\documentclass[reqno, 12pt]{amsart}

\usepackage{amsfonts, amsthm, amsmath, amssymb}

\usepackage{hyperref}
\usepackage{a4wide}
\usepackage{enumitem}

\RequirePackage{mathrsfs} \let\mathcal\mathscr

\DeclareRobustCommand{\SkipTocEntry}[5]{} 

\numberwithin{equation}{section}

\newtheorem{theorem}{Theorem}[section]
\newtheorem{lemma}[theorem]{Lemma}
\newtheorem{proposition}[theorem]{Proposition}
\newtheorem{corollary}[theorem]{Corollary}

\newtheorem*{rc}{\rm \textsc{Recurrence Condition}}

\theoremstyle{definition}
\newtheorem*{ack}{Acknowledgements}
\newtheorem{rem}[theorem]{Remark}
\newtheorem{rems}[theorem]{Remarks}
\newtheorem*{rem*}{Remark}

\newtheorem{definition}[theorem]{Definition}

\renewcommand{\d}{\,\mathrm{d}}

\renewcommand{\rho}{\varrho}
\newcommand{\1}{\mathbf{1}}

\newcommand{\ZZ}{\mathbb{Z}}
\newcommand{\NN}{\mathbb{N}}
\newcommand{\QQ}{\mathbb{Q}}
\newcommand{\RR}{\mathbb{R}}
\newcommand{\CC}{\mathbb{C}}
\newcommand{\T}{\mathbb{T}}

\newcommand{\Q}{\mathcal Q}

\newcommand{\set}[1]{\left\{#1\right\}}
\newcommand{\bigset}[1]{\bigl\{ #1 \bigr\}}
\newcommand{\biggset}[1]{\biggl\{ #1 \biggr\}}

\newcommand{\bigabs}[1]{\bigl| #1 \bigr|}
\newcommand{\Bigabs}[1]{\Bigl| #1 \Bigr|}

\newcommand{\bigbrac}[1]{\bigl( #1 \bigr)}
\newcommand{\Bigbrac}[1]{\Bigl( #1 \Bigr)}
\newcommand{\biggbrac}[1]{\biggl( #1 \biggr)}

\newcommand{\norm}[1]{\left\| #1\right\|}

\newcommand{\twosum}[2]{ \sum_{\substack{#1\\ #2}}}
\newcommand{\threesum}[3]{ \sum_{\substack{#1\\ #2\\ #3}}}

\DeclareMathOperator{\EE}{{\mathbb{E}}}

\newcommand{\X}{\mathcal X}
\newcommand{\W}{\widetilde W}

\renewcommand{\leq}{\leqslant}
\renewcommand{\geq}{\geqslant}
\renewcommand{\bar}{\overline}

\newcommand{\n}{\mathbf{n}}

\newcommand{\Mod}[1]{\;(\operatorname{mod}\,#1)}

\newcommand{\id}{{\rm{id}}}

\DeclareMathOperator{\vol}{vol}
\DeclareMathOperator{\lcm}{lcm}
\DeclareMathOperator{\supp}{supp}
\newcommand{\eps}{\varepsilon}

\begin{document}

\title{Smooth numbers are orthogonal to nilsequences}
\author{Lilian Matthiesen and Mengdi Wang}
\address{Mathematisches Institut\\
Georg-August Universit{\"a}t G{\"o}ttingen\\
Bunsenstra{\ss}e 3-5\\
D-37073 G{\"o}ttingen\\
Germany}
\email{lilian.matthiesen@math.kth.se}

\address{
Institute of Mathematics\\
École Polytechnique Fédérale de Lausanne (EPFL)\\
CH-1015 Lausanne\\
Switzerland}
\email{mengdi.wang@epfl.ch}

\thanks{2010  {\em Mathematics Subject Classification.} 
11N37 (11N25, 11B30, 11D04, 11L15)
}

\begin{abstract}
The aim of this paper is to study distributional properties of integers without large or small prime factors. 
Define an integer to be $[y',y]$-smooth if all of its prime factors belong to the interval $[y',y]$.
We identify suitable weights $g_{[y',y]}(n)$ for the characteristic function of $[y',y]$-smooth numbers that allow us to establish strong asymptotic results on their distribution in short arithmetic progressions.
Building on these equidistribution properties, we show that (a $W$-tricked version of) the function 
$g_{[y',y]}(n) - 1$ is orthogonal to nilsequences. 
Our results apply in the almost optimal range $(\log N)^{K} < y \leq N$ of the smoothness parameter $y$, 
where $K \geq 2$ is sufficiently large, and to any $y' < \min(\sqrt{y}, (\log N)^c)$.

As a first application, we establish for any $y> N^{1/\sqrt{\log_9 N}}$ asymptotic results on the frequency with which an arbitrary finite complexity system of shifted linear forms
$\psi_j (\n) + a_j \in \ZZ[n_1, \dots, n_s]$, $1 \leq j \leq r$, simultaneously takes $[y',y]$-smooth values as the $n_i$ vary over integers below $N$. 

\end{abstract}

\maketitle

\tableofcontents

\section{Introduction}
Let $y > 0$ be a real number.
A positive integer $n$ is called $y$-smooth if its largest prime factor is at most $y$.
The $y$-smooth numbers below $N$ form a subset of the integers below $N$ which is, in general, sparse but enjoys good equidistribution properties in arithmetic progressions and short intervals.
These distributional properties turn $y$-smooth numbers into an important technical tool for many arithmetic questions.
As an example for one of the striking applications of smooth numbers within analytic number theory, we mention Vaughan's work \cite{Vaughan} which introduced smooth numbers in combination with a new iterative method to the study of bounds in Waring's problem. Wooley extended these methods in \cite{wooley-waring} and achieved substantial improvements on Waring's problem by working with smooth numbers. 
We refer to Granville's survey \cite{Granville-survey} (see in particular Section 6) for a more comprehensive overview of applications of smooth numbers within number theory.

In the present paper, we prove new results on the equidistribution of smooth numbers in short intervals and arithmetic progressions.
Our principal aim is to prove higher uniformity of $y$-smooth numbers in a sense that will be made precise below 
and for $y$ ranging over an almost optimal range.
In addition, we prove that provided $y$ is not too small, the set of $y$-smooth numbers is sufficiently well distributed to guarantee the existence of non-trivial solutions to arbitrary finite complexity systems of linear equations.

In order to state our main results precisely, we first introduce a subset of the $y$-smooth numbers as well as a weighted version of its characteristic function that are both central to the rest of this paper. 
Given any real numbers $0< y' \leq y$, we may consider the set of $y$-smooth numbers that are free from prime factors smaller than $y'$. 
We call such numbers $[y',y]$-smooth and denote their set by
$$S([y',y]) := \{n \in \NN: p\mid n \Rightarrow p \in [y',y]\}.$$
Given any $x > 0$, the subset of $[y',y]$-smooth numbers $\leq x$ and its cardinality are denoted by
$$
S(x,[y',y]) := S([y',y]) \cap [1,x] \quad \text{ and } \quad \Psi(x,[y',y]):= |S(x,[y',y])|~.
$$
Our notation extends the following standard notation for $y$-smooth numbers:
$$S(y) := S([1,y]), \quad S(x,y):= S(x,[1,y]), \quad \text{and} \quad  \Psi(x,y)=\Psi(x,[1,y]).$$
With this notation, we define the weighted characteristic function 
$$
g_{[y',y]}(n) = \frac{n}{\alpha(n,y)\Psi(n,[y',y])} \1_{S([y',y])}(n), \qquad (n \in \NN),
$$
of $[y',y]$-smooth numbers, where $\alpha(n,y)$ denotes the saddle point\footnote{We recall the definition in Section \ref{sec:background}.} associated to $S(n,y)$.
If $1 \leq A < W$ are coprime integers, we further define a \emph{$W$-tricked} version of $g_{[y',y]}$ by setting
$$
g_{[y',y]}^{(W,A)}(m) = \frac{\phi(W)}{W}g_{[y',y]}(Wm + A), \qquad (m \in \NN).
$$
In the definitions of the functions $g_{[y',y]}$ and $g_{[y',y]}^{(W,A)}$, the normalisations are chosen such that their average values are roughly $1$.

Following these preparations, we are now ready to state the main result of this paper.
The notation around nilsequences will be recalled in Section \ref{sec:nilsequences}.
Throughout this paper we write $\log_k x$ to denote the $k$-fold iterated logarithm of $x$.

\begin{theorem}[Higher uniformity] \label{thm:main-uniformity}
Let $N$ be a large positive parameter and let 
$K' \geq 1$, $K > 2K'$ and $d \geq 0$ be integers. 
Let $\frac{1}{2}\log_3 N \leq y' \leq (\log N)^{K'}$ and suppose that $(\log N)^K< y < N^{\eta}$ 
for some sufficiently small $\eta \in (0,1)$ depending on the value of $d$.
Let $(G/\Gamma, G_{\bullet})$ be a filtered nilmanifold of complexity $Q_0$ and degree $d$.
Finally, let $w(N) = \frac{1}{2}\log_3 N$, $W=\prod_{p<w(N)}p$ and define 
$\delta(N) = \exp(-\sqrt{\log_4 N}))$.

If $K$ is sufficiently large depending on the degree $d$ of $G_{\bullet}$, then the estimate 
\begin{align} \label{eq:thm-non-corr-bound-g-intro}
\bigg| \frac{W}{N}
\sum_{n \leq (N-A)/W} 
(g_{[y',y]}^{(W,A)}(n)-1)
F(g(n)\Gamma)
\bigg| 
\ll_{d} (1 +\|F\|_{\mathrm{Lip}}) \delta(N) Q_0 + \frac{1}{\log w(N)}
\end{align}
holds uniformly for all $1\leq A\leq W$ with $\gcd(A,W)=1$, all polynomial sequences
$g \in \mathrm{poly}(\ZZ,G_{\bullet})$ and all $1$-bounded Lipschitz functions $F:G/\Gamma \to \CC$.
\end{theorem}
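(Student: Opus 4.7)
The plan is to reduce the non-correlation estimate \eqref{eq:thm-non-corr-bound-g-intro} to the quantitative asymptotic evaluation of $g^{(W,A)}_{[y',y]}$ in short arithmetic progressions of moderate modulus, which is the principal technical input supplied by the preceding sections of the paper. This reduction follows the Green--Tao template of exploiting the quantitative factorisation theorem for polynomial sequences in nilmanifolds.

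First, fix a slowly growing parameter $M_0 = M_0(N)$ satisfying $M_0^{-A_d} \leq \delta(N)$ for a large constant $A_d$ depending on $d$, for instance $M_0 = \exp((\log_4 N)^{1/3})$, and apply the Green--Tao factorisation theorem of degree $d$ to decompose
\begin{equation*}
g(n) = \epsilon(n)\, g'(n)\, \gamma(n),
\end{equation*}
where $\epsilon$ is $(M_0,N)$-smooth, $\gamma$ is $M_0$-rational (so $\gamma(n)\Gamma$ has period $Q \leq M_0^{O_d(1)}$), and $g' \in \mathrm{poly}(\ZZ, G'_\bullet)$ is totally $M_0^{-A_d}$-equidistributed on a closed subnilmanifold $G'/\Gamma'$ of complexity $\leq M_0^{O_d(1)}$. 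Splitting the $n$-sum modulo $Q$ and further into blocks of length $\asymp N/(WQM_0)$ freezes $\gamma(n)\Gamma$ to a constant and confines $\epsilon(n)$ to an $M_0^{-1}$-neighbourhood; on each such sub-progression $P$, one may replace $F(g(n)\Gamma)$ by $\tilde F(g'(n)\Gamma')$ for some Lipschitz $\tilde F$ on $G'/\Gamma'$ of norm $\leq M_0^{O_d(1)}(1+\|F\|_{\mathrm{Lip}})$, at the cost of an additive error $O(M_0^{-1}(1+\|F\|_{\mathrm{Lip}}))$. The task thereby reduces to establishing
\begin{equation*}
\sum_{n\in P}\bigl(g^{(W,A)}_{[y',y]}(n)-1\bigr)\tilde F(g'(n)\Gamma') \ll (1+\|F\|_{\mathrm{Lip}}) M_0^{-c_d} |P|
\end{equation*}
on each such $P$ for some $c_d>0$.

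This remaining estimate is then handled by decomposing $\tilde F$ into its vertical Fourier components on $G'/\Gamma'$, so that the problem splits into a constant-mean part and a mean-zero part. The mean-zero part is controlled by the total equidistribution of $g'$, applied to sub-progressions of $P$ of sufficient length (possibly in conjunction with a short induction on the degree $d$ of the filtration), producing cancellation of order $M_0^{-c_d}$. The constant-mean part reduces to $(\int_{G'/\Gamma'}\tilde F) \sum_{n\in P'}(g^{(W,A)}_{[y',y]}(n)-1)$ summed over sub-progressions $P'\subseteq P$ of modulus $Q \leq M_0^{O_d(1)}$, and this is $\ll (\delta(N) + 1/\log w(N))|P'|$ by the quantitative asymptotic for $g^{(W,A)}_{[y',y]}$ in short arithmetic progressions that is established in the preceding sections.

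The principal obstacle is the parameter balance between these two inputs: the factorisation generates sub-APs of modulus up to $M_0^{O_d(1)}$, whereas the AP asymptotic for $g^{(W,A)}_{[y',y]}$ tolerates moduli only up to a fixed power of $\log N$, since the smoothness parameter $y > (\log N)^K$ is the ultimate source of AP-equidistribution. The hypothesis that $K$ be sufficiently large depending on $d$ is exactly what absorbs the exponent $O_d(1)$ introduced by the factorisation. Because the target saving $\delta(N) = \exp(-\sqrt{\log_4 N})$ is very mild, it suffices to take $M_0$ growing as a small iterated logarithm of $N$, which makes the constraints mutually compatible. The additional summand $1/\log w(N)$ in \eqref{eq:thm-non-corr-bound-g-intro} reflects the contribution of Dirichlet characters of conductor close to $w(N)$ that survive the $W$-trick, a loss that must be tracked carefully throughout the reduction.
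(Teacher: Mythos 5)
Your reduction via the Green--Tao factorisation theorem is essentially the paper's first step: one does indeed split into sub-progressions on which $\eps$ and $\gamma$ are frozen, replace $F(g(n)\Gamma)$ by a Lipschitz function of the equidistributed component, and use the asymptotic for $g^{(W,A)}_{[y',y]}$ in short arithmetic progressions to dispose of the \emph{mean} of $\tilde F$ (this is where the $1/\log w(N)$ term arises). So far, so good.

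The gap is in your treatment of the mean-zero part. You claim it is ``controlled by the total equidistribution of $g'$, applied to sub-progressions of $P$ of sufficient length.'' This cannot work. Total equidistribution of $(g'(n)\Gamma')_{n\in P}$ only tells you that $\sum_{n\in P'}\tilde F(g'(n)\Gamma')=o(|P'|\,\|\tilde F\|_{\mathrm{Lip}})$ for long sub-progressions $P'$; to transfer such a bound to $\sum_{n\in P}(g^{(W,A)}_{[y',y]}(n)-1)\tilde F(g'(n)\Gamma')$ you would need to write $g^{(W,A)}_{[y',y]}-1$ as a short combination of indicator functions of long progressions, which is impossible here: for $y=N^{o(1)}$ the support $S([y',y])$ is a sparse set of density $\Psi(N,y)/N=u^{-u(1+o(1))}=o(1)$, on which the weight is large ($\asymp N/\Psi(N,[y',y])$), so the correlation of this weight with a genuinely equidistributed, mean-zero nilsequence is not determined by the equidistribution of the nilsequence alone. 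Detecting cancellation there requires arithmetic input about smooth numbers against the nilsequence itself. This is precisely what the bulk of the paper supplies and what is absent from your proposal: a Montgomery--Vaughan-type decomposition reducing the correlation to a bilinear (Type II) sum $\sum_m\sum_n \1_{S([y',y])}(n)\Lambda(m)F(g(mn)\Gamma)$; Weyl-sum estimates for $[y',y]$-smooth numbers (extending Wooley and Drappeau--Shao); a strong recurrence theorem for polynomial sequences over smooth numbers, proved by a multi-stage bootstrapping argument using the short-interval and short-progression equidistribution results; the resulting statement that for most $n\in S([y',y])$ the linear subsequence $m\mapsto g(mn)$ remains equidistributed; and finally a quantitative non-correlation estimate for the $W$-tricked von Mangoldt function with nilsequences, uniform over the wide range of scales produced by the decomposition. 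Without some substitute for this chain of arguments, the central case of the theorem --- the correlation with the equidistributed component --- is unproven.

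As a secondary point, your heuristic that the $1/\log w(N)$ term ``reflects the contribution of Dirichlet characters of conductor close to $w(N)$'' is not accurate: in the paper it comes from the error incurred when subtracting the mean of $\tilde F$, i.e.\ from the quality of equidistribution of $h_{[y',y]}^{(W,A)}$ in the short sub-progressions created by the factorisation (essentially a $q/\phi(q)$-type loss controlled by the $W$-trick), not from surviving character sums.
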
 

\begin{rem}
 Since $\gcd(A,W)=1$, we may, when working with $Wn+A \in S([y',y])$, restrict without loss of generality to the case $y' \geq w(N)$.
 In applications where $y$ is not too small, the contribution from $S(y)\setminus S([w(N),y])$ can often be taken care of separately, leading to a result for $S(y)$ in the end.
\end{rem}

Theorem \ref{thm:main-uniformity} constitutes the first of two parts necessary in order to establish asymptotic results on the number of $[y',y]$-smooth solutions to finite complexity systems of linear forms using the nilpotent circle method.
We emphasise that the setting of smooth numbers studied in the present paper is significantly more difficult than those settings considered in previous applications such as e.g.\ \cite{GT-linearprimes}, \cite{Mat-multiplicative, Mat-lcm} or \cite{Mat-binaryquadr}, which concern primes, a large class of multiplicative functions, and numbers representable by binary quadratic forms, respectively.
The reason for this increase in difficulty lies partly in the sparsity of the set of $y$-smooth numbers and partly in the unavailability of sieve methods to study this set.

The parameter $w(N)$ is determined by the distribution of $y$-smooth numbers in arithmetic progressions and chosen in such a way that the subset of $y$-smooth numbers in any fixed reduced residue class modulo $W(N) = \prod_{p <w(N)} p$ is sufficiently well-distributed in arithmetic progressions.
At the end of Section \ref{sec:overview}, we will discuss in more detail the need for applying such a `$W$-trick' when studying $y$-smooth numbers.
In addition to the $W$-trick,
the value $y'$ may be used to further influence how well the resulting set of $[y',y]$-smooth numbers is distributed in progressions.
Increasing parameter $y'$ beyond the value of $w(N)$ leads to better error terms for larger moduli in the distribution of $[y',y]$-smooth numbers in progressions.
The proof of Theorem \ref{thm:main-uniformity} relies on information on the distribution of $[y',y]$-smooth numbers
in short intervals and in arithmetic progressions, which we establish in Sections~\ref{sec:shortintervals} and
 \ref{sec:shortAPs}.

From a technical perspective, our focus in Theorem \ref{thm:main-uniformity} has been to establish a result in which the lower bound on the range of the smoothness parameter $y$ is as small as possible in terms of $N$, while the $W$-trick (i.e.\ the value of $W$) is still independent of $y$.
This allows one to combine this result in applications with inductive or recursive arguments in the $y$-parameter.
We remark that when focussing on larger values of $y$, the function $w(N)$ in the statement can be chosen larger and consequently the bounds in this result improve.

As a first application, we estimate, for large values of $y$, the frequency with which an arbitrary finite complexity system of shifted linear forms $\psi_j (\n) + a_j \in \ZZ[n_1, \dots, n_s]$, $1 \leq j \leq r$, simultaneously takes $[y',y]$-smooth values as the $n_i$ vary over integers below $N$.

\begin{theorem}[Linear equations in smooth numbers]\label{thm:main-theorem}
Let $r, s \geq 2$ be integers and let $N>2$ be a parameter. 
We consider the following setup:
\begin{enumerate}
 \item[(i)] Let $\psi_1, \dots, \psi_r: \ZZ^s \to \ZZ$ be linear forms that are pairwise linearly independent over 
 $\QQ$, and let $a_1, \dots, a_r \in [-N,N] \cap \ZZ$ be integers.
 Let $L$ denote the maximum of the absolute values of the coefficients of $\psi_1, \dots, \psi_r$.
 \item[(ii)] Let $\mathfrak K \subseteq [-1,1]^s$ be a fixed convex set of positive $s$-dimensional volume $\vol \mathfrak K > 0$, and suppose that
the dilated copy $$N \mathfrak K = \{ Nk \in \RR^s: k \in \mathfrak{K} \}$$ satisfies
$\psi_j(N\mathfrak K) + a_j \subseteq [1, N]$ for all $1 \leq j \leq r$.
 \item[(iii)] Let $\frac{1}{2} \log_3 N \leq y' \leq (\log N)^{K'}$ for some fixed $K' \geq 1$. 
 Let $\eta \in (0,1)$ be sufficiently small in terms of $r$ and $s$ and suppose that $y \leq N^{\eta}$.
\end{enumerate}
If $y \leq N^{\eta}$ is sufficiently large to ensure that $\Psi(N,y) > N/ \log_8 N$, then
\begin{align*}
 \sum_{\n \in \ZZ^s \cap N\mathfrak{K}} \prod_{j=1}^r g_{[y',y]} (\psi_j(\n)+a_j)
 = \vol(\mathfrak{K})N^{s}  \prod_{p < y'} \beta_p + o_{r,s,L}(N^s) ,
\end{align*}
as $N \to \infty$ and where
$$
\beta_p = \frac{1}{p^s} 
  \sum_{\mathbf{u} \in (\ZZ/p\ZZ)^s} \prod_{j=1}^r \frac{p}{p-1}
  \1_{\psi_j(\mathbf{u})+a_j \not\equiv 0 \Mod p}.
$$
\end{theorem}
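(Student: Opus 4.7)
The plan is to follow the nilpotent Hardy--Littlewood circle method of Green, Tao, and Ziegler, treating Theorem~\ref{thm:main-uniformity} as the higher-order uniformity input and combining it with the inverse theorem for the Gowers norms. The first step is a $W$-trick reduction. Writing $W = \prod_{p<w(N)} p$ with $w(N) = \tfrac{1}{2}\log_3 N$, I partition the sum over $\mathbf{n} \in \ZZ^s \cap N\mathfrak{K}$ according to $\mathbf{n} \equiv \mathbf{b} \pmod{W}$. Because $y' \geq w(N)$, every $[y',y]$-smooth number is coprime to $W$, so the summand vanishes unless $\gcd(\psi_j(\mathbf{b})+a_j,\, W)=1$ for every $j$. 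For each admissible $\mathbf{b}$, one writes $\psi_j(W\mathbf{m}+\mathbf{b})+a_j = W(\psi_j(\mathbf{m})+q_j) + A_j$ with $A_j \in (\ZZ/W\ZZ)^{\times}$, and trades $g_{[y',y]}$ for $\tfrac{W}{\phi(W)}\, g^{(W,A_j)}_{[y',y]}$. This places the sum into precisely the setting where Theorem~\ref{thm:main-uniformity} applies.

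Next, I expand each factor as $g^{(W,A_j)}_{[y',y]} = 1 + \bigl(g^{(W,A_j)}_{[y',y]} - 1\bigr)$ and distribute the product into $2^r$ terms. The term in which every factor equals $1$ supplies the main term: pairing the lattice-point count $|(N\mathfrak{K}-\mathbf{b})/W \cap \ZZ^s| = \vol(\mathfrak{K})N^s/W^s + O(N^{s-1})$ with the factor $(W/\phi(W))^r$ and the number of admissible residues $\mathbf{b} \pmod W$ produces $\vol(\mathfrak{K}) N^s \prod_{p < w(N)} \beta_p$. To upgrade this to $\prod_{p < y'} \beta_p$, I would verify that for each prime in the narrow range $w(N) \leq p < y' \leq (\log N)^{K'}$ one has $\beta_p = 1 + O_{r,L}(p^{-2})$, so that $\prod_{w(N) \leq p < y'} \beta_p = 1 + o(1)$ as $N \to \infty$; the hypothesis $\Psi(N,y) > N/\log_8 N$ ensures that this refinement remains within the permitted $o(N^s)$ error.

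Each of the remaining $2^r - 1$ terms contains at least one factor of $g^{(W,A_j)}_{[y',y]} - 1$. For these I would appeal to a generalised von Neumann theorem adapted to the shifted linear system $(\psi_j + q_j)_{j=1}^r$, whose Cauchy--Schwarz complexity $s_0$ depends only on $r$, $s$, and $L$. This reduces each such term to the claim that $\norm{g^{(W,A_j)}_{[y',y]} - 1}_{U^{s_0+1}} = o(1)$. The latter follows by combining Theorem~\ref{thm:main-uniformity} with the Green--Tao--Ziegler inverse theorem for Gowers norms in the familiar contrapositive: were the $U^{s_0+1}$-norm to exceed a fixed positive $\eps$ for arbitrarily large $N$, then $g^{(W,A_j)}_{[y',y]} - 1$ would have to correlate with a nilsequence of bounded complexity, contradicting~\eqref{eq:thm-non-corr-bound-g-intro} once $N$ is large enough that both $\delta(N) Q_0$ and $1/\log w(N)$ fall below any fixed threshold.

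The principal technical obstacle lies in the generalised von Neumann step: both the inverse theorem and the standard generalised von Neumann inequality are formulated for $1$-bounded functions, whereas $g^{(W,A)}_{[y',y]}$ is highly unbounded at individual integers. One therefore has to install a pseudorandom majorant $\nu \geq g^{(W,A)}_{[y',y]}$ — or invoke a transference/dense-model argument — for which the requisite linear forms and correlation conditions can be verified. In the regime of smooth numbers no classical sieve is available for this purpose, and the needed majorant analysis must ultimately be built from the short-interval and arithmetic-progression equidistribution estimates developed in Sections~\ref{sec:shortintervals} and~\ref{sec:shortAPs}.
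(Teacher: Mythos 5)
Your overall architecture (a $W$-trick, a generalised von Neumann step, and an inverse theorem applied to Theorem \ref{thm:main-uniformity}) is the same as the paper's, and your main-term computation together with the upgrade from $\prod_{p<w(N)}\beta_p$ to $\prod_{p<y'}\beta_p$ via $\beta_p=1+O_{r,s,L}(p^{-2})$ is correct. The genuine gap sits exactly where you flag ``the principal technical obstacle,'' and it is not a side issue: it is the reason the theorem carries the hypothesis $\Psi(N,y)>N/\log_8 N$. The paper does not build a majorant of the correct average order from Sections \ref{sec:shortintervals}--\ref{sec:shortAPs}; it uses the \emph{trivial} Cram\'er-type majorant $\nu(n)=\frac{P(y')}{\phi(P(y'))}\1_{(n,P(y'))=1}(n)$, which dominates the rescaled function $\frac{P(y')}{\phi(P(y'))}\frac{\Psi(N,[y',y])}{N}\,g^{(W,A)}_{[y',y]}$ and whose linear forms condition is supplied by Tao--Ter\"av\"ainen. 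The price is that the transferred function then has density only $\sigma\asymp\Psi(N,y)/N$ relative to $\nu$.

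Consequently, your plan to finish with the qualitative Green--Tao--Ziegler inverse theorem ``in the familiar contrapositive'' against a fixed threshold $\eps>0$ cannot close. With a majorant whose average order exceeds that of the function by the factor $\sigma^{-1}$, the transferred generalised von Neumann inequality requires $\|g^{(W,A)}_{[y',y]}-1\|_{U^{s_0+1}}=o(\sigma^{r-1})$ and the linear-forms error $w(N)^{-c}=(\log_3N)^{-c}$ to be $o(\sigma^{r})$ --- not merely $o(1)$ --- and $\sigma$ may be as small as $1/\log_8 N$. One therefore needs a \emph{quantitative} inverse theorem converting the explicit bound $\delta(N)Q_0+1/\log w(N)$ of Theorem \ref{thm:main-uniformity} into a Gowers-norm bound that beats powers of $\sigma$; the paper invokes Manners' quantitative inverse theorem in the transferred form of Tao--Ter\"av\"ainen for precisely this purpose, and the constraint $\Psi(N,y)>N/\log_8 N$ is where that bookkeeping closes. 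Had you a majorant of the correct average order $\asymp\Psi(N,y)/N$ satisfying the linear forms condition, your qualitative argument would indeed work on the full range of Theorem \ref{thm:main-uniformity}, but, as the paper notes, no such majorant is currently available for smooth numbers.
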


We note as an aside that, as shown in Green and Tao \cite[\S 4]{GT-linearprimes}, this result implies an asymptotic count of the number of solutions in $[y',y]$-smooth numbers to systems of linear equations satisfying the non-degeneracy condition stated in \cite[Theorem~1.8]{GT-linearprimes}.
Theorem~\ref{thm:main-theorem} is the first instance of a result its kind for smooth numbers that applies in sparse situations where $\Psi(N,y) = o(N)$.
We mainly include this result for illustration and remark that we have not tried to optimise the lower bound on 
$\Psi(N,y)/N$.\footnote{As this application only involves fairly large values in $y$, the bounds in Theorem \ref{thm:main-theorem} could be improved by establishing better bounds in Theorem \ref{thm:main-uniformity} for large values of $y$. }
Theorem \ref{thm:main-theorem} follows by combining Theorem~\ref{thm:main-uniformity} with a `trivial' majorising function.
Once suitable majorising functions are available on the full range of $y$ on which Theorem \ref{thm:main-uniformity} applies, a version of Theorem~\ref{thm:main-theorem} will follow on that range of $y$, 
making Theorem~\ref{thm:main-theorem} redundant. For this reason, we chose not to optimise the bounds.

The following unweighted version of Theorem \ref{thm:main-theorem} is an easy consequence of an asymptotic lower bound on the weight factor that appears in the definition of $g_{[y',y]}(n)$.

\begin{corollary} \label{cor:main}
With the notation and under the assumptions of Theorem \ref{thm:main-theorem} the following holds.
If $y \leq N^{\eta}$ is sufficiently large to ensure that $\Psi(N,y) > N/ \log_8 N$
and $\frac{1}{2} \log_3 N \leq y' \leq (\log N)^{K'}$ for some fixed $K' \geq 1$, then 
the number $\mathcal{N}(N,\mathfrak{K})$ of $\n \in \ZZ^s \cap N\mathfrak{K}$ for which the given system of linear polynomials 
$(\psi_j(\n) + a_j)_{1 \leq j \leq r}$ takes simultaneous $[y',y]$-smooth values satisfies:
\begin{align*}
 \mathcal{N}(N,\mathfrak{K}):= 
 \sum_{\n \in \ZZ^s \cap N\mathfrak{K}} \prod_{j=1}^r \1_{S([y',y])} (\psi_j(\n)+a_j)
 \gg  \vol(\mathfrak{K}) N^{s-r} \Psi(N,[y',y])^r  \prod_{p} \beta_p
\end{align*}
for all sufficiently large $N$.
\end{corollary}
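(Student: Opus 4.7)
The plan is to deduce the unweighted lower bound from Theorem~\ref{thm:main-theorem} via a pointwise upper bound on the weight $\frac{n}{\alpha(n,y)\Psi(n,[y',y])}$ appearing in the definition of $g_{[y',y]}$, which will allow me to compare the weighted sum to the unweighted count $\mathcal{N}(N,\mathfrak K)$.

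The key ingredient is the uniform estimate
$$g_{[y',y]}(n) \ll \frac{N}{\Psi(N,[y',y])}\,\1_{S([y',y])}(n) \qquad (1 \leq n \leq N).$$
I would establish this from the saddle-point material reviewed in Section~\ref{sec:background}. The hypothesis $\Psi(N,y)>N/\log_8 N$, together with the standard saddle-point relation $\Psi(N,y) = N^{\alpha(N,y)} \cdot (\text{slowly varying factors})$, forces $\alpha(N,y)$ to be close to $1$; and by the continuity of the saddle point in the $x$-variable, $\alpha(n,y)\asymp 1$ uniformly for $n$ comparable to $N$. Combined with the near-monotonicity $\Psi(n,[y',y])/n \gg \Psi(N,[y',y])/N$ for $n$ in the bulk of $[1,N]$ (a standard consequence of the same saddle-point analysis), this yields the claimed pointwise bound. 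The contribution of the very small values of $n$ where either estimate might fail is negligible by a crude count, since $\mathcal{N}$ receives only a vanishing contribution from such inputs.

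With the pointwise bound in hand, hypothesis~(ii) guarantees $\psi_j(\n)+a_j \in [1,N]$ for each $\n \in \ZZ^s \cap N\mathfrak K$, so applying the bound $r$ times gives
$$\sum_{\n\in\ZZ^s\cap N\mathfrak K}\prod_{j=1}^r g_{[y',y]}(\psi_j(\n)+a_j) \ll \left(\frac{N}{\Psi(N,[y',y])}\right)^r \mathcal{N}(N,\mathfrak K).$$
Inserting the asymptotic for the left-hand side supplied by Theorem~\ref{thm:main-theorem} and rearranging produces the lower bound
$$\mathcal{N}(N,\mathfrak K)\gg \vol(\mathfrak K)N^{s-r}\Psi(N,[y',y])^r\prod_{p<y'}\beta_p.$$
To pass from the truncated product $\prod_{p<y'}\beta_p$ to the full Euler product $\prod_p \beta_p$, I would carry out the standard elementary verification that $\beta_p=1+O_{r,s,L}(p^{-2})$ for all primes $p$ outside a bounded exceptional set depending only on the forms. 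This shows $\prod_{p\geq y'}\beta_p = O(1)$, hence $\prod_{p<y'}\beta_p \gg \prod_p \beta_p$, completing the deduction.

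The main technical obstacle is the pointwise weight estimate of Step 1, which requires uniform control of both $\alpha(n,y)$ and the ratio $\Psi(n,[y',y])/n$ across the entire relevant range; once these saddle-point inputs from Section~\ref{sec:background} are in place, the rest of the argument is routine algebra.
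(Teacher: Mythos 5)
Your overall strategy is exactly the one the paper intends (the paper only remarks that the corollary is "an easy consequence" of a bound on the weight factor and gives no further details): bound the weight pointwise by $O(N/\Psi(N,[y',y]))$, compare the weighted sum of Theorem \ref{thm:main-theorem} with $\mathcal{N}(N,\mathfrak K)$, and absorb $\prod_{p\geq y'}\beta_p=O(1)$ via $\beta_p=1+O_{r,s,L}(p^{-2})$ (the paper quotes \cite[Lemma 1.3]{GT-linearprimes} for the latter). Two points in your Step 1 need repair, though. First, the pointwise bound $g_{[y',y]}(n)\ll N/\Psi(N,[y',y])$ is genuinely false for small $n$: e.g.\ for $n\leq y'=(\log N)^{K'}$ one has $\Psi(n,[y',y])=1$ and the weight is of size $n$, which can exceed $N/\Psi(N,[y',y])\ll (\log_2N)(\log_8 N)$ once $K'\geq 1$. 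Your patch points at the wrong sum: what must be shown negligible is not the contribution of such inputs to $\mathcal{N}$ (which would be irrelevant, and circular as a way to prove a lower bound on $\mathcal{N}$), but their contribution to the \emph{weighted} sum $\sum_{\n}\prod_j g_{[y',y]}(\psi_j(\n)+a_j)$, which must be $o(N^s)$ so that the main term of Theorem \ref{thm:main-theorem} survives the restriction to $\n$ with all $\psi_j(\n)+a_j\geq T$ (say $T=N^{1/2}$). For that you need, in addition to the slab count $O_L(TN^{s-1})$ for the exceptional $\n$, a crude uniform bound such as $g_{[y',y]}(n)\ll (\log N)^{O_{K'}(1)}$ valid on all of $[1,N]$; this is available but is not the same estimate as your main pointwise bound.

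Second, the "near-monotonicity" $\Psi(n,[y',y])/n\gg\Psi(N,[y',y])/N$ should be justified with the right inputs: Lemma \ref{lem:BrTen-2.4.i-ii}(i) gives an \emph{upper} bound on $\Psi(x/d,[y',y])$ and so goes in the wrong direction, while part (ii) only covers $d\leq\exp((\log x)^{1/3})$, i.e.\ $n$ very close to $N$. The clean route for $n\in[N^{1/2},N]$ is Lemma \ref{lem:BrTen-2.1} together with \eqref{eq:Hil-Ten-Thm1}, the fact that $x\mapsto\min_\sigma x^{\sigma-1}\zeta(\sigma,y)$ is non-increasing while $\alpha(\cdot,y)$ is, and $\sigma\mapsto g_{P(y')}(\sigma)$ increasing. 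With these two repairs the argument closes, and it is the intended deduction.
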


\paragraph{{\bf Previous work and discussion}}
Before turning towards Theorem~\ref{thm:main-uniformity}, we shall first discuss previous work in the direction of Theorem~\ref{thm:main-theorem}, partly because this work illustrates for which range of the smoothness parameter $y$ one can hope to prove such a result.

In the dense case where $y>N^{\eps}$ for any $\eps > 0$, Theorem~\ref{thm:main-theorem} has been proved by Lachand~\cite{Lachand}
with $S([y',y])$ replaced by $S(y)$ and with $W=1$, $A=0$. 
The special case of Lachand's result where $s = 2$ and $a_1 = \dots = a_r = 0$ also follows from work of Balog, Blomer, Dartyge and Tenenbaum \cite{BBDT}.
The latter paper studies smooth values of binary forms and can be applied to
$F(n_1,n_2) = \prod_{i=1}^r \psi_i(n_1,n_2)$.
In a similar spirit, Fouvry \cite{Fouvry} investigates smooth values of absolutely irreducible polynomials 
$F(\mathbf X) \in \ZZ[X_1, \dots, X_n]$ with $n \geq 2$ and $\deg F \geq 3$ and shows that
\begin{equation*} 
\sum_{\n \in \ZZ^n \cap [-N,N]} \1_{S(N^{d-\delta})} (F(\n)) \gg N^n 
\end{equation*}
for all $\delta < 4/3$ and all $N>N_0(F)$, which also corresponds to the dense setting.  

Concerning the sparse setting, i.e.\ smaller values of $y$, work of 
Lagarias and Soundararajan \cite{Lagarias-Sound-2011, Lagarias-Sound-2012} implies that the lower threshold on 
$y$ for which our counting function satisfies
$$\mathcal{N}(N,\mathfrak{K}) \to \infty, \quad \text{as } N \to \infty,$$
is $y> (\log N)^{\kappa}$ for some $\kappa \geq 1$.
More precisely, Lagarias and Soundararajan study primitive solutions to the equation $A+B=C$ in smooth numbers. 
Using our notation they prove in \cite[Theorem 1.5]{Lagarias-Sound-2012} that GRH implies
\begin{equation} \label{eq:lagarias-sound}
 \sum_{\substack{0<n_1+n_2 \leq N \\ \gcd(n_1,n_2)=1}} \1_{S(y)}(n_1)\1_{S(y)}(n_2)\1_{S(y)}(n_1+n_2)
\gg \frac{\Psi(N,y)^3}{N}
\end{equation}
for any $y\geq(\log N)^{\kappa}$ and $\kappa > 8$. 
On the other hand, they show in \cite[Theorem 1.1]{Lagarias-Sound-2011} that the $abc$ conjecture implies that the left hand side above is bounded independent of $N$ if $y=(\log N)^{\kappa}$ and $\kappa < 1$.
In view of this latter result, and with applications in the spirit of Theorem \ref{thm:main-theorem} in mind,
the range of $y$ in which we prove Theorem \ref{thm:main-uniformity} is optimal up to the size of the exponent $K$.
Indeed, the left hand side above is a special case of the expression $\mathcal{N}(N,\mathfrak{K})$ studied in Corollary \ref{cor:main} when ignoring the parameter $y'$.
When combined with a simple majorising function, Theorem \ref{thm:main-theorem} and Corollary \ref{cor:main} follow from Theorem \ref{thm:main-uniformity}, the Green--Tao--Ziegler inverse theorem \cite{GTZ} for the $U^k$-norms\footnote{To be precise, we require the quantitative version of the inverse theorem due to Manners \cite{Manners} in order to deduce explicit bounds from a very simple majorarising function.
A majorising function of the correct average order on a larger range of $y$ would allow us to use the original qualitative inverse theorem \cite{GTZ} instead.}
and the generalised von Neumann theorem \cite{GT-linearprimes}. 
The existence of a suitable majorising function for $g_{[y',y]}$ on a larger range of $y$ would imply an analogue of Theorem~\ref{thm:main-theorem} on the intersection of that range and the range on which Theorem~\ref{thm:main-uniformity} holds. 
Such an analogue can only hold provided that $y>(\log N)^K$ for some sufficiently large $K$.

Unconditional versions of the result \eqref{eq:lagarias-sound} of Lagarias and Soundararajan 
\cite[Theorem 1.5]{Lagarias-Sound-2012}, but with more restrictive ranges for $y$, 
have been established by a number of authors. 
In \cite[Corollary 1]{harper-minor-arcs}, Harper significantly improves those ranges and shows, almost matching the conditional range, that if $K>1$ is sufficiently large then the asymptotic formula
$$
\sum_{0<n_1+n_2 \leq N } \1_{S(y)}(n_1)\1_{S(y)}(n_2)\1_{S(y)}(n_1+n_2)
=  \frac{\Psi(N,y)^3}{2N} \Big(1 + O\Big(\frac{\log(u+1)}{\log y}\Big)\Big)
$$
holds for all $y\geq (\log N)^K$, where $u=(\log N)/\log y$.

Concerning smooth solutions to Diophantine equations in many variables, there is a vast literature on Waring's problem in smooth numbers. Building on the work of Harper \cite{harper-minor-arcs} mentioned before, Drappeau and Shao \cite{Drappeau-Shao} study Waring's problem in $y$-smooth numbers for $y\geq (\log N)^K$ and $K>1$ sufficiently large, which is the currently largest possible range of $y$. 
We refer to that paper for references to previous work on this question. 

Turning towards Theorem \ref{thm:main-uniformity}, the closest previous result is due to Lachand \cite{Lachand} who proves, starting out from a M\"obius inversion formula for $\1_{S(y)}$, that
$$
\sum_{n \leq N} \Big(\1_{S(y)} - \frac{\Psi(N,y)}{N}\Big) F(g(n)\Gamma) = o((1+\|F\|_{\mathrm{Lip}})\Psi(N,y))
$$
for any $\eps>0$ and uniformly for $x \geq y \geq \exp((\log N)/(\log \log N)^{1-\eps})$.
We shall explain at the end of Section \ref{sec:overview} why, when restricting to large values of $y$, neither a $W$-trick nor the restriction to integers with prime factors in the interval $[y',y]$ is necessary.

As exponential phases $e(\theta n^k)$ for $\theta \in \RR$, $k \in \NN$, form a special case of the nilsequences $F(g(n)\Gamma)$ in Theorem \ref{thm:main-uniformity}, previous work on this question includes all work on exponential sum (and Weyl sum) estimates over smooth numbers.
The latter is a well-studied subject.
Our proof indeed builds on some of this previous work, in particular on that of 
Harper~\cite{harper-minor-arcs}, Drappeau--Shao~\cite{Drappeau-Shao} and Wooley~\cite{Wooley}.

\begin{ack}
We would like to thank Adam Harper for very helpful comments on a previous version of this paper and the anonymous referee for their astute questions and suggestions, leading us, in particular, to rework our arguments and remove the previous restriction to large values of $y'$ ($\asymp (\log N)^C$).
While working on this paper, the authors were supported by the Swedish Research Council (VR) grant 2021-04526 and the G\"oran Gustafsson Foundation.
\end{ack}

\section{Outline of the proof and overview of the paper} \label{sec:overview}

The proof of Theorem \ref{thm:main-uniformity} occupies most of this paper.
After recalling some of the background on smooth numbers in Section \ref{sec:background}, we investigate in 
Section \ref{sec:shortintervals} the distribution of $[y',y]$-smooth numbers in short intervals and the results we prove here may be of independent interest.
The results of this section motivate the definition of the weighted function $g_{[y',y]}(n)$ as well as an auxiliary weighted function $h_{[y',y]}(n)$, and we prove that these functions are equidistributed in \emph{short intervals}.
After extending in Section~\ref{sec:harper} work of Harper on the distribution of $y$-smooth numbers in progressions, we deduce in Section~\ref{sec:W_trick_and_equid} that a $W$-tricked version of $g_{[y',y]}(n)$ is equidistributed in \emph{short arithmetic progressions}.
The work in Sections \ref{sec:shortintervals}-\ref{sec:shortAPs} (i.e.\ the fact that $g_{[y',y]}(n)$ is equidistributed in short arithmetic progressions) prepares the ground for the proof of the non-correlation estimate \eqref{eq:thm-non-corr-bound-g-intro}.
It allows us in Section~\ref{sec:nilsequences} to reduce the task of proving 
\eqref{eq:thm-non-corr-bound-g-intro} to the case in which the polynomial sequence $g$ is ``equidistributed''. 
Using a Montgomery--Vaughan-type decomposition, this task is further reduced in Section~\ref{subsec:MV}
to a Type II sum estimate. 
More precisely, we reduce our task to bounding bilinear sums of the (slightly simplified) form
$$
\sum_{\substack{m \in [y',y]}}
\sum_{\substack{N/m < n \\ \leq (N+N_1)/m}} \1_{S([y',y])}(n) \Lambda(m) F(g(mn)\Gamma).
$$
Swapping the order of summation reduces this problem to that of bounding the correlation of the ($W$-tricked) von Mangoldt functions with equidistributed nilsequences, \emph{provided} we can show an implication of the  form
$$(g(m) \Gamma)_{m \leq N} \text{ is equidistributed} 
\implies 
\begin{array}{c}
(g(mn) \Gamma)_{m \leq N/n} \text{ is equidistributed} \\ \text{for most } n \in S(N,[y',y]).
\end{array}
$$
A precise version of this implication will be established in Section~\ref{sec:linear-subsequences}.
The proof of this implication builds on the material from Sections \ref{sec:shortAPs}, \ref{sec:weyl-sums} and \ref{sec:bootstrap}. 
More precisely, it relies on a strong recurrence result for polynomial sequences over $[y',y]$-numbers that is proved in Section~\ref{sec:bootstrap}. The recurrence result in turn relies on Weyl sum estimates for $[y',y]$-smooth numbers that we establish in Section~\ref{sec:weyl-sums} by extending work of Drappeau and Shao \cite{Drappeau-Shao}.
As the Weyl sum estimates by themselves are in fact too weak for our purposes, we need to combine them with a bootstrapping argument, which requires good bounds on the distribution of smooth numbers in short arithmetic progressions (as established in Section \ref{sec:shortAPs}) as input.
The final section contains the proof of Theorem \ref{thm:main-theorem}.

The most difficult part of this work is to ensure that $y$ can be chosen as small as $(\log N)^{K}$ in our main result.
To achieve this, we need to run the bootstrapping argument in Section~\ref{sec:bootstrap} in four different stages of iterations, using the full scale of results proved in Sections \ref{sec:shortintervals} and \ref{sec:shortAPs}, and we have to choose our parameters in Sections \ref{sec:nilsequences} and \ref{sec:equid-nilsequences} very carefully.
Similarly, working with $y$ as small as $(\log N)^{K}$ required us to use a Montgomery--Vaughan-type reduction to a Type-II sum estimate in Section \ref{sec:equid-nilsequences} instead of working with the intrinsic decomposition that was used e.g.\ by Harper \cite[\S3]{harper-minor-arcs} to establish minor arc estimates for exponential sums over smooth numbers. (The intrinsic decomposition appears in the proofs of Lemmas \ref{lem:auxiliary-lemma} and \ref{lem:drappeau-shao}.)
Harper's approach, which in principle generalises to the nilsequences setting, involves an application of the Cauchy-Schwarz inequality and reduces the minor arc estimate to bounds on degree one Weyl sums 
$\sum_{n \leq N} e(n \theta)$ for irrational $\theta$.
The losses of the Cauchy-Schwarz application in this approach can be compensated by the strong bounds available for degree one Weyl sums for irrational $\theta$.
When working with an irrational (=equidistributed) nilsequence $n \mapsto F(g(n)\Gamma)$ instead of 
$n \mapsto e(\theta n)$ for irrational $\theta$, the savings on the corresponding sum 
$\sum_{n \leq N} F(g(n)\Gamma)$ are usually much weaker, and in our case too weak in order to compensate the loss of the Cauchy-Schwarz application when $y = (\log N)^K$.

\paragraph{{\bf Smooth numbers and the `$W$-trick'}}
A result of the form of Theorem \ref{thm:main-uniformity} requires as a necessary condition that the function it involves, here $g_{[y',y]}^{(W,A)}$, is equidistributed in arithmetic progressions to small moduli.
The reason for this is that additive characters $n \mapsto e(a n/q)$, where $a,q \in \NN$ and $e(x) = e^{2\pi i x}$, form a special case of the nilsequences $n \mapsto F(g(n)\Gamma)$ that appear in the statement.
The function $g_{[y',y]}^{(W,A)}$ is a weighted version of the characteristic function of $[y',y]$-smooth numbers, restricted to a reduced residue class $A$ modulo $W$.
Both the use of a $W$-trick, i.e.\ the restriction to integers of the form $n = Wm+A$, as well as the restriction to the subset of $y$-smooth numbers that are coprime to $P(y'):=\prod_{p < y'} p$ are a means to ensure equidistribution in progressions.
The weight is introduced in order to guarantees equidistribution in short intervals and progressions.

We proceed to explain why the $W$-trick is used.
Combining work of Soundararajan \cite{Sound} and Harper~\cite{harper-sound} on the distribution of smooth numbers in progressions with work of de la Bret\`eche and Tenenbaum~\cite{Breteche-Tenen-2005} one may deduce that
$$
\Psi(x,y;q,a) := \sum_{\substack{n \leq x \\ n \equiv a \Mod{q}}} \1_{S(y)}(n) 
~\sim \frac{1}{\phi(q)} \sum_{\substack{n \leq x \\ \gcd(n,q) =1 }} \1_{S(y)}(n)
\sim \frac{\Psi(x,y)}{q} \prod_{p|q} \frac{1-p^{-\alpha(x,y)}}{1-p^{-1}}
$$
for $(\log x)^2 < y \leq x$, $2 \leq q \leq y^2$, and $(a,q)=1$, as $\log x / \log q \to \infty$.

When $\alpha(x,y)$ is sufficiently close to $1$, which happens when $y$ is sufficiently close to $x$, then the final product over prime divisors $p \mid q$ will be approximately $1$ and the above asymptotic implies that $S(y)$ is equidistributed in \emph{all} residue classes (reduced and non-reduced) modulo $q$ for all small values of $q$. 
The necessary condition for the validity of Theorem \ref{thm:main-uniformity} is met in this situation and no $W$-trick is required. 

Once $\alpha(x,y)$ is no longer close to $1$, the product over prime factors $p \mid q$ in the asymptotic formula above genuinely depends on $q$.
In this case, $S(y)$ is seen to be equidistributed in the \emph{reduced} residue classes modulo a fixed integer $q$.
However, the density of $S(y)$ within a reduced class will differ from that in a non-reduced class modulo $q$, the latter being obtained by dividing out the common factor and applying the asymptotic formula with $q$ replaced by a suitable divisor of $q$.
To remove this discrepancy between reduced and non-reduced residue classes, one may use a $W$-trick with a slowly-growing function $w(N)$.
Fixing a reduced residue $A \Mod{W}$, one is then interested in the count of $y$-smooth integers of the form 
$n = W(qm+a)+A$, which satisfies
$$
\Psi(x,y; Wq, Wa + A) \sim \frac{\Psi(x,y)}{Wq} 
\bigg(\prod_{p\mid W} \frac{1-{p}^{-\alpha(x,y)}}{1-{p}^{-1}}\bigg) 
\prod_{p' \mid q, p' \nmid W } \frac{1-{p'}^{-\alpha(x,y)}}{1-{p'}^{-1}}.
$$
If $W = P(w(N))$ is the product of all primes $p < w(N)$, then all prime factors $p'$ that appear in the final product satisfy $p' \geq w(N)$, i.e.\ are large.
If $q$ is not too large, this allows one to show that the final product is asymptotically equal to $1$, with an error term that depends on $w(N)$.
This ensures, within any fixed reduced residue class $A \Mod{W}$, that the necessary condition for the validity of Theorem \ref{thm:main-uniformity} holds.

\section{Smooth numbers} \label{sec:background}
In this section we collect together general properties of smooth numbers that we will frequently make use of within this paper.
Recall the definitions of the sets $S(y)$, $S([y',y])$, $S(y,x)$ and $S([y',y],x)$ from the introduction.
The relative quantity
\begin{equation} \label{def:u}
u:=\frac{\log x}{\log y} 
\end{equation}
frequently appears when describing properties of $S(x,y)$ and $S(x,[y',y])$. 

\subsection{The summatory function of $\1_{S(y)}$}
Let 
$$\Psi(x,y) = |S(x,y)| =  \sum_{n \leq x} \1_{S(y)}(n)$$ 
denote the number of $y$-smooth numbers below $x$. 
As a function of $x$, $\Psi(x,y)$ may be viewed as the summatory function of the 
characteristic function $\1_{S(y)}$ of $y$-smooth numbers. 
The latter function is completely multiplicative and the associated Dirichlet series is given by
$$
\zeta(s,y)=\sum_{n\in S(y)}n^{-s}=\prod_{p\leq y}(1-p^{-s})^{-1}, \qquad (\Re(s)>0).
$$
Rankin's trick shows that $\Psi(x,y) \leq x^{\sigma} \zeta(\sigma,y)$ for all $\sigma >0$.
The (unique) saddle point of the function $\sigma \mapsto x^{\sigma} \zeta(\sigma,y)$ that appears here is usually denoted by $\alpha(x,y)$ and we have $\alpha(x,y) \in (0,1)$. 
Hildebrand \cite[Lemma 4]{Hildebrand-local-behaviour} (see also \cite[Lemma 2]{HilTen86}) showed that
\begin{align}\label{alpha-x-y}
\alpha(x,y)=1-\frac{\log\bigbrac{u\log (u+1)}}{\log y}+O\Bigbrac{\frac{1}{\log y}}, \qquad (\log x<y\leq x).
\end{align}
This saddle point can be used in order to asymptotically describe $\Psi(x,y)$.
More precisely, Hildebrand and Tenenbaum proved in \cite[Theorems 1-2]{HilTen86} that, uniformly for $y/\log x\to \infty$,
\begin{equation}\label{eq:Hil-Ten-Thm1}
 \Psi(x,y) 
 = \frac{x^{\alpha}\zeta(\alpha,y)}{\alpha \sqrt{2\pi\log x\log y }} 
   \Big(1+ O\Big(\frac{1}{\log (u+1)}+ \frac{1}{\log y}\Big)\Big),
\end{equation}
where $\alpha=\alpha(x,y)>0$ and $u=(\log x)/\log y$.
In applications, it is frequently necessary to understand the relation between $\Psi(cx,y)$ and $\Psi(x,y)$ as $c>0$ varies. 
In this direction, Theorem 3 of Hildebrand and Tenenbaum \cite{HilTen86} shows that
\begin{equation}\label{eq:Hil-Ten-Thm3}
 \Psi(cx,y) = \Psi(x,y) c^{\alpha(x,y)} \Big(1+ O\Big(\frac{1}{u}+ \frac{\log y}{u}\Big)\Big),
\end{equation}
uniformly for $x \geq y \geq 2$ and $1\leq c \leq y$.

The following related lemma is a consequence of \eqref{alpha-x-y}.
\begin{lemma} \label{lem:alpha-approx}
Suppose that $x >2$ is sufficiently large and that $\log x < y \leq x$. Then
 $$
 \alpha(cx,y) - \alpha(x,y) \ll 1/\log y
 $$
 uniformly for all $c \in [1,2]$.
\end{lemma}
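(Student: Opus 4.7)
The plan is to deduce this directly from Hildebrand's asymptotic formula \eqref{alpha-x-y}, which gives an explicit expression for $\alpha(x,y)$ in terms of $u = (\log x)/\log y$ up to an error of size $O(1/\log y)$. Write $u = (\log x)/\log y$ and $u' = (\log cx)/\log y = u + (\log c)/\log y$. Since $c \in [1,2]$, we immediately have $u' - u \ll 1/\log y$. Since $\log x < y \leq x$, both $u$ and $u'$ lie in $[1, \log x/\log\log x]$ (the lower bound uses $y \leq x$).

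Define $f(t) := \log(t \log(t+1))$ for $t \geq 1$. By \eqref{alpha-x-y},
\begin{equation*}
\alpha(cx,y) - \alpha(x,y) = \frac{f(u) - f(u')}{\log y} + O\Bigbrac{\frac{1}{\log y}},
\end{equation*}
so the task reduces to showing $f(u') - f(u) \ll 1$. I would do this via the mean value theorem: one computes
\begin{equation*}
f'(t) = \frac{1}{t} + \frac{1}{(t+1)\log(t+1)},
\end{equation*}
which is uniformly bounded on $[1,\infty)$ (the first term is at most $1$ and the second is at most $1/(2\log 2)$). Hence $|f(u') - f(u)| \leq \sup_{t \geq 1} |f'(t)| \cdot |u' - u| \ll 1/\log y$, and consequently $(f(u) - f(u'))/\log y \ll 1/(\log y)^2$, which is absorbed into the error term $O(1/\log y)$ coming from \eqref{alpha-x-y}.

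There is no real obstacle: the lemma is essentially a direct consequence of \eqref{alpha-x-y} together with the fact that $f$ has bounded derivative on $[1,\infty)$. The only minor thing to verify is that the range of $u$ and $u'$ implied by the hypotheses $\log x < y \leq x$ and $c \in [1,2]$ keeps us in the regime where both the asymptotic \eqref{alpha-x-y} applies and $f'$ is bounded, which is immediate.
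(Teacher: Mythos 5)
Your proof is correct and follows essentially the same route as the paper: both arguments deduce the lemma directly from \eqref{alpha-x-y} by showing that the main term $\log(u\log(u+1))/\log y$ varies by at most $O(1/\log y)$ as $x$ is replaced by $cx$. The only difference is in execution — the paper tracks the change via iterated-logarithm identities ($\log_2(cx)=\log_2 x+O(1)$, etc.), whereas your mean value theorem argument on $f(t)=\log(t\log(t+1))$ is slightly cleaner and even shows the main-term variation is $O(1/(\log y)^2)$, which is then absorbed into the $O(1/\log y)$ error of \eqref{alpha-x-y}.
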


\begin{proof}
We write $cx = x+x'$ and expand out the expression that \eqref{alpha-x-y} provides for $\alpha(x+x',y)$ when taking into account the definition \eqref{def:u} of $u$.
The lemma follows provided we can show that $\log_2(x+x') = \log_2 x + O(1)$ and
$\log_3(x+x') = \log_3 x + O(1)$.
Both of these estimates may be deduced by repeated application of the identity 
$\log (t+t') = \log t + \log (1 + t'/t)$
combined with the observation that for $t>2$ (in particular, for large $t$) the inequality
$t \geq t'$ is preserved in the sense that $T>T'$ if $T:= \log t$ and $T':=\log (1 + t'/t)$.
Finally, the bound $\log (1 + t'/t) \ll 1$ provides the shape of the error term.
\end{proof}

\subsection{The truncated Euler product}
On the positive real line, the Dirichlet series $\zeta(\sigma,y)$ can be estimated and we will frequently use the following lemma which is \cite[Lemma 7.5]{MV-book}.
\begin{lemma}\label{lem:MV}
Suppose that $y \geq 2$. If $\max\set{2/\log y,1-4/\log y}\leq\sigma\leq1$, then
\[
\prod_{p\leq y}(1-p^{-\sigma})^{-1}\asymp \log y.
\]
If $2/\log y\leq\sigma\leq 1-4/\log y$, then
\[
\prod_{p\leq y}(1-p^{-\sigma})^{-1}=\frac{1}{1-\sigma}\exp\left(\frac{y^{1-\sigma}}{(1-\sigma)\log y}
\left\{1+O\Big(\frac{1}{(1-\sigma)\log y}\Big)+O(y^{-\sigma})\right\}\right).
\]	
\end{lemma}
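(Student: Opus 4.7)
The plan is to take logarithms of the product and separate the contributions of first powers of primes from higher powers:
\[
\log\prod_{p\leq y}(1-p^{-\sigma})^{-1} \;=\; \sum_{p\leq y}p^{-\sigma} \;+\; \sum_{k\geq 2}\frac{1}{k}\sum_{p\leq y}p^{-k\sigma},
\]
and then to evaluate the main prime sum $S(\sigma,y):=\sum_{p\leq y}p^{-\sigma}$. Partial summation combined with the prime number theorem $\pi(t)=\operatorname{li}(t)+O(t\exp(-c\sqrt{\log t}))$ yields
\[
S(\sigma,y) \;=\; \int_{2}^{y}\frac{t^{-\sigma}}{\log t}\,dt \;+\; O\bigbrac{y^{1-\sigma}\exp(-c\sqrt{\log y})},
\]
and the substitution $v=(1-\sigma)\log t$ identifies the integral with the difference of exponential integrals $\operatorname{Ei}((1-\sigma)\log y)-\operatorname{Ei}((1-\sigma)\log 2)$.

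I would then split into the two ranges. In the first range, $\sigma\geq 1-4/\log y$, the variable $w:=(1-\sigma)\log y$ lies in $[0,4]$, so both $\operatorname{Ei}$ evaluations are bounded and one obtains $S(\sigma,y)=\log\log y+O(1)$, matching Mertens' theorem. The higher-power tail $\sum_{k\geq 2}\sum_{p\leq y}p^{-k\sigma}/k$ is $O(1)$ on this range because $k\sigma\geq 2\sigma>1$ for $y$ sufficiently large, so each inner sum is bounded by $\zeta(k\sigma)$; exponentiating then gives $\prod\asymp\log y$. In the second range, $\sigma\leq 1-4/\log y$, we have $w\geq 4$, and the asymptotic $\operatorname{Ei}(w)=(e^{w}/w)(1+O(1/w))$ extracts the main term $y^{1-\sigma}/((1-\sigma)\log y)\set{1+O(1/((1-\sigma)\log y))}$. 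The lower endpoint $\operatorname{Ei}((1-\sigma)\log 2)=\gamma+\log((1-\sigma)\log 2)+O(1-\sigma)$ simplifies to $-\log(1-\sigma)+O(1)$, which upon exponentiation produces the prefactor $1/(1-\sigma)$ appearing in the lemma.

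All remaining bounded or slowly-growing contributions (the prime-counting error, the constants $\gamma,\log\log 2$ coming from the lower endpoint, and the full tail $\sum_{k\geq 2}\sum_{p}p^{-k\sigma}/k$) must be absorbed into the multiplicative error $\set{1+O(1/((1-\sigma)\log y))+O(y^{-\sigma})}$ inside the exponential. The dominant tail contribution is the $k=2$ piece: by the same Ei-integral argument applied with $\sigma$ replaced by $2\sigma$ (when $2\sigma<1$), or by the convergence bound $\sum_{p}p^{-2\sigma}\leq\zeta(2\sigma)$ (when $2\sigma>1$), one checks in either case that this term is of order $M\cdot y^{-\sigma}$ where $M:=y^{1-\sigma}/((1-\sigma)\log y)$; the subsequent $k\geq 3$ terms decay by further factors of $y^{-\sigma}$ and sum to the same order. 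Since $\sigma\geq 2/\log y$ forces $y^{-\sigma}\leq e^{-2}$, this bound fits cleanly into the $O(y^{-\sigma})$ error.

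The main obstacle I expect is this final bookkeeping, which has to be carried out uniformly across the full range $2/\log y\leq\sigma\leq 1-4/\log y$. The delicate regime is near $\sigma=1/2$, where the $k=2$ inner sum behaves like $\log\log y$; it is essential to observe that the first error term $M\cdot O(1/((1-\sigma)\log y))$ dominates $\log\log y$ in this regime, so the stated error in the lemma is generous enough to swallow all such secondary contributions without losing any of the claimed main term $y^{1-\sigma}/((1-\sigma)\log y)$.
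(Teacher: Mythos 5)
The paper does not prove this lemma at all: it is quoted verbatim from Montgomery--Vaughan \cite[Lemma 7.5]{MV-book}, so the only thing to compare your argument with is the standard proof in that reference, which proceeds exactly as you describe (logarithm of the Euler product, prime number theorem with partial summation, exponential-integral asymptotics $\mathrm{Ei}(w)=\frac{e^w}{w}(1+O(1/w))$, and absorption of the lower endpoint $-\log(1-\sigma)+O(1)$ into the prefactor and error). Your outline is sound, and you correctly identify that the whole difficulty is the uniform bookkeeping, in particular that near $\sigma=1/2$ the $k=2$ tail is of size $\log\log y$ rather than $M y^{-\sigma}$ and must be absorbed by the term $M\cdot O(1/((1-\sigma)\log y))=e^{w}/w^{2}$ with $w=(1-\sigma)\log y\geq 4$. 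The one place where your stated claims are too glib is the assertion that the $k\geq 3$ terms ``decay by further factors of $y^{-\sigma}$'': the same resonance occurs at every $\sigma\approx 1/k$, where $\frac1k\sum_{p\le y}p^{-k\sigma}\asymp\log\log y$ is not $O(My^{-(k-1)\sigma})$; these cases are handled by the identical observation you already make at $\sigma=1/2$, since $e^{w}/w^{2}$ is enormous whenever $\sigma$ is bounded away from $1$. Likewise the PNT error needs a split of $\int_{\log 2}^{L}e^{(1-\sigma)u-c\sqrt{u}}\,du$ at $u=L/2$ to see that it is $O(e^{w/2}+Le^{w-c\sqrt{L/2}})\ll e^{w}/w^{2}$ uniformly down to $w=4$; the crude bound $O(y^{1-\sigma}e^{-c\sqrt{\log y}})$ alone does not suffice when $w$ is bounded and $\log y$ is large. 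With those two refinements carried out, your proof is complete and matches the cited source's method.
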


\subsection{The summatory function of $\1_{S([y',y])}$}

In \cite{Breteche-Tenen-2005}, de la Bret\`eche and Tenenbaum analyse the quantities 
$$\Psi_m(x,y) = \sum_{\substack{n \in S(x,y)\\ (n,m)=1}}1 \quad \text{ and } \quad
\frac{\Psi_m(x/d,y)}{\Psi(x,y)},
$$
and establish asymptotic estimates for these expressions under certain assumptions on $m$ and $d$.
Observe that
$$
\Psi_{P(y')}(x,y) = \Psi(x,[y',y])
$$
for $P(y') = \prod_{p < y'} p$.
We shall require versions of two of the results from \cite{Breteche-Tenen-2005} that are restricted to 
the case $m = P(y')$ but come with sufficiently good explicit error terms.
To state these results, define for any given integer $m \in \NN$ the restricted Euler product
\begin{align}\label{g-m}
g_m(s)=\prod_{p \mid m}(1-p^{-s}), \qquad(s\in\CC).
\end{align}
The following lemma is a consequence of de la Bret\`eche and Tenenbaum \cite[Th\'eor\`eme 2.1]{Breteche-Tenen-2005}.
\begin{lemma} \label{lem:BrTen-2.1}
Let $K'>0$ and $K>\max(2K',1)$ be constants, let $x \geq 2$ and $y' \leq (\log x)^{K'}$. 
Then 
$$
\Psi(x,[y',y]) = g_{P(y')}(\alpha(x,y)) \Psi(x,y)  \left(1 + O\left( \frac{\log_2 x}{\log x} + \frac{1}{\log y}\right)\right)
$$
uniformly for all $(\log x)^K < y \leq x$.
\end{lemma}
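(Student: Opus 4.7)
\medskip

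The plan is to obtain this as a direct consequence of de la Bret\`eche and Tenenbaum's Th\'eor\`eme~2.1 in \cite{Breteche-Tenen-2005} specialised to $m = P(y')$. Since $\1_{S([y',y])}(n) = \1_{S(y)}(n) \1_{\gcd(n,P(y'))=1}$, we have $\Psi(x,[y',y]) = \Psi_{P(y')}(x,y)$, and Th\'eor\`eme~2.1 of \cite{Breteche-Tenen-2005} provides an asymptotic of the form $\Psi_m(x,y) = g_m(\alpha(x,y)) \Psi(x,y)(1+O(R_m(x,y)))$ with an explicit error term $R_m(x,y)$ that depends on $m$ through quantities such as $\log m$, $\omega(m)$ and $\sum_{p\mid m} p^{-\alpha}$. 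The entire task is therefore to verify the hypotheses and then to bound $R_{P(y')}(x,y)$ under the standing assumptions $y' \leq (\log x)^{K'}$ and $(\log x)^K < y \leq x$ with $K > \max(2K',1)$.

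For the hypothesis checking, Chebyshev's bound yields $\log P(y') = \theta(y') \ll y' \leq (\log x)^{K'}$, so $\log P(y')$ is much smaller than $\log y$: indeed, $\log y > K \log_2 x$ gives $(\log x)^{K'}/\log y = o(1)$ once $K > 2K'$ is used together with $y > (\log x)^K$ (the stronger inequality $K > 2K'$ leaves a safety margin that will be needed when bounding the error). In particular, $P(y')$ is at most $y^{o(1)}$ and well within the range to which Th\'eor\`eme~2.1 applies. The saddle-point value $\alpha = \alpha(x,y)$ is given by \eqref{alpha-x-y}, and $\Psi(x,y)$ satisfies the Hildebrand--Tenenbaum asymptotic \eqref{eq:Hil-Ten-Thm1}; the leading term $g_{P(y')}(\alpha)\Psi(x,y)$ in the statement of the lemma is then exactly what Th\'eor\`eme~2.1 predicts.

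The remaining (and main) step is a bookkeeping exercise: translate each component of the Bret\`eche--Tenenbaum error term into the present setting and check that it is $O(\log_2 x/\log x + 1/\log y)$. The $1/\log y$ contribution absorbs standard saddle-point errors (in particular those from \eqref{eq:Hil-Ten-Thm1} and Lemma~\ref{lem:MV}), while the $\log_2 x/\log x$ contribution is where the bound $y' \leq (\log x)^{K'}$ feeds in: it absorbs terms proportional to $\log P(y')/\log x$ and to $\sum_{p< y'} p^{-\alpha}/\log x$, both of which are $O((\log x)^{K'}/\log x)$ after using Mertens-type estimates and the fact that $\alpha$ is bounded away from $0$ in the range $y > (\log x)^K$. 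The main technical point to get right is that one must use Th\'eor\`eme~2.1 directly, rather than a M\"obius inversion $\Psi_{P(y')}(x,y) = \sum_{d \mid P(y')} \mu(d) \Psi(x/d, y)$, which would cost a factor of $2^{\omega(P(y'))}$ and destroy the error bound. Because the reference already packages this non-trivial saddle-point analysis into a clean statement, no new analytic input is required; the only genuine work is to trace the dependence of the error on $y'$ and $y$ under the assumptions $y' \leq (\log x)^{K'}$ and $y > (\log x)^K$ with $K > 2K'$.
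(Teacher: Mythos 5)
Your proposal takes essentially the same route as the paper: both deduce the lemma by specialising de la Bret\`eche--Tenenbaum's Th\'eor\`eme~2.1 (the paper uses it via their Corollaire~2.2) to $m = P(y')$ and then verifying that the error term $E^*_m(x,y)$ is acceptable under the hypotheses $y' \leq (\log x)^{K'}$ and $y > (\log x)^K$. The only difference is that you leave the error-term verification as ``bookkeeping,'' whereas the paper carries it out explicitly by splitting into the cases $C_1$ and $C_2$ of Corollaire~2.2 and using $\vartheta_{P(y')} \ll (\log_2 x)/\log y$; this is the substantive part of the argument, but your plan correctly identifies where the hypotheses enter.
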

\begin{proof}
This result follows from Bret\`eche and Tenenbaum \cite[Th\'eor\`eme 2.1 and Corollaire 2.2]{Breteche-Tenen-2005}.
We need to verify that the error term is of the shape claimed in the statement above.
Let $m = P(y') = \prod_{p<y'}p$. Since $y' < y^{1/2}$ and $\pi(y') \ll y^{1/2}/(\log y)$, we are either in the situation $C_1$ or $C_2$ of \cite[Corollaire 2.2]{Breteche-Tenen-2005}.

Observe that in our case $\bar{u} := \min(u, y/\log y) = u = \log x/ \log y$ and 
$$W_{m} = W_{P(y')} = \log p_{\pi(y')} \asymp \log y' \ll \log \log x.$$
Hence, $\vartheta_{m} \ll (\log \log x)/\log y ~(\ll 1)$.

If $C_1$ holds, then the error term in the expression for $\Psi_{P(y')}(x,y)$ is bounded by
$$E^* := E^*_{m}(x,y) \ll \frac{\vartheta_{m} \log (u+2)}{u} \ll \frac{(\log_2 x)^2}{\log x} .$$
If condition $C_2$ holds, then $y^{1/\log(u+2)} \ll \omega(P(y')) <  y' \leq  (\log x)^{K'}$ and, hence, 
$$\log y \ll (\log_2 x) \log (u+2) \ll (\log_2 x)^2.$$
This shows that $\log(u+2) \asymp \log_2 x $ and
$$
E^* 
\ll \frac{\vartheta_{m_x}^2}{\log (u+2)} \ll \frac{(\log_2 x)^2}{(\log y)^2 \log (u+2)}
\ll \frac{1}{\log y},
$$
as required.
\end{proof}

By restricting de la Bret\`eche and Tenenbaum's \cite[Th\'eor\`eme 2.4]{Breteche-Tenen-2005} to the situation of the present paper, we obtain:
\begin{lemma} \label{lem:BrTen-2.4.i-ii}
(i) If $y' \leq (\log x)^{K'}$ for some constant $K' > 0$, then we have
$$
\Psi_{P(y')}(x/d,y) 
\ll d^{-\alpha} \Psi_{P(y')}(x,y)
$$
uniformly for all $\max(y'^2,(\log x)^2) < y \leq x$, $1 \leq d \leq x/y$.

(ii) If, moreover, $1 \leq d \leq \exp((\log x)^{1/3})$, the following more precise statement holds.
We have
\begin{align*}
\Psi_{P(y')}(x/d,y) 
&= d^{-\alpha} \Psi_{P(y')}(x,y) \Big(1 + O\Big(\frac{\log_3 x}{\log_2 x}\Big)\Big)
\end{align*}
uniformly for all $\max(y'^2,(\log x)^2) < y \leq x$ and $1 \leq d \leq \exp((\log x)^{1/3})$, 
where $y'\leq (\log x)^{K'}$ and $K'>0$ as before.
\end{lemma}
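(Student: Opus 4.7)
The plan is to derive both parts as direct specialisations of de la Bret\`eche and Tenenbaum's \cite[Th\'eor\`eme 2.4]{Breteche-Tenen-2005} to $m = P(y') = \prod_{p<y'} p$, and then simplify the resulting error terms in our regime — in exactly the spirit of the proof of Lemma \ref{lem:BrTen-2.1} above.

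For part (i), the Rankin-type majoration provided by Th\'eor\`eme 2.4 immediately yields the claim, once one verifies that $m = P(y')$ satisfies the required conditions. These reduce to checking that $P(y')$ is small relative to $y$ (which follows from the hypothesis $y > y'^2$ together with the prime number theorem, since $P(y') \leq e^{(1+o(1))y'}$), and that $u = \log x /\log y$ is bounded away from $0$, which is immediate from $y \leq x$. Any bounded quantity arising in the error from the theorem can then be absorbed into the implied constant. Uniformity in $1 \leq d \leq x/y$ is part of the theorem.

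For part (ii), we invoke the quantitative asymptotic half of the same theorem and carefully simplify its error term. With $m = P(y')$ one has $\omega(P(y')) = \pi(y') \leq y'/\log y'$ and $W_{P(y')} = \log p_{\pi(y')} \asymp \log y' \ll \log_2 x$, so the key parameter $\vartheta_{P(y')} = W_{P(y')}/\log y$ satisfies $\vartheta_{P(y')} \ll (\log_2 x)/\log y$. Since $y > (\log x)^2$ we have $\log y \geq 2\log_2 x$, so $\vartheta_{P(y')} \ll 1$; and since $u \asymp \log x/\log_2 x$ for $y$ near $(\log x)^K$, we also have $\log(u+2) \asymp \log_2 x$. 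The restriction $d \leq \exp((\log x)^{1/3})$ ensures that the $d$-dependent perturbation of the saddle point $\alpha(x,y)$ entering Th\'eor\`eme~2.4 contributes at most $(\log d)/\log x \leq (\log x)^{-2/3}$, which is much smaller than $\log_3 x /\log_2 x$. A case analysis paralleling the dichotomy between conditions $C_1$ and $C_2$ from the proof of Lemma~\ref{lem:BrTen-2.1} then shows that every surviving error contribution of shape $\vartheta_m \log(u+2)/u$ or $\vartheta_m^2/\log(u+2)$ is at most $O(\log_3 x/\log_2 x)$.

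The principal obstacle is the careful error bookkeeping: one must confirm that \emph{both} regimes of Th\'eor\`eme~2.4 produce the same final bound, and that the perturbation from passing between $\alpha(x,y)$ and $\alpha(x/d,y)$ — which enters the factor $g_{P(y')}(\alpha)$ and whose size is controlled by Lemma~\ref{lem:alpha-approx} iterated along a short chain from $x$ down to $x/d$ — is dominated by $\log_3 x/\log_2 x$. The $\log_3 x$ factor arises precisely because $\log(u+2) \asymp \log_2 x$ in our parameter range, so that the generic relative error $1/\log(u+2)$ of the Hildebrand–Tenenbaum-type formulas translates into a $\log_3 x/\log_2 x$ once one passes to the iterated logarithm notation used in the statement.
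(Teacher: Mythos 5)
Your proposal is correct and follows essentially the same route as the paper: both parts are obtained by specialising de la Bret\`eche--Tenenbaum's Th\'eor\`eme 2.4 to $m = P(y')$ and then simplifying the resulting error terms ($\vartheta_m$, $E_m^*$, the $t=(\log d)/\log y$ contributions, and the Gaussian factor) in the stated parameter range, exactly as the paper does. The only cosmetic differences are that the paper traces the final $\log_3 x/\log_2 x$ loss chiefly to the term $1/u_y \ll \log_2(2y)/\log y$ (largest when $y$ is near $(\log x)^2$) rather than to $1/\log(u+2)$, and that its second case split for bounding $t E_m/u$ is on the size of $\log y$ versus $(\log_2 x)^3$ rather than the $C_1$/$C_2$ dichotomy, but these are bookkeeping details rather than gaps.
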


\begin{proof}
 Part (i) is a direct application of \cite[Th\'eor\`eme 2.4 (i)]{Breteche-Tenen-2005} combined with Lemma \ref{lem:BrTen-2.1}, when taking into account that 
 $\delta = 0$ according to the remark following (2.8) of that paper, since $y> \log^2 x$.
 
 Concerning part (ii), Lemma \ref{lem:BrTen-2.1} implies that the asymptotic formula given in the statement is equivalent to
 $$
 \Psi_{P(y')}(x/d,y) 
 = d^{-\alpha}g_{P(y')}(\alpha(x,y)) \Psi(x,y) \Big(1 + O\Big(\frac{\log_3 x}{\log_2 x}\Big)\Big).
 $$
 The statement thus follows from \cite[Th\'eor\`eme 2.4 (ii)]{Breteche-Tenen-2005} provided we can show that under our assumptions the error term in  \cite[Th\'eor\`eme 2.4 (ii)]{Breteche-Tenen-2005} is in fact of the above stated shape.
 In the notation of \cite[Th\'eor\`eme 2.4 (ii)]{Breteche-Tenen-2005} we therefore need to show that
 $h_m = o(1)$ and $(1-t^2/(2u^2))^{bu} = 1+o(1)$ as $x \to \infty$, where $o(1)$ needs to be made explicit in both cases.
 In the former case, we have
$$
 h_m 
  = \frac{1}{u_y} + t \frac{(1+E_m)}{u} + E^*_m(x,y) 
  \ll \frac{\log_2(2y)}{\log y} + \frac{1}{\log_2 x} + \frac{\log d}{\log x} + \frac{\log d}{\log y}\frac{E_m}{u},
$$
where we used that (cf.\ \cite[(2.18)]{Breteche-Tenen-2005}) 
$u_y = u + (\log y)/\log(u+2) \gg (\log y)/\log_2 (2y)$ and that $E^*_m(x,y) \ll 1/(\log_2 x)$ by the previous proof. For the term $(1+E_m)/u$, we obtain
$$
E_m = \frac{\vartheta_m (u \log (u+2))^{\vartheta_m}}{1 + \vartheta_m \log(u+2)}
$$
Note that $\vartheta_{m} = (\log p_{\omega(m)})/\log y = (\log y')/\log y \leq 1/2$ for $y\geq y'^2$. Further, $u \log (u+2) \geq 1$. Hence,
$$
\frac{E_m}{u} \ll \frac{\vartheta_m  \log^{1/2}(u+2)}{u^{1/2}(1 + \vartheta_m \log(u+2))}
$$
If $\log y > (\log_2 x)^3$, then
$$
\frac{\log d}{\log y}\frac{E_m}{u} \ll \frac{\log d}{\log y} \frac{\log^{1/2} (u+2)}{u^{1/2}(\log_2 x)^2} 
\ll  \frac{(\log d)(\log_2 x)^{1/2}}{(\log x)^{1/2} (\log y)^{1/2} \log_2 x} 
\ll \frac{\log d}{(\log x)^{1/2}(\log_2 x)^{2}}.
$$
If $\log y \leq (\log_2 x)^3$ and since $\log_2 x \ll \log y$, we have
\begin{align*}
t\frac{E_m}{u} 
&\ll t\frac{\vartheta_m  \log^{1/2} (u+2)}{u^{1/2}(1 + \vartheta_m \log(u+2))} 
\ll t \frac{\vartheta_m  (\log_2 x)^{1/2} (\log y)^{1/2}}{(\log x)^{1/2}(1 + \frac{1}{(\log_2 x)^2}\log(u+2))} \\
&\ll t \frac{(\log_2 x)^2}{(\log x)^{1/2}}
\ll \frac{\log d}{\log y} \frac{(\log_2 x)^2}{(\log x)^{1/2}} 
\ll  \frac{(\log d) \log_2 x}{(\log x)^{1/2}}.
\end{align*}

It remains to analyse $(1-t^2/(2u^2))^{bu}$. 
Since $t/u = (\log d) / \log x = o(1)$, we have

$$
 \Big(1-\frac{t^2}{2u^2}\Big)^{bu} = \exp\Big(-C\frac{t^2}{u}\Big)
 = 1 + O(1/\log y)
$$
for some constant $C>0$.
The final step above follows since 
$$
\frac{t^2}{u} 
= \frac{(\log d)^2}{(\log y) \log x}
\ll \frac{1}{\log y}
$$
by our assumption on $d$.
\end{proof}

\section{Smooth numbers in short intervals} \label{sec:shortintervals}

Our aim in this section is to show that a suitably weighted version of the function $\1_{S([y',y])}$ is equidistributed in short intervals.
More precisely, we will consider intervals contained in $[x,2x]$ that are of length at least $x/(\log x)^c$.
The error terms in the Lemmas \ref{lem:BrTen-2.1} and \ref{lem:BrTen-2.4.i-ii} are too weak in order to allow one to work with intervals as short as $x/(\log x)^c$. 
Better error terms are, however, available when considering the quantity 
$$
\frac{\Psi_{P(y')}(x/d,y)}{\Psi_{P(y')}(x,y)} = \frac{\Psi(x/d,[y',y])}{\Psi(x,[y',y])} \quad \text{ instead of } \quad \frac{\Psi_{P(y')}(x/d,y)}{\Psi(x,y)}~.
$$
In the first subsection below we prove asymptotic formulas for the former quotient.
In the second subsection we introduce a suitable smooth weight for $\1_{S([y',y])}$ and prove that the correspondingly weighted $[y',y]$-smooth numbers are equidistributed in short intervals of the above form. 

\subsection{The local behaviour of $\Psi(x,[y',y])$}
In order to analyse the distribution of $S([y',y])$ in short intervals, we need to compare
$\Psi(x,[y',y])$ to $\Psi(x(1+1/z),[y',y])$. 
In the notation of Lemma \ref{lem:BrTen-2.4.i-ii} this means that our $d$ is very close to $1$.
Recall that $u = (\log x)/ \log y$.
We split our analysis below into two case according to whether $u$ is large or small.

\begin{lemma}[Local behaviour of $\Psi_{P(y')}$ for large $u$]  \label{lem:Psi_m-approx}
Let $x > 2$ be a parameter, let $K' >0$ and $K \geq \max(2K',2)$ be constants and suppose that 
$y' \leq (\log x)^{K'}$ and $(\log x)^{K} < y < \exp( (\log x)^{1/3})$. 
Let $\alpha = \alpha(x,y)$ denote the saddle point associated to $S(x,y)$. 
Then
\begin{align*}
\left|\Psi(x,[y',y]) - \left(1 + z^{-1}\right)^{-\alpha} \Psi\left(\left(1 + z^{-1}\right)x,[y',y] \right) \right|
\ll  \Psi(x,[y',y]) \left(\frac{\sqrt{\log y}}{z \log^{1/6} x} + \frac{O_A(1)}{\log^A x}\right),
\end{align*}
for all $z > 1$.
\end{lemma}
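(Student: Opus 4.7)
The plan is to attack the difference through a saddle-point / Perron inversion of the Dirichlet series
$F(s) := \prod_{y' \leq p \leq y}(1 - p^{-s})^{-1} = \zeta(s,y)\, g_{P(y')}(s)$ associated with $\1_{S([y',y])}$. First I would apply a truncated Perron formula to represent $\Psi(x,[y',y])$ and $\Psi((1+z^{-1})x,[y',y])$ as contour integrals along the vertical line $\Re(s) = \alpha$, truncated at a height $T$ taken to be a suitably large power of $\log x$. Subtracting $(1+z^{-1})^\alpha$ times the former from the latter collapses both integrals into a single contour integral with integrand
$$\frac{F(s) x^s}{s}\bigl((1+z^{-1})^s - (1+z^{-1})^\alpha\bigr) = \frac{(1+z^{-1})^\alpha F(s) x^s}{s}\bigl((1+z^{-1})^{s-\alpha} - 1\bigr).$$
Parametrising $s = \alpha + it$ and invoking the elementary inequality $|(1+z^{-1})^{it} - 1| \leq \min(2, |t|/z)$ extracts a factor $|t|/z$ from the integrand on the central part of the contour, which is the source of the claimed $1/z$ saving.

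The second step is the saddle-point analysis. I would split the $t$-range into a central part $|t| \leq T_0$ containing the Gaussian width of the saddle, and a tail $T_0 < |t| \leq T$. On the central part one has the Gaussian-type bound $|F(\alpha + it)| \ll F(\alpha) \exp(-c \sigma t^2)$, where $\sigma = \sigma(\alpha,y) \asymp \log x \cdot \log y$ is the usual Hildebrand--Tenenbaum saddle-width parameter, so that integrating against $|t|/z$ and using $|\alpha + it| \gg \alpha$ produces a contribution of order $F(\alpha) x^\alpha / (z \sigma)$. Combining this with the Hildebrand--Tenenbaum-type asymptotic $\Psi(x,[y',y]) \asymp F(\alpha) x^\alpha / (\alpha \sqrt{\sigma})$ that follows from \eqref{eq:Hil-Ten-Thm1} and Lemma \ref{lem:BrTen-2.1} produces a main-range contribution of order $\Psi(x,[y',y]) / (z \sqrt{\sigma})$. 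Under the hypothesis $y < \exp((\log x)^{1/3})$ one has $1/\sqrt{\log x \, \log y} \leq \sqrt{\log y} / \log^{1/6} x$, which matches the first asserted error term.

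For the tail $T_0 < |t| \leq T$ I would use the rapid decay of $|F(\alpha + it)/F(\alpha)|$ away from the saddle, as developed in \cite{HilTen86}, together with the pointwise bound $|(1+z^{-1})^{it} - 1| \leq 2$, to obtain a contribution of $O_A(\Psi(x,[y',y])/\log^A x)$ for any fixed $A$; the Perron truncation error is then controlled analogously by taking $T$ a sufficiently large power of $\log x$. The main obstacle I anticipate is establishing the Gaussian bound $|F(\alpha + it)| \ll F(\alpha) \exp(-c \sigma t^2)$ on the saddle range for the restricted Euler product over primes in $[y',y]$, rather than over all $p \leq y$ as in the classical analysis. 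However, under the hypotheses $y' \leq (\log x)^{K'}$ and $(\log x)^K < y$ with $K > 2K'$, the primes $p < y'$ contribute negligibly both to $F(\alpha)$ and to the second-moment quantity defining $\sigma$, so the estimates of \cite{HilTen86} and \cite{Breteche-Tenen-2005} should transfer essentially unchanged to the present setting.
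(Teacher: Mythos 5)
Your overall strategy is exactly the one the paper uses: represent the two counting functions by a Perron integral on the line $\Re(s)=\alpha$, subtract so that the integrand acquires the factor $(1+z^{-1})^{s-\alpha}-1\ll |t|/z$, and then exploit the decay of the restricted Euler product near the saddle together with the Hildebrand--Tenenbaum decay away from it. Your central-range computation is correct and in fact slightly sharper than what the paper records (the paper only combines the trivial bound $|\zeta_m(\alpha+i\tau,y)|\leq\zeta_m(\alpha,y)$ on $|\tau|\leq(\log x)^{-1/3}$ with $\int|\tau|\,\mathrm d\tau$), and your observation that the primes $p<y'$ may be discarded because the decay is driven by the primes near $y$ is also the paper's observation.

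The step that does not work as stated is the control of the Perron truncation error at height $T=(\log x)^{O(1)}$. The standard truncated Perron formula carries the arithmetic error term $\sum_{x/2<n<2x}\1_{S([y',y])}(n)\min\{1,x/(T|x-n|)\}$, whose near-diagonal part is essentially the number of $[y',y]$-smooth integers in an interval of length about $x/\sqrt{T}$ around $x$. Since $\Psi(x,[y',y])=x^{\alpha+o(1)}$ with $\alpha$ bounded away from $1$ (for $y=(\log x)^{K}$ one has $1-\alpha\asymp 1/K$), an interval of length $x(\log x)^{-O(1)}$ is far longer than $\Psi(x,[y',y])$, so the trivial bound is useless and a nontrivial short-interval estimate is required --- which is precisely the kind of statement this lemma is designed to produce. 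Nor can one escape by pushing $T$ up to a power of $x$, because for polylogarithmic $y$ the available bounds on $\zeta_m(\alpha+it,y)$ only reach heights $|t|\leq\exp((\log y)^{3/2})=x^{o(1)}$. The paper sidesteps the issue by importing the ready-made Perron-type approximation (4.50) of de la Bret\`eche--Tenenbaum, in which this error has already been absorbed into $R\ll\exp(-c_5u/(\log 2u)^2)+\exp(-(\log y)^{4/3})$, and then checking that $R\ll_A(\log x)^{-A}$ when $y<\exp((\log x)^{1/3})$. Your argument can be repaired by bounding the near-diagonal term with a Harper-type short-interval estimate such as Lemma \ref{lem:short-progressions-Harper} (which rests only on Lemma \ref{lem:BrTen-2.4.i-ii}(i), so no circularity arises), but some such input must be supplied; it is not ``controlled analogously'' to the tail of the contour integral.
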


\begin{proof}
 We will follow the strategy via Perron's formula employed in \cite[Lemma 8]{HilTen86} and \cite[\S4.2]{Breteche-Tenen-2005}, and start by bounding Euler factors from the relevant Dirichlet series.
 Suppose that $s=\alpha+i\tau$ and $(\log x)^{-1/4} \leq \tau \leq (\log y)^{-1}$. Then\footnote{Cf.\ the proof of \cite[Lemma 8]{HilTen86} for more details.} for any $y' \leq p \leq y$, we have
 \begin{align*}
  \frac{|1-p^{-\alpha}|}{|1-p^{-s}|}
  &= \left(1 + \frac{2(1-\cos(\tau \log p))}{p^{\alpha}(1-p^{-\alpha})^2}\right)^{-1/2}\\
  &\leq \exp \left(-c_1\frac{1-\cos(\tau \log p)}{p^{\alpha}(1-p^{-\alpha})^2}\right)
   \leq \exp \left(-c_2 \frac{(\tau \log p)^2}{p^{\alpha}}\right),
 \end{align*}
where we took advantage of the fact that $1-p^{-\alpha} \asymp 1$ for all $p \leq y$ since $\alpha > 1/2$.
Hence,
\begin{align} \label{eq:euler-prod-bd}
\nonumber
 \prod_{y' \leq p \leq y} \frac{|1-p^{-\alpha}|}{|1-p^{-s}|}
 &\leq \exp \left(-c_2 \frac{\tau^2}{y^\alpha} \sum_{y' \leq p \leq y} (\log p)^2\right)
 \leq \exp \left(-c_2 \frac{\tau^2}{y^\alpha} \sum_{y/2 \leq p \leq y} (\log p)^2\right) \\
\nonumber 
 &\leq \exp \left(-c_3 \tau^2 y^{1-\alpha} \log y \right)
  \leq \exp \left(-c_4 \tau^2 u \log(u+1) \log y \right)\\
 &\leq \exp \left(-c_4 \tau^2 \log x  \right).
\end{align}
By invoking bounds of the above Euler factors in different regimes of $\tau$, Bret\`eche and Tenenbaum \cite[(4.50)]{Breteche-Tenen-2005} deduce from Perron's formula that
\begin{equation} \label{eq:perron-approx}
\Psi(x,[y',y]) 
= \frac{1}{2 \pi i} \int_{\alpha - i/\log y}^{\alpha + i/\log y} \zeta_m(s,y)x^{s} \frac{ds}{s}
+ O(x^{\alpha} \zeta_m(\alpha,y)R),
\end{equation}
where $R = \exp(-c_5 u/(\log 2u)^2) + \exp(-(\log y)^{4/3})$, $m = P(y') = \prod_{p<y'}p$, and
$$\zeta_m(s,y) = \prod_{p\leq y, p \nmid m}(1-p^{-s})^{-1}.$$
The approximation \eqref{eq:perron-approx} applies in our situation since $u > (\log x)^{3/2} > (\log y)^{3/2}$ and 
$\pi(y') = \omega(P(y')) < (\log x)^{K'} < \sqrt{y}$, which ensures that the conditions \cite[(4.40)]{Breteche-Tenen-2005} are satisfied.
Under our assumption that $\log x < y < \exp( (\log x)^{1/3})$, we have 
$$
R \ll \exp(-c_5(\log x)^{1/2}) + \exp(-(\log \log x)^{4/3})
\ll_A (\log x)^{-A}.
$$
The bound \eqref{eq:euler-prod-bd} allows us to further truncate the integral in \eqref{eq:perron-approx}.
More precisely,
\begin{align*}
\left|\int_{\alpha \pm i/(\log x)^{1/3}}^{\alpha \pm i/\log y} \zeta_m(s,y) x^{s} \frac{ds}{s} \right|
&\leq \zeta_m(\alpha,y)\frac{x^{\alpha}}{\alpha} 
     \int_{1/(\log x)^{1/3}}^{1/\log y} \exp \left(-c_4 \tau^2 \log x  \right) d\tau \\
&\leq \frac{\zeta_m(\alpha,y) x^{\alpha}}{\alpha \log y } \exp \left(-c_4 (\log x)^{-1/3}  \right) \\
&\leq \Psi(x,[y',y]) \exp \left(-c_6 (\log x)^{-1/3}  \right),
\end{align*}
which implies that
\begin{equation} \label{eq:perron-approx-2}
\Psi(x,[y',y]) 
= \frac{1}{2 \pi i} \int_{\alpha - i/(\log x)^{1/3}}^{\alpha + i/(\log x)^{1/3}} \zeta_m(s,y)x^{s} \frac{ds}{s}
+ O_A\left(\frac{\Psi(x,[y',y])}{(\log x)^{A}}\right).
\end{equation}
The latter approximation then yields:
\begin{align*}
 &\Psi(x,[y',y]) - (1+z^{-1})^{-\alpha} \Psi\left(x(1+z^{-1}),[y',y]\right) \\
 &= \frac{1}{2 \pi i} \int_{\alpha - i/(\log x)^{1/3}}^{\alpha + i/(\log x)^{1/3}} \zeta_m(s,y)x^{s}
 \left(1-\left(1+z^{-1}\right)^{s-\alpha}\right) \frac{ds}{s}
+ O_A\left(\frac{\Psi(x,[y',y])}{(\log x)^{A}}\right)\\
 &\ll  \zeta_m(\alpha,y)x^{\alpha} \int_{-1/(\log x)^{1/3}}^{1/(\log x)^{1/3}} 
 \left|1-\left(1+z^{-1}\right)^{i\tau}\right| \frac{d\tau}{\alpha}
+ O_A\left(\frac{\Psi(x,[y',y])}{(\log x)^{A}}\right)\\
&\ll  \frac{\zeta_m(\alpha,y)x^{\alpha}}{z} \int_{-1/(\log x)^{1/3}}^{1/(\log x)^{1/3}} 
 \left|\tau \right| d\tau
+ O_A\left(\frac{\Psi(x,[y',y])}{(\log x)^{A}}\right)\\
&\ll \frac{\zeta_m(\alpha,y)x^{\alpha}}{z (\log x)^{2/3}} 
+ O_A\left(\frac{\Psi(x,[y',y])}{(\log x)^{A}}\right) \\
&\ll \Psi(x,[y',y]) \left(\frac{\sqrt{\log y}}{z (\log x)^{1/6}} + O_A\left( (\log x)^{-A} \right)\right),
\end{align*}
as claimed.
\end{proof}

The previous lemma applies to $y<\exp((\log x)^{1/3})$.
Below, we establish an analogues result in the complementary range where $\exp((\log x)^{1/4})\leq y\leq x$.
In this case, $1-\alpha(x',y)$ is very small as $x'$ ranges over $[x,2x]$, and the error terms in Lemma \ref{lem:BrTen-2.1} are also very good. 
This allows us to reduce the problem of asymptotically evaluating $\Psi(x(1+z^{-1}),[y',y])$ to that of bounding
$\Psi(x(1+z^{-1}),y)-\Psi(x,y)$. 
Bounds on the latter difference have been established by Hildebrand \cite[Theorem 3]{HilTen86} in the case where 
$u = (\log x)/\log y$ is small, which applies to our current situation.\footnote{Hildebrand's theorem has been extended by several authors and we refer to Granville's survey paper \cite[Section 4.1]{Granville-survey} for a discussion of these extensions and references.}

\begin{lemma}[Local behaviour of $\Psi_{P(y')}$ for small $u$] \label{lem:small-u}
Let $x > 2$ be a parameter, suppose that $y'\leq(\log x)^{K'}$ for some fixed $K'>0$ and that
$\exp\bigbrac{(\log x)^{1/4}}\leq y\leq x$, and let 
$\alpha=\alpha(x,y)$ denote the saddle point associated to $S(x,y)$. Then
\begin{equation} \label{eq:small-u}
\bigabs{\Psi(x(1+z^{-1}),[y',y])-(1+z^{-1})^\alpha\Psi(x,[y',y])}
\ll\Psi(x,[y',y])\frac{(\log_2x)^2}{\log y} 
\end{equation}
holds uniformly for $1\leq z\leq y^{5/12}$.
\end{lemma}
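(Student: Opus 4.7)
The strategy parallels that of Lemma \ref{lem:Psi_m-approx}: factor out the $[y',y]$-restriction via Lemma \ref{lem:BrTen-2.1} and reduce to a local comparison for the unrestricted smooth-counting function $\Psi(\,\cdot\,,y)$. In the present range $y \ge \exp((\log x)^{1/4})$ the saddle point $\alpha = \alpha(x,y)$ is very close to $1$, and correspondingly the error in Lemma \ref{lem:BrTen-2.1} collapses to $O(1/\log y)$, which is already within the target bound $(\log_2 x)^2/\log y$.

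Setting $x' := (1+z^{-1})x$ and $\alpha' := \alpha(x',y)$, I would apply Lemma \ref{lem:BrTen-2.1} to both $\Psi(x,[y',y])$ and $\Psi(x',[y',y])$, writing each in the form $g_{P(y')}(\alpha(\,\cdot\,,y))\,\Psi(\,\cdot\,,y)\,(1+O(1/\log y))$. Lemma \ref{lem:alpha-approx} shows that $\alpha' - \alpha \ll 1/\log y$, while a direct calculation of the logarithmic derivative
$$
\frac{g_{P(y')}'(s)}{g_{P(y')}(s)} \;=\; \sum_{p<y'}\frac{p^{-s}\log p}{1-p^{-s}} \;\ll\; \sum_{p<y'}\frac{\log p}{p-1} \;\ll\; \log y' \;\ll\; \log_2 x,
$$
valid for $s$ near $1$ by Mertens' theorem, yields
$$
\frac{g_{P(y')}(\alpha')}{g_{P(y')}(\alpha)} = 1 + O\Big(\frac{\log_2 x}{\log y}\Big).
$$
Hence the variation of $g_{P(y')}$ contributes only an error consistent with the target.

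The problem then reduces to estimating $\Psi(x',y) - (1+z^{-1})^{\alpha}\Psi(x,y)$, with $\alpha = \alpha(x,y)$. Here I would invoke the Hildebrand--Tenenbaum local estimate \cite[Theorem 3]{HilTen86}, applied with $c = 1+z^{-1} \in [1,2]$, to obtain a bound of the right order. Combining the three ingredients and using Lemma \ref{lem:BrTen-2.1} once more to reassemble $\Psi(x,[y',y]) = g_{P(y')}(\alpha)\,\Psi(x,y)\,(1+O(1/\log y))$ produces the claimed estimate \eqref{eq:small-u}.

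The principal technical hurdle is the regime where $y$ is close to $x$, in which the error $O(1/u + \log y/u)$ given by \eqref{eq:Hil-Ten-Thm3} is not by itself small enough. In that case one must sharpen the local comparison via a Perron-type contour argument modelled on Lemma \ref{lem:Psi_m-approx}: on the vertical line $\Re(s) = \alpha$ the relevant integrand carries the extra factor $(1+z^{-1})^{s} - (1+z^{-1})^\alpha \ll (1+z^{-1})^\alpha\, |\tau|/z$, and combined with an Euler-product decay estimate analogous to \eqref{eq:euler-prod-bd} this yields the required savings. The hypothesis $z \le y^{5/12}$ is exploited precisely to ensure that this Perron argument goes through with admissible truncation errors, recovering the bound $(\log_2 x)^2/\log y$ uniformly on the stated range of $z$.
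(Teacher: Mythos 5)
Your reduction steps coincide with the paper's: both proofs factor out the $[y',y]$-restriction via Lemma \ref{lem:BrTen-2.1}, control the variation of $g_{P(y')}$ through its logarithmic derivative $\gamma_{P(y')}'(\sigma)\ll\log y'\ll\log_2 x$ together with $\alpha'-\alpha\ll\log_2 x/\log y$, and thereby reduce the problem to the local behaviour of the unrestricted counting function $\Psi(\cdot,y)$. Up to that point your argument is correct and essentially identical to the paper's.

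The gap is in the final, decisive step. You correctly observe that \eqref{eq:Hil-Ten-Thm3} is too weak (its relative error does not shrink with $z$, whereas the quantity to be detected, $\Psi(cx,y)-\Psi(x,y)\asymp\Psi(x,y)/z$, can be as small as $\Psi(x,y)y^{-5/12}$), but your proposed remedy --- a Perron-type contour argument modelled on Lemma \ref{lem:Psi_m-approx} --- cannot work in the present regime. The truncated Perron formula \eqref{eq:perron-approx} carries an error $O\bigl(x^{\alpha}\zeta_m(\alpha,y)R\bigr)$ with $R\gg\exp(-c_5u/(\log 2u)^2)$, and the saddle-point savings \eqref{eq:euler-prod-bd} likewise degrade as $u$ decreases; for $y\geq\exp((\log x)^{1/4})$ one only has $u\leq(\log x)^{3/4}$, and for $y$ a fixed power of $x$ the parameter $u$ is bounded, so $R\asymp 1$ and the Perron error already swamps the main term $\Psi(x,y)/z$ by a factor of order $z\sqrt{\log x\log y}$. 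This is precisely why the paper splits the analysis into two lemmas at $y\approx\exp((\log x)^{1/3})$. In the small-$u$ range the paper instead invokes Hildebrand's short-interval theorem \cite[Theorems 1 and 3]{Hildebrand}, which gives
$\Psi(x(1+z^{-1}),y)-\Psi(x,y)=\frac{\Psi(x,y)}{z}\bigl\{1+O(\log(u+1)/\log y)\bigr\}$
uniformly for $\exp((\log_2x)^{5/3+\eps})\leq y\leq x$ and $1\leq z\leq y^{5/12}$; the constraint $z\leq y^{5/12}$ in the statement is exactly the range of validity of that theorem, not a condition enabling a Perron argument. Without this (or an equivalent short-interval result for smooth numbers, proved by non-saddle-point methods), your proof does not close.
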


\begin{proof}
Under the assumptions on $y$ it follows from \eqref{alpha-x-y} that the saddle point $\alpha(x,y)$ is very close to $1$. More precisely, writing $u' = (\log x')/\log y$, we have 
\begin{equation} \label{eq:alpha-estimate}
\alpha(x',y)=1-\frac{\log (u'\log(u'+1))}{\log y}+O\Bigbrac{\frac{1}{\log y}}
=1+O\Bigbrac{\frac{\log_2x'}{\log y}}
=1+O\Bigbrac{\frac{\log_2 x}{(\log x)^{1/4} }}
\end{equation}
uniformly for all $x' \in [x,2x]$ and $y \geq \exp\bigbrac{(\log x)^{1/4}}$. 
This implies, in particular, that
\[
(1+z^{-1})^\alpha=(1+z^{-1})(1+z^{-1})^{O((\log_2x/\log y)}=(1+z^{-1})\Bigbrac{1+O\Bigbrac{\frac{\log_2x}{z\log y}}}
\]
for $\alpha = \alpha(x,y)$.
Substituting this formula into \eqref{eq:small-u} and rearranging the resulting expression reduces our task to that of establishing:
\begin{align*}\label{goal-short-interval}
\Psi\left(x(1+z^{-1}),[y',y]\right)-\Psi(x,[y',y])
=\frac{\Psi(x,[y',y])}{z}\Bigbrac{1+O\bigbrac{\frac{z(\log_2x)^2}{\log y}}}.
\end{align*}
By Lemma \ref{lem:BrTen-2.1}, the difference on the left hand side satisfies
\begin{equation}\label{apply-41}
\begin{aligned}
\Psi\left(x(1+z^{-1}),[y',y]\right)-\Psi(x,[y',y])&= g_{P(y')}(\alpha_z)\Psi(x(1+z^{-1}),y)-g_{P(y')}(\alpha)\Psi(x,y)\\
&\quad+O\Bigbrac{\Psi(x,[y',y])\Bigbrac{\frac{\log_2x}{\log x}+\frac{1}{\log y}}},
    \end{aligned}
\end{equation}
where $\alpha_z=\alpha(x(1+z^{-1}),y)$ is the saddle point associated to $S(x(1+z^{-1}),y)$. 
In order to simplify the main term above, we seek to relate $g_{P(y')}(\alpha_z)$ to $g_{P(y')}(\alpha)$. 
Following \cite[\S3.6]{Breteche-Tenen-2005}, define $\gamma_m(s):=\log \,g_m(s)$ and let $\gamma_m'(s)$ denote the derivative with respect to $s$. 
Since $\alpha_z\leq\alpha$, we then have
\[
\log \left(\frac{g_{P(y')}(\alpha_z)}{g_{P(y')}(\alpha)}\right)
= \int_\alpha ^{\alpha_z} \gamma_{P(y')}'(\sigma) \,\mathrm d\sigma
\leq (\alpha-\alpha_z) \sup_{\alpha_z \leq \sigma \leq \alpha} \gamma_{P(y')}'(\sigma),
\]
and shall estimate the latter two factors in turn.  
Since $1 < 1 + z^{-1} < 2$ for $z > 1$, it follows from \eqref{eq:alpha-estimate} that
$$
\alpha- \alpha_z \ll \frac{\log_2 x}{\log^{1/4} x}.
$$
Concerning the second factor, we have 
\[
\gamma_{P(y')}'(\sigma) = \sum_{p\leq y'} \frac{\log p}{ p^\sigma -1} 
\ll \sum_{n\leq y'} \frac{\Lambda (n)}{n^\sigma} 
\ll {y'}^{1-\sigma} \sum_{n\leq y'} \frac{\Lambda (n)}{n}
\ll \log y' \ll \log_2 x
\] 
since $\log y' \ll \log_2 x$ and $1-\sigma \ll_{\eps} \log^{-1/4 + \eps} x$ by \eqref{eq:alpha-estimate}.
Hence
\[
\frac{g_{P(y')}(\alpha_z)}{g_{P(y')}(\alpha)} 
= \exp \left( \log \left(\frac{g_{P(y')}(\alpha_z)}{g_{P(y')}(\alpha)}\right) \right)
= \exp \left( O \left(\frac{(\log_2 x)^2}{ \log^{1/4}x}\right) \right)
= 1+  O \left(\frac{(\log_2 x)^2}{ \log^{1/4}x}\right).
\]
Substituting this expression into (\ref{apply-41}), we obtain
\begin{align*}
&\Psi(x(1+z^{-1}),[y',y])-\Psi(x,[y',y]) \\
&=g_{P(y')}(\alpha)\bigset{\Psi(x(1+z^{-1}),y)-\Psi(x,y)}+O\Bigbrac{\Psi(x,[y',y])\frac{(\log_2x)^2}{\log y}},
\end{align*}
where we used Lemma \ref{lem:BrTen-2.1} and \eqref{eq:Hil-Ten-Thm1} to simplify the error term.
To the right hand side, we may now apply Hildebrand's \cite[Theorems 1 and 3]{Hildebrand} which assert that 
\[
\Psi(x(1+z^{-1}),y)-\Psi(x,y)
=\frac{\Psi(x,y)}{z}\left\{1+O_{\eps}\left(\frac{\log(u+1)}{\log y}\right)\right\}
\]
holds uniformly for $\exp\bigbrac{(\log_2x)^{5/3+\eps}}\leq y\leq x$ and $1\leq z\leq y^{5/12}$.
Thus, under our assumptions on $z$ and $y$,
\begin{align*}
\Psi(x(1+z^{-1}),[y',y])-\Psi(x,[y',y])
=\frac{g_{P(y')}(\alpha)\Psi(x,y)}{z}\left\{1+O\left(\frac{\log(u+1)}{\log y}\right)\right\}
\\+O\Bigbrac{\Psi(x,[y',y])\frac{(\log_2x)^2}{\log y}}.
\end{align*}
Another application of Lemma \ref{lem:BrTen-2.1} to the right-hand side yields the desired result.
\end{proof}

\subsection{Equidistribution of weighted $[y',y]$-smooth numbers in short intervals} \label{subsec:shortintervals}

The correction factors in Lemmas \ref{lem:Psi_m-approx} and \ref{lem:small-u} suggest to consider the smoothly weighted version 
$$
n \mapsto n^{1-\alpha(x,y)} \1_{S([y',y])}(n)
$$
of the characteristic function of $[y',y]$-smooth numbers.
This choice of weight allows us to deduce from these lemmas that the new weighted function is equidistributed 
in all short intervals of length at least $x(\log x)^{-c}$, a property that is required in 
Section \ref{sec:bootstrap} and, indirectly, in Section  \ref{sec:nilsequences}.
For simplicity, we further normalise our weighted function to mean value $1$ in the following lemma.

\begin{lemma}[Equidistribution in short intervals] \label{lem:short-int}
Let $N>2$ be a parameter and define the weighted and normalised function
\begin{equation} \label{def:h_y}
 h_{[y',y]}(n) := \frac{N^{\alpha}}{\Psi(N,[y',y])} \frac{n^{1-\alpha}}{\alpha} \1_{S([y',y])}(n), 
 \qquad (N\leq n\leq 2N),
\end{equation}
where $\alpha = \alpha(N,y)$. Let $y' \leq (\log N)^{K'}$ for some fixed $K'>0$, let $K = \max(2, 2K')$, and suppose that 
$(\log N)^K <y \leq N$.
Then 
\begin{align*}
 \sum_{N_0 < n \leq N_0 + N_1} h_{[y',y]}(n) 
 = N_1 \biggset{1+O\left(\frac{\log_3N}{\log_2N}\right)} 
 + O\left(\frac{N}{\log^{1/24} N}\right).
\end{align*}
uniformly for all  
$N\leq N_0 < N_0 + N_1 \leq 2N$ such that $N_1 \gg N \exp(-(\log N)^{1/4}/4)$.
\end{lemma}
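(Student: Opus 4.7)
The plan is to apply partial (Abel) summation to
\[
S := \sum_{N_0 < n \leq N_0+N_1} n^{1-\alpha} \1_{S([y',y])}(n).
\]
Setting $M = N_0$, $M' = N_0+N_1$ and $\Psi(t) := \Psi(t,[y',y])$, this produces
\[
S = {M'}^{1-\alpha} \Psi(M') - M^{1-\alpha} \Psi(M) - (1-\alpha) \int_M^{M'} t^{-\alpha} \Psi(t) \, dt.
\]
The key step is to substitute the approximation $\Psi(t) \approx (t/M)^\alpha \Psi(M)$, provided by Lemmas~\ref{lem:Psi_m-approx} and~\ref{lem:small-u} depending on the size of $y$. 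Under this substitution, the boundary terms contribute $N_1 M^{-\alpha} \Psi(M)$ and the integral contributes $(1-\alpha) N_1 M^{-\alpha} \Psi(M)$, yielding the clean main term $S = \alpha N_1 M^{-\alpha} \Psi(M) + \text{(errors)}$. Multiplying by the normalization $N^\alpha/(\alpha \Psi(N, [y',y]))$ gives $N_1 \cdot (N/M)^\alpha \Psi(M)/\Psi(N,[y',y])$, and a further application of the same lemmas (comparing $\Psi(M)$ to $\Psi(N,[y',y])$, with $z = N/(M-N) \geq 1$) shows that this factor is $1 + O(\epsilon)$ for a small $\epsilon$.

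For the error analysis I plan to split into two cases according to the size of $y$. When $(\log N)^K < y \leq \exp((\log N)^{1/4})$, Lemma~\ref{lem:Psi_m-approx} yields a multiplicative error of order $\sqrt{\log y}/\log^{1/6} N \leq \log^{-1/24} N$, together with additive tails of order $O_A(\log^{-A} N)$. For $\exp((\log N)^{1/4}) \leq y \leq N$, Lemma~\ref{lem:small-u} gives the much smaller multiplicative error $(\log_2 N)^2/\log y$. Both bounds are comfortably below the claimed relative error $\log_3 N/\log_2 N$, and taken together they cover the full range $(\log N)^K < y \leq N$.

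The main obstacle will be packaging the additive tail errors into the $O(N/\log^{1/24} N)$ term of the statement. After integration over $[M,M']$, the $O_A(\log^{-A} N)$ tails contribute absolute errors of size $O(N \log^{-A} N)$, which is dominated by $N/\log^{1/24} N$ once $A > 1/24$. The bulk multiplicative error $\log^{-1/24} N$ on the main approximation produces $N_1 \log^{-1/24} N \leq N/\log^{1/24} N$, again fitting within the stated error. Since no factor of $1/N_1$ appears in the error terms, the conclusion holds uniformly throughout the range $N_1 \gg N \exp(-(\log N)^{1/4}/4)$; the explicit lower bound on $N_1$ only serves to ensure that the main term $N_1$ actually dominates the additive error $N/\log^{1/24} N$.
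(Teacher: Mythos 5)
Your proposal matches the paper's proof: partial summation, substitution of the local approximations from Lemmas \ref{lem:Psi_m-approx} and \ref{lem:small-u} according to whether $y$ is below or above $\exp((\log N)^{1/4})$, and a final renormalisation comparing $\Psi(N_0,[y',y])$ with $\Psi(N,[y',y])$. The only deviations are bookkeeping ones — the paper performs that last comparison via Lemma \ref{lem:BrTen-2.4.i-ii}(ii) (which is where the stated $\log_3 N/\log_2 N$ term actually originates), and it explicitly reconciles the exponent $\alpha(N_0,y)$ produced by the local lemmas with the $\alpha(N,y)$ in the weight via Lemma \ref{lem:alpha-approx}, a step you should not omit (it is harmless only because $t/N_0\in[1,2]$), just as you should note that the constraint $z\leq y^{5/12}$ in Lemma \ref{lem:small-u} forces either the paper's choice of $N_0$ as reference point or a monotonicity sandwich when $t$ is very close to it — but your route goes through.
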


\begin{proof}
By partial summation,
\begin{align} \label{eq:partialsum}
\nonumber
 \sum_{\substack{n \in S([y',y])\\N_0 < n \leq N_0 + N_1}} n^{1-\alpha}
&= (N_0 + N_1)^{1-\alpha} \Psi(N_0 + N_1,[y',y])- N_0^{1-\alpha}\Psi(N_0,[y',y]) \\
& \qquad - (1-\alpha) \int_{N_0}^{N_0 + N_1} \frac{\Psi(t,[y',y])}{t^{\alpha}}~\mathrm dt .
\end{align}
We seek to bound the right hand side with the help of Lemmas \ref{lem:Psi_m-approx} and \ref{lem:small-u}.
To start with, suppose that $y < \exp ((\log N)^{1/4})$.
For $N\leq N_0<t \leq N_0+N_1\leq 2N$, we have $t= N(1 + 1/z)$ where $z>1$ and 
$(t/N)^{-\alpha} \asymp 1$. Hence,
\begin{align*}
\Psi(t,[y',y])
&=(t/N)^{\alpha} \Psi(N,[y',y]) \left(1+ O\left(\frac{\sqrt{\log y}}{z (\log N)^{1/6}} + O_A\left((\log N)^{-A}\right) \right)\right)\\
&=(t/N)^{\alpha} \Psi(N,[y',y]) \Big(1+ O\Big((\log N)^{-1/24}\Big)\Big)
\end{align*}
for $y < \exp ((\log N)^{1/4})$
by Lemma \ref{lem:Psi_m-approx}.
Thus,
\begin{align*}
\frac{N^{\alpha}}{\Psi(N,[y',y])} 
\sum_{\substack{n \in S([y',y])\\N_0 < n \leq N_0 + N_1}} n^{1-\alpha} 
&= (N_0 + N_1)  - N_0  
 - (1-\alpha) \int_{N_0}^{N_0 + N_1} 1 ~dt + O\left(\frac{N}{(\log N)^{1/24}}\right)\\
&= \alpha N_1 + O\left(\frac{N}{(\log N)^{1/24}}\right),
\end{align*}
which establishes the lemma when $y < \exp ((\log N)^{1/4})$.

Suppose next that $\exp ((\log N)^{1/4})\leq y \leq N$. 
In view of the restriction on the size of $z$ in Lemma \ref{lem:small-u}, we cannot apply this lemma in the current situation with $x = N$ (which would correspond to the choice in the first part of the proof),
but need to choose $x = N_0$ instead.
For this purpose, let $\alpha'=\alpha(N_0,y)$ denote the saddle point associated to $S(N_0,y)$ and note that
$$|\alpha'- \alpha| = |\alpha(N_0,y) - \alpha(N,y)| \ll (\log y)^{-1} \ll (\log N)^{-1/4}$$
by Lemma \ref{lem:alpha-approx} and the lower bound on $y$.

For $N\leq N_0<t \leq N_0+N_1$ with $N \exp(- (\log N)^{1/4}/4) \leq N_1 \leq N$, we have 
$$t= N_0(1 + 1/z), \quad \text{where} \quad 
1 < z \ll \exp((\log N)^{1/4}/4) = o(y^{5/12}),
$$ 
and $(t/N_0)^{-\alpha'} \asymp 1$. 
Hence, Lemma \ref{lem:small-u} applies and yields
\begin{align} \label{eq:aux1}
\Psi(t,[y',y]) =(t/N_0)^{\alpha'} \Psi(N_0,[y',y]) \Big(1+ O\Big(\frac{(\log_2 N)^2}{\log^{1/4} N}\Big)\Big).
\end{align}
Each of the three terms arising in the partial summation expression \eqref{eq:partialsum} involves a weighted count of the form
$\Psi(t,[y',y]) / t^{\alpha}$.
With this and \eqref{eq:aux1} in mind, observe that
\begin{align*}
 (t/N_0)^{\alpha'}t^{-\alpha}
&= (t/N_0)^{\alpha'-\alpha} N_0^{-\alpha}
= N_0^{-\alpha} (1 + 1/z)^{\alpha' - \alpha}
= N_0^{-\alpha} (1 + O(|\alpha-\alpha'|)) \\
&= N_0^{-\alpha} \Big(1 + O(\log N)^{-1/4}\Big),
\end{align*}
since 
$$
|(1 + 1/z)^{a} - 1^{a}|
\leq \int_1^{1 + 1/z} |a| t^{a-1} dt
\leq \frac{|a|}{z} \leq |a|
$$
for $0<|a|<1$, which we applied with $a = \alpha'-\alpha \ll (\log N)^{-1/4}$.

Hence, it follows from \eqref{eq:partialsum} and \eqref{eq:aux1} that
\begin{align*}
\frac{N_0^{\alpha}}{\Psi(N_0,[y',y])} 
\sum_{\substack{n \in S([y',y])\\N_0 < n \leq N_0 + N_1}} n^{1-\alpha} 
&= (N_0 + N_1)  - N_0  
 - (1-\alpha) \int_{N_0}^{N_0 + N_1} 1 ~dt + O\left(\frac{N (\log_2 N)^2}{(\log N)^{1/4}}\right)\\
&= \alpha(N,y) N_1 + O\left(\frac{N (\log_2 N)^2}{(\log N)^{1/4}}\right). 
\end{align*}
The lemma then follows since
$$
\frac{(N_0/N)^{-\alpha}\Psi(N_0,[y',y])}{\Psi(N,[y',y])} 
= 1 + O\left(\frac{\log_3 N}{\log_2 N}\right)
$$
by Lemma \ref{lem:BrTen-2.1} and part (ii) of Lemma \ref{lem:BrTen-2.4.i-ii}.
\end{proof}

Working with functions $h_{[y',y]}: [N,2N] \to \RR$ whose support is restricted to a dyadic interval is too restrictive for the purposes of our main theorem.
The following lemma provides an intrinsically defined function $g_{[y',y]}: \NN \to \RR$ 
that approximates any function $h_{[y',y]}$ on the interval $[N,2N]$ where the latter function is defined.

\begin{lemma} \label{lem:g-h-approximation}
Define
\begin{equation} \label{eq:g-def}
  g_{[y',y]}(n) = \frac{n}{\alpha(n,y)\Psi(n,[y',y])} \1_{S([y',y])}, \qquad (n \in \NN),
\end{equation}
and suppose that $N \leq n < 2N$ and that $h_{[y',y]}$ is define on $[N,2N]$.
Then
$$
g_{[y',y]}(n) = h_{[y',y]}(n) \left(1+O\left(\frac{\log_3 N}{\log_2 N}\right)\right),
$$
provided that $N$ is sufficiently large.
\end{lemma}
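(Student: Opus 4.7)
The plan is to directly compute the ratio $g_{[y',y]}(n)/h_{[y',y]}(n)$ on the common support $n \in [N,2N) \cap S([y',y])$ and to verify that each factor is $1+O(\log_3 N/\log_2 N)$. Writing $n = cN$ with $c \in [1,2)$ and abbreviating $\alpha_N = \alpha(N,y)$, $\alpha_n = \alpha(n,y)$, a direct manipulation of the definitions gives
\begin{align*}
\frac{g_{[y',y]}(n)}{h_{[y',y]}(n)}
= \Bigbrac{\frac{n}{N}}^{\alpha_N} \cdot \frac{\alpha_N}{\alpha_n} \cdot \frac{\Psi(N,[y',y])}{\Psi(n,[y',y])}
= c^{\alpha_N}\cdot \frac{\alpha_N}{\alpha_n}\cdot \frac{\Psi(N,[y',y])}{\Psi(n,[y',y])}.
\end{align*}
So it suffices to estimate each of these three factors.

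For the density quotient, I would apply Lemma \ref{lem:BrTen-2.4.i-ii}(ii) with parameter $x = n$ and $d = n/N \in [1,2]$, which is certainly admissible since $d \leq \exp((\log n)^{1/3})$. This gives
\begin{align*}
\Psi(N,[y',y])
= \Bigbrac{\frac{n}{N}}^{-\alpha_n}\Psi(n,[y',y])\Bigbrac{1+O\Bigbrac{\frac{\log_3 N}{\log_2 N}}},
\end{align*}
hence
\begin{align*}
\frac{\Psi(N,[y',y])}{\Psi(n,[y',y])} = c^{-\alpha_n}\Bigbrac{1+O\Bigbrac{\frac{\log_3 N}{\log_2 N}}}.
\end{align*}

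For the saddle-point factor, Lemma \ref{lem:alpha-approx} yields $\alpha_n - \alpha_N \ll 1/\log y$. Under the hypothesis $y > (\log N)^K$ we have $1/\log y \ll 1/\log_2 N \ll \log_3 N/\log_2 N$, and since $\alpha_N, \alpha_n$ are bounded below by an absolute positive constant (owing to \eqref{alpha-x-y} in the relevant range), this gives
\begin{align*}
\frac{\alpha_N}{\alpha_n} = 1 + O\Bigbrac{\frac{\log_3 N}{\log_2 N}}.
\end{align*}
The same bound controls the remaining factor: since $c \in [1,2)$,
\begin{align*}
c^{\alpha_N - \alpha_n} = 1 + O(|\alpha_N - \alpha_n|) = 1 + O\Bigbrac{\frac{\log_3 N}{\log_2 N}}.
\end{align*}

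Combining the three estimates, the product $c^{\alpha_N}\cdot (\alpha_N/\alpha_n)\cdot c^{-\alpha_n}\cdot(1+O(\log_3 N/\log_2 N))$ simplifies to $1 + O(\log_3 N/\log_2 N)$, which is the desired conclusion. There is no real obstacle here: the argument is a bookkeeping exercise packaging Lemmas \ref{lem:BrTen-2.4.i-ii}(ii) and \ref{lem:alpha-approx} together with the observation $1/\log y = o(\log_3 N/\log_2 N)$. The only point requiring care is to verify that the lower bound on $y$ imposed by Lemma \ref{lem:BrTen-2.4.i-ii}(ii), namely $y > \max(y'^2,(\log n)^2)$, is satisfied under the hypotheses of the present lemma; this holds because $K \geq \max(2K',2)$ ensures $y > (\log N)^K \geq \max(y'^2, (\log N)^2) \geq \max(y'^2,(\log n)^2)$ for $n < 2N$ (after adjusting $N$ to be large enough).
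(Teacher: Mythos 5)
Your proposal is correct and follows exactly the route the paper takes: the paper's proof of this lemma is the single sentence ``This follows immediately from Lemmas \ref{lem:alpha-approx} and \ref{lem:BrTen-2.4.i-ii} (ii)'', and your argument is precisely the detailed expansion of that citation (factor the ratio $g/h$ into the density quotient, the saddle-point quotient, and $c^{\alpha_N-\alpha_n}$, then control each via those two lemmas and the observation $1/\log y \ll \log_3 N/\log_2 N$). Nothing to correct.
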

\begin{proof}
 This follows immediately from Lemmas \ref{lem:alpha-approx} and \ref{lem:BrTen-2.4.i-ii} (ii).
\end{proof}

\section{Major arc analysis: Smooth numbers in arithmetic progressions and a $W$-trick} \label{sec:shortAPs}
Extending previous work of Soundararajan \cite{Sound}, Harper \cite{harper-sound} proved
that for $y \leq x$, $\eps>0$, $2 \leq q \leq y^{4\sqrt{e}-\eps}$, and $(a,q)=1$,
$$
\Psi(x,y;q,a) := \sum_{\substack{n \leq x \\ n \equiv a \Mod{q}}} \1_{S(y)}(n) ~\sim \frac{1}{\phi(q)} \Psi_q(x,y)
$$
as $\log x / \log q \to \infty$ provided that $y$ is sufficiently large in terms of $\eps$.
This shows that, for $y \in (\log x, x]$, the $y$-smooth numbers up to $x$ are equidistributed within the reduced residue classes to a given modulus $q$ provided $x$ is sufficiently large.
Our aim in this section is to construct a subset of the $y$-smooth numbers with the property that its elements are equidistributed in the reduced residue classes of \emph{any} modulus $q \leq (\log x)^c$. 
More precisely, we will show that, after applying a $W$-trick, the $[y',y]$-smooth numbers for any $y' \leq (\log N)^{K'}$, $K'>0$, have this property.
In addition, we will construct a suitable weight for this subset which allows us to retain equidistribution when we restrict any of the above progressions to a shorter interval of suitable length.
These equidistribution properties will be required in order to reduce in Section \ref{sec:nilsequences} the task of establishing a non-correlation estimate with nilsequences to the case where the nilsequence is highly equidistributed.

\subsection{Smooth numbers in arithmetic progressions} \label{sec:harper}
In this subsection we extend Harper's result \cite[Theorem 1]{harper-sound} and show that numbers without small and large prime factors are equidistributed in progressions. More precisely, defining
$$
\Psi(x,[y',y];q,a) = \sum_{\substack{n \leq x \\ n \equiv a \Mod{q}}} \1_{S([y',y])}(n),
$$
we show the following:

\begin{theorem} \label{thm:equid-in-APs}
Let $K', K'' >0$ and $K > \max(2K',2)$ be constants, let $x \geq 2$ be a parameter and let
$1 \leq y' \leq (\log x)^{K'}$ and $(\log x)^{K} < y \leq x$. 
If $q < (\log x)^{K''}$, $p \mid q \Rightarrow p<y'$ and $(a,q)=1$, then
$$
\Psi(x,[y',y];q,a) 
= \frac{\Psi(x,[y',y])}{\phi(q)}  
\Big(1 + O\Big( \log^{-1/5} x \Big) \Big),
$$
provided that $K$ and $K/K''$ are sufficiently large.
\end{theorem}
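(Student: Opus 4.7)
The plan is to peel off the lower prime cut-off by Möbius inversion and feed each resulting smooth character sum into a quantitative strengthening of Harper's theorem \cite{harper-sound}. Since every prime factor of $q$ is $<y'$, writing $n=dm$ with $d\mid \gcd(n,P(y'))$ preserves $y$-smoothness of $m$, while any $d$ with $\gcd(d,q)>1$ makes the congruence $dm\equiv a\Mod q$ insoluble (as $p\mid d$, $p\mid q$ and $p\mid n$ would force $p\mid a$, contradicting $(a,q)=1$). This yields the exact identity
\begin{equation*}
\Psi(x,[y',y];q,a)
= \sum_{\substack{d\mid P(y')\\ \gcd(d,q)=1}} \mu(d)\,\Psi(x/d,y;q,a\bar d),
\end{equation*}
where $\bar d$ is the inverse of $d$ modulo $q$.

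The main obstacle is to prove a quantitative version of Harper's theorem of the form
\begin{equation*}
\Psi(x/d,y;q,b)=\frac{\Psi_q(x/d,y)}{\phi(q)}\bigl(1+O((\log x)^{-c})\bigr),\qquad (b,q)=1,
\end{equation*}
uniformly over squarefree $(y'-1)$-smooth $d\leq x/y$, with $c>1/5$ governed by how large $K/K''$ is. To achieve this I would rerun Harper's argument from \cite{harper-sound} with explicit bookkeeping. After passing to Dirichlet characters, the task is to bound $\sum_{m\in S(x/d,y)}\chi(m)$ for non-principal $\chi\Mod q$. Harper handles this via Perron's formula applied to $L(s,\chi,y)=\prod_{p\leq y}(1-\chi(p)p^{-s})^{-1}$ on a short vertical contour through the saddle point $\alpha$, controlling $|L(s,\chi,y)/L(s,\chi_0,y)|$ by Hal\'asz--Montgomery mean-value bounds together with the pretentious-distance lower bound $\mathbb{D}(\chi,1;y)^2=\sum_{p\leq y}\frac{1-\mathrm{Re}\,\chi(p)}{p}$. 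The hypothesis $\log y/\log q\geq K/K''$ ensures that $\mathbb{D}(\chi,1;y)^2$ is large for every non-principal $\chi\Mod q$, and I would convert this into an explicit power-of-$\log x$ saving by tracking every logarithmic factor in Harper's proof and taking $K/K''$ sufficiently large.

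Once the quantitative asymptotic is in hand, the main term telescopes: a direct Möbius computation, using that every prime factor of $q$ lies below $y'$ and hence divides $P(y')$, shows
\begin{equation*}
\sum_{\substack{d\mid P(y')\\ \gcd(d,q)=1}}\mu(d)\,\Psi_q(x/d,y)=\Psi(x,[y',y]),
\end{equation*}
recovering $\Psi(x,[y',y])/\phi(q)$ as the main term. The cumulative error term is controlled by
\begin{equation*}
\frac{(\log x)^{-c}}{\phi(q)}\sum_{d\mid P(y')}\Psi_q(x/d,y)
\ll \frac{(\log x)^{-c}}{\phi(q)}\,\Psi(x,y)\prod_{p<y'}(1+p^{-\alpha})
\end{equation*}
by a variant of Lemma~\ref{lem:BrTen-2.4.i-ii}(i); and Lemmas~\ref{lem:BrTen-2.1} and~\ref{lem:MV} imply that $\prod_{p<y'}(1+p^{-\alpha})/g_{P(y')}(\alpha)\ll(\log_2 x)^{O(1)}$ (since $\prod_{p<y'}(1-p^{-2\alpha})$ is bounded and $\prod_{p<y'}(1-p^{-\alpha})^{-1}\asymp\log_2 x$). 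Hence the error is at most $\phi(q)^{-1}(\log x)^{-c}(\log_2 x)^{O(1)}\Psi(x,[y',y])$, which is of the claimed shape $O(\Psi(x,[y',y])(\log x)^{-1/5}/\phi(q))$ as soon as $c>1/5$.
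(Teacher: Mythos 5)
Your architecture (Möbius inversion over $d\mid P(y')$ first, then a quantitative Harper asymptotic for each $\Psi(x/d,y;q,\cdot)$) is genuinely different from the paper's, which builds the restriction to $[y',y]$ into the Dirichlet series $L(s,\chi;[y',y])=\prod_{y'\leq p\leq y}(1-\chi(p)p^{-s})^{-1}$ from the outset (Proposition \ref{prop:smooth-perron}) and only resorts to Möbius inversion for the exceptional character when $y\geq x^{1/(\log\log x)^2}$. Unfortunately your route has a fatal quantitative gap in the error accumulation. You sum absolute values of the errors and claim $\prod_{p<y'}(1-p^{-\alpha})^{-1}\asymp\log_2 x$; this requires $y'^{\,1-\alpha}=O(1)$ and fails in the most important range of the theorem. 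If $y=(\log x)^{K}$ then $1-\alpha\asymp 1/K$ by \eqref{alpha-x-y}, so for $y'$ a power of $\log x$ one has $\sum_{p<y'}p^{-\alpha}\gg y'^{\,1-\alpha}/((1-\alpha)\log y')\gg(\log x)^{K'/K-o(1)}$ and hence
\begin{equation*}
\prod_{p<y'}\frac{1+p^{-\alpha}}{1-p^{-\alpha}}=\exp\Bigl(2\sum_{p<y'}p^{-\alpha}+O(1)\Bigr)=\exp\bigl((\log x)^{\Theta(1)}\bigr).
\end{equation*}
Since $\Psi(x,[y',y])\asymp g_{P(y')}(\alpha)\Psi(x,y)$ by Lemma \ref{lem:BrTen-2.1}, your cumulative error $\phi(q)^{-1}(\log x)^{-c}\Psi(x,y)\prod_{p<y'}(1+p^{-\alpha})$ exceeds the main term by $\exp((\log x)^{\Theta(1)})$ no matter how large $c$ is: the integers counted by $\sum_{d\mid P(y')}\Psi_q(x/d,y)$ carry multiplicity $2^{\omega(\gcd(n,P(y')))}$, and no power-of-$\log$ saving in the individual character sums can recover this. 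This is precisely why one must work with the restricted object directly.

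Two further problems, even granting the decomposition. First, the quantitative character-sum input is not "bookkeeping": in the worst case the unweighted pretentious distance satisfies only $\mathbb{D}(\chi,1;y)^2\geq\log(\log y/\log q)+O(1)=O(1)$ under your hypotheses, which yields a constant-factor saving, not $(\log x)^{-c}$ with $c>1/5$. The actual savings come from a contour shift inside the classical zero-free region (giving $x^{-0.4\eps}$ with $\eps\asymp 1/\log(qH)$) for unexceptional characters, and the possible Siegel character — which your sketch never mentions — needs two entirely separate arguments (a $p^{-\alpha}$-weighted repulsion bound exploiting $u\to\infty$ for small $y$, and Fouvry--Tenenbaum character-sum estimates for large $y$). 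Second, your asymptotic is only claimed for $d\leq x/y$, while divisors of $P(y')$ range up to $\min(x,P(y'))$; the tail $x/y<d\leq x$ contains up to $\Psi(x,y')=x^{1-1/K'+o(1)}$ divisors and its trivial bound can exceed the main term when $y$ is close to $x$, so it requires the kind of separate treatment the paper gives it in \eqref{eq:harper-truncation}.
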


\begin{rem*}
 The theorem generalises to the case where $q \leq (\log x)^{K''}$ but $p \mid q \not\Rightarrow p<y'$.
 In this case, $\Psi(x,[y',y])$ needs to be replaced by $\Psi_q(x,[y',y])$.
\end{rem*}

As we require explicit error terms, we do not follow Harper's proof strategy from \cite{harper-sound}, but instead start by establishing a version of the Perron-type bound given in Harper \cite[Proposition 1]{harper-BV} that applies to 
$$\Psi(x,[y',y],\chi):= \sum_{n \leq x} \chi(n) \1_{S([y',y])}~.$$

\begin{proposition} \label{prop:smooth-perron}
There exist a small absolute constant $\rho \in (0,1)$, and a large absolute
constant $C > 1$, such that the following is true.
Let $K'>0$ and $K > \max(2K, 2)$ be constants, let $1 \leq y' \leq (\log x)^{K'}$ and $(\log x)^{K} < y \leq x$, and suppose that $x$ is large. Suppose that $\chi$ is a non-principal Dirichlet character
with conductor $r \leq x^{\rho}$, and to modulus $q \leq x$, such that $L(s, \chi)=\sum_n \chi(n) n^{-s}$ has no zeros
in the region
$$\Re(s) > 1 - \eps, \quad |\Im(s)| \leq H,$$
where $C/ \log y < \eps \leq \min(\alpha/2,1/(2K'))$ and $y^{0.9\eps} \log^2 x \leq H \leq \min(x^{\rho},y^{5/6})$. 
Suppose, moreover, that at least one of the following holds:
\begin{itemize}
\item[(i)] $y \geq (Hr)^C$;
\item[(ii)] $\eps \geq 40 \log \log(qyH) /\log y$.
\end{itemize}
Then we have the bound
$$
\Psi(x,\chi;[y',y])
\ll\Psi(x,[y',y]) \bigg( (\log x)^{-1/5} + \sqrt{\log x\log y}\Big(H^{-1/2}+x^{-0.4\eps}\log H \Big)\bigg)~.
$$
\end{proposition}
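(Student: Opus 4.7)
The plan is to adapt Harper's proof of \cite[Proposition 1]{harper-BV} to the $[y',y]$-smooth setting. The Dirichlet series
$$F(s,\chi) := \prod_{y' \leq p \leq y}(1-\chi(p)p^{-s})^{-1}$$
associated to $\chi \cdot \1_{S([y',y])}$ is a finite Euler product, hence entire in $s$. I would start from a truncated Perron representation
$$\Psi(x,\chi;[y',y]) = \frac{1}{2\pi i}\int_{\alpha - iH}^{\alpha + iH} F(s,\chi)\frac{x^s}{s}\,ds + \mathcal{E}_{\mathrm{Perron}},$$
where $\alpha = \alpha(x,y)$. Bounding $\mathcal{E}_{\mathrm{Perron}}$ via Cauchy--Schwarz combined with a mean-value estimate for $|F(\alpha+it,\chi)|^2$ (in the manner of Harper) produces the $\sqrt{\log x \log y}/\sqrt{H}$ term of the stated conclusion, once normalised by $\Psi(x,[y',y])$ through the Hildebrand--Tenenbaum asymptotic \eqref{eq:Hil-Ten-Thm1}.

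The next step is to shift the contour from $\Re(s) = \alpha$ to $\Re(s) = 1 - \eps$. Since $\chi$ is non-principal, $L(s,\chi)$ is entire, and by hypothesis $L(s,\chi)$ has no zeros in the box $[1-\eps,\alpha]\times[-H,H]$; consequently both $L(s,\chi)$ and $F(s,\chi)$ are holomorphic there and no residues are picked up. The shifted vertical integral is bounded by $x^{1-\eps}(\log H)\sup_{|t|\leq H}|F(1-\eps+it,\chi)|$, while the horizontal connecting segments are handled similarly, with the Cauchy--Schwarz mean-value bound absorbing their contribution.

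The crux is to bound $|F(s,\chi)|$ on the shifted contour. Writing
$$\log F(s,\chi) = \log L(s,\chi) + \sum_{p<y'}\log(1-\chi(p)p^{-s}) + \sum_{p>y}\log(1-\chi(p)p^{-s}),$$
I would estimate $\log L(s,\chi) = O(\log(qH))$ by Borel--Carath\'eodory applied in the zero-free region; bound the sum over $p<y'$ trivially by $O(\log y')$ using $\eps \leq 1/(2K')$ to keep $|\chi(p)p^{-s}|$ bounded away from $1$; and control the tail over $p>y$ by separating the $k=1$ contribution $\sum_{p>y}\chi(p)p^{-s}$ from the higher-order terms. The $k\geq 2$ sum converges absolutely and is negligible; the $k=1$ sum is estimated via partial summation from explicit-formula bounds on $\sum_{p\leq X}\chi(p)$ that follow from the zero-free region. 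The target is an estimate of the shape $|F(1-\eps+it,\chi)|/F(\alpha,\chi_0) \ll x^{0.6\eps}\log H$, which after normalisation produces the $x^{-0.4\eps}\sqrt{\log x\log y}\log H$ term.

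The main obstacle is executing this tail-sum estimate with explicit control, and this is exactly where the alternative hypotheses (i) and (ii) enter essentially. Hypothesis (i), $y\geq(Hr)^C$, makes $y$ large enough relative to the analytic data that the explicit formula yields power savings for $\sum_{p\leq X}\chi(p)$ uniformly for $X$ in the relevant range; hypothesis (ii), $\eps \geq 40\log\log(qyH)/\log y$, instead secures enough horizontal room in the zero-free region for an alternate argument to go through. The residual $(\log x)^{-1/5}$ term emerges, as in Harper's proof, from an optimal splitting of the $t$-range on the shifted contour between regimes governed by the two estimates. The extra restriction to $[y',y]$-smooth (rather than $y$-smooth) integers introduces only a bounded multiplicative correction via the $p<y'$ Euler factors, and so does not affect the overall structure of the proof.
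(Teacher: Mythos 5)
Your overall architecture (Perron at $\Re s=\alpha$, then a contour shift controlled by the zero-free region) matches the paper's, but two of your key steps would not go through as described. First, you shift the contour to $\Re s = 1-\eps$ and decompose $\log F(s,\chi)=\log L(s,\chi)+\sum_{p<y'}(\cdots)+\sum_{p>y}(\cdots)$. The destination of the shift is wrong: the saddle point $\alpha=\alpha(x,y)$ can be well below $1$ (only $\eps\leq\alpha/2$ is assumed), so $1-\eps$ may lie at or to the \emph{right} of $\alpha$, where $x^{\sigma}$ already exceeds $x^{\alpha}\zeta(\alpha,y)\asymp\Psi(x,y)\sqrt{\log x\log y}$ and no saving of the form $x^{-0.4\eps}$ is possible. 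The paper instead shifts only to $\Re s=\alpha-0.8\eps$, a short distance to the \emph{left} of the saddle point, and the whole game is to show (Lemma \ref{lem:contour-shift}) that $|\log L(\sigma+it,\chi;[y',y])-\log L(\alpha+it,\chi;[y',y])|\leq(\alpha-\sigma)(\log x)/2$ on that strip, so that the loss from the growth of the Euler product is at most $x^{(\alpha-\sigma)/2}=x^{0.4\eps}$ against the gain $x^{-0.8\eps}$ from $x^{\sigma}$. Moreover, your tail term $\sum_{p>y}\chi(p)p^{-s}$ at $\sigma=1-\eps$ is an infinite sum whose conditional convergence and size would require zero-free information at unbounded heights, whereas the hypothesis only gives a zero-free box of height $H$. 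The paper avoids this entirely by never introducing the full $L$-function: it bounds the variation of $\log L(s,\chi;[y',y])$ via its logarithmic derivative, which is the \emph{finite} sum $\sum_{n\in S([y',y])}\Lambda(n)\chi(n)n^{-s}$; after discarding prime powers this reduces to $\sum_{n\leq y}\Lambda(n)\chi(n)n^{-\sigma'-it}$, which is exactly what Harper's Lemmas 1 and 2 of \cite{harper-BV} control under hypotheses (i) and (ii) respectively. Those two hypotheses are the respective hypotheses of those two lemmas; they are not, as you suggest, alternative routes to estimating a tail over $p>y$.

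Second, you misattribute the $(\log x)^{-1/5}$ term to ``an optimal splitting of the $t$-range on the shifted contour.'' In the paper it comes from the Perron truncation error: the contribution of $n$ with $|x-n|\leq xH^{-1/2}$ is the short-interval count $\Psi(x(1+H^{-1/2}),[y',y])-\Psi(x(1-H^{-1/2}),[y',y])$, which is estimated by the local-behaviour results of Section \ref{sec:shortintervals} (Lemma \ref{lem:Psi_m-approx} for small $y$, Lemma \ref{lem:small-u} for $y\geq\exp(\log^{1/4}x)$); the $(\log x)^{-1/5}$ is the error in Lemma \ref{lem:small-u} and is absent when only Lemma \ref{lem:Psi_m-approx} is needed, as the remark after the proposition records. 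Your alternative of handling the Perron error by Cauchy--Schwarz and a mean value of $|F(\alpha+it,\chi)|^{2}$ is not obviously fatal, but as written it neither produces this term nor explains where it would come from. To repair the proposal you need (a) the shift to $\alpha-0.8\eps$ with the log-derivative bound in place of the $\log L$ decomposition, and (b) the short-interval input for the truncation error.
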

\begin{rem}
 The $(\log x)^{-1/5}$ term in the bound can be omitted when $y<\exp(\log^{1/4}x)$, i.e.\ when the proof involves an application of Lemma \ref{lem:Psi_m-approx} and avoids Lemma \ref{lem:small-u}.
\end{rem}

The proof of Proposition \ref{prop:smooth-perron} presented below follows the proof of Harper's original result \cite[Proposition 1]{harper-BV} very closely.
It involves an application of Perron's formula to relate $\Psi(x,[y',y];\chi)$ to its Dirichlet $L$-function, defined as
\[
L(s,\chi;[y',y])
=\sum_{n\in S([y',y])}\frac{\chi(n)}{n^s}
=\prod_{y' \leq p \leq y} (1-\chi(p)p^{-s})^{-1},
\qquad (\Re s>0),
\] 
which is followed by a contour shift in the resulting contour integral.
The following lemma will be used in order to estimate the integrals on the new contour.
\begin{lemma} \label{lem:contour-shift}
 Suppose that $\alpha=\alpha(x,y)$ is the saddle point associated to $S(x,y)$. 
 Then under the assumptions of the proposition, we have
\[
\bigabs{\log L(\sigma+it,\chi;[y',y])-\log L(\alpha+it,\chi;[y',y])}\leq\frac{(\alpha-\sigma)\log x}{2}
\]
for all $\alpha-0.8\eps\leq\sigma\leq\alpha$ and $|t|\leq H/2$.
\end{lemma}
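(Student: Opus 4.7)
The plan is to express the difference as an integral of the logarithmic derivative of $L(s,\chi;[y',y])$ along a horizontal segment and then bound the logarithmic derivative using the hypothesized zero-free region. Concretely, I would first write
\[
\log L(\sigma+it,\chi;[y',y]) - \log L(\alpha+it,\chi;[y',y]) = -\int_\sigma^\alpha \frac{L'}{L}(u+it, \chi; [y', y])\,\mathrm du,
\]
so that the task reduces to establishing $|(L'/L)(u+it, \chi; [y',y])| \leq \tfrac12 \log x$ uniformly for $u \in [\alpha - 0.8\eps,\alpha]$ and $|t| \leq H/2$. Expanding the logarithmic derivative as a Dirichlet series
\[
-\frac{L'}{L}(s,\chi;[y',y]) = \sum_{y'\leq p\leq y}\sum_{k\geq 1} \chi(p)^k (\log p) p^{-ks},
\]
the contribution of the higher prime powers $k \geq 2$ is bounded trivially by $\sum_{p} (\log p) p^{-2u} = O(1)$, which is negligible against $\log x$ since $u \geq \alpha - 0.8\eps$ stays bounded away from $1/2$ in the regime of the proposition.

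The main work is to estimate the $k=1$ contribution $A(u+it) := \sum_{y'\leq p\leq y} \chi(p)(\log p) p^{-u-it}$. Here I would mirror the strategy used by Harper in \cite{harper-BV} for the full smooth Dirichlet series: express $A(u+it)$ via Perron's formula as a contour integral of $-(L'/L)(s+u+it,\chi)\cdot(y^{s}-y'^{s})/s$ on a vertical line to the right of the critical abscissa, and then shift the contour into the zero-free strip $\Re s > -\eps$ (relative to $u$). Since $L(s,\chi)$ has no zeros in $\Re s > 1-\eps$, $|\Im s| \leq H$, the shifted integrand is holomorphic apart from the simple pole at $s=0$, which yields the expected truncated sum, while the shifted contour and its horizontal connectors at height $\pm H/2$ contribute at most
\[
y^{1-u}\exp(-c\eps\log y) + y^{1-u}\log(qH)/H,
\]
after invoking standard bounds on $L'/L$ on the edge of the zero-free region. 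Combining this with the saddle-point identity $-\zeta'/\zeta(\alpha,y) = \log x$ (and the asymptotic $y^{1-\alpha}/(1-\alpha) \asymp \log x/\log y$) shows that the above bound is strictly smaller than $\tfrac{1}{2}\log x$ once the constants are chosen correctly.

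The main obstacle is to calibrate the savings against the losses: the zero-free region provides a factor $\exp(-c\eps\log y)$, while the conductor $r$ and the vertical height $H$ each introduce polynomial losses. The two alternative hypotheses of the proposition --- namely (i) $y \geq (Hr)^{C}$ and (ii) $\eps \geq 40\log\log(qyH)/\log y$ --- are precisely the quantitative conditions under which these losses are absorbed, and the proof naturally splits into these two cases. The restriction to primes $p \geq y'$ causes no essential difficulty since $y' \leq (\log x)^{K'}$ is tiny relative to $y$; the missing contribution from primes $p < y'$ affects both logarithmic derivatives identically up to an $O(\log y')=O(\log\log x)$ error, which is swallowed by the target bound.
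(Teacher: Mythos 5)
Your overall architecture matches the paper's: write the difference as $\int_\sigma^\alpha (L'/L)(u+it,\chi;[y',y])\,\mathrm du$, strip off the proper prime powers trivially, and reduce to bounding the prime sum $\sum_{y'\leq p\leq y}\chi(p)(\log p)p^{-u-it}$ by a small multiple of $\log x$. The one structural difference is that the paper does not re-derive the estimate for $\sum_{n\leq y}\Lambda(n)\chi(n)n^{-\sigma'-it}$ from Perron's formula and the zero-free region; it simply quotes Harper's \cite[Lemmas 1 and 2]{harper-BV}, whose two hypotheses correspond exactly to the alternatives (i) and (ii) of the proposition, and which yield the bound $y^{1-\alpha-0.1\eps}/(1-\alpha)+\log(rH)+\log^{0.9}q+\eps^{-1}$. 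Your plan to reprove these bounds is workable but redundant; citing them is cleaner and is where the conditions $y^{0.9\eps}\log^2 x\leq H$ etc.\ actually get used.

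There is, however, a genuine quantitative gap in your treatment of the primes $p<y'$. You assert that the missing contribution is $O(\log y')=O(\log\log x)$, but $\sum_{p<y'}(\log p)p^{-\sigma'}\asymp y'^{1-\sigma'}/(1-\sigma')$ once $\sigma'$ is bounded away from $1$, and in the hardest regime $y=(\log x)^K$ the saddle point satisfies $1-\alpha\approx \log(u\log u)/\log y$, which is of order $1/K$; together with $\sigma'\geq\alpha-0.8\eps$ and $y'=(\log x)^{K'}$ this makes $y'^{1-\sigma'}$ a genuine \emph{power} of $\log x$, not a power of $\log\log x$. The paper's proof of the lemma devotes a separate argument to exactly this sum: it splits into the cases $\sigma'>1-1/\log y'$ (where your trivial bound is valid) and $\sigma'\leq 1-1/\log y'$, and in the latter case uses $\eps\leq 1/(2K')$ and $y'<y^{1/2}$ (i.e.\ $K>2K'$) to get $y'^{1-\sigma'}\log y'\ll y^{(1-\alpha)/2}(\log x)^{0.4}\log y'\ll (\log x)^{9/10+o(1)}=o(\log x)$. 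Without invoking these hypotheses your estimate for the $p<y'$ range is simply false, even though the conclusion you want ($o(\log x)$, hence absorbed into $\tfrac12\log x$) is true. A smaller slip: the saddle-point relation you quote should read $y^{1-\alpha}/(1-\alpha)\asymp\log x$ (equivalently $y^{1-\alpha}\asymp u\log(u+1)$), not $\asymp\log x/\log y$; the correct version is still enough to absorb the main term $y^{1-\alpha-0.1\eps}/(1-\alpha)\ll e^{-0.1C}\log x$ once $\eps\geq C/\log y$.
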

Before proving this lemma, we complete the proof of the proposition. 
\begin{proof}[Proof of Proposition \ref{prop:smooth-perron} assuming Lemma \ref{lem:contour-shift}]
The truncated version of Perron's formula as given in \cite[Theorems 5.2 and 5.3]{MV-book} yields
\begin{align} \label{eq:truncated-Perron}
\Psi(x,[y',y];\chi)&=\frac{1}{2\pi i}\int_{\alpha-\frac{iH}{2}}^{\alpha+\frac{iH}{2}}L(s,\chi;[y',y])\frac{x^s}{s}\mathrm ds\\
\nonumber
&\quad+O\Bigg(\frac{x^\alpha}{H}\sum_{n\in S([y',y])}\frac{\chi_0(n)}{n^\alpha}+\twosum{x/2<n<2x}{n\in S([y',y])}\chi_0(n)\min\left\{1,\frac{x}{H|x-n|}\right\}\Bigg),
\end{align}
where $\alpha=\alpha(x,y)$. 
Since $x/(H|x-n|)\leq H^{-1/2}$ whenever $|x-n|>xH^{-1/2}$, we have
$$
\min\left\{1,\frac{x}{H|x-n|}\right\}
\leq 
\begin{cases}
 1 & \text{if } |x-n|\leq xH^{-1/2}; \cr
 H^{-1/2} & \text{if } |x-n|>xH^{-1/2}.
\end{cases}
$$
Thus, the second error term in \eqref{eq:truncated-Perron} is bounded above by
\begin{align*} 
&\twosum{n\in S([y',y])}{x/2<n<2x}H^{-1/2}\chi_0(n)+\twosum{|x-n|\leq xH^{-1/2}}{n\in S([y',y])}1 \\
\nonumber
&\ll\frac{x^\alpha}{H^{1/2}}L(\alpha,\chi_0;[y',y])+\bigabs{\Psi(x+xH^{-1/2},[y',y])-\Psi(x-xH^{-1/2},[y',y])}. 
\end{align*}
Since the first error term in \eqref{eq:truncated-Perron} is smaller than the first term in the preceding line, 
it suffices to estimate the two terms in the bound above in order to bound the error in \eqref{eq:truncated-Perron}.

Concerning the first term, Lemma \ref{lem:BrTen-2.1}, combined with formulas (\ref{eq:Hil-Ten-Thm1}) and (\ref{g-m}), shows that
\begin{align} \label{eq:L-bound}
\nonumber
x^{\alpha} L(\alpha,\chi_0;[y',y])
&\ll
x^{\alpha}\sum_{n\in S([y',y])}  n^{-\alpha}  
\ll x^{\alpha} \zeta(\alpha,y) \prod_{p < y'}(1-p^{-\alpha}) \\
&\ll  \Psi(x,[y',y])\sqrt{\log x\log y}.
\end{align}
Concerning the second term, applying Lemma \ref{lem:Psi_m-approx} if $y < \exp(\log^{1/4}x)$ and Lemma \ref{lem:small-u} if $\exp(\log^{1/4}x) \leq y \leq x$ shows that
\begin{align*}
&\Psi(x(1+H^{-1/2}),[y',y])-\Psi(x(1- H^{-1/2}),[y',y])\\
&\ll \Bigbrac{ (1+H^{-1/2})^\alpha - (1-H^{-1/2})^\alpha + H^{-1/2} + (\log x)^{-1/5}} \Psi(x,[y',y])\\
&\ll (H^{-1/2} + (\log x)^{-1/5})\Psi(x,[y',y]),
\end{align*}
provided that $H \leq y^{5/6}$ so that $z = H^{1/2} \leq y^{5/12}$ in the application of Lemma \ref{lem:small-u}, and where we used an integral to bound the difference.
This shows that
\begin{align} \label{eq:truncated-Perron-with-error}
\Psi(x,[y',y];\chi)
&=\frac{1}{2\pi i}\int_{\alpha-\frac{iH}{2}}^{\alpha+\frac{iH}{2}}L(s,\chi;[y',y])\frac{x^s}{s}\mathrm ds \\
\nonumber
&\quad +O\Bigbrac{\Psi(x,[y',y])\left\{(\log x)^{-1/5}+H^{-\frac{1}{2}}\sqrt{\log x\log y}\right\}}.
\end{align}
Since $L(s,\chi;[y',y])$ is defined and has no singularities in $\Re s > 0$, shifting the line of integration to 
$\Re s=\alpha-0.8\eps$, shows that the integral above is equal to
\[
\int_{-\frac{H}{2}}^{\frac{H}{2}}L(\alpha-0.8\eps+it,\chi;[y',y])\frac{x^{\alpha-0.8\eps+it}}{\alpha-0.8\eps+it}\mathrm dt\pm\int_{\alpha-0.8\eps}^\alpha L(\sigma\pm\frac{iH}{2},\chi;[y',y])\frac{x^{\sigma\pm\frac{iH}{2}}}{\sigma\pm\frac{iH}{2}}\mathrm d\sigma,
\]
where the final expression indicates the sum over the two horizontal pieces.
Multiplying each of their integrands by a trivial factor of the form 
$L(\alpha\pm\frac{iH}{2},\chi;[y',y])^{+1-1}$ and applying Lemma \ref{lem:contour-shift}, each of the horizontal integrals is seen to be bounded by
\[
\frac{L(\alpha,\chi_0;[y',y])}{H}\int_{\alpha-0.8\eps}^\alpha x^{\frac{\alpha-\sigma}{2}}x^\sigma \mathrm d\sigma\ll\frac{x^\alpha L(\alpha,\chi_0;[y',y])}{H}.
\]
A similar argument shows that the vertical integral is bounded by
\begin{align*}
L(\alpha,\chi_0;[y',y])x^{\alpha-0.8\eps + 0.4 \eps}\int_{-H/2}^{H/2}\frac{\mathrm d t}{\alpha+|t|}
&\ll x^\alpha L(\alpha,\chi_0;[y',y])x^{-0.4\eps}\Big(\frac{1}{\alpha}+\log H\Big) \\
&\ll\sqrt{\log x\log y}\Psi(x,[y',y])x^{-0.4\eps}\log H,
\end{align*}
where we applied \eqref{eq:L-bound}. 
Putting everything together,
\[
\Psi(x,\chi;[y',y])\ll
\Psi(x,[y',y])\bigg( (\log x)^{-1/5} + \sqrt{\log x\log y}\Big(H^{-1/2}+x^{-0.4\eps}\log H \Big)\bigg),
\]
as required. It remains to prove the lemma.
\end{proof} 
\begin{proof}[Proof of Lemma \ref{lem:contour-shift}]
Reinterpreting the given difference as an integral shows that it is bounded above by
\[
(\alpha-\sigma)\sup_{\sigma\leq\sigma'\leq\alpha}\Bigabs{\frac{L'(\sigma'+it,\chi;[y',y])}{L(\sigma'+it,\chi;[y',y])}}=(\alpha-\sigma)\sup_{\sigma\leq\sigma'\leq\alpha}\Bigabs{\sum_{n\in S([y',y])}\frac{\Lambda(n)\chi(n)}{n^{\sigma'+it}}}.
\]
We may replace the summation condition in the final sum by $n \in [y',y]$ when bounding the contribution 
from proper prime powers $p^k \in [y',y]$ with $p < y'$ and $p^k>y$ with $p\leq y$ separately.
Bounding this contribution from proper prime powers trivially, the expression above is seen to be 
\begin{multline*}
\ll (\alpha-\sigma)\sup_{\sigma\leq\sigma'\leq\alpha}\Bigabs{\sum_{y'\leq p\leq y}\frac{\log p\,\chi(n)}{p^{\sigma'+it}}}+(\alpha-\sigma)\sum_{p\leq y}\frac{\log p}{p^{2(\alpha-0.8\eps)}}\\	
\ll (\alpha-\sigma)\biggbrac{\sup_{\alpha-0.8\eps\leq\sigma'\leq\alpha}\Bigabs{\sum_{ n\leq y}\frac{\Lambda(n)\chi(n)}{n^{\sigma'+it}}}+\sup_{\alpha-0.8\eps\leq\sigma'\leq\alpha}\Bigabs{\sum_{ n\leq y'}\frac{\Lambda(n)\chi(n)}{n^{\sigma'+it}}}+\frac{y^{1-\alpha-0.1\eps}}{1-\alpha}+\frac{1}{\eps}},
\end{multline*}
as $2(\alpha-0.8\eps)\geq\alpha+0.4\eps$ if $\eps\leq \alpha/2$. 
In the case that condition (i) of Proposition \ref{prop:smooth-perron} holds, that is $y\geq(Hr)^C$, it follows from Harper \cite[Lemma 1]{harper-BV} that 
\[
\sup_{\alpha-0.8\eps\leq\sigma'\leq\alpha}\Bigabs{ \sum_{ n\leq y} \frac{ \Lambda(n)\chi(n) }{ n^{\sigma'+it} } }\ll  \frac{ y^{1-\alpha-0.1\eps }}{ 1-\alpha } +\log(rH)+\log^{0.9}q+\frac{1}{\eps}
\]
since $\log^2(qyH)/H \ll y^{-0.9\eps}$ whenever $y^{0.9\eps}\log^2x \leq H$. 
Otherwise, condition (ii) of Proposition \ref{prop:smooth-perron} holds by assumption, and we have
$\log^2(qyH) < y^{2\eps/40} = y^{0.05 \eps}$.
Thus it follows from \cite[Lemma 2]{harper-BV} that the previous estimate holds in this case as well. 

Concerning the sum over $n \leq y'$, we shall show that for all sufficiently large $x$,
$$
\sup_{\alpha-0.8\eps\leq\sigma'\leq\alpha}\Bigabs{ \sum_{ n\leq y'} \frac{ \Lambda(n)\chi(n) }{ n^{\sigma'+it}}}
< \frac{\log x}{4}.
$$
To see this, note that if $1-1/\log y' < \sigma' \leq 1$, then
$$
\Bigabs{ \sum_{ n\leq y'} \frac{ \Lambda(n)\chi(n) }{ n^{\sigma'+it}}} \leq
\sum_{ n\leq y'} \frac{ \Lambda(n)}{n^{\sigma'}} \ll \log y' \ll K' \log_2 x = o(\log x).
$$
If $\alpha-0.8\eps \leq \sigma' < 1-1/\log y'$, then partial summation and the prime number theorem imply that
$$
\Bigabs{ \sum_{ n\leq y'} \frac{ \Lambda(n)\chi(n) }{ n^{\sigma'+it}}}
\ll  \frac{{y'}^{1-\sigma'}}{ 1-\sigma'} 
\ll  {y'}^{1-\sigma'}\log y'
\ll  {y'}^{1-\alpha} y'^{0.8\eps} \log y'
\ll  {y}^{(1-\alpha)/2} (\log x)^{0.4} \log y',
$$
where we also used that $\eps \leq 1/(2K')$ and $y' < y^{1/2}$.
On recalling the estimate \eqref{alpha-x-y}, that is
$1-\alpha = (\log (u \log (u+1))+O(1))/\log y$, and noting that $\log (u+1) \ll \log_2 x$, the above is seen to be bounded by
\begin{align*}
\ll  (u \log (u+1))^{1/2} (\log x)^{0.4} \log y'
\ll  \frac{ K' (\log x)^{9/10} (\log_2 x)^{3/2}}{(\log y)^{1/2}} = o(\log x),
\end{align*}
as required.

Collecting everything together, we conclude that
\begin{align*}
 \bigabs{\log L(\sigma+it,\chi;[y',y])&-\log L(\alpha+it,\chi;[y',y])}\\
&\ll(\alpha-\sigma)\left(
\Big(y^{-0.1\eps}+ \frac{1}{4}\Big)\log x +\log(rH)+\log^{0.9}q+\frac{1}{\eps}\right).
\end{align*}
Since $\eps> C/ \log y$, $q< x$ and $r,H\leq x^{\rho}$,  we obtain
\begin{align*}
\bigabs{\log L(\sigma+it,\chi;[y',y])&-\log L(\alpha+it,\chi;[y',y])} \\
&\ll(\alpha-\sigma)\left(\bigbrac{e^{-0.1C} +4^{-1}+2\rho+C^{-1}}\log x+\log^{0.9} x\right).
\end{align*}
The claimed bound thus follows provided $C>1$ is sufficiently large, $0<\rho<1$ sufficiently small and $x$ is sufficiently large.
\end{proof}

\begin{proof}[Proof of Theorem \ref{thm:equid-in-APs}]
Since $(a,q)= 1$, character orthogonality implies that
\begin{align*}
\Psi(x,[y',y];q,a) &= \frac{1}{\phi(q)} \sum_{\chi\Mod q} \bar\chi(a) \sum_{n\in S(x,[y',y])} \chi(n)\\
&=\frac{1}{\phi(q)}\Psi(x,[y',y];\chi_0)+\frac{1}{\phi(q)} \twosum{\chi\Mod q}{\chi\neq\chi_0} \bar\chi(a)\Psi(x,[y',y];\chi).
\end{align*}
If $p\mid q \Rightarrow p<y'$, then 
 $$
 \Psi(x,[y',y],\chi_0) 
 = \sum_{\substack{n\leq x,\, (n,q)=1}} 1_{S([y',y])}(n)
 =  \Psi(x,[y',y]).
 $$ 
To complete the proof, it thus suffices to show that
\[
\Psi(x,[y',y];\chi)\ll \Psi(x,[y',y]) \log^{-1/5} x
\]
uniformly for all non-principal characters $\chi\Mod q$.

The classical zero-free region (cf.\ \cite[Theorem 11.3]{MV-book}) implies that there is an absolute constant 
$0<\kappa \leq 1$ such that $\prod_{\chi\neq\chi_0} L(\sigma+it,\chi)$ has at most one, necessarily simple, zero in the region 
$$\{s: \sigma > 1-\kappa /\log (qH), |t|< H\}.$$
If such an exceptional zero exists, let $\chi_{\mathrm{Siegel}}$ denote the corresponding character. 
We consider the cases $\chi \neq \chi_{\mathrm{Siegel}}$ and $\chi=\chi_{\mathrm{Siegel}}$ separately.

Suppose first that $\chi \neq \chi_{\mathrm{Siegel}}$ and let $r\leq q \leq (\log x)^{K''}$ denote the conductor of 
$\chi$. 
We claim that the conditions of Proposition \ref{prop:smooth-perron} are satisfied if we choose
$$H = \min\set{ y^{\kappa/(2C)}, \exp(\log^{2/3} x), y^{5/6}, x^{\rho}} \quad \text{ and } \quad
\eps= \kappa/\log(qH).$$
Suppose that $x$ is sufficiently large to ensure that $\exp(\log^{2/3} x) < x^{\rho}$ and 
suppose that $C>1 \geq \kappa$.
If $K>2CK''/\kappa$ (or, in other words, if $K/K''$ is sufficiently large), then
$$q \leq (\log x)^{K''}<(\log x)^{\kappa K/(2C)}<y^{\kappa/(2C)}.$$
We thus have $qH<y^{\kappa/C}$ and, hence, $\eps= \kappa/\log(qH) > C / \log y$.
The upper bound $\eps< \min(\alpha/ 2, 1/(2K''))$ holds as soon as $x$, and hence $H$, is sufficiently large.
Concerning the conditions on $H$, suppose first that $H=\exp(\log^{2/3} x)$.
In this case, $\eps \asymp \log^{-2/3} x$ and $y^{0.9 \eps} \ll \exp(\log^{1/3}x)$. 
Hence, $H>y^{0.9\eps} \log^2 x$ holds as soon as $x$ is sufficiently large.
Next, suppose that $H \neq \exp(\log^{2/3} x)$. 
In this case, $\min(y^{\kappa/(2C)}, y^{5/6}) \leq \exp(\log^{2/3} x)$ and 
$\eps \asymp 1/\log y \gg 1/\log^{2/3} x$.
From $\eps \asymp 1/\log y$ we obtain $y^{0.9 \eps} \ll 1$ and, thus, $H>y^{0.9\eps} \log^2 x$ holds provided that the exponent $K$ in $y > (\log x)^K$ satisfies $\min(5K/6,K \kappa/(2C))>2$ and provided that $x$ is sufficiently large. 
With the below application of Proposition \ref{prop:smooth-perron} in mind, observe that the lower bound
$\eps \gg 1/\log^{2/3} x$, which holds in either of the above two cases, implies that
$$
x^{-0.4 \eps} \ll \exp (- c\log^{1/3} x), \quad (c>0).
$$
Finally, observe that condition (i) of Proposition \ref{prop:smooth-perron} is satisfied since
$rH \leq qH \leq y^{\kappa/C} \leq y^{1/C}$ in view of $\kappa \leq 1$.

This shows that Proposition \ref{prop:smooth-perron} applies to all $\chi \Mod q$, $\chi \neq \chi_{\mathrm{Siegel}}$ 
and yields
\begin{align*}
\Psi(x,[y',y];\chi) 
&\ll \Psi(x,[y',y]) \bigg( (\log x)^{-1/5} + \sqrt{\log x\log y}\Big(H^{-1/2}+x^{-0.4\eps}\log H \Big)\bigg) \\
&\ll \Psi(x,[y',y]) (\log x)^{-1/5}.
\end{align*}
To see that the latter bound holds, we shall show that the second and third term in the bound above make a negligible contribution.
Indeed, the third term is bounded as follows:
\begin{align*}
\sqrt{\log x\log y}~ x^{-0.4\eps}\log H 
&\ll \sqrt{\log x\log y} ~\exp(-c\log^{1/3} x)(\log y + \log^{2/3} x) \\
&\ll \exp(-c'\log^{1/3} x),
\end{align*}
where $c,c'> 0$ are positive constants. 
Further, if $H = \exp(\log^{2/3} x)$, then the second term is bounded by:
\begin{align*}
 \sqrt{\log x\log y} ~H^{-1/2} 
 \ll \exp(- (\log x)^{2/3}/2 + \log_2 x),
\end{align*}
which is negligible.
If $H \neq \exp(\log^{2/3} x)$, then $\log y \ll \log^{2/3} x$ and $\eps \asymp 1/(\log y)$, and the second term
satisfies:
\begin{align*}
 \sqrt{\log x\log y} ~H^{-1/2} 
 \ll \sqrt{\log x\log y} ~\frac{y^{-9 \eps/20}}{\log x}
 \ll \left(\frac{\log y}{\log x}\right)^{1/2} \ll (\log x)^{-1/3},
\end{align*}
which is also negligible. This concludes the case of unexceptional characters.

We now consider the contribution of the potential exceptional character $\chi=\chi_{\mathrm{Siegel}}$. 
Following Harper \cite[\S3.3]{harper-BV}, we split the analysis into two cases according to the size of $y$.
Suppose first that $y'^2 \leq y \leq x^{1/ (\log \log x)^2}$.
Applying the truncated Perron formula \eqref{eq:truncated-Perron-with-error} with $H=y^{5/6}$,
we obtain
\begin{align} \label{eq:truncated-perron2}
\Psi(x,[y',y];\chi) 
&= \frac{1}{2\pi i} \int_{\alpha-iy^{5/6}/2}^{\alpha+iy^{5/6}/2}
L(s,\chi;[y',y]) x^s \frac{\mathrm{d}s}{s} \\ \nonumber
&\quad +O\Bigbrac{\Psi(x,[y',y])\left\{(\log x)^{-1/5}+y^{-\frac{5}{12}}\sqrt{\log x\log y}\right\}}.
\end{align}
As in \cite{harper-BV}, we proceed by bounding the integrand in absolute value from above.
The argument used in \cite[\S3.3]{harper-BV} is partly based on ideas from Soundararajan \cite{Sound}. 
In our case, only small modifications are required.
To start with, we have
\begin{align*}
&\Bigabs{ \frac{ L(\alpha+it,\chi;[y',y]) }{ L(\alpha,\chi_0;[y',y]) } }
=\prod_{y'\leq p\leq y} 
\left| \frac{1-\chi(p)p^{-\alpha-it}}{ 1-\chi_0(p)p^{-\alpha}} \right|^{-1} 
\leq \prod_{\substack{y'\leq p\leq y \\ p\nmid q}} \left| 1+\sum_{k\geq1} \frac{ 1-\Re(\chi(p)p^{-it}) }{ p^{k\alpha} } \right|^{-1}\\
&\leq \exp \biggset{-\twosum{y'\leq p\leq y}{p\nmid q} \frac{ 1-\Re(\chi(p)p^{-it}) }{ p^{\alpha} } }
\leq \exp \biggset{-\twosum{y^{1/2}\leq p\leq y}{p\nmid q} \frac{ 1-\Re(\chi(p)p^{-it}) }{ p^{\alpha} } },
\end{align*}
since $y'^2<y$.
The next step is to show that in the final expression the argument of the exponential function satisfies 
\[
\twosum{y^{1/2}\leq p\leq y}{p\nmid q} \frac{ 1-\Re\chi(p)p^{-it} }{ p^{\alpha} } 
\gg\frac{u}{\log^2(u+1)},
\]
provided that $|t|\leq y/2$ and $y \leq x^{1/(\log \log x)^2}$.
The proof of this bound proceeds by splitting the range of $t$ into $|t| \leq 1/(2 \log y)$ and 
$1/(2 \log y) \leq |t| \leq y/2$.
On the latter range, the proof of the second part of Soundararajan's \cite[Lemma 5.2]{Sound} is employed.
In that proof, all sums over primes can be restricted to the range $y^{1/2} \leq p \leq y$, $p\nmid q$.
On the former range, Harper's argument (cf.\ \cite[p.17]{harper-BV}) applies directly.
For sake of completeness, we summarise this argument here.
Since $p \leq y$ and $t \leq 1/(2 \log y)$, we have $|t \log p| \leq 1/2$ and, by analysing the cases 
$\chi(p) = 1$ and $\chi(p) = -1$ for the quadratic character $\chi = \chi_{\mathrm{Siegel}}$ separately,
one obtains
\begin{align*}
 \sum_{y^{1/2}\leq p\leq y, p\nmid q} \frac{ 1-\Re\chi(p)p^{-it} }{ p^{\alpha} } 
 &\gg \sum_{y^{1/2}\leq p\leq y, p\nmid q} \frac{ 1-\chi(p) }{ p^{\alpha} } \\
 &\geq \frac{1}{\log y} \left( \sum_{y^{1/2}\leq p\leq y, p\nmid q} \frac{\log p}{p^{\alpha}} 
 - \sum_{y^{1/2}\leq p\leq y, p\nmid q} \frac{\chi_{\mathrm{Siegel}}^*(p)\log p}{p^{\alpha}}  \right),
\end{align*}
where $\chi_{\mathrm{Siegel}}^*$ is the primitive character that induces $\chi_{\mathrm{Siegel}}$.
Harper's argument is based on a combination of partial summation and the asymptotic evaluation of 
$\sum_{n \leq x} \Lambda(n) \chi(n)$ together with the observation that a Siegel zero has a negative contribution in that asymptotic evaluation.
The error terms in the asymptotic evaluation in \cite[Theorem 11.16 and Exercise 11.3.1.2]{MV-book} are small when 
$q < (\log x)^{K''}$.
Thanks to the minus sign in the final expression above, the contribution from the Siegel zero to this expression is positive and can therefore be ignored when seeking a lower bound.
See \cite[\S3.3]{harper-BV} for the remaining details.

Returning to \eqref{eq:truncated-perron2}, the above bounds and an application of \eqref{eq:L-bound} yield
\begin{align*}
\Psi(x,[y',y];\chi)&\ll L(\alpha,\chi_0;[y',y]) e^{-c_3 u(\log u)^{-2}} x^\alpha \log y + 
\frac{\Psi(x,[y',y])}{\log^{1/5} x}\\
&\ll \Psi(x,[y',y])\bigg(\sqrt{\log x} (\log y)^{3/2} e^{-c_3 u(\log u)^{-2}} + \log^{-1/5} x \bigg).
\end{align*}
Since $u = (\log x)/\log y  \gg \log \log^2 x$ when $y\leq \exp\bigbrac{ (\log x)/\log \log^2 x}$, the first 
term in the bound is $O_A(\Psi(x,[y',y])(\log x)^{-A})$ and we thus have
\[
\Psi(x,[y',y];\chi) \ll (\log x)^{-1/5} \Psi(x,[y',y]) 
\]
if $\chi=\chi_{\mathrm{Siegel}}$ and $y \leq x^{ 1/(\log \log x)^2 }$.

It remains to analyse the range $x^{ 1/(\log \log x)^2 }\leq y\leq x$ of $y$. 
In this case too, we follow the overall strategy used in \cite{harper-BV} but need to make a number of technical changes arising from our slightly different situation.
Observe that
\[
\Psi(x,[y',y];\chi) = \sum_{n\in S(x,[y',y])} \chi(n) 
= \twosum{ n\in S(x,y) }{ (n,P(y'))=1}\chi(n)
\leq \sum_{\substack{d\leq x \\ d \mid P(y')}} \Bigg| \sum_{n \in S(x/d,y)} \chi(n) \Bigg|,
\]
where we applied M\"obius inversion in order to remove the co-primality condition.
Our next aim is to show that, at the expense of an acceptable error term, the sum over $d$ can be further truncated 
in such a way that $\log (x/d) \asymp \log x$ uniformly for all values of $d$ that remain in the sum.
Recall that $\alpha(x,y)>1-1/K + o(1)$ and that $\Psi(x,y') \leq x^{1-1/K' + o(1)}$. 
Let $K^*>K$ be sufficiently large so that $\alpha(x,y)(1-1/K^*)>1-1/K'$.
Then the $m=1$ case of \cite[Theorem 2.4(i)]{Breteche-Tenen-2005}, which applies uniformly for $1 \leq d \leq x$, 
implies that
\begin{align} \label{eq:harper-truncation}
\nonumber
\sum_{\substack{x^{1-1/K^*} \leq d \leq x\\ d \mid P(y')}} \Psi(x/d,y)
&\ll \Psi(x,y) x^{-(1-1/K^*)\alpha} \sum_{x^{1-1/K^*} \leq d \leq x} \1_{S(y')}(d) \\
&\ll \Psi(x,y) x^{-(1-1/K^*)\alpha} x^{1-1/K' + o(1)}
 \ll \Psi(x,y) x^{-c_4}
\end{align}
for some positive constant $c_4>0$.
Assuming for the moment that this bound is sufficiently good, it remains to analyse the expression
$$
\sum_{\substack{d\leq x^{1-1/K^*} \\ d \mid P(y')}} 
\Bigg| \sum_{\substack{n \in S(x/d,y)}} \chi(n) \Bigg|,
$$
where $\log (x/d) \asymp \log x$. 
Fouvry and Tenenbaum study bounds on character sums such as the inner sum above in 
\cite[Lemme 2.1]{fouvry-tenenbaum-1996} (see also their Lemme 2.2 and its deduction) in a larger range for $y$ than the one under current consideration.
In view of the truncation of the sum over $d$ and the bound $q \leq (\log x)^{K''}$ on the modulus of 
$\chi$, \cite[Lemma 2.1 (i)]{fouvry-tenenbaum-1996} applies uniformly to the above character sum for each $d$.
By invoking \cite[Theorem 2.4(i)]{Breteche-Tenen-2005} in the case $m=1$, we obtain
\begin{align} \label{eq:harper-aux}
\nonumber
\sum_{\substack{d\leq x^{1-1/K^*} \\ d \mid P(y')}} 
\Bigg| \sum_{\substack{n \in S(x/d,y)}} \chi(n) \Bigg|
&\ll \sum_{\substack{d\leq x^{1-1/K^*} \\ d \mid P(y')}} d^{-\alpha} \Psi(x,y) \exp \bigbrac{-c\sqrt{\log y}} \\
& \ll  \exp \bigbrac{-c\sqrt{\log y}} \Psi(x,y) \prod_{p\leq y'} (1-p^{-\alpha})^{-1} .
\end{align}

It remains to express the bounds \eqref{eq:harper-aux} and \eqref{eq:harper-truncation} in terms of 
$\Psi(x,[y',y])$.
By Lemma \ref{lem:BrTen-2.1} and the first part of Lemma \ref{lem:MV}, we have 
$$\Psi(x,y)\ll \Psi(x,[y',y]) \prod_{p\leq y'} (1-p^{-\alpha})^{-1} .$$
Since $\log y' \asymp \log_2 x$ and $1- \alpha \ll (\log_2 x)^{2+\eps}/\log x$ 
for $x^{ 1/(\log \log x)^2 } \leq y \leq x$, it follows from the first part of Lemma \ref{lem:MV} (or direct computation) that
$\prod_{p\leq y'} (1-p^{-\alpha})^{-1} \ll \log y'$.
Hence, it follows from \eqref{eq:harper-aux} and \eqref{eq:harper-truncation} that
$$
\Psi(x,[y',y];\chi) \ll_A (\log x)^{-A} \Psi(x,[y',y]).
$$
if $\chi=\chi_{\mathrm{Siegel}}$ and $x^{ 1/(\log \log x)^2 } \leq y \leq x$.
Taking $A = 1/5$ completes the proof.
\end{proof}

\subsection{$W$-trick and equidistribution of weighted smooth numbers in short APs}
\label{sec:W_trick_and_equid}
The choice of our weight factor $n^{1-\alpha}$ ensures that the weighted version $h_{[y',y]}(n)$ of $\1_{S([y',y])}$ is equidistributed in short intervals and, by invoking Theorem \ref{thm:equid-in-APs}, also in short progressions 
$n \equiv a \Mod{q}$, provided the residue class $a$ is coprime to the modulus $q$ of the progression and $q < y'$.
Establishing non-correlation with nilsequences later requires equidistribution in almost all residue classes
$a \Mod{q}$.
Given any $Q \in \NN$ and $1 \leq A < Q$, $\gcd(Q,A)=1$, we consider the following renormalised restrictions  
\begin{equation} \label{def:h_y--W,A}
h_{[y',y]}^{(Q,A)}(m) = \frac{\phi(Q)}{Q}h_{[y',y]}(Q m + A),\qquad (m  \sim (N-A)/Q),
\end{equation}
and 
\begin{equation} \label{def:g_y--W,A}
g_{[y',y]}^{(Q,A)}(m) = \frac{\phi(Q)}{Q}g_{[y',y]}(Qm + A), \qquad (m \in \NN).
\end{equation}
Let $w(N) = \frac{1}{2}\log_3 N$, let $W=W(N) = \prod_{p\leq w(N)}p$ and observe for later use that 
$w(N) = (\log_3 N')/2 + o(1)$ for any $N' \in (N/\log N, N]$. 
We will mainly be interested in the case where 
\begin{itemize}
 \item $Q = W(N)$, or
 \item $Q = \W$ for some $\W \leq (\log N)^{K''}$ such that $W(N)|\W$ and $p|\W \Rightarrow p\leq w(N)$
\end{itemize}
in \eqref{def:h_y--W,A} and \eqref{def:g_y--W,A}.

The following lemma is an immediate consequence of Theorem \ref{thm:equid-in-APs} and Lemma \ref{lem:short-int}.

\begin{lemma}[Equidistribution in short progression]\label{lem:equid-short-progressions}
Let $K', K'' >0$ and $K > \max(2K',2)$ be constants and let $N \geq 2$ be a parameter. 
Suppose that $y' \leq (\log N)^{K'}$, and that $(\log N)^{K} < y \leq N$. 
Then, provided that $K$ and $K/K''$ are sufficiently large, the following estimates hold uniformly for all $N_0$, $N_1$ such that $$N < N_0 < N_0 + N_1 \leq 2N$$ and 
$N_1 \gg N \exp(-(\log^{1/4} N)/4)$, for all $q \leq (\log N)^{K''}$ such that $p\mid q \Rightarrow p<y'$ 
and all $a \Mod{q}$ such that $\gcd(a,q)=1$.
Then
\begin{align*}
\sum_{\substack{N_0<m\leq N_0+N_1 \\ m\equiv a \Mod{q}}} h_{[y',y]}(m) -  \frac{N_1}{\phi(q)}
&\ll \frac{N_1}{\phi(q)} \frac{\log_3N}{\log_2 N} + \frac{N}{\phi(q) \log^{1/24}N} + \frac{N}{\log^{1/5}N}.
\end{align*}
\end{lemma}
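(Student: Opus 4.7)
The plan is to deduce this lemma as a direct combination of Theorem~\ref{thm:equid-in-APs} (which controls $\Psi(t,[y',y];q,a)$ for each $t$) with Lemma~\ref{lem:short-int} (which handles the unrestricted sum in the short interval). The hypotheses on $q$, $y'$, and $y$ of the present lemma are precisely those required to invoke Theorem~\ref{thm:equid-in-APs}, and the assumption on $N_1$ is precisely the one needed for Lemma~\ref{lem:short-int}.

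First I would unfold the definition \eqref{def:h_y} to rewrite the target sum as
\[
\sum_{\substack{N_0<m\leq N_0+N_1 \\ m\equiv a \Mod q}} h_{[y',y]}(m)
= \frac{N^{\alpha}}{\alpha\,\Psi(N,[y',y])}
\sum_{\substack{N_0<n\leq N_0+N_1 \\ n\equiv a \Mod q\\ n\in S([y',y])}} n^{1-\alpha},
\]
with $\alpha=\alpha(N,y)$. Abel summation on the inner sum gives an expression identical in shape to \eqref{eq:partialsum}, but with the counting function $\Psi(t,[y',y])$ replaced by its restriction $\Psi(t,[y',y];q,a)$ at the two endpoints and inside the integral over $t\in[N_0,N_0+N_1]$.

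Next, since $N\leq t\leq 2N$ forces $\log t\asymp\log N$, Theorem~\ref{thm:equid-in-APs} yields
\[
\Psi(t,[y',y];q,a)
= \frac{\Psi(t,[y',y])}{\phi(q)}
  + O\!\left(\frac{\Psi(t,[y',y])}{\phi(q)\log^{1/5} N}\right)
\]
uniformly for all such $t$ (here the hypotheses $q\leq(\log N)^{K''}$, $p\mid q\Rightarrow p<y'$, $(a,q)=1$ are the ones assumed in the current lemma, and $K$, $K/K''$ are taken large enough). Substituting this into the Abel-summation expression, undoing partial summation on the main term $\Psi(t,[y',y])/\phi(q)$, and using $\Psi(t,[y',y])\ll\Psi(N,[y',y])$ and $t^{-\alpha}\asymp N^{-\alpha}$ to estimate the error contribution, I obtain
\[
\sum_{\substack{N_0<m\leq N_0+N_1 \\ m\equiv a \Mod q}} h_{[y',y]}(m)
= \frac{1}{\phi(q)}\sum_{N_0<m\leq N_0+N_1} h_{[y',y]}(m)
+ O\!\left(\frac{N}{\phi(q)\log^{1/5} N}\right).
\]

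Finally, Lemma~\ref{lem:short-int} controls the unrestricted sum on the right, contributing a main term $N_1/\phi(q)$ and errors $N_1(\log_3 N)/(\phi(q)\log_2 N)$ and $N/(\phi(q)\log^{1/24} N)$. Combining the three error terms and using $\phi(q)^{-1}\leq 1$ to absorb the $\log^{-1/5}N$ error into the stated $N/\log^{1/5}N$ term completes the proof. I do not anticipate a serious obstacle: the argument is a bookkeeping exercise built on the two previous results, with the only mild care required being to verify that Theorem~\ref{thm:equid-in-APs} may be applied with $x=t$ ranging through $[N_0,N_0+N_1]$ (which it can, since $\log t\asymp\log N$ and all hypotheses are preserved).
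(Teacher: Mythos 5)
Your proposal is correct and follows essentially the same route as the paper: partial summation on the weighted sum, insertion of the asymptotic from Theorem~\ref{thm:equid-in-APs} at each point $t\in[N_0,N_0+N_1]$, reassembly of the main term into the unrestricted short-interval sum, and a final application of Lemma~\ref{lem:short-int}. The only detail worth making explicit is that the bound $\Psi(t,[y',y])\ll\Psi(N,[y',y])$ for $t\leq 2N$ used in estimating the error contribution is justified by Lemmas~\ref{lem:BrTen-2.4.i-ii}~(i) and~\ref{lem:BrTen-2.1}, exactly as the paper notes.
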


\begin{proof}
Recall the definition \eqref{def:h_y} of $h_{[y',y]}(n)$. 
On omitting the weight factor $N^\alpha/ (\alpha \Psi(N,[y',y]))$ for the moment, partial summation yields
\begin{align*}
\sum_{N_0<n\leq N_0+N_1} &n^{1-\alpha} \1_{n\equiv a\Mod{q}}1_{S([y',y])}(n)\\
&=\bigbrac{N_0+N_1}^{1-\alpha}\Psi(N_0+N_1,[y',y];q,a)
-(N_0)^{1-\alpha}\Psi(N_0,[y',y];q,a)\\
&\quad -(1-\alpha)\int_{N_0}^{N_0+N_1}\frac{\Psi(t,[y',y];q,a)}{t^\alpha}\mathrm dt.	
\end{align*}
Theorem \ref{thm:equid-in-APs} implies that
\begin{equation} \label{eq:aux2}
\Psi(t,[y',y];q,a)=\frac{\Psi(t,[y',y])}{\phi(q)}\Big(1+O(\log^{-1/5} N)\Big)
\end{equation}
whenever $N\leq t\leq 2N$ and provided that $K$ is sufficiently large.
Inserting this expansion into the previous expression and recalling the partial summation application
\eqref{eq:partialsum}, we see that the contribution from the main term in \eqref{eq:aux2} can be reinterpreted
as a sum over $n^{1-\alpha} \1_{S([y',y])}$ that is not restricted to a progression.
More precisely,
\begin{align} \label{eq:unweighted}
 \nonumber
&\phi(q)\sum_{N_0<n\leq N_0+N_1}n^{1-\alpha} \1_{n\equiv a\Mod{q}} \1_{S([y',y])}(n)\\
&= \sum_{N_0<n\leq N_0+N_1} n^{1-\alpha} \1_{S([y',y])}(n)
+ O \left(\frac{ N^{1-\alpha} \Psi(N,[y',y])}{\log^{1/5}N}\right).
\end{align}
The shape of the error term above follows by several applications of Lemmas \ref{lem:BrTen-2.4.i-ii} (i) and \ref{lem:BrTen-2.1}.
On reinserting the weights into \eqref{eq:unweighted} and invoking Lemma \ref{lem:short-int}, we obtain
\begin{align*}
&\sum_{\substack{ N_0\leq m\leq N_0+N_1 \\ m\equiv a \Mod q}} h_{[y',y]}(m) 
= \frac{1}{\phi(q)} \sum_{N_0\leq n\leq N_0+N_1} h_{[y',y]} (n) + O \left(\frac{N}{\log^{1/5}N} \right)\\
&= \frac{N_1}{\phi(q)} \left(1+ O\Bigbrac{ \frac{\log_3 N}{\log_2N}}\right) 
   + O\left(\frac{N}{\phi(q) (\log N)^{1/24}}\right)
   + O \left(\frac{N}{\log^{1/5}N} \right) 
\end{align*}
provided that $N_1 \gg N \exp(-(\log N)^{1/4}/4)$.
\end{proof}

By analysing the asymptotic order of the weight factor in $h_{[y',y]}$, we may deduce from the previous result an upper bound on the unweighted count of $[y',y]$-smooth numbers in short \emph{intervals}.
This bound will be integral in the application of an bootstrapping argument in Section \ref{sec:bootstrap}.

\begin{corollary}[{$[y',y]$}-smooth numbers in short intervals] \label{cor:W-tricked-equid-short-APs}
Let $K', K'' >0$ and $K > \max(2K',2)$ be constants and let $N \geq 2$ be a parameter.
Suppose that $1 \leq y' \leq (\log N)^{K'}$, and that $(\log N)^{K} < y \leq N$. 
Let $P \subseteq [N,2N]$ be any progression of length 
$|P| \geq N \exp(-(\log^{1/4} N)/4)$ and common difference $q \leq (\log N)^{K''}$ such that 
$p\mid q \Rightarrow p<y'$.

Then the following assertions hold provided that $K$ and $K/K''$ are sufficiently large.

 \begin{enumerate}
  \item Suppose that $|P| \geq L:=N/(\log N)^{\ell}$ for some constant $\ell>0$. Then
 $$
 |S(2N,[y',y]) \cap P| \ll \Delta \frac{\Psi(2N,[y',y]) \cdot |P|}{N}~, 
 $$
 where $\Delta = \log_3 N + (\log N)^{\ell - 1/24}$.
 \item  If, instead, $|P| \geq L:=N \exp(-C \varpi(N))$ and $q \leq N/L \leq \exp(C \varpi(N)) = o(\log N)$ for some 
 $\varpi(N) = o(\log_2 N)$, then 
 $$
 |S(2N,[y',y]) \cap P| \ll \Delta \frac{\Psi(2N,[y',y]) \cdot |P|}{N}~, 
 $$
 where $\Delta = \log \varpi(N)$.
 \end{enumerate}
\end{corollary}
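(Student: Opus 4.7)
The plan is to convert the equidistribution of the weighted function $h_{[y',y]}$ established in Lemma~\ref{lem:equid-short-progressions} into an unweighted upper bound on $|P \cap S([y',y])|$. Writing $P = \{a, a+q, \ldots, a+(|P|-1)q\} \subseteq [N,2N]$, I would first dispose of the trivial case $\gcd(a,q) > 1$: since every prime factor of $q$ is $<y'$ while every prime factor of an element of $S([y',y])$ is $\geq y'$, no element of $P$ is $[y',y]$-smooth and the bound is vacuous. So assume $\gcd(a,q)=1$.

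Next, I would observe that for $n \in [N,2N]$ the hypothesis $y > (\log N)^K$ together with \eqref{alpha-x-y} forces $\alpha = \alpha(N,y) \in [1/2, 1)$, whence $h_{[y',y]}(n) \asymp N/\Psi(N,[y',y])$ on smooth $n$ in this range. Combined with $\Psi(N,[y',y]) \asymp \Psi(2N,[y',y])$ from Lemma~\ref{lem:BrTen-2.4.i-ii}(ii), this yields the sandwich
\[
|P \cap S([y',y])| \ll \frac{\Psi(2N,[y',y])}{N}\sum_{m \in P} h_{[y',y]}(m).
\]
Setting $N_1 = |P|q + O(q)$ and choosing $N_0$ so that $P \subseteq (N_0, N_0+N_1] \subseteq [N,2N]$, the lower bound on $|P|$ in either case guarantees $N_1 \gg N\exp(-(\log N)^{1/4}/4)$, so Lemma~\ref{lem:equid-short-progressions} applies (with $K''$ chosen in terms of $\ell$ or $C$ so that $q \leq (\log N)^{K''}$ and $K/K''$ is sufficiently large) and gives
\[
\sum_{m \in P} h_{[y',y]}(m) \ll \frac{|P|q}{\phi(q)} + \frac{N}{\phi(q)(\log N)^{1/24}} + \frac{N}{(\log N)^{1/5}}.
\]

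The remaining work is to verify that each of the three resulting contributions is $\ll \Delta |P|$. For the first I would invoke the classical Mertens bound $q/\phi(q) \ll \log\log(q+2)$: in case (1), the constraint $|P|q \leq N$ forces $q \leq (\log N)^\ell$, giving $q/\phi(q) \ll \log_3 N$; in case (2), $q \leq \exp(C\varpi(N))$ gives $q/\phi(q) \ll 1 + \log\varpi(N)$. For the second term, in case (1) the inequality $N \leq (\log N)^\ell |P|$ gives $N/(\log N)^{1/24} \leq (\log N)^{\ell - 1/24}|P|$, matching the second summand of $\Delta$; in case (2), the assumption $\varpi(N) = o(\log_2 N)$ makes $N/|P| \leq (\log N)^{o(1)}$, so this term is $o(|P|)$ and is absorbed into $\log\varpi(N)\cdot|P|$. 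The third term $N/(\log N)^{1/5}$ is dominated by the second in both cases.

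The main obstacle is essentially bookkeeping: matching the size restrictions on $q$ (coming from $|P|q \ll N$) with the two summands of $\Delta$ via the Mertens estimate, and verifying that the $\exp(-(\log N)^{1/4}/4)$ lower bound on $N_1$ is met in both regimes. No new analytic input is required beyond Lemma~\ref{lem:equid-short-progressions}, Lemma~\ref{lem:BrTen-2.4.i-ii}, and the asymptotic for $\alpha(N,y)$.
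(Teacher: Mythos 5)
Your proposal is correct and follows essentially the same route as the paper: bound $\1_{S([y',y])}$ on $[N,2N]$ by $h_{[y',y]}\cdot\Psi(2N,[y',y])/N$ using $h_{[y',y]}\asymp N/\Psi(N,[y',y])$ there, apply Lemma~\ref{lem:equid-short-progressions}, and check that $q/\phi(q)$ and the error terms are $\ll\Delta$ in each regime (the paper derives $q/\phi(q)\ll\log_2 q$ by hand where you cite the Mertens bound, which is equivalent). Your explicit disposal of the non-coprime case $\gcd(a,q)>1$ is a small but legitimate addition the paper leaves implicit.
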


\begin{proof}
Note that if $n \in [N,2N]$ is an element of $S([y',y])$, 
then $h_{[y',y]}(n) \neq 0$ and $N^{1-\alpha} \leq n^{1-\alpha} \leq (2N)^{1-\alpha}$, so that
\begin{equation} \label{eq:constant-order}
h_{[y',y]}(n) = \frac{N^{\alpha}}{\Psi(N,[y',y])} \frac{n^{1-\alpha}}{\alpha} 
\asymp \frac{N}{\Psi(N,[y',y])}, \qquad (n \in [N,2N] \cap S([y',y])).
\end{equation}
We shall show that in either of the two cases in the statement
$$
\frac{q}{\phi(q)} \ll \Delta,
$$
which then leaves us with the task of deducing from Lemma \ref{lem:equid-short-progressions} that 
$$
\sum_{n \in P} h_{[y',y]}(n) \ll \Delta |P|.
$$

In order to bound $q/\phi(q)$, consider the decomposition $q=q_1q_2$, where $q_1$ is composed of primes $p\leq \varpi$ and $q_2$ is composed of primes $p>\varpi$.
Then 
$$
\prod_{p|q_1}(1-p^{-1})^{-1} \ll \log \varpi.
$$
Concerning the product over prime divisors of $q_2$, we obtain 
\begin{equation} \label{eq:phi-and-W-trick}
\prod_{p \mid q, p>\varpi} (1 - p^{-1})^{-1}
\ll \exp\Big(\sum_{p \mid q, p>\varpi} p^{-1} \Big)
\ll 1 + O\Big(\frac{\log q}{\varpi \log \varpi}\Big),
\end{equation}
where we used the bound $\omega(q_2) \leq (\log q)/ \log \varpi$.
Thus,
$$
\frac{q}{\phi(q)} \ll \log \varpi + \frac{\log q}{\varpi} \ll \log_2 q,
$$
where we chose $\varpi \log \varpi = \log q$.
In case (2), we use this bound directly and obtain $\frac{q}{\phi(q)} \ll \log \varpi(N) = \Delta$. 
In case (1), we have $q / \phi(q) \ll \log_2 q \ll \log_3 N \ll \Delta$.

Turning towards the application of Lemma \ref{lem:equid-short-progressions}, note that for $|P| = N_1 / q$ the bound in that lemma satisfies
\begin{align*}
 \frac{N_1}{\phi(q)} \frac{\log_3N}{\log_2 N} + \frac{N}{\phi(q) \log^{1/24}N} + \frac{N}{\log^{1/5}N}
 \ll |P| \Big( \frac{q}{\phi(q)} \frac{\log_3N}{\log_2 N} + (N/|P|)(\log N)^{- 1/24}\Big).
\end{align*}
If $|P|>N (\log N)^{-\ell}$, we obtain 
\begin{align*}
 \frac{N_1}{\phi(q)} \frac{\log_3N}{\log_2 N} + \frac{N}{\phi(q) \log^{1/24}N} + \frac{N}{\log^{1/5}N}
 \ll |P| \Big( \frac{q}{\phi(q)} \frac{\log_3N}{\log_2 N} + (\log N)^{\ell - 1/24}\Big)
 \ll |P| \Delta,
\end{align*}
provided $\Delta \gg (\log N)^{\ell - 1/24}$, which holds under the assumptions of (1).

If, instead, $\log (N/|P|) \ll \varpi(N) = o(\log_2 N)$, then
\begin{align*}
 \frac{N_1}{\phi(q)} \frac{\log_3N}{\log_2 N} + \frac{N}{\phi(q) \log^{1/24}N} + \frac{N}{\log^{1/5}N}
 \ll |P| \Big( \frac{q}{\phi(q)} \frac{\log_3N}{\log_2 N} + o(1) \Big) = o(|P|),
\end{align*}
which concludes the proof.
\end{proof}

In addition to the above result, we shall need an analogous bound that is valid on much shorter intervals but may in return have a larger value of $\Delta$.
For $y$-smooth number, such a result appears as `Smooth Numbers Result 3' in Harper \cite{harper-minor-arcs}, see also \cite[Lemma 3.3]{Drappeau-Shao} for a slight extension.
The lemma below will be used in Section \ref{sec:weyl-sums}, where we extend some of the results of Drappeau and Shao \cite{Drappeau-Shao}, as well as in Section \ref{sec:bootstrap}, where it will find application in those situations where the progressions are too short for the previous corollary to be used.

\begin{lemma} \label{lem:short-progressions-Harper}
Let $K'>0$, $K \geq \max(2K',2)$, $x>2$, $1 \leq y' \leq (\log x)^{K'}$ and $(\log x)^K < y\leq x$. 
Suppose that $P\subseteq[x,2x]$ is an arithmetic progression.
Then, provided that $x$ is sufficiently large, we have
\[
\#\set{n\in P:n\in S([y',y])}\ll \left(x/|P|\right)^{1-\alpha}\frac{\Psi(x,[y',y]) |P|}{x} \log x.	
\]
\end{lemma}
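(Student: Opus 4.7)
The strategy is to adapt Harper's proof of his ``Smooth Numbers Result~3'' (\cite[\S3]{harper-minor-arcs}) to the $[y',y]$-smooth setting. The key additional ingredient is the saddle-point bound $\Psi(x/d,[y',y]) \ll d^{-\alpha}\Psi(x,[y',y])$ supplied by Lemma~\ref{lem:BrTen-2.4.i-ii}(i). The first step is to reduce the counting problem to a bilinear sum via the logarithmic identity
\[
\1_{S([y',y])}(n) \log n \;=\; \sum_{\substack{n = dm,\; m = p^j \\ y'\leq p\leq y,\; d \in S([y',y])}} \Lambda(m).
\]
Combined with $\log n \asymp \log x$ for $n \in P \subseteq [x,2x]$, summing over $n \in P \cap S([y',y])$ yields
\[
\#(P \cap S([y',y])) \;\ll\; \frac{1}{\log x}\sum_{\substack{m = p^j \\ y'\leq p\leq y,\; m\leq 2x}} \Lambda(m)\, N_m,
\]
where $N_m := \#\{d \in S([y',y])\cap [x/m,2x/m] : dm \in P\}$ counts $[y',y]$-smooth numbers in a shorter arithmetic progression of common difference $q/\gcd(m,q)$ inside an interval of length comparable to $x/m$.

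For each $m$ there are two natural bounds on $N_m$: the \emph{smooth} bound $N_m \leq \Psi(2x/m,[y',y]) \ll m^{-\alpha}\Psi(x,[y',y])$, which follows from Lemma~\ref{lem:BrTen-2.4.i-ii}(i) provided $m \leq 2x/y$; and the \emph{progression} bound $N_m \ll x/(mq) + 1$, obtained simply by counting residues in an interval (the case $\gcd(m,q)>1$ requires only a mild modification, since $p\mid q$ then forces restrictions on $a\Mod q$). I split the $m$-sum at a threshold $T$, applying the smooth bound for $m \leq T$ and the progression bound for $m > T$. Using the prime number theorem in the form $\sum_{m \leq T,\, m = p^j,\, y'\leq p\leq y} \Lambda(m) m^{-\alpha} \ll T^{1-\alpha}/(1-\alpha)$ together with Mertens, the two contributions are seen to be $\ll \Psi(x,[y',y])\,T^{1-\alpha}/(1-\alpha)$ and $\ll (x/q)\log(x/(qT))$ respectively, with the residual tail $m > 2x/q$ easily absorbed.

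Optimising $T$ to balance these two dominant terms, and invoking the saddle-point relations $y^{1-\alpha} \asymp u\log(u+1)$ and $(1-\alpha)\log y \asymp \log(u\log(u+1))$ from \eqref{alpha-x-y}, the combined estimate converts into $\ll (x/|P|)^{1-\alpha}\Psi(x,[y',y])\,(|P|/x)\log x$ after dividing by $\log x$ and using $|P|\asymp x/q$. The principal technical obstacle is the careful bookkeeping of logarithmic factors during this optimisation: the quantities $(1-\alpha)^{-1}$, $\log(x/T)$ and $\log y$ must combine in precisely the way dictated by \eqref{alpha-x-y} so that the final bound carries only a single power of $\log x$, rather than accumulating spurious factors of $\log_2 x$ or $\log y$.
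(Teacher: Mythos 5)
Your outline reconstructs Harper's original argument (logarithmic identity, two competing bounds on $N_m$, a threshold split), which is indeed the route the paper takes — it simply imports Harper's proof and records the modifications needed for $S([y',y])$. But there is a genuine gap: the lemma imposes \emph{no lower bound} on $|P|$, and your argument cannot close when $P$ is short. Two things go wrong simultaneously in that regime. First, as you note yourself, the smooth bound $N_m\ll m^{-\alpha}\Psi(x,[y',y])$ coming from Lemma~\ref{lem:BrTen-2.4.i-ii}(i) is only available for $m\leq 2x/y$ (this is exactly why the paper strengthens Harper's hypothesis $qy\leq Z$ to $qy^2\leq 2Z$); once $|P|\lesssim y$ your threshold $T$ would have to exceed $2x/y$, and for $m\in(2x/y,2x]$ you have neither bound. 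Second, the "$+1$" terms are not "easily absorbed": summing $\Lambda(m)$ over all prime powers $m=p^j\leq 2x$ with $y'\leq p\leq y$ already contributes $\asymp y\log x/\log y$ to the bilinear sum, and comparing this with the target $\Psi(x,[y',y])(|P|/x)^{\alpha}(\log x)^2$ requires precisely a lower bound of the form $|P|\gtrsim y$ together with the estimate $y^{\alpha-1}\Psi(x,[y',y])x^{-\alpha}\log x\gg1$. The paper handles the complementary range $|P|<y^2/2$ by an entirely separate argument — bound the count trivially by $|P|$ and verify, via Lemma~\ref{lem:MV}, Lemma~\ref{lem:BrTen-2.1} and \eqref{eq:Hil-Ten-Thm1}, that the claimed right-hand side dominates $|P|$ — and this half of the proof is absent from your proposal.

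A smaller but related error: your progression bound should read $N_m\ll |P|\gcd(m,q)/m+1$, not $x/(mq)+1$. The two coincide only when $P$ spans essentially all of $[x,2x]$; for a short interval with $q=1$ your version gives $x/m+1$, which is far too weak to balance against the smooth bound. To repair the proposal you should (i) state and use the hypothesis $|P|\gg y^2$ for the bilinear part, carrying out the tail estimates with the corrected progression bound, and (ii) add the trivial-bound argument for $|P|\ll y^2$, which reduces to showing $y^{2(\alpha-1)}\Psi(x,[y',y])x^{-\alpha}\log x\gg1$ and is where the second part of Lemma~\ref{lem:MV} enters for small $y$.
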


\begin{proof}
Let $X\geq x$ denote the smallest element of $P$. 
This lemma is a generalization of \cite[Smooth number result 3]{harper-minor-arcs}, which bounds
$$\sum_{\substack{X \leq n \leq X+Z \\ n \equiv a \Mod{q}}} \1_{S(y)}(n)$$
under the assumptions that $q \geq 1$ and $qy \leq Z \leq X$, i.e. concerns progressions of length $|P|\geq y$.
The proof of \cite[Smooth number result 3]{harper-minor-arcs} carries over directly to the situation where 
$S(y)$ is replaced by $S([y',y])$, with one small exception: 
The application of Bret\`eche and Tenenbaum \cite[Th\'eor\`eme 2.4 (i)]{Breteche-Tenen-2005} with $m=1$ needs to be replaced by an application of Lemma \ref{lem:BrTen-2.4.i-ii} (i), which is 
\cite[Th\'eor\`eme 2.4 (i)]{Breteche-Tenen-2005} with $m=P(y')$.
While in the former case, the bound holds uniformly for $1 \leq d \leq x$, it is only available for 
$1 \leq d \leq x/y$ in our case.

Replacing the assumption that $qy \leq Z$ by $qy^2 \leq 2Z$ ensures that $d < X/y$ in all applications of 
\cite[Th\'eor\`eme 2.4 (i)]{Breteche-Tenen-2005}.
More precisely, we have 
$$
 \frac{Xqy}{Z2^{j+1}} \leq \frac{X}{y}
$$
for all admissible values of $j$ in Harper's proof.
 
It remains to consider the case where $|P| < y^2/2$, which corresponds to the case $|P|<y$ addressed in (the proof of) 
\cite[Lemma 3.3]{Drappeau-Shao}.
By bounding the left hand side in our statement trivially by $|P|$, and observing that
$|P|^{-1 + \alpha} \gg y^{(-1+\alpha)/2}$, it suffices to show that
$$
y^{\alpha-1} \frac{\Psi(x,[y',y])}{x^\alpha} \log x \gg 1.
$$
Note that $y^{\alpha - 1} = (u \log (u+1) + O(1))^{-1}$ in view of \eqref{alpha-x-y} and observe that  
\[
y^{\alpha-1} \frac{\Psi(x,[y',y])}{x^\alpha} \log x 
\gg y^{\alpha-1}\frac{\prod_{y'\leq p\leq y}(1-p^{-\alpha})^{-1}}{\sqrt{\log x\log y}} \log x
\gg y^{\alpha-1} \prod_{y'\leq p\leq y}(1-p^{-\alpha})^{-1}.
\]
by Lemma \ref{lem:BrTen-2.1} and \eqref{eq:Hil-Ten-Thm1}.
We split the analysis of the product over primes into two cases according to the size of $y$.
When $\exp(\log x/\log_2x)\leq y\leq x$, noting that $\alpha\leq 1$, we have
\[
\prod_{y'\leq p\leq y}(1-p^{-\alpha})^{-1} \gg \frac{ \prod_{p\leq y} (1-p^{-1})^{-1}}{\prod_{p\leq y'} (1-p^{-1})^{-1}} \gg \frac{\log y}{1+\log y'} \gg \frac{\log x}{(\log_2x)^2}.
\]
Hence,
\begin{align*}
y^{\alpha-1} \prod_{y'\leq p\leq y}(1-p^{-\alpha})^{-1}
\gg
\frac{\log y}{\log x \log(u+1)} \frac{\log x}{(\log_2x)^2} 
\gg \frac{\log x}{(\log_2 x)^3\log_3 x}
\gg 1.
\end{align*}
When $y'^2\leq y\leq \exp(\log x/\log_2x)$, then $u \geq \log_2 x$ and 
$\alpha \leq  1 - (\log_3 x)/\log y + O(1/\log y)$.
Hence, the second conclusion of Lemma \ref{lem:MV} shows that
\begin{align*}
y^{-1 +\alpha}\prod_{y'\leq p\leq y}(1-p^{-\alpha})^{-1}
& \gg y^{-1+\alpha}\prod_{\sqrt{y}\leq p\leq y}(1-p^{-\alpha})^{-1} \\
& \gg y^{-1+\alpha} 
   \exp\left(\frac{y^{1-\alpha}(1+ o(1))- 2 y^{(1-\alpha)/2}(1+ o(1)) }{(1-\alpha)\log y} \right) \\
&\gg  
   \exp\left(\frac{y^{1-\alpha}(1+ o(1)) - (\log y^{1-\alpha})^2}{\log y^{1-\alpha}}  \right) \gg 1,
\end{align*}
where we used that $y^{1-\alpha} \asymp u \log u$, where $u \to \infty$ as $x \to \infty$, implies that
$y^{(1-\alpha)/2} = o(y^{1-\alpha})$.
\end{proof}

\section{Weyl sums for numbers without small and large prime factors}
\label{sec:weyl-sums}

As technical input in the proof of the orthogonality of $(h_{[y',y]}^{(W,A)}(n)-1)$ to nilsequences, we shall require bounds on Weyl sums where the summation variable is restricted to integers in $S([y',y])$.
In \cite[\S5]{Drappeau-Shao}, Drappeau and Shao establish bounds on Weyl sums for smooth numbers $S(x,y)$ by extending the methods from Harper's minor arc analysis in \cite{harper-minor-arcs}.
Our aim in this section is to obtain analogues bounds for Weyl sums over $S(x,[y',y])$, that is for Weyl sums over numbers free from small and large prime factors.

Following the notation in \cite{Drappeau-Shao}, define for any given parameters $Q,x>0$ and co-prime integers 
$0< a \leq q \leq Q$ the major arcs
$$\mathfrak{M}(q,a;Q,x) = \{\theta \in [0,1): |q\theta - a| \leq Qx^{-k}\}$$
and
\begin{equation} \label{def:majorarc}
\mathfrak{M}(Q,x) 
= \bigcup_{\substack{0\leq a<q \leq Q \\ (a,q)=1}} \mathfrak{M}(q,a;Q,x)
\end{equation}
Given any positive integer $k$, define the following smooth Weyl sum of degree $k$: 
\[
E_k(x,[y',y];\theta) := \sum_{n \in S(x,[y',y])} e(\theta n^k).
\]
The main objective of this section is to establish the theorem below.

\begin{theorem} [Weyl sums, $x^{\eta}$-smooth case]\label{thm:x-eps-smooth}
Let $k>0$ be a fixed positive integer and suppose that $\eta \in (0,1/(4k)]$.  
Let $K>2K'>2$ and suppose that $y' \leq (\log x)^{K'} < (\log x)^{K} < y \leq x^{\eta}$
and that $K$ is sufficiently large.
Let $\alpha = \alpha(x,y)$ denote the saddle point associated to $S(x,y)$, and
let $\theta \in \T:=\RR/\ZZ$ be a frequency. Then: 
\begin{enumerate}
 \item If $x$ is sufficiently large and $\theta\not\in\mathfrak M(x^{1/12},x)$, we have
$$
E_k(x,[y',y];\theta)  \ll x^{1-c}
$$
for some $c=c(k,\eps)>0$.
 \item If $x$ is sufficiently large, $\theta\in\mathfrak M(x^{1/12},x)$ and if $0<a<q\leq x^{0.1}$ are co-prime integers such that $|q\theta - a| \leq q^{-1}$, then
\[
E_k(x,[y',y];\theta)\ll\Q^{-c+2(1-\alpha)}(\log x)^5\Psi(x,[y',y])
\]
for some $c=c(k,\eps)>0$ and $\Q=q+x^k\norm{q\theta}$.
\end{enumerate}
\end{theorem}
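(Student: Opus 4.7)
The plan is to adapt the bilinear decomposition of Drappeau--Shao \cite[\S5]{Drappeau-Shao}, which itself extends Harper's intrinsic decomposition \cite{harper-minor-arcs}, from $S(y)$ to the $[y',y]$-smooth setting. The starting point is the factorisation identity obtained by extracting the largest prime factor: for $n \in S([y',y])$ with $n > y'$, write $n = p^j m$ with $p = P^+(n) \in [y',y]$, $j \geq 1$ and $m \in S([y',y])$, $P^+(m) \leq p$. This yields
\begin{align*}
(\log n)\1_{S([y',y])}(n) = \sum_{\substack{p^j m = n \\ y' \leq p \leq y,\, m \in S([y',p])}} \log p.
\end{align*}
Applying this together with partial summation to remove the $\log n$ weight, performing a dyadic decomposition in $p$, and discarding the negligible $j \geq 2$ contribution reduces matters to bounding bilinear sums of the form
\begin{align*}
T(P) := \sum_{P < p \leq 2P} (\log p) \sum_{\substack{m \leq x/p \\ m \in S([y',p])}} e(\theta p^k m^k)
\end{align*}
for dyadic $y' \leq P \leq y$, with the extra condition $P^+(m) \leq P$ removed by a standard completion argument.

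For the minor arc statement (1), I would apply Cauchy--Schwarz in the outer variable $p$, bounding $|T(P)|^2$ by $P (\log P)^2$ times a sum over pairs $(p_1,p_2) \in (P,2P]^2$ of inner Weyl sums $\sum_{m \leq x/\max(p_i)} e(\theta(p_1^k - p_2^k) m^k)$ (possibly after a further Cauchy--Schwarz to drop the smoothness constraint on $m$). Weyl's inequality for $k=1$, or the Vinogradov--Wooley type bounds used by Drappeau--Shao for $k \geq 2$, then deliver power savings on the inner sum whenever $\theta(p_1^k - p_2^k)$ has no good rational approximation of small denominator. The hypothesis $\theta \notin \mathfrak{M}(x^{1/12},x)$ together with a counting argument bounding bad pairs produces the desired $x^{1-c}$ bound; the restriction $y \leq x^{\eta}$ with $\eta \leq 1/(4k)$ is precisely what makes the Cauchy--Schwarz losses acceptable.

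For the major arc statement (2), write $\theta = a/q + \beta$ with $|\beta| \leq 1/q^2$ and set $\Q = q + x^k\|q\theta\|$. Here I would forgo Cauchy--Schwarz and analyse $T(P)$ directly: for each $p$, split the inner sum over $m$ into arithmetic progressions modulo $q/\gcd(q,p^k)$, on which the phase $e(\theta p^k m^k)$ is controlled via the van der Corput/completion device for polynomial phases. The density of $[y',y]$-smooth integers in each such short progression is bounded by Lemma~\ref{lem:short-progressions-Harper}, whose factor $(x/|P|)^{1-\alpha}$ feeds directly into the $\Q^{2(1-\alpha)}$ correction in the conclusion. Summing over $p$ and invoking Lemma~\ref{lem:BrTen-2.4.i-ii}(i) to replace $\Psi(x/p,[y',p])$ by $p^{-\alpha}\Psi(x,[y',y])$ yields the asserted bound $\Q^{-c+2(1-\alpha)}(\log x)^5 \Psi(x,[y',y])$, with the extra $(\log x)^5$ absorbing the logarithmic factor from the short-AP lemma, the partial summation, and the dyadic decomposition.

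The main obstacle is tracking the two-parameter smoothness constraint $[y',y]$ uniformly throughout, and in particular handling the discrepancy between the global saddle point $\alpha(x,y)$ and the local saddle points $\alpha(x/p,p)$ as $p$ ranges over $[y',y]$. This is precisely where our strengthened versions of the Bret\`eche--Tenenbaum estimates (Lemmas~\ref{lem:BrTen-2.1} and~\ref{lem:BrTen-2.4.i-ii}) together with the short-AP bound (Lemma~\ref{lem:short-progressions-Harper}) come in: each must be applied with explicit error terms, so that the cumulative loss from varying $p$ is absorbed into the single factor $\Q^{2(1-\alpha)}$. A secondary technical point is that the major arcs extend up to $q \leq x^{0.1}$, which can substantially exceed $y$; the assumption $\eta \leq 1/(4k)$ ensures $y^k \leq x^{1/4}$ so that the inner progressions have length at least $x^{3/4}/q$, comfortably within the range of Lemma~\ref{lem:short-progressions-Harper}.
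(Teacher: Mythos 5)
There are genuine gaps in both parts; in effect you have assigned to each regime the method that only works in the other one.

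For part (1), Cauchy--Schwarz over the prime variable $p\in(P,2P]$ cannot produce the bound $x^{1-c}$, because $P\leq y$ and $y$ may be as small as $(\log x)^{K}$. However you arrange the Cauchy--Schwarz, the diagonal pairs $p_1=p_2$ contribute at least
\[
\Psi(x/P,[y',y])\,(\log P)^2\cdot P\cdot\frac{x}{P}\;\asymp\;(x/P)^{1+\alpha}P^{1+o(1)}
\]
to $|T(P)|^2$, so the resulting bound for $|T(P)|$ is never better than $x^{(1+\alpha)/2}P^{-\alpha/2+o(1)}$; for $P\leq y=(\log x)^{K}$ this saves at most a factor $x^{(1-\alpha)/2+o(1)}=x^{1/(2K)+o(1)}$ over the trivial bound. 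That is not $O(x^{1-c})$ for any fixed $c=c(k,\eta)>0$, and it is in fact \emph{weaker} than the trivial bound $\Psi(x,[y',y])=x^{\alpha+o(1)}$ that part (1) must beat: the application in Lemma \ref{lem:strong-recurrence} requires $c>1-\alpha$, while your diagonal caps $c$ at $(1-\alpha)/2$. This is precisely why the paper does \emph{not} use a bilinear argument here: part (1) is proved by extending Wooley's method (Lemmas \ref{lem:auxiliary-lemma} and \ref{lem:wooley}), in which $n$ is factored as $uv$ with the long factor $v>M=x^{\lambda}$, $\lambda\in(1/2,1)$, and the power saving comes from H\"older's inequality combined with Vinogradov-type mean values $J_w(\theta)$ and the hypothesis $\theta\notin\mathfrak M(x^{1/12},x)$, not from off-diagonal cancellation over prime pairs. (A smaller point: your opening identity $(\log n)\1_{S([y',y])}(n)=\sum\log p$ is false as stated, since the constraint $m\in S([y',p])$ forces $p$ to be the largest prime factor, so the right-hand side equals $\log P^+(n)$ up to multiplicity; but this is cosmetic by comparison.)

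For part (2), the direct splitting of the inner sum into progressions modulo $q/\gcd(q,p^k)$ cannot be closed, because $q$ may be as large as $x^{0.1}$, while equidistribution of $[y',y]$-smooth numbers in arithmetic progressions is only available for moduli $q\leq(\log x)^{O(1)}$ (Theorem \ref{thm:equid-in-APs}). Lemma \ref{lem:short-progressions-Harper} gives only an \emph{upper bound} on $|S([y',y])\cap P|$, and an upper bound on the density in each residue class cannot produce cancellation in $\sum_{b}e(ap^kb^k/q)\,|S\cap P_b|$; without an asymptotic in each class the complete sum modulo $q$ never materialises. The paper's proof of part (2) (Lemma \ref{lem:drappeau-shao}, following Drappeau--Shao) is essentially the bilinear Cauchy--Schwarz argument you proposed for part (1): there the much weaker target $\Q^{-c+2(1-\alpha)}(\log x)^5\Psi(x,[y',y])$ can absorb the diagonal loss, and Lemma \ref{lem:short-progressions-Harper} enters only as an upper bound on the number of near-diagonal pairs $(n_1,n_2)$. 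So the bilinear method belongs in part (2) and Wooley's mean-value method in part (1); as proposed, neither part of your argument goes through.
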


We prove the two parts of this lemma in turn. 
The former case, in which $\theta$ is `highly irrational', will be handled by extending results of Wooley \cite{Wooley}.
For this purpose, we first establish an auxiliary lemma generalising \cite[Lemma 2.3]{Wooley}, which in turn builds on an argument of Vaughan \cite{Vaughan}.

\begin{lemma}\label{lem:auxiliary-lemma}
Suppose that $\theta\in\T$ is a frequency and $r\in\NN$. If $1 \leq y'< y\leq M<x$, then we have
\[
\twosum{n\in S(x,[y',y])}{(n,r)=1}e(n^k\theta)
\ll y (\log x) \max_{y'\leq p\leq y}\twosum{v\in\mathfrak B(M,p,y)}{(v,r)=1}
\sup_{\beta\in\T}\Bigabs{\twosum{u\in S(x/M,[y',p])}{(u,r)=1}e(u^kv^k\theta+u\beta)}+M,
\]
where 
$\mathfrak B(M,p,y)=\set{M<v\leq Mp:p\mid v,v\in S([p,y])}$.
\end{lemma}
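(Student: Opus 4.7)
The plan is to derive the lemma from a Vaughan-style combinatorial decomposition of each $[y',y]$-smooth integer $n>M$ as a product $n=uv$, followed by a Fourier completion that converts a $v$-dependent cut-off on the inner variable $u$ into a uniform one. The contribution of $n\leq M$ is absorbed into the trivial error $O(M)$. For $n>M$ in $S(x,[y',y])$ I would order the prime factors of $n$ in \emph{decreasing} order as $p_1\geq p_2\geq\cdots\geq p_\ell$, let $j$ be the least index with $p_1\cdots p_j>M$, and set $v:=p_1\cdots p_j$, $u:=p_{j+1}\cdots p_\ell=n/v$, and $p:=p_j$. The minimality of $j$ gives $v/p\leq M<v$, so $v\in(M,Mp]$; and since the primes of $v$ are the $j$ largest primes of $n$, they all lie in $[p,y]$ and $p\mid v$, so $v\in\mathfrak B(M,p,y)$. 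Simultaneously $u\leq x/v$ and every prime of $u$ lies in $[y',p]$, so $u\in S(x/M,[y',p])$. Coprimality with $r$ splits cleanly: $(n,r)=1$ iff $(u,r)=(v,r)=1$.

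After applying the triangle inequality (which discards the canonicity of the factorisation) and bounding the number of admissible primes $p\in[y',y]$ crudely by $\pi(y)\leq y$, this reduces the task to
\[
\Bigabs{\twosum{n\in S(x,[y',y]),\,n>M}{(n,r)=1}e(n^k\theta)}
\leq y\max_{y'\leq p\leq y}\twosum{v\in\mathfrak B(M,p,y)}{(v,r)=1}
\Bigabs{\twosum{u\leq x/v}{u\in S(x/M,[y',p]),\,(u,r)=1}e(u^kv^k\theta)}.
\]
To replace the $v$-dependent cut-off $u\leq x/v$ by the uniform range $u\leq x/M$ required by the statement, I would apply the standard Fourier completion trick: write $\mathbf 1_{u\leq x/v}$ as a Fourier integral against the Dirichlet-type kernel $K_v(\beta):=\sum_{j\leq x/v}e(-j\beta)$, whose $L^1(\T)$-norm satisfies $\|K_v\|_1\ll\log(x/v)\ll\log x$, and bound the inner sum by $\|K_v\|_1\sup_\beta|\cdot|$. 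This introduces the supremum over $\beta\in\T$ and the extra linear phase $u\beta$, and loses exactly the factor $\log x$, producing the bound in the statement.

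The argument is essentially combinatorial and uses no delicate analysis beyond the trivial $L^1$-bound on a Dirichlet kernel. The one point requiring care is the bookkeeping of the ranges forced by $[y',y]$-smoothness: one must verify that the minimality of $j$ yields $v\leq Mp$, that every prime of $u=n/v$ is automatically in $[y',p]$ (so that $u\in S([y',p])$ rather than merely in $S([y',y])$), and that discarding canonicity under the triangle inequality does not require any further factor of $y$. A slightly sharper form of the lemma would replace $\pi(y)\leq y$ by $\pi(y)\ll y/\log y$, but the stated form is perfectly sufficient for the downstream application to Theorem~\ref{thm:x-eps-smooth}.
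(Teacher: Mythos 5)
Your proposal is correct and follows essentially the same route as the paper: the identical Vaughan-type canonical factorisation $n=uv$ with $v$ the smallest initial (decreasingly ordered) prime-power segment exceeding $M$, yielding the conditions $v\in\mathfrak B(M,p,y)$ and $u\in S(x/v,[y',p])$, followed by the same Fourier completion against a Dirichlet kernel with $L^1$-norm $\ll\log x$ to trade the $v$-dependent cut-off $u\leq x/v$ for a supremum over $\beta$ with the extra linear phase $u\beta$. The bookkeeping points you flag (minimality of $j$ giving $v\leq Mp$, the primes of $u$ lying in $[y',p]$, and the crude $\pi(y)\leq y$ bound on the sum over $p$) are exactly how the paper's argument proceeds.
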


\begin{proof}
We start by decomposing the elements $n \in S(x,[y',y])$ in a similar fashion as in Vaughan \cite[Lemma 10.1]{Vaughan}.
By considering the prime factorisation $n=p_1^{k_1}\cdots p_s^{k_s}$, where the prime factors are ordered in \emph{decreasing} order $y\geq p_1>\dots> p_s\geq y'$, it is immediate that, for every given $M \in [y,x)$ and every
$n \in S(x,[y',y])$ with $n>M$ there is a unique triple $(u,v,p)$ such that $n=uv$ and such that $v$ is the smallest initial factor in the factorisation above that exceeds $M$. More precisely,
\begin{enumerate}
\item $y'\leq p\leq y$ and $p\mid v$;
\item $u\in S(x/v,[y',p])$;
\item $M<v\leq Mp$ and $v\in S([p,y])$.
\end{enumerate}
Using this factorization, the exponential sum can be decomposed as
\[
\twosum{n\in S(x,[y',y])}{(n,r)=1}e(n^k\theta)=\sum_{y'\leq p\leq y}\twosum{v\in\mathfrak B(M,p,y)}{(v,r)=1}\twosum{u\in S(x/v,[y',p])}{(u,r)=1}e(u^kv^k\theta)+O(M),
\]
where the error term $O(M)$ bounds the contribution from all $n\in S(x,[y',y])$ with $n\leq M$.
As in Vaughan \cite[(10.9)]{Vaughan}, we can use the orthogonality principle to remove the dependence on $v$ in the restriction $u\in S(x/v,[y',p])$. Since $v>M$, the inner sum above is equal to
\begin{align*}
\int_\T\Bigbrac{\twosum{u\in S(x/M,[y',p])}{(u,r)=1}&e(u^kv^k\theta+u\beta)}\bigbrac{\sum_{m\leq x/v}e(-m\beta)}\,\mathrm d\beta\\
&\ll\sup_{\beta\in\T}\Bigabs{\twosum{u\in S(x/M,[y',p])}{(u,r)=1}e(u^kv^k\theta+u\beta)}\int_\T\min\set{x/v,\norm{\beta}^{-1}}\,\mathrm d\beta\\
&\ll\log x\sup_{\beta\in\T}\Bigabs{\twosum{u\in S(x/M,[y',p])}{(u,r)=1}e(u^kv^k\theta+u\beta)}.\qquad\qquad\qquad
\end{align*}
The lemma follows by combining this bound with the previous expression.
\end{proof}

The following lemma is a generalization of the Weyl sum estimate for $y$-smooth numbers given by Wooley \cite{Wooley}, and it relies on the observation that the conclusion of Wooley \cite[Lemma 3.1]{Wooley} 
continues to hold when we restrict the $y$-smooth numbers to $[y',y]$-smooth numbers.

\begin{lemma}[Weyl sum, $\theta$ is highly irrational]\label{lem:wooley}
Let $k\in\NN$ be a fixed positive integer and suppose that $\sigma \in (0, 1/2)$ and $\eta \in (0,\sigma/(4k)]$.
Then there exists $c = c(k, \sigma)>0$ such that the following statement is true. 
Let $1 \leq y'< y\leq x^\eta$ and $\theta \in (0,1)$. 
Then, if $\theta\not\in\mathfrak M(x^\sigma,x)$, we have
\[
E_k(x,[y',y];\theta)\ll x^{1-c}.
\]   
\end{lemma}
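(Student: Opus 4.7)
My plan is to mimic Wooley's proof of the Weyl sum bound for $y$-smooth numbers \cite{Wooley}, using Lemma \ref{lem:auxiliary-lemma} as the decomposition step in place of Wooley's analogue and verifying that none of the intermediate steps sees the lower cut-off $y'$ in any essential way.

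First I would apply Lemma \ref{lem:auxiliary-lemma} with $r = 1$ and a suitable choice of $M$ in the range $y \leq M \leq x^{1-\eta}$ (say $M = x^{1/2}$, adjusted at the end in terms of $\sigma, k, \eta$). This reduces the estimation of $E_k(x,[y',y];\theta)$ to bounding, up to a factor $y \log x$, the quantity
\[
\max_{y' \leq p \leq y} \sum_{v \in \mathfrak{B}(M,p,y)} \sup_{\beta \in \T} \Bigabs{\sum_{u \in S(x/M,[y',p])} e(u^k v^k \theta + u \beta)},
\]
plus a negligible error of size $M$. Note that the outer summation over $v$ is taken over ordinary (non-smooth) integers, so the lower cut-off $y'$ plays no role there.

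Next, for each fixed $v$ and $\beta$, I would apply $k-1$ iterations of Weyl differencing (Weyl--van der Corput) to the inner exponential sum over $u \in S(x/M,[y',p])$. This reduces the $k$-th degree polynomial phase $u^k v^k \theta + u\beta$ to a linear phase in $u$ of the form $k!\,h_1 \cdots h_{k-1} v^k \theta \cdot u + \text{lower order}$, while at the cost of averaging over difference parameters $h_1, \dots, h_{k-1}$ in ranges of size $\ll x/M$. Since Weyl differencing is insensitive to the support condition on $u$, the restriction $u \in S(x/M,[y',p])$ simply becomes a $1$-bounded coefficient that plays no role until the final linear sum. The linear sum $\sum_{u} 1_{S(x/M,[y',p])}(u)\,e(\gamma u)$ (with $\gamma = k!\,h_1\cdots h_{k-1} v^k \theta$) can then be estimated by partial summation combined with Lemma \ref{lem:short-progressions-Harper}, which controls $[y',y]$-smooth numbers in short progressions; the relevant modulus is the denominator of a good rational approximation to $\gamma$.

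The main obstacle, and the heart of the argument, is to show that for sufficiently many tuples $(h_1, \dots, h_{k-1}, v)$ the frequency $\gamma = k!\,h_1\cdots h_{k-1} v^k \theta$ does not admit a rational approximation $a'/q'$ with $q'$ too small, so that the linear sum savings are genuinely exploitable. This is precisely the content of Wooley's \cite[Lemma 3.1]{Wooley}: if $\theta \notin \mathfrak{M}(x^\sigma, x)$, then for all but a negligible proportion of the $(h_1, \dots, h_{k-1}, v)$, the product $k!\,h_1\cdots h_{k-1} v^k \theta$ lies outside a corresponding set of minor arcs. Wooley's counting argument here is a Diophantine statement about integers $v$ in $\mathfrak{B}(M,p,y)$ and $h_i \leq x/M$; restricting $v$ to $[p,y]$-smooth integers rather than to $[1,y]$-smooth ones only decreases the count being estimated, so the upper bounds carry over immediately. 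This is the observation flagged in the statement of the lemma.

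Finally, I would collect the estimates, optimising $M$ as a function of $k, \sigma, \eta$ to obtain a power saving $x^{1-c}$ with $c = c(k,\sigma) > 0$. The constraint $\eta \leq \sigma/(4k)$ enters at this optimisation step: it ensures that $y \leq x^\eta$ is small enough that the outer factor $y \log x$ from Lemma \ref{lem:auxiliary-lemma} and the losses from Weyl differencing together do not erode the power saving supplied by the minor-arc hypothesis $\theta \notin \mathfrak{M}(x^\sigma,x)$.
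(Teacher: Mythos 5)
Your outer skeleton agrees with the paper: the proof does begin by applying Lemma \ref{lem:auxiliary-lemma} with $r=1$, and the parameters $M$ and $q$ are fixed via Dirichlet approximation so that $\theta\not\in\mathfrak M(x^\sigma,x)$ forces $q>x^\sigma>(x/M)^k$. But the inner step, which is the heart of the matter, goes wrong. After $k-1$ rounds of Weyl--van der Corput differencing applied to $\sum_{u\in S(x/M,[y',p])}e(u^kv^k\theta+u\beta)$, the coefficient attached to $u$ is not $\1_{S(x/M,[y',p])}(u)$ but the product $\prod_{\boldsymbol\eps\in\{0,1\}^{k-1}}\1_{S(x/M,[y',p])}(u+\boldsymbol\eps\cdot\mathbf h)$: the support of the final linear sum is an intersection of shifted copies of the smooth numbers. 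No equidistribution statement is available for such intersections, and Lemma \ref{lem:short-progressions-Harper} — an upper bound for $[y',y]$-smooth numbers in progressions — cannot extract cancellation from a sum over such a set (indeed, even for a single copy of $\1_{S([y',y])}$, partial summation combined with progression counts only yields cancellation when the frequency lies on a major arc; on minor arcs one needs genuinely bilinear input, which is the content of Harper's work and of Lemma \ref{lem:drappeau-shao}). This loss of structure under differencing is exactly why the smooth Weyl sum literature avoids differencing the smooth indicator.

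Relatedly, your description of Wooley's Lemma 3.1 as a Diophantine counting statement about the frequencies $k!\,h_1\cdots h_{k-1}v^k\theta$ is not what that lemma says. It is a mean-value estimate: two applications of H\"older's inequality convert the bilinear sum from Lemma \ref{lem:auxiliary-lemma} into a product of (i) moments $\sum_c n_c$ and $\sum_c n_c^2$, where $n_c$ counts representations $u_1^k+\cdots+u_t^k=c$ with the $u_i$ smooth, and (ii) a quantity $J_w(\theta)$ in which the hypothesis $\theta\not\in\mathfrak M(x^\sigma,x)$ is exploited on a complete sum over a full range. The paper's proof consists of running exactly this argument with $u\in S(x/M,[y',p])$ and observing that, since $S(x/M,[y',p])\subset S(x/M,y)$, the moments $\sum_c n_c$ and $\sum_c n_c^2$ only decrease when the lower cut-off is imposed, so Wooley's unrestricted bounds apply verbatim. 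Your monotonicity instinct is the right one, but it must be applied to these $u$-moments (the $v$-sum in Lemma \ref{lem:auxiliary-lemma} never sees $y'$ at all), and it is only usable because the argument is a mean-value argument rather than a differencing argument.
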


\begin{proof}
We seek to apply a modified version of Wooley \cite[Lemma 3.1]{Wooley} in a similar way as at the start of the proof of \cite[Theorem~4.2]{Wooley}. 
Wooley's lemma produces bounds on 
$$
f(\theta;x,y) := E_k(x,[1,y];\theta) 
$$
in terms of $M$ and $q$, where $M = x^{\lambda}>y$ is a parameter such that $\lambda \in (1/2,1)$ and $q$ is any natural number such that $\|q\theta \| = |q \theta -a |<q^{-1}$ for some $a \in \ZZ$ with $(a,q)=1$.
As we shall explain below, one can deduce from the proof of the lemma that the same bounds hold for 
$E_k(x,[y',y];\theta)$ in place of $f(\theta;x,y) = E_k(x,[1,y];\theta)$, i.e.\ for the exponential sum relevant to us.
In order for those bounds to be useful, additional assumptions on $M$ and $q$ are necessary, and our first aim is to show that, under the assumption of our lemma, we can choose $M$ and $q$ in such a way that  
\begin{equation} \label{eq:wooley-conditions}
M>y, \quad q \leq 2(yM)^k, \quad |q \theta -a |< (yM)^{-k}/2 \quad \text{ and } \quad q > (x/M)^k. 
\end{equation}
These conditions corresponds to those in place at the start of the proof of \cite[Theorem~4.2]{Wooley}.
When combined\footnote{See the proof of \cite[Theorem~4.2]{Wooley} for details.} with the upper bound produced by \cite[Lemma 3.1]{Wooley}  
it follows that
\begin{equation} \label{eq:wooley-conclusion}
E_k(x,[y',y];\theta) \ll x^{1 - c(k, \sigma)}, 
\end{equation}
once we proved that the lemma can be extended to the case of $[y',y]$-smooth numbers.

Concerning the conditions \eqref{eq:wooley-conditions}, let $0<\sigma^* < \sigma/2$. 
Then $\sigma^* + 2k\eta < \sigma < 1/2$.
By the Dirichlet approximation theorem, there is a positive integer $q \leq x^{k-\sigma^*}$ and some $a \in \ZZ$ with 
$(a,q)=1$ such that $$\|q\theta\| = |q\theta-a| \leq x^{\sigma^*-k} < x^{\sigma-k}.$$
Since $\theta\not\in\mathfrak M(x^\sigma,x)$, this implies that $q > x^{\sigma}$.
Let $M$ be defined by the equation $$x^{k-\sigma^*} = 2(yM)^k.$$
Then, since $\sigma^* + 2\eta < 1/2$,  we have
$$
y<x^{\eta} < x^{1-\sigma^*/k-\eta} 2^{-1/k} \leq M = 2^{-1/k} x^{1-\sigma^*/k} y^{-1} \leq x^{1-\sigma^*/k}.
$$
as soon as $x$ is sufficiently large. Hence, $y<M$.
If we write $M = x^{\lambda}$, then the above line of inequalities also shows that 
$\lambda \in (1/2,1)$ since $\sigma^*/k+\eta< 1/2$.
Observe further that $\sigma^* + k\eta < \sigma$ implies that
$$
q > x^{\sigma} > 2 x^{\sigma^* + k\eta} \geq (x/M)^k
$$
as soon as $x$ is sufficiently large. 
Hence, all conditions from \eqref{eq:wooley-conditions} are satisfied and \eqref{eq:wooley-conclusion} therefore follows under the assumptions of our lemma.

It remains to show that \cite[Lemma 3.1]{Wooley} can be extended to the case of exponential sums over $[y',y]$-smooth numbers.
The proof strategy is to follow the original argument, applied to the exponential sum $E_k(x,[y',y];\theta)$
instead of $E_k(x,[1,y];\theta)$, and explain how all dependencies on $y'$ that appear in any of the bounds on the way can be removed, so as to eventually arrive at an intermediate bound that agrees with the corresponding bound in the original proof. 
Following the original proof through to the end shows then that the upper bound that \cite[Lemma 3.1]{Wooley} provides for $f(\theta;x,y) = E_k(x,[1,y];\theta)$ is also an upper bound for
$E_k(x,[y',y];\theta)$.

Turning to the details, let $M = x^{\lambda}$ for any $\lambda\in (1/2,1)$ and let $q \in \NN$ be such that 
$|q\theta-a|<q^{-1}$ for some $a \in \ZZ$ with $(a,q)=1$.
The $r=1$ case of Lemma \ref{lem:auxiliary-lemma} (which replaces \cite[Lemma 2.3]{Wooley}) produces a prime $y'< p\leq y$ and some frequency $\gamma\in\T$ such that
\[
E_k(x,[y',y];\theta)\ll (\log x) y\sum_{v\in S(My,y)}\bigabs{\sum_{u\in S(x/M,[y',p])}e(u^kv^k\theta+u\gamma)}+M.
\]
Note carefully that the sum over $v$ only involves $y$-smooth numbers. 
Following the argumentation of the start of the proof of \cite[Lemma~3.1]{Wooley} 
(which involves expressing the absolute value of the inner sum as 
$\eps(v, \theta) \sum_{u\in S(x/M,[y',p])}e(u^kv^k\theta+u\gamma)$ for some $\eps(v, \theta) \in \CC$ 
of unit modulus, reinterpreting certain exponential sums, as well as two applications of H\"older's inequality), shows that for all positive integers $t,w\in\NN$,
\[
\biggbrac{\sum_{v\in S(My,y)}\bigabs{\sum_{u\in S(x/M,[y',p])}e(u^kv^k\theta+u\gamma)}}^{2tw} \ll (My)^{2w(t-1)} \Big(\sum_cn_c\Big)^{2w-2} \Big(\sum_cn_c^2\Big) J_w(\theta),
\]
where $J_w(\theta)$ is as in \cite[(3.4)]{Wooley} and where $n_c$ denotes the number of solutions to the equation $u_1^k+\cdots+u_t^k=c$ with $u_i \in S(x/M,[y',p])$. 
Since $p\leq y$, we have $S(x/M,[y',p]) \subset S(x/M,y)$, which implies that $\sum_cn_c^2$ is trivially bounded above by 
$\int_\T\bigabs{\sum_{u\in S(x/M,y)}e(u^k\theta)}^{2t}\,\mathrm d\theta$.
Using the trivial bound  $\sum_cn_c\ll(x/M)^t$ for the remaining sum over $c$ completes our task of removing all dependencies on $y'$ from the upper bound on the given exponential sum $E_k(x,[y',y];\theta)$.
To summaries, we obtain
\begin{multline*}
\biggbrac{\sum_{v\in S(My,y)}\bigabs{\sum_{u\in S(x/M,[y',p])}e(u^kv^k\theta+u\gamma)}}^{2tw} \\
 \ll (My)^{2w(t-1)} \bigbrac{x/M}^{t(2w-2)} J_w(\theta) \int_{\T} \bigabs{ \sum_{u\in S(x/M,y)} e(u^k\theta) }^{2t}\,\mathrm d\theta,
\end{multline*}
corresponding to the bound in \cite[(3.5)]{Wooley}. 
\end{proof}

The following final lemma of this section is an easy generalization of Drappeau and Shao \cite[Proposition~5.7]{Drappeau-Shao}, which itself generalises the $k=1$ case established by Harper \cite[Theorem 1]{harper-minor-arcs}.

\begin{lemma}[Weyl sum, $\theta$ is irrational] \label{lem:drappeau-shao}
Suppose that $\theta=a/q+\delta$ for some $q \in \NN$, $a \in \ZZ$ and $\delta \in \RR$ such that $(a,q)=1$ and 
$|\delta|\leq 1/(2q)$. 
Further, let $x > 2$, $1 \leq y' \leq (\log x)^{K'}$ for some $K'\geq 1$ and let $y'^2 \leq y \leq x$. 
Write $\Q=q+x^k\norm{q\theta}$, and assume that $4y^3\Q^3\leq x$.
Then there exists some constant $c=c(k)>0$ such that
\[
E_k(x,[y',y];\theta)\ll \Q^{-c+2(1-\alpha)}(\log x)^5\Psi(x,[y',y]).
\]
\end{lemma}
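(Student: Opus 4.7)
The plan is to adapt the proof of Drappeau--Shao \cite[Proposition 5.7]{Drappeau-Shao} to the $[y',y]$-smooth setting. All the main ingredients of their argument have analogues established earlier in the paper, so the adaptation is essentially a matter of substituting the $[y',y]$-smooth versions of the key lemmas in the right places.

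Schematically, I would proceed as follows. First, apply Lemma \ref{lem:auxiliary-lemma} (with $r=1$), which plays the role of \cite[Lemma 2.3]{Wooley} in the Drappeau--Shao argument, to decompose
\[
E_k(x,[y',y];\theta)
\ll y(\log x)\max_{y'\leq p\leq y}\sum_{v\in\mathfrak B(M,p,y)}
\sup_{\beta\in\T}\Bigabs{\sum_{u\in S(x/M,[y',p])}e(u^kv^k\theta+u\beta)}+M
\]
for a parameter $M$ to be fixed in terms of $\mathcal Q$ and $x$ at the end. Second, apply Cauchy--Schwarz over the $v$-variable (and Weyl differencing, using the van der Corput $B$-process, when $k\geq 2$) to the inner exponential sum. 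After expanding the square and changing variables, the estimate is reduced to bounding the number of near-collisions of the phases $u^k v^k \theta$ modulo $1$, which is controlled by the rational approximation $\theta=a/q+\delta$: the values of $v^k\theta$ spread essentially evenly over $\approx q$ arcs on $\T$ as $v$ varies.

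Third, the resulting near-collision count reduces to counting $u\in S(x/M,[y',p])$ and $v\in\mathfrak B(M,p,y)$ lying in short arithmetic progressions of common difference essentially $q$ and length essentially $M/\mathcal Q$, respectively $(x/M)/\mathcal Q$. Here I would invoke Lemma \ref{lem:short-progressions-Harper} (in place of Drappeau--Shao's \cite[Lemma 3.3]{Drappeau-Shao}) once for each variable, producing two factors each of shape $\mathcal Q^{1-\alpha}$ times the expected density, and a single factor $\mathcal Q^{-c}$ coming from the savings in the near-collision count. The hypothesis $4y^3\mathcal Q^3\leq x$ is precisely what is needed to ensure that both applications of Lemma \ref{lem:short-progressions-Harper} take place in their valid range ($|P|$ not too small relative to $x$), and simultaneously that $d\leq x/y$ in all later applications of Lemma \ref{lem:BrTen-2.4.i-ii}(i). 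Finally, choose $M$ to balance the contributions from the $u$ and $v$ sums (this is the same optimisation as in \cite{Drappeau-Shao}), and convert everything back to $\Psi(x,[y',y])$ using Lemmas \ref{lem:BrTen-2.1} and \ref{lem:BrTen-2.4.i-ii}.

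The main obstacle is bookkeeping: one needs to verify that the exponent $-c+2(1-\alpha)$ of $\mathcal Q$ in the final estimate is preserved under all the substitutions, which boils down to checking that in each application of $\Psi(x/d,[y',y])\ll d^{-\alpha}\Psi(x,[y',y])$ (arising once from the $u$-sum and once from the $v$-sum) the parameter $d$ really stays in the admissible range $1\leq d\leq x/y$. The factor $(\log x)^5$ absorbs the logarithmic losses from the Cauchy--Schwarz application, the dyadic decomposition over $p$, the outer factor $\log x$ in Lemma \ref{lem:auxiliary-lemma}, and the $\log x$ factor carried by Lemma \ref{lem:short-progressions-Harper} (used twice). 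As none of these steps introduces any structural dependence on $y'$ beyond the replacement of $\Psi(x,y)$ by $\Psi(x,[y',y])$ on the right-hand side, the argument goes through verbatim once the correct $[y',y]$-versions of the ingredients are substituted.
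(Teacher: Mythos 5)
You correctly identify the high-level strategy the paper uses — follow Drappeau--Shao \cite[Proposition 5.7]{Drappeau-Shao}, replace their Lemma 3.2 by Lemma \ref{lem:BrTen-2.4.i-ii}(i) combined with Lemma \ref{lem:BrTen-2.1}, replace their Lemma 3.3 by Lemma \ref{lem:short-progressions-Harper}, and strengthen the hypothesis to $4y^3\Q^3\leq x$ so that $d\leq x/y$ in every application of Lemma \ref{lem:BrTen-2.4.i-ii}(i). However, your actual outline deviates from the Drappeau--Shao argument in two places where the deviation is fatal rather than cosmetic. First, you omit the opening move of their proof: extracting $d=\gcd(n,q^\infty)$ and writing $E_k(x,[y',y];\theta)=\sum_{d\mid q^\infty}\1_{S([y',y])}(d)\sum_{(n,q)=1}\1_{S([y',y])}(n)e(n^kd^k\theta)$. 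The coprimality $(n,q)=1$ carried through the whole argument is what makes the later Diophantine step work: when the complete Weyl sum over the short variable is large, one inverts modulo (a large divisor of) $q$ to confine $n_2$ to a few residue classes, and this inversion needs the variables' $q$-parts to have been removed. Applying Lemma \ref{lem:auxiliary-lemma} with $r=1$ discards exactly this mechanism.

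Second, you take the Wooley-type factorisation of Lemma \ref{lem:auxiliary-lemma} (large-prime factor $v$ short, small-prime factor $u\in S(x/M,[y',p])$ long) and apply Cauchy--Schwarz over $v$. In the paper (following \cite{harper-minor-arcs,Drappeau-Shao}) the factorisation is the opposite one: the \emph{small}-prime factor $m\in[L,yL]$ is the short one, Cauchy--Schwarz is applied over $m$, and the sparse condition $m\in S([y',p])$ is relaxed to the full interval $(M/p,2M/p]$ \emph{at} the Cauchy--Schwarz step, where the terms are squared and non-negative; only then does expanding and swapping produce an inner oscillating sum over $m$ in a genuine full interval, to which a standard Weyl estimate applies. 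With your ordering, after expanding the square the inner oscillating sum runs over $u$ or $v$, both still confined to sparse smooth sets, so no standard complete-sum estimate is available (bounding a Weyl sum over such a set is essentially the statement being proved). Even if one completes $v=pw$ to $w$ in a full interval, the resulting near-collision count involves pairs $u_1,u_2\in S(x/M,[y',p])$ where $p$ may be barely larger than $y'$, so Lemma \ref{lem:short-progressions-Harper} (which needs the upper smoothness parameter to exceed $(\log x)^K$) does not apply, and the diagonal term is controlled by $\Psi(x/M,[y',p])$ rather than by the $(x/M)^{\alpha-1}$-density of $S([y',y])$ that makes the Drappeau--Shao bookkeeping close. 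A small additional correction: Lemma \ref{lem:short-progressions-Harper} has no lower bound on $|P|$ relative to $x$, so the hypothesis $4y^3\Q^3\leq x$ is not there for its sake; it is needed only for the $d\leq x/y$ constraint in Lemma \ref{lem:BrTen-2.4.i-ii}(i), and no van der Corput $B$-process appears anywhere in the argument.
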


\begin{proof}
The proof of \cite[Proposition 5.7]{Drappeau-Shao} carries over almost directly when replacing any application of 
\cite[Lemma 3.2]{Drappeau-Shao} by one of Lemma \ref{lem:BrTen-2.4.i-ii} (i) combined with Lemma \ref{lem:BrTen-2.1}.
In doing so, we however have to ensure that the stronger condition that $1\leq d \leq x/y$ of 
Lemma~\ref{lem:BrTen-2.4.i-ii}~(i) is satisfied.
For this reason we make the stronger assumption that $4y^3\Q^3\leq x$ instead of $4y^2\Q^3\leq x$.
As only fairly straightforward changes are required, we explain below how to deduce our lemma from the proof of \cite[Proposition 5.7]{Drappeau-Shao}, instead of repeating the entire proof.

Extracting, as before, the greatest common divisor of $n\in S(x,[y',y])$ and $q^\infty$, we obtain
\[
E_k(x,[y',y];\theta)
=\sum_{\substack{d\mid q^\infty \\ d \leq x}}\1_{S([y',y])}(d)
 \twosum{n\leq x/d}{(n,q)=1}\1_{S([y',y])}(n)e(n^kd^k\theta).
\]
We claim that the contribution from those terms with $d>\Q^{1/2}$ and those terms with $n\leq x/\Q$ is negligible, in the sense that
\begin{equation} \label{eq:DS-aux1}
E_k(x,[y',y];\theta)
=\sum_{ \substack{d\mid q^\infty \\ d\leq\Q^{1/2}}} \1_{S([y',y])}(d)
 \sum_{ \substack{ x/\Q\leq n\leq x/d \\ n \in S([y',y]) \\ (n,q)=1 }} e(n^kd^k\theta)
+O_{\eps}\left(\frac{\Psi(x,[y',y])}{\Q^{\alpha/2-\eps}}\right).
\end{equation}
We start with the contribution from $\max(x/y, \Q^{1/2}) < d \leq x$.
In this case $x/d < y < x^{1/3}$. 
Note that $q \leq \Q \leq x^{1/3}$ has $\omega(q) \leq \log x$ prime factors.
Hence, the contribution to $E_k(x,[y',y];\theta)$ is bounded by
$$
\sum_{\substack{d\mid q^{\infty}\\ x/y < d \leq x}} y
\leq x^{1/3} \Psi(x, \log x) \leq x^{1/3 +o(1)} 
$$
by Erd{\H{o}}s's bound stated in \cite[Section 7.1.1, Exercise 12]{MV-book}.
Since $\alpha > 1/2$ and $\Q < x^{1/3}$, it follows that
$$
x^{1/3 +o(1)} \ll \Q^{-\alpha/2} \Psi(x,[y',y]).
$$

If $\Q^{1/2} < d < x/y$, then Lemma \ref{lem:BrTen-2.4.i-ii} (i) applies and we obtain
\begin{align*}
 &\sum_{\substack{d \mid q^\infty \\ \Q^{1/2} < d \leq x/y}} \Psi(x/d,[y',y]) \1_{S([y',y])}(d)
 \ll \Psi(x,[y',y]) \sum_{\substack{ \Q^{1/2} < d \leq x/y \\ d\mid q^{\infty}}} d^{-\alpha} \1_{S([y',y])}(d) \\
 &\qquad\ll \Psi(x,[y',y]) \Q^{-\alpha/2} \sum_{\substack{d\mid q^{\infty}}} d^{-\alpha/2} \1_{S([y',y])}(d) 
 \ll \Psi(x,[y',y]) \Q^{-\alpha/2} \exp\bigg(\sum_{\substack{p > y'\\ p\mid q}} p^{-\alpha/2} + O(1)\bigg) \\
 &\qquad\ll \Psi(x,[y',y]) \Q^{-\alpha/2} \exp\Big(O \Big( {y'}^{-\alpha/2} \frac{\log \Q}{\log y'}\Big)\Big) 
  \ll_{\eps} \Psi(x,[y',y]) \Q^{-\alpha/2 +\eps},
 \end{align*}
since $\alpha > 1/2$.

To remove the contribution from $n \leq x / \Q$ we may now assume that $d < \Q^{1/2}$.
This contribution is trivially bounded above by
$$
\sum_{d < \Q^{1/2}} \sum_{\substack{n \leq x/\Q \\ (n,d) = 1}} \1_{S([y',y])}(dn)
\leq \Psi(x/\Q^{1/2};[y',y]) \ll \Q^{\alpha/2} \Psi(x;[y',y]),
$$
where we used Lemma \ref{lem:BrTen-2.4.i-ii} (i) together with the bound $\Q^{1/2} < x/y$ which follows from the assumption that $4 y^3\Q^3 \leq x$.
This shows that \eqref{eq:DS-aux1} holds.

Following \cite{Drappeau-Shao}, set $L=4y\Q$ and decompose any element of $[x/\Q,x/d]\cap S([y',y])$ as the unique product $mn$ for which $m\in[L,yL]$ and $P^+(m)\leq P^-(n)$, which is possible since
$y<Ly < x/\Q$ by the assumption that $4y^3 \Q^3 \leq x$.
Decomposing the range of $m$ dyadically and extracting the largest prime factor of $m$ implies that 
\[
E_k(x,[y',y];\theta)\ll\log x\sup_{L\leq M\leq yL}\mathcal E(M)+\frac{\Psi(x,[y',y])}{\Q^{\alpha/4}},
\]
where 
\[
\mathcal E(M)
 = \threesum{d\mid q^\infty}{d\leq\Q^{1/2}}{d\in S([y',y])}
  \sum_{y' \leq p\leq y}
  \sum_{\substack{m \in S([y',p]) \\ M/p \leq m\leq\min\set{2M,yL}/p}}
  \sum_{\substack{n \in S([p,y]) \\ (pmn,q)=1 \\ x/(pm\Q)\leq n\leq x/(pmd) }}
  e\bigbrac{(pmnd)^k\theta}.
\]
The dependence of the summation condition of the inner sum on $m$ can be removed with the help of the same trick as in the proof of Lemma \ref{lem:auxiliary-lemma}, which leads to
$$
\mathcal E(M)
\ll (\log x) \sup_{\beta \in [0,1)}
  \sum_{\substack{d \mid q^\infty \\ d\leq\Q^{1/2} \\ d\in S([y',y])}}
  \sum_{y' \leq p\leq y}
  \sum_{\substack{m \in S([y',p]): \\ M/p \leq m \leq \\ \min\set{2M,yL}/p}}
  \Big|
  \sum_{\substack{n \in S([p,y]) \\ (pmn,q)=1 \\ x/(2M\Q)\leq n\leq x/(Md) }}
  e\bigbrac{(pmnd)^k\theta + \beta n} \Big|.
$$
An application of Cauchy-Schwarz, combined with the bound
$$\sum_{\substack{d \mid q^\infty \\ d \leq \Q^{1/2}\\d \in S([y',y])}} 1 
\leq \Q^{\eps/2} \sum_{\substack{d \mid q^\infty \\ d \leq \Q^{1/2}\\d \in S([y',y])}} d^{-\eps} 
\leq \Q^{\eps/2} \exp(O(1)\sum_{p\mid q} p^{-\eps}) \ll \Q^{\eps/2} \Q^{o(1)} \ll_{\eps} \Q^{\eps},$$
which follows from $\omega (d) \leq (\log \Q)/ \log y'$, we obtain
$$
\mathcal E(M)
\ll_{\eps} (\log x) \Q^{\eps} M^{1/2} \mathcal{S}_1(M)^{1/2},
$$
where $\mathcal{S}_1(M)$ is defined as in \cite[display just below (5.3)]{Drappeau-Shao} except that the sum over primes runs over $y' \leq p \leq y$:
$$
\mathcal{S}_1(M) = 
\sum_{\substack{d \mid q^{\infty} \\ d \leq \Q}}
\sum_{y'\leq p \leq y} 
\sum_{M/p <m \leq 2M/p} 
\bigg| 
  \sum_{\substack{n \in S([p,y]) \\ (pmn,q)=1 \\ x/(2M\Q)\leq n\leq x/(Md) }}  e\bigbrac{(pmnd)^k\theta + \beta n}
\bigg|^2.
$$
Here we relaxed the summation conditions on $d$ (to match the corresponding sum in \cite{Drappeau-Shao}) and, following \cite{harper-minor-arcs, Drappeau-Shao}, on $m$, which allows one to later use a standard exponential sum estimate where the summation variable runs over a full interval.
Expanding out the square, swapping the order of summation, applying the triangle inequality and 
relaxing some of the summation conditions in the outer sum leads to
$$
\mathcal{S}_1(M) \ll 
\sum_{\substack{d \mid q^{\infty} \\ d \leq \Q}} \sum_{y'\leq p \leq y} 
\sum_{\substack{n_1, n_2 \in S([y',y]) \\ (n_1n_2,q)=1 \\ x/(2M\Q)< n_1 \leq n_2 \leq x/(Md)}}
\bigg|
\sum_{M/p < m \leq 2M/p} e\Big( (pmd)^k \theta (n_1^k-n_2^k)\Big)
\bigg|.
$$
Note carefully that instead of relaxing the condition $n_1, n_2 \in S([p, y])$ to $n_1, n_2 \in S(y)$, as in 
\cite{Drappeau-Shao}, we replaced it by $n_1, n_2 \in S([y', y])$.
We now follow the analysis of $\mathcal{S}_1(M)$ from \cite{Drappeau-Shao}, with the only change that in the definitions of $\mathcal{S}_j$, $j \in \{2,\dots,5\}$ the variables $n_1$ and $n_2$ continue to be restricted to 
$S([y',y])$.
Instead of bounding the quantity $\mathcal{S}_5(M;r',d;n_1,b;T)$ with the help of \cite[Lemma 3.3]{Drappeau-Shao}, we use Lemma \ref{lem:short-progressions-Harper}, which produces the analogues bound with
$\Psi(x/(Md),y)$ replaced by $\Psi(x/(Md),[y',y])$.
In the definition of $\mathcal{S'}_3$, we include the restriction to $n_1 \in S([y',y])$.
Adapting the analysis of $\mathcal{S}_3$ requires the lower bound 
\begin{align*}
\Psi\left(x/(Md),y\right)
&\asymp 
\left(\frac{x}{Md}\right)^{\alpha'} (\log x\log y)^{-1/2}
\prod_{y' \leq  p  \leq y} (1-p^{-\alpha'})^{-1} \\
&\gg \left(\frac{x}{Md}\right)^{\alpha'} (\log x\log y)^{-1/2} 
\gg \left(\frac{x}{Md}\right)^{\alpha' + o(1)} \gg \left(\frac{x}{Md}\right)^{\alpha},
\end{align*}
where $\alpha' = \alpha(x/(Md), y)$,
which follows from Lemma \ref{lem:BrTen-2.1}, the asymptotic expansion \eqref{eq:Hil-Ten-Thm1} 
as well as the bound $\alpha' = \alpha(x/(Md), y) > \alpha(x,y)=\alpha$ implied by the lower bound 
$M \leq L = 4 y \Q > \log x$ and the estimate \eqref{alpha-x-y}.

The remaining analysis in the proof of \cite[Proposition 5.7]{Drappeau-Shao} involves two applications of 
\cite[Lemma 3.2]{Drappeau-Shao}, which may be replaced by applications of 
Lemma \ref{lem:BrTen-2.4.i-ii} (i) combined with Lemma \ref{lem:BrTen-2.1} since for 
$d \leq \Q$ and $M \leq yL$ we have
$$
Md \leq y L \Q \leq 4y^2 \Q^2 \leq x/(y \Q) \leq x/y,
$$
thanks to our stronger assumption that $4 y^3 \Q^3 \leq x$. 
This ensures that $d \leq x/(yM)$ in the first application and that $M < x/y$ in the second application, i.e.\ that the conditions of Lemma \ref{lem:BrTen-2.4.i-ii} (i) are satisfied.
Our lemma thus follows from the proof of \cite[Proposition 5.7]{Drappeau-Shao}.
\end{proof}

\begin{proof}[Proof of Theorem \ref{thm:x-eps-smooth}]
The first part of the result follows directly from Lemma \ref{lem:wooley} applied with $\sigma=1/12$.
Concerning the second part, suppose that $\theta \in \mathfrak{M}(x^{1/12},x)$ and recall that by definition \eqref{def:majorarc} there exists a positive integer $q \leq x^{0.1}$ such that 
$\norm{q\theta}\leq x^{1/12} x^{-k}$. 
For any such value of $q$, we have $\Q = q + \|q\theta\| x^k \leq 2x^{1/12}$ and 
$4y^3\Q^3  \leq x$ since $y< x^{\eta}$ and $\eta < 1/4$. 
It then follows from Lemma \ref{lem:drappeau-shao} that
\[
E_k(x,[y',y];\theta)\ll \Q^{-c+2(1-\alpha)}(\log x)^5\Psi(x,[y',y]).
\]

\end{proof}

\section{Strongly recurrent polynomial sequences over smooth numbers} \label{sec:bootstrap}

Our aim in this section is to show that if a sequence $\{\|\beta n^k\|\}_{n \in S(x,[y',y])}$ is strongly recurrent, then $\beta$ is very close to a rational with small denominator $q$.
More precisely:

\begin{theorem} \label{thm:strong-recurrence}
Let $N>2$ be a parameter, let $\theta\in\RR$, and let $k\geq1$ be a fixed integer. 
Let $K' > 0$, $K>\max (2, 2K')$, $1 \leq y' \leq (\log N)^{K'}$, and suppose that 
$(\log N)^K \leq y \leq N^{\eta}$, for some small constant 
$\eta = \eta(k) \in (0,1)$. 
Let $\delta: \RR_{>0} \to \RR_{>0}$ be a function of $N$ that satisfies 
$$\log_5 N \ll \delta(N)^{-1} < \log_2 N $$
for all sufficiently large $N$.
Finally suppose that $0<\eps\leq \delta/2$ and that
$\eps < \delta(N)^{-O(1)} (\log N)^{-C_1}$ for some fixed constant $C_1 \geq 1$. 
Then the following assertion holds provided that $K$ and $C_1$ are sufficiently large depending on $k$.

If the bound $\|n^k \theta\| \leq \eps$ holds for at least $\delta\Psi(N,[y',y])$ elements 
$n\in S(N,[y',y])$, then there exists a positive integer $0<q\ll\delta^{-O(1)}$
such that
$$
\norm{q\theta}\ll \eps \delta^{-O_{C_1}(1)} /N^k.
$$
\end{theorem}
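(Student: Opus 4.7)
The plan is to argue in three phases: a Fourier-analytic reduction from the recurrence hypothesis to a Weyl sum lower bound; a first application of the Weyl sum estimates of Section~\ref{sec:weyl-sums} to obtain a weak rational approximation to $\theta$; and a multi-stage bootstrap exploiting the distribution of $[y',y]$-smooth numbers in short arithmetic progressions (Section~\ref{sec:shortAPs}) that sharpens this into the desired strong recurrence.

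\textbf{Reduction to a Weyl sum bound.} Let $B:=\{n\in S(N,[y',y]):\|n^k\theta\|\leq \eps\}$, so $|B|\geq \delta\Psi(N,[y',y])$. Choose a smooth majorant $\phi:\T\to[0,1]$ of $\1_{[-\eps,\eps]}$ with Fourier decay $|\hat\phi(m)|\ll \eps/(1+\eps|m|)^{2}$. Expanding
\[
\delta\,\Psi(N,[y',y])\leq \sum_{n\in S(N,[y',y])}\phi(n^k\theta)= \sum_{m\in\ZZ}\hat\phi(m)\,E_k(N,[y',y];m\theta),
\]
the $m=0$ term contributes at most $\delta\Psi/2$ (using $\eps\leq \delta/2$) and truncating $|m|>M$ for $M\asymp(\eps\delta)^{-1}$ costs at most another $\delta\Psi/4$ by the decay of $\hat\phi$. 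Pigeonhole yields some $1\leq|m|\leq M$ with $|E_k(N,[y',y];m\theta)|\gg \delta^{O(1)}\Psi$. Applying Theorem~\ref{thm:x-eps-smooth} to $m\theta$: since $K$ is large and $\eta$ small one has $1-\alpha=O(1/K)$, so $c-2(1-\alpha)$ is a fixed positive constant and $\Psi\gg N^{1-c/2}$ defeats the minor-arc bound of part~(1); hence $m\theta\in\mathfrak{M}(N^{1/12},N)$ and part~(2) produces $(q_m,a_m)$ with $\gcd(a_m,q_m)=1$ and $q_m+N^k\|q_m(m\theta)\|\ll \delta^{-O(1)}(\log N)^{O(1)}$. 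Setting $q_0:=mq_m$ gives the \emph{weak recurrence}
\[
q_0\leq \eps^{-1}\delta^{-O(1)}(\log N)^{O(1)},\qquad \|q_0\theta\|\leq \delta^{-O(1)}(\log N)^{O(1)}/N^k,
\]
the critical feature being that no $\eps^{-1}$ appears in the bound on $\|q_0\theta\|$.

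\textbf{Bootstrap and conclusion.} This is the main step and I expect it to be the principal obstacle. Write $\theta=a/q_0+\beta$, set $d:=\gcd(a,q_0)$, $q:=q_0/d$, $a':=a/d$ (so $\gcd(a',q)=1$). For each $n\in B$,
\[
\|n^k a'/q\|\leq \eps+N^k|\beta|\leq \eps+\delta^{-O(1)}(\log N)^{O(1)}/q,
\]
so $n^k a'\bmod q$ lies in an interval of length $O(q\eps+\delta^{-O(1)}(\log N)^{O(1)})$; because $\gcd(a',q)=1$, the admissible residues for $n$ modulo $q$ form a set of size at most $q^{o(1)}\cdot O(q\eps+\delta^{-O(1)}(\log N)^{O(1)})$. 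Summing the bound of Lemma~\ref{lem:short-progressions-Harper} (or Corollary~\ref{cor:W-tricked-equid-short-APs} when modulus and length permit) for $|S(N,[y',y])\cap P|$ over these residue progressions $P$ and comparing with $|B|\geq \delta\Psi$ forces $q\leq \delta^{-O(1)}(\log N)^{O(1)}$; this single pass already strips the $\eps^{-1}$ factor. To drop the remaining $(\log N)^{O(1)}$, I would iterate, re-expressing $\theta$ with respect to the reduced denominator and re-running the admissible-residue count with a tightened error $\delta^{-O(1)}(\log N)^{O(1)}/q$. As foreshadowed in Section~\ref{sec:overview}, roughly four stages of bootstrap suffice, switching between Corollary~\ref{cor:W-tricked-equid-short-APs} (long progressions, smooth modulus) and Lemma~\ref{lem:short-progressions-Harper} (short progressions, arbitrary modulus) according to the current scale of $q$ and $N/q$; the main technical challenge is the careful bookkeeping across these stages, verifying at each scale that the hypotheses of the active equidistribution lemma remain satisfied. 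Once $q\leq \delta^{-O(1)}$, one final equidistribution argument yields the $\eps$ in $\|q\theta\|$: partition $B$ into fibres $B_r=\{n\in B:n\equiv r\bmod q\}$, on which $\gamma(r):=\{r^k a'/q\}$ is constant, so $\|n^k\theta\|\leq \eps$ becomes $\|\gamma(r)+n^k\beta\|\leq \eps$. As $n$ ranges over $S(N,[y',y])\cap[N/2,N]$, $n^k\beta$ sweeps an interval of length $L=\Theta(N^k|\beta|)$; if $L>4\eps/\delta$, then on each fibre only a fraction $\ll\eps/L$ of elements can satisfy the constraint, and summing over $r$ (using Theorem~\ref{thm:equid-in-APs} to equate the fibre sizes to $\Psi/\phi(q)$) gives $|B|\ll(\eps/L)\Psi<\delta\Psi$, contradicting the hypothesis. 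Hence $N^k|\beta|\ll \eps/\delta$ and $\|q\theta\|=q|\beta|\ll \delta^{-O(1)}\eps/N^k$, as required.
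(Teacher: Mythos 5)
Your overall architecture (Fourier reduction to Weyl sums, then a bootstrap driven by equidistribution in short progressions) is the same as the paper's, but two of your steps fail as written. First, your Fourier reduction majorises $\1_{[-\eps,\eps]}$ at scale $\eps$, so the truncation at $M\asymp(\eps\delta)^{-1}$ and the pigeonholing produce a frequency $m$, and hence a denominator $q_0=mq_m$, as large as $\eps^{-1}\delta^{-O(1)}(\log N)^{O(1)}$. Since the hypotheses impose no lower bound on $\eps$, this can be arbitrarily large compared with $\delta^{-O(1)}$, whereas the theorem asserts $q\ll\delta^{-O(1)}$ with no $\log N$ or $\eps^{-1}$ factors. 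The paper's reduction exploits $\eps\leq\delta/2$ to fatten the interval to width $\asymp\delta$ (a Lipschitz majorant supported on a set of measure $\delta$, mollified by a Fej\'er kernel at scale $\delta'$), so the pigeonholed frequency is $\ll\delta^{-O(1)}$ from the outset and never needs to be shrunk. Your proposed rescue --- counting admissible residues modulo $q$ against Lemma \ref{lem:short-progressions-Harper} --- can at best yield $q\ll\delta^{-O(1)}(\log N)^{O(1)}$, because that lemma is the only distribution result in the paper valid for arbitrary moduli and it carries an irremovable loss $(x/|P|)^{1-\alpha}\log x$; the sharper inputs (Theorem \ref{thm:equid-in-APs}, Corollary \ref{cor:W-tricked-equid-short-APs}) require the modulus to be composed of primes $<y'$ and the residue class to be reduced, which the denominator of a rational approximation to $\theta$ has no reason to satisfy. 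The log-free bound on $q$ is genuinely needed in the application (Proposition \ref{prop:linear-subseqs}), where $\delta^{-1}<\log_2 N$ means a stray $(\log N)^{O(1)}$ cannot be absorbed.

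The same smooth-modulus obstruction undermines your final step: you fibre $B$ over residue classes modulo $q$ and invoke Theorem \ref{thm:equid-in-APs} to equate fibre sizes to $\Psi/\phi(q)$, but that theorem does not apply to arbitrary $q$ or to non-reduced residues. Moreover, even granting fibre equidistribution, one pass of your interval-sweeping argument cannot deliver $\norm{q\theta}\ll\eps\delta^{-O(1)}/N^k$: every pass loses the factor $\Delta$ of whichever short-interval estimate is active ($\log x$ for Lemma \ref{lem:short-progressions-Harper}, $\log_3 N+(\log N)^{\ell-1/24}$ for Corollary \ref{cor:W-tricked-equid-short-APs}(1), $\log_5 N$ for Corollary \ref{cor:W-tricked-equid-short-APs}(2)), so a single pass leaves $(\log N)^{O(1)}$ behind. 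This is precisely why the paper runs the Drappeau--Shao bootstrapping lemma (Lemma \ref{lem:bootstrapping}) in four stages (Lemma \ref{lem:strong-recurrence}, parts (1)--(4)), with roughly $24C_1$ iterations in the third stage, each stage lengthening the interval parameter $L$ --- which is only legitimate because the previous stage has already improved $\norm{q\theta}$ enough to keep $\norm{\vartheta}\leq\eps'/(LN^{k-1})$ --- and thereby shrinking $\Delta$. Crucially, Lemma \ref{lem:bootstrapping} needs only counts of $S(N,[y',y])$ in discrete \emph{intervals} (common difference $1$) and works with $\vartheta=q\theta$ and the recurrence $\norm{m^kq\theta}\leq q\eps$, so no equidistribution modulo $q$ is ever required. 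To repair your argument you would need to replace the residue-fibre decomposition by this interval-only formulation and make the staged iteration explicit; as written, the claims that a single pass "strips the $\eps^{-1}$" and that "one final equidistribution argument yields the $\eps$" are not achievable with the stated tools.
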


The corresponding problem for strongly recurrent \emph{unrestricted} polynomial sequences 
$\{\|\beta n^k\|\}_{n \leq x}$ has been treated by Green and Tao in \cite[\S3]{GT-nilmobius}
where the polynomial case was reduced to the linear case via an application of bounds 
in Waring's problem.
While a suitable Waring-type result was proved by Drappeau and Shao in \cite[Theorem 2.4]{Drappeau-Shao} for the set $S(x,y)$, no such result is currently available for the sparse subset $S(x,[y',y])$, although we expect such an analogue to hold.

For this reason, our approach proceeds instead via first reducing the problem  via Fourier analysis to bounds on Weyl sums over $S(x,[y',y])$, which can then be combined with the results of Section \ref{sec:weyl-sums}.
The Fourier analysis reduction is standard (cf.\ \cite[Proposition 3.1 and Lemma 3.2]{GT-polyorbits}).
The bounds on the relevant Weyl sums obtained in Section \ref{sec:weyl-sums} by themselves are, however, not strong enough in order to deduce the theorem above; they only provide part (1) of Lemma \ref{lem:strong-recurrence} below.
In particular, those bounds on $\|q\theta\|$ are not strong enough in order to later analyse the correlation of $g_{[y',y]}(n)$ with nilsequence for very small values of $y$, that is when $\log y \ll \log \log x$.
To work around this problem, we will employ a bootstrapping argument in order to improve the bounds on 
$\|q \theta\|$. 
This argument is based on the combination of the following lemma due to Drappeau and Shao \cite[Lemma 3.7]{Drappeau-Shao} (which is a higher-dimensional version of the bootstrapping argument used in the proof of \cite[Lemma 3.2]{GT-polyorbits}) 
and the bounds we obtained in Section~\ref{sec:W_trick_and_equid} on equidistribution of $S([y',y])$ in short progressions.

\begin{lemma}[Bootstrapping lemma, Drappeau--Shao \cite{Drappeau-Shao}]
\label{lem:bootstrapping}
 Let $k>0$ be a fixed integer and let $\eps', \delta \in (0,1)$. 
 Let $1 \leq L \leq x$ be parameters and let $\mathcal{A} \subset [x,2x]$ be a non-empty subset with the property that
 $$
 |\mathcal{A} \cap P| \leq \Delta \frac{|\mathcal{A}||P|}{x}
 $$
 for some $\Delta \geq 1$ and 
 for any arithmetic progression $P \subseteq [x,2x]$ of length at least $L$ and common difference $q=1$.
 Suppose that for some $\vartheta \in \RR$ with $\| \vartheta \| \leq \eps' / (Lx^{k-1})$, there are at least 
 $\delta |\mathcal{A}|$ elements $m \in \mathcal{A}$ satisfying $\|m^k \vartheta\| \leq \eps'$.
 Then either $\eps' \gg \delta / \Delta$ or 
 $$
 \vartheta \ll \Delta \delta^{-1} \eps'/x^k.
 $$
\end{lemma}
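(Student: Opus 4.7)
The plan is to extract, from the largeness of $B:=\{m\in\mathcal A:\|m^k\vartheta\|\leq\eps'\}$, a long sub-structure of $[x,2x]$ on which $\|n^k\vartheta\|$ stays small, and to compare the geometric size of that sub-structure against the well-distribution hypothesis. We may assume $\eps'\leq c\delta/\Delta$ for a small absolute constant $c$ (else the first alternative of the conclusion holds), and, after reducing $\vartheta$ modulo $1$, regard $\vartheta$ as a nonzero real number with $|\vartheta|\leq\eps'/(Lx^{k-1})$.

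The first step is a slow-variation observation. For every $m\in B$ and every integer $n\in[m,m+L]$, the upper bound on $|\vartheta|$ gives
\[
|n^k\vartheta-m^k\vartheta|\leq L\cdot k(2x)^{k-1}|\vartheta|\leq k\cdot 2^{k-1}\eps',
\]
so $\|n^k\vartheta\|\leq C_k\eps'$ for a constant $C_k$ depending only on $k$. Defining
\[
T:=\{n\in[x,2x+L]\cap\ZZ:\|n^k\vartheta\|\leq C_k\eps'\},
\]
each $m\in B$ therefore lies in an integer component of $T$ of length at least $L$. Let $T^\ast$ be the union of those components of $T$ which meet $B$, so $B\subseteq T^\ast\subseteq T$ and every component of $T^\ast$ is an interval of length at least $L$.

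Applying the well-distribution hypothesis to each piece $J_i$ in the decomposition $T^\ast=\bigsqcup_i J_i$ (which is legitimate since $|J_i|\geq L$) gives
\[
\delta|\mathcal A|\leq|B|\leq\sum_i|\mathcal A\cap J_i|\leq\Delta|\mathcal A|\frac{|T^\ast|}{x},
\]
whence the lower bound $|T^\ast|\geq\delta x/\Delta$. For a matching upper bound I would view $n\mapsto n^k|\vartheta|$ as a monotone map on $[x,2x]$ with derivative $\geq kx^{k-1}|\vartheta|$: its image has length $(2^k-1)x^k|\vartheta|$, meets at most $(2^k-1)x^k|\vartheta|+1$ integer translates of the band $[-C_k\eps',C_k\eps']$, and the preimage of any such band has real length at most $2C_k\eps'/(kx^{k-1}|\vartheta|)$. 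Summing and converting real measure into integer count (each component of $T^\ast$ has real length $\geq L\geq 1$, so its integer cardinality is within a factor $2$ of its real length) yields
\[
|T^\ast|\ll_k\eps' x+\frac{\eps'}{x^{k-1}|\vartheta|}.
\]

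Combining the two bounds gives $\delta x/\Delta\ll_k\eps' x+\eps'/(x^{k-1}|\vartheta|)$. If the first term dominates, then $\eps'\gg_k\delta/\Delta$, contradicting the initial reduction; otherwise $|\vartheta|\ll_k\Delta\delta^{-1}\eps'/x^k$, which is the desired conclusion. The subtle point is the slow-variation step: it must simultaneously guarantee that every component of $T^\ast$ meeting $B$ has real length at least $L$ (so the well-distribution hypothesis is applicable to each piece) and that the conversion between integer cardinality and real length in the upper bound costs only an $O(1)$ factor. Both features follow from the interaction of the hypothesis $|\vartheta|\leq\eps'/(Lx^{k-1})$ with the interval scale $L$, and it is this delicate calibration that makes the lemma sharp.
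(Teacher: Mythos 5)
Your argument is correct and is essentially the proof of the cited source (the paper itself only quotes this lemma from Drappeau--Shao without giving a proof): one plays the well-distribution hypothesis, applied to the long integer runs of the level set $\{n:\|n^k\vartheta\|\le C_k\eps'\}$ that the slow-variation estimate produces around each recurrent $m$, against the measure of the corresponding real level set computed via the monotonicity of $t\mapsto t^k\vartheta$. Two routine points deserve a line each: a component containing $m\in B$ may protrude past $2x$, so before invoking the hypothesis one should replace $J_i\cap[x,2x]$ by an interval of length $\max(L,|J_i\cap[x,2x]|)\le |J_i|$ inside $[x,2x]$; and the factor-$2$ passage from integer cardinality to real measure requires noting (via the same slow-variation bound on unit intervals) that the real hull of each component lies in the level set with a slightly enlarged constant $C_k'$.
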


\begin{rem}
 The original statement of \cite[Lemma 3.7]{Drappeau-Shao} does not restrict the progressions $P$ to have common difference $q=1$. An inspection of the proof reveals, however, that the corresponding assumption is only required for progressions with common difference $q=1$, i.e.\ for discrete intervals.
\end{rem}

Theorem \ref{thm:strong-recurrence} is an immediate consequence of the following lemma, which will help us structure our proof.

\begin{lemma} \label{lem:strong-recurrence}
Let $\theta\in\RR$, $k\geq1$ a fixed integer, and let $K'>0$ and $K>\max(2K',2)$ be constants. 
Let $1 \leq y'\leq (\log x)^{K'}$ and suppose that $(\log x)^K\leq y\leq x^{\eta}$ for some small constant $\eta \in (0,1)$. 
Let $\delta = \delta(x)$ be such that $\delta(x)^{-B} \ll_B x$ for all $B>0$ and suppose that 
$0<\eps\leq\frac{\delta}{2}$. 
Suppose that there are at least $\delta\Psi(x,[y',y])$ elements $n\in S(x,[y',y])$ for which 
$\|n^k \theta\| \leq \eps$. Then:
\begin{enumerate}
 \item \label{item:SR1}
 There is some integer $0<q\ll\delta^{-O(1)}$ and a constant $c>0$ depending on $k$ such that
$$
\norm{q\theta}\ll \delta^{-O(1)} (\log x)^{10/c}/x^k
$$
 provided $K$ is sufficiently large in terms of $c$.
\item \label{item:SR2} If, furthermore, $\delta(x)>(\log_2 x)^{-1}$ and $\eps < \delta(x)^{-O(1)} (\log x)^{-C_1}$ for some fixed constant $C_1 \geq 5 + 20/c$, then the integer $0<q\ll\delta^{-O(1)}$ from part (\ref{item:SR1}) satisfies
$$
\norm{q\theta}\ll\eps\delta^{-O(1)} (\log x)^{C_1}/x^k,
$$
provided that $K$ is sufficiently large in terms of $c$.
\item \label{item:SR3} Under the assumptions of part (\ref{item:SR2}) 
the integer $0<q\ll\delta^{-O(1)}$ from part (\ref{item:SR1}) satisfies the bound
$$
\norm{q\theta}\ll\eps\delta^{-O_{C_1}(1)} (\log_3 x)/x^k,
$$
provided that $K$ is sufficiently large in terms of $c$.
\item \label{item:SR4} If, in addition to all previous assumptions,
$\delta^{-1} \gg \log_5 x$, 
then the integer $0<q\ll\delta^{-O(1)}$ from part (\ref{item:SR1}) is such that
$$
\norm{q\theta} \ll \eps\delta^{-O_{C_1}(1)}/x^k,
$$
provided that $K$ is sufficiently large in terms of $c$.
\end{enumerate}

\end{lemma}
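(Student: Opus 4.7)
The proof splits into four stages. Part (1) is a base estimate obtained from Fourier analysis combined with the Weyl sum bounds of Theorem \ref{thm:x-eps-smooth}; Parts (2)--(4) then sharpen the bound on $\|q\theta\|$ (for the same $q$) by iteratively applying Lemma \ref{lem:bootstrapping}, each time invoking a different equidistribution input from Section \ref{sec:W_trick_and_equid}.

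For Part (1), I would follow the classical Fourier-analytic strategy (cf.\ Green--Tao \cite[\S3]{GT-nilmobius}). A Fej\'er-kernel expansion with $M \asymp 1/\eps$ converts the $\eps$-recurrence hypothesis into a lower bound of the form $\sum_{0<|\ell|\leq M}|E_k(x,[y',y];\ell\theta)| \gg \delta M \Psi(x,[y',y])$, and pigeonhole then produces some $\ell\in[1,M]$ with $|E_k(x,[y',y];\ell\theta)| \gg \delta\Psi(x,[y',y])$. The minor-arc bound of Theorem \ref{thm:x-eps-smooth}(1), combined with the lower bound $\Psi(x,[y',y]) \gg x^{1-O(1/K)}$ deducible from Lemma \ref{lem:BrTen-2.1} and \eqref{eq:Hil-Ten-Thm1}, forces $\ell\theta \in \mathfrak{M}(x^{1/12},x)$ once $K$ is large enough (so that $c-2(1-\alpha) \geq c/2$). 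The major-arc bound of Theorem \ref{thm:x-eps-smooth}(2) applied to $\ell\theta$ then yields coprime $a,q^*$ with $q^* + x^k\|q^*\ell\theta - a\| \ll \delta^{-O(1/c)}(\log x)^{10/c}$, and a further pigeonhole over the large set of Fej\'er-good $\ell$'s (extracting two of them sharing the same $(a,q^*)$ and taking their difference) reduces the effective denominator to $q \ll \delta^{-O(1)}$ while preserving $\|q\theta\| \ll \delta^{-O(1)}(\log x)^{10/c}/x^k$.

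For Parts (2)--(4), I would fix the $q$ from Part (1) and, after passing via dyadic pigeonhole to a sub-interval $[X,2X]\subset[x/2,x]$ containing $\gg\delta\Psi(x,[y',y])$ recurrent $n$'s, apply Lemma \ref{lem:bootstrapping} iteratively with $\vartheta = q\theta$ and $\eps' = q\eps$ (noting that $\|n^k\vartheta\| \leq q\eps < 1/2$ under our assumptions). Given a current bound $\|q\theta\| \leq \eta/X^k$, the lemma is invoked with $L = q\eps X/\eta$, producing the update $\eta_{\mathrm{new}} \ll \Delta\,\delta^{-O(1)}\,q\eps$, where $\Delta$ is supplied by Corollary \ref{cor:W-tricked-equid-short-APs}(1) when $L \geq X/(\log X)^{O(1)}$, by Corollary \ref{cor:W-tricked-equid-short-APs}(2) for shorter progressions, and by Lemma \ref{lem:short-progressions-Harper} for the shortest. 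For Part (2), a direct calculation in the Cor.~\ref{cor:W-tricked-equid-short-APs}(1)-regime shows that the exponent $\lambda := \log\eta/\log_2 x$ shrinks by $1/24$ per iteration, so $O_c(1)$ iterations reduce $\eta$ to $\delta^{-O_c(1)}$, matching Part (2) upon using $\eps \leq \delta^{-O(1)}(\log x)^{-C_1}$. Continuing past Part (2), the quantity $\ell_\ast = \log(x/L)/\log_2 x$ drops below $1/24$, activating the $\log_3 x$-branch of Cor.~\ref{cor:W-tricked-equid-short-APs}(1) and yielding Part (3). For Part (4), I would switch to Cor.~\ref{cor:W-tricked-equid-short-APs}(2): the iteration rule $\mu \mapsto O(\log\delta^{-1}) + \log\log\mu$ (with $\mu := \log(\eta/(q\eps))$) contracts super-geometrically to a fixed point $\mu^\ast \asymp \log\delta^{-1}$, giving $\eta \ll q\eps \cdot \log\log\delta^{-1} \cdot \delta^{-O(1)}$; the hypothesis $\delta^{-1} \gg \log_5 x$ is exactly what permits $\log\log\delta^{-1}$ to be absorbed into $\delta^{-O_{C_1}(1)}$.

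The main obstacle is the careful parameter tracking across the four stages: at each iteration one must verify that the chosen $L$ falls in the valid regime of the invoked equidistribution estimate, that the non-triviality condition $\eps' \ll \delta/\Delta$ of Lemma \ref{lem:bootstrapping} continues to hold (this is precisely why $C_1 \geq 5 + 20/c$ is required), and that the accumulating $\delta^{-O(1)}$ factors remain below the stated targets. It is this interplay that forces the proof to proceed through four distinct stages, each invoking a progressively finer equidistribution input from Section \ref{sec:W_trick_and_equid}.
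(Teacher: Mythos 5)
Your overall architecture --- a Fourier-analytic reduction to the Weyl-sum bounds of Theorem \ref{thm:x-eps-smooth} for part (1), followed by iterated applications of the bootstrapping Lemma \ref{lem:bootstrapping} fed by the short-progression equidistribution results of Section \ref{sec:W_trick_and_equid} --- is the same as the paper's, and your treatment of parts (2)--(4) is essentially correct up to bookkeeping. (Two slips there: Corollary \ref{cor:W-tricked-equid-short-APs}(2) applies to the \emph{longest} progressions, $|P|\geq x\exp(-C\varpi)$ with $\varpi=o(\log_2 x)$, and gives the smallest $\Delta$, so it is Lemma \ref{lem:short-progressions-Harper} alone that covers the very short ranges of $L$ forced by the part-(1) bound; accordingly the paper's part (2) is a \emph{single} application of Lemma \ref{lem:bootstrapping} with $\Delta=(x/L)^{1-\alpha}\log x$, and the $1/24$-per-step contraction via Corollary \ref{cor:W-tricked-equid-short-APs}(1) only begins at part (3). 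Also, the quantity absorbed using $\delta^{-1}\gg\log_5 x$ in part (4) is $\log\varpi(x)=\log_5 x$, a function of $x$, not $\log\log\delta^{-1}$.)

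The genuine gap is in part (1). Taking the Fej\'er truncation at $M\asymp 1/\eps$ destroys the conclusion $q\ll\delta^{-O(1)}$: the lemma imposes no lower bound on $\eps$ (in the application in Section \ref{sec:linear-subsequences}, $\eps$ can be a negative power of $x$), so the pigeonholed frequency $\ell\leq M$ need not be $\delta^{-O(1)}$. The hypothesis $\eps\leq\delta/2$ is there precisely so that one may work at scale $\delta$ rather than $\eps$: either choose $M\asymp\delta^{-1}$ (the other end of the admissible range $\delta^{-1}\ll M\ll\eps^{-1}$), or, as the paper does, approximate the indicator of the recurrence event by a Lipschitz bump supported on a set of measure $\delta$ convolved with a Fej\'er kernel of width $\asymp\delta'$, truncating the Fourier expansion at $|\ell|\ll\delta'^{-3}$. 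Your proposed repair --- pigeonholing two good frequencies $\ell_1,\ell_2$ sharing a rational approximation $a/q^*$ and differencing --- does not close the gap: it only yields $\|(\ell_1-\ell_2)\theta\|$ small with $|\ell_1-\ell_2|\leq M$, so the resulting denominator is still bounded only by $M$, not by $\delta^{-O(1)}$. It is also unnecessary: once $\ell\ll\delta^{-O(1)}$, one applies Theorem \ref{thm:x-eps-smooth}(2) to the frequency $\ell\theta$ with the \emph{trivial} approximation $q^*=1$, so that $\Q=1+x^k\|\ell\theta\|$; the pigeonholed lower bound $|E_k(x,[y',y];\ell\theta)|\gg\delta^{O(1)}\Psi(x,[y',y])$ then forces $\|\ell\theta\|\ll\delta^{-O(1)}(\log x)^{10/c}x^{-k}$ directly, and one takes $q=\ell$.
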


\begin{proof}[Proof of Lemma \ref{lem:strong-recurrence}]
Suppose that $I\subseteq[0,1]$ is an interval of length $|I|=\eps$, and that there are at least 
$\delta \Psi(x,[y',y])$ elements $n\in S(x,[y',y])$ such that $n^k\theta\Mod\ZZ\in I$.
By approximating the characteristic function of the interval $I$ by a suitable smooth Lipschitz function $F$,
we obtain
\begin{equation} \label{eq:Lip-approx}
 \Bigabs{\sum_{n\in S(x,[y',y])}F(n^k\theta\Mod \ZZ)}\geq\delta'\Psi(x,[y',y]),
\end{equation}
for some $\delta' \in [\delta/2,\delta]$. 
We may assume that $F$ is supported on a set of measure $\delta$. Hence, on rescaling $F$ and redefining
$\delta'$ in the bound above as $\delta'=\delta^{C}$ for some $C \geq 1$, 
we may assume that $\norm{F}_{\mathrm{Lip}}=1$, which implies that, for every $k \in \ZZ$,
\begin{equation} \label{eq:F-hat-bound}
 |\hat F (k)| = \int_{\RR/\ZZ} F(\theta) e(-\theta k)\,\mathrm d\theta \leq \|F\|_{\infty} \leq \|F\|_{\mathrm{Lip}} = 1.
\end{equation}

Following the proof of \cite[Lemma 3.1]{GT-polyorbits}, our next step is to approximate $F$ by a function whose Fourier transform is finitely supported, with a support defined in terms of $\delta'$. 
For this purpose, consider the Fej\'er kernel $K(\theta) = \chi_Q*\chi_Q(\theta)$, 
where $\chi_Q(\theta) = \frac{\delta'}{16} \1_Q(\theta)$ is the normalised characteristic function of the short interval $Q=[-\delta'/16, \delta'/16]$.
Since $\int_{\RR/\ZZ} K = 1$ and by \eqref{eq:F-hat-bound}, the convolution $F_1=F*K$ satisfies
$$
\|F-F_1\|_{\infty} \leq\frac{\delta'}{4} \qquad \text{ and } \qquad |\hat F_1(k)| \leq 1 \text{ for all } k \in \ZZ.
$$
The rapid decay of the Fourier coefficients of $K$ allows one to approximate $F_1$ by a finite truncation of its Fourier series as follows. If
\[
F_2(\theta):=\sum_{0<|q|\ll\delta'^{-3}}\hat F_1(q)e(q\theta),\qquad\text{then}\qquad \norm{F_2-F_1}_\infty\leq\frac{\delta'}{4}.
\]
When applying both of these approximations to \eqref{eq:Lip-approx} and then swapping the order of summation,
it follows from the triangle inequality that
\[
\frac{\delta'}{2}\Psi(x,[y',y])
\leq\sum_{0<|q|\ll\delta'^{-3}}|\hat F_1(q)|\Bigabs{\sum_{n\in S(x,[y',y])}e(qn^k\theta)}
\leq\sum_{0<|q|\ll\delta'^{-3}}\Bigabs{\sum_{n\in S(x,[y',y])}e(qn^k\theta)}.
\]
By the pigeonhole principle there therefore is some integer $0<|q|\ll\delta'^{-3}$ such that
\begin{equation}
\label{eq:Fourier-bd}
\delta'^4\Psi(x,[y',y])\ll\Bigabs{\sum_{n\in S(x,[y',y])}e(qn^k\theta)} = |E_k(x,[y',y];q\theta)|.
\end{equation}

Following the above reduction via Fourier analysis, we are now in the position to invoke the bounds on Weyl sums over smooth numbers from Section \ref{sec:weyl-sums}.
To start with, the first part of Theorem~\ref{thm:x-eps-smooth} shows that, if $q\theta\Mod\ZZ$ does not belong to 
$\mathfrak M(x^{1/12},x)$, then the right hand side of \eqref{eq:Fourier-bd} is bounded by 
\[
E_k(x,[y',y];q\theta) 
\ll x^{1-c} 
\ll x^\alpha x^{1-\alpha - c} \ll x^{-c'}\Psi(x,[y',y]),
\]
where $c' := c-(1-\alpha)$ and where the lower bound $\Psi(x,[y',y]) \gg x^{\alpha+o(1)}$ follows from \eqref{eq:Hil-Ten-Thm1} and Lemma \ref{lem:MV}. 
Observe that $c' > 0$ provided that $K$ is sufficiently large in terms of $c$ since 
$1-\alpha(x,y) \leq \frac{1}{K} + o(1)$ by \eqref{alpha-x-y}. 
Hence, $\delta'^{4} \ll x^{-c'}$ and, thus, $x \ll {\delta'}^{-4/c'} = \delta^{-4C/c'}$, contradicting our assumptions on $\delta$, which implies $\delta^{-4C/c'} \ll_B x^{1/B}$ for all $B>0$.

It follows that $q\theta\Mod\ZZ\in\mathfrak M(x^{1/12},x)$.
In this case, the second part of Theorem \ref{thm:x-eps-smooth}, applied with $\theta$ replaced by $q\theta$ 
(and applied in the special case where, in the statement of Theorem \ref{thm:x-eps-smooth}, $q=a=1$), shows that
\[
\sum_{n\in S(x,[y',y])}e(qn^k\theta)\ll\bigbrac{1+x^k\norm{q\theta}}^{-c+2(1-\alpha)}(\log x)^5\Psi(x,[y',y]).
\]
Suppose that $K$ is sufficiently large to ensure that $c-2(1-\alpha)>c/2$.
Then it follows from the Fourier analysis bound \eqref{eq:Fourier-bd} that
\[
1+x^k\norm{q \theta} \ll \delta^{-8C/c} (\log x)^{10/c},
\]
and hence
\[
\norm{q\theta}\ll\delta^{-8C/c} (\log x)^{10/c} x^{-k},
\]
which proves {\em part (\ref{item:SR1})}.
For later use in the proofs of all remaining parts, we record the following consequence of part (1) and our assumptions.

\begin{rc}
Let $0 < q \ll \delta^{-O(1)}$ denote the integer produced by part (1). Then we have 
\begin{equation} \label{eq:recurrence-condition}
 \|n^k q \theta\| \leq q\|n^k \theta\| < \eps' 
\end{equation}
for some $\eps' \asymp q \eps \ll \delta^{-O(1)} \eps$ and for at least $\delta \Psi(x,[y',y])$ elements $n$ of 
$S(x,[y',y])$.
\end{rc}

To establish {\em part (\ref{item:SR2})}, observe that in view of Lemma \ref{lem:short-progressions-Harper}, the conditions of Lemma~\ref{lem:bootstrapping} are satisfied for the set $\mathcal{A}= S(2x,[y',y])$ and any lower bound $L \geq 1$  with a correction factor of the form $\Delta = (x/L)^{1-\alpha}\log x$.
Moreover, we have
$$\|q \theta\| \ll \delta^{-8C/c} (\log x)^{10/c} x^{-k} = \eps' /(Lx^{k-1})$$ 
if we set $L = \eps' \delta^{8C/c}(\log x)^{-10/c} x$ and with $\eps'$ as in \eqref{eq:recurrence-condition}. 
In this case, 
\begin{align*}
\eps'\Delta &\leq \eps' (\log x) 
(\eps' \delta^{8C/c}(\log x)^{-10/c})^{\alpha-1}
\ll (q \eps)^{\alpha} \delta^{-O(1)}(\log x)^{1 + 5/c} \\
&\ll (\log x)^{- \alpha C_1} \delta^{-O(1)}(\log x)^{1 + 5/c}
= o(\delta(x))
\end{align*}
provided that $C_1$ is sufficiently large depending on $c$. 
Here, we used that $\alpha > 1/2$ if $K>2$.
To find a simple bound for $\Delta$, note that
$$x/L \ll (\eps q \delta^{8C/c}(\log x)^{-10/c})^{-1} 
\ll (\log x)^{C_1} \delta^{-O(1)}(\log x)^{10/c} \ll (\log x)^{3C_1/2},$$
provided that $C_1$ is sufficiently large (e.g. $C_1 \geq 5 + 20/c$), which implies that 
$$
\Delta = (x/L)^{1-\alpha}\log x \ll (\log x)^{1 + 3C_1(1-\alpha)/2} \ll (\log x)^{3C_1/4 +1} \leq (\log x)^{C_1}.
$$
Thus, in view of the Recurrence Condition above, the conclusion of part (\ref{item:SR2}) follows from Lemma \ref{lem:bootstrapping}, applied with the given value of $\eps'$  and $\vartheta := q \theta$, provided that
$C_1$ is sufficiently large in terms of $c$.

To prove {\em part (\ref{item:SR3})}, we use the information from part (\ref{item:SR2}), i.e.\ that the positive integer $q \ll \delta^{-O(1)}$
produced by part (\ref{item:SR1}) satisfies
\begin{equation} \label{eq:bootstrap-start}
\|q\theta\| \ll \eps \delta^{-O(1)} (\log x)^{C_1}/x^k. 
\end{equation}
We shall now apply the short intervals case of Corollary \ref{cor:W-tricked-equid-short-APs} (1).
This result and the Recurrence Condition imply that the conditions of the bootstrapping lemma are satisfied for the set 
$\mathcal{A} = S(x,[y',y])$, the lower bound $L = x/(\log x)^{\ell}$ and 
$$\Delta = \log_3 x + (\log x)^{\ell - 1/24}$$ 
for any constant $\ell > 0$. 
Observe that, since $\eps < \delta^{-O(1)} (\log x)^{-C_1}$ and $\delta(x)^{-1}<\log_2 x$, 
a bound of the form $\ell \leq C_1$ implies
\begin{align*}
 \eps' \delta(x)^{-O(1)} \Delta 
&<  \delta(x)^{-O(1)} (\log_3 x + (\log x)^{\ell-1/24}) (\log x)^{-C_1} \\
&< (\log x)^{-C_1 + o(1)} + (\log x)^{-1/24 + o(1)} 
= o(\delta(x)).
\end{align*}
This ensures that the bootstrapping lemma produces bounds on $\|q\theta\|$ in all applications below.

Let $0 \leq j < 24 C_1$ be an integer, and suppose inductively that
$$
\|q\theta\| \ll \eps \delta^{-O_{C_1}(1)} (\log x)^{C_1 -j/24}/x^k,
$$
the case $j=0$ being \eqref{eq:bootstrap-start}. Let $\ell := C_1 -j/24$, and $L = x/(\log x)^{\ell}$.
Then the bootstrapping lemma, applied with $\eps'$ replaced by $\eps \delta^{-O_{C_1}(1)}$, implies that
$$
\|q\theta\| \ll \eps \delta^{-O_{C_1}(1)} (\log x)^{C_1 -(j+1)/24}/x^k
$$
if $j+1< 24 C_1$. Recalling the shape of $\Delta$, we pick up a $(\log_3 x)$-factor when $j+1\geq24 C_1$ and thus 
$\ell - 1/24 \leq 0$, and obtain
$$
\|q\theta\| 
\ll \eps \delta^{-O_{C_1}(1)} (\log_3 x) /x^k
$$
as claimed.

It remains to establish {\em part (\ref{item:SR4})}. In view of part (\ref{item:SR3}), we have 
$$\|q\theta\|  
\ll \eps \delta^{-O_{C_1}(1)} (\log_3 x) /x^k$$
and there are $\delta \Psi(x,[y',y])$ elements $n \in  S(x,[y',y])$ for which $\|n^k q\theta\| < \eps'$
by \eqref{eq:recurrence-condition}.
We shall now appeal to Corollary \ref{cor:W-tricked-equid-short-APs} (2) in order to remove the $(\log_3 x)$-factor from the bound.
For this purpose let $\varpi(x) = \log_4 x$ and recall the Recurrence Condition.
Then Corollary~\ref{cor:W-tricked-equid-short-APs}~(2) shows that the conditions of Lemma \ref{lem:bootstrapping} are satisfied for the set $\mathcal{A} = S(x,[y',y])$, the lower bound $L = x/\log_3 x$ and 
$\Delta = \log \varpi(x) \ll \log_5 x \ll \delta^{-1}$.
By the assumptions on $\delta$ and $\varpi(x)$ it follows easily that
$$
\Delta \eps'\delta^{-O_{C_1}(1)} \ll \delta^{-O_{C_1}(1)}(\log x)^{-C_1} = o(\delta).
$$ 
Hence, Lemma \ref{lem:bootstrapping} applied with $\eps'$ replaced by $\eps \delta^{-O_{C_1}(1)}$ 
finally leads to the bound
$$
\|q\theta\| \ll  \eps \delta^{-O_{C_1}(1)} /x^k.
$$
\end{proof}

\section{Non-correlation with nilsequences. A reduction and general lemmas} 
\label{sec:nilsequences}

The aim of this section is to first establish an initial reduction of non-correlation estimate stated in Theorem \ref{thm:main-uniformity} to the case where the nilsequence is equidistributed.
In preparation for the proof of the reduced version we then show that most sequences in certain sparse families of subsequences of an equidistributed polynomial sequence are equidistributed.
We start by recalling some notation around nilsequences.

\begin{definition}[Filtered nilmanifold]
Let $d, m_G \geq 0$ be integers and let $M>0$. 
We define a {\em filtered nilmanifold $G/\Gamma$ of degree $d$, dimension $m_G$ and complexity at most $M$} to be an $s$-step nilmanifold $G/\Gamma$, for some $1\leq s \leq d$, of dimension $m_G$ in the sense of 
\cite[Definition 1.1]{GT-polyorbits} such that $G$ is equipped with a filtration $G_{\bullet}$ of degree $d \geq s$ in the sense of \cite[Definition 1.1]{GT-polyorbits} and such that the Lie algebra $\mathfrak{g} = \log G$ is equipped with an $M$-rational Mal'cev basis adapted to $G_{\bullet}$ in the sense of \cite[Definition 2.1]{GT-polyorbits}. 
\end{definition}

A Mal'cev basis $\mathcal{X}$ gives rise to a metric $d_{\mathcal{X}}$ on $G/\Gamma$ 
(see \cite[Definition 2.2]{GT-polyorbits}).
With respect to this metric, Lipschitz functions can be defined. 
More precisely, if $F: G/\Gamma \to \CC$, we define (cf.\ \cite[Definition 1.2]{GT-polyorbits}) the Lipschitz norm
$$
\|F\|_{\mathrm{Lip}} := 
\|F\|_{\infty} + \sup_{x,y \in G/\Gamma,\, x \neq y} \frac{|F(x)-F(y)|}{ d_{\mathcal{X}}(x,y)}
$$
and call $F$ a Lipschitz function if $\|F\|_{\mathrm{Lip}} < \infty$.

\begin{definition}[Polynomial sequence and nilsequence]
Given a nilpotent Lie group $G$ and a filtration 
$$G_{\bullet}: \quad G= G_0 = G_1 \supseteq G_2 \supseteq \dots \supseteq G_d \supseteq G_{d+1} = \{ \id_G\},$$ 
the set $\mathrm{poly}(\ZZ, G_{\bullet})$ of \emph{polynomial sequences} is defined as the set of all maps 
$g: \ZZ \to G$ 
such that, if $\partial_h g(n) := g(n+h) g(n)^{-1}$, 
the $j$-th discrete derivative $\partial_{h_j} \dots \partial_{h_1} g$ takes values in $G_j$ 
for all $j \in \{1, \dots, d+1\}$ and all $h_1 \dots h_j \in \ZZ$.
If $g \in \mathrm{poly}(\ZZ, G_{\bullet})$ and $F: G/\Gamma \to \CC$ is a Lipschitz function, then the sequences $\ZZ \to F(g(\cdot) \Gamma)$ is called a \emph{nilsequence}.
\end{definition}

With this notation in place, we restate the main result of our paper, which shows that
$n \mapsto (g_{[y',y]}^{(W,A)}(n) - 1)$ is orthogonal to nilsequences.

\begin{theorem}[Non-correlation with nilsequences]
\label{thm:non-corr}
Let $N$ be a large positive parameter and let 
$K' \geq 1$, $K > 2K'$ and $d \geq 0$ be integers. 
Let $\frac{1}{2}\log_3 N \leq  y' \leq (\log N)^{K'}$ and suppose that $(\log N)^K < y_0 < y < N^{\eta}$ 
for some sufficiently small $\eta \in (0,1)$ depending the value of $d$.
Let $(G/\Gamma, G_{\bullet})$ be a filtered nilmanifold of complexity $Q_0$ and degree $d$.
Finally, let $w(N) = \frac{1}{2}\log_3 N + o(1)$, $W=P(w(N))$ and define 
$\delta(N) = \exp(-\sqrt{\log_4 N})$.

If $K$ is sufficiently large depending on the degree $d$ of $G_{\bullet}$, then the estimate 
\begin{align} \label{eq:thm-non-corr-bound-g}
\bigg| \frac{\W}{N}
\sum_{n \leq (N-A)/\W} 
(g_{[y',y]}^{(\W,A)}(n)-1)
F(g(n)\Gamma)
\bigg| 
\ll_{d,C} (1 +\|F\|_{\mathrm{Lip}}) \delta(N) Q_0 + \frac{1}{\log w(N)}
\end{align}
holds uniformly for all $\W = Wq$, where $q \leq (\log y_0)^C$ satisfies $p \mid q \Rightarrow p < w(N)$ and $C \geq 1$ is a fixed constant, for all $1\leq A\leq \W$ with $\gcd(A,W)=1$, all polynomial sequences $g \in \mathrm{poly}(\ZZ,G_{\bullet})$ and all $1$-bounded Lipschitz functions $F:G/\Gamma \to \CC$.
\end{theorem}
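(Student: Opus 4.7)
The plan is to follow the Green--Tao nilpotent circle method strategy: first apply a factorisation theorem to reduce to the case where the polynomial sequence $g$ is equidistributed on its nilmanifold, and then handle the equidistributed case via a bilinear (Type-II) estimate combined with non-correlation of the von Mangoldt function with nilsequences.

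\textbf{Step 1: Reduction to the equidistributed case.} I would apply the quantitative factorisation theorem of Green and Tao for polynomial sequences to $n \mapsto g(\W n + A)$ with some slowly growing equidistribution parameter $M = M(N)$, producing a decomposition $g(\W n + A) = \eps(n) g'(n) \gamma(n)$ in which $\eps$ varies slowly on the scale $N/\W$, $g'$ is $M$-equidistributed on a sub-nilmanifold of $G/\Gamma$, and $\gamma$ is rational with period $Q' \leq M$. Partitioning $n \leq (N-A)/\W$ into short progressions modulo $Q'$ together with intervals on which $\eps$ is essentially constant allows one to absorb $\eps$ and $\gamma$ into the Lipschitz test function on each piece. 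The key input required to carry out this reduction is that $g_{[y',y]}^{(\W,A)}$ has mean value close to one on each such short sub-progression; this follows from Lemma~\ref{lem:equid-short-progressions} combined with Lemma~\ref{lem:g-h-approximation}, provided $Q'$ is composed only of primes below $y'$. Since $\W$ already absorbs all primes below $w(N)$ and can be further enlarged to absorb the relevant prime part of $Q'$ below $y'$ (this is precisely why the theorem is stated with $\W = Wq$ for $q$ having small prime factors and bounded by $(\log y_0)^C$), this reduction applies.

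\textbf{Step 2: Montgomery--Vaughan reduction to a Type-II sum.} For the remaining equidistributed case, unwinding $g_{[y',y]}^{(\W,A)}$ and applying a Montgomery--Vaughan-type combinatorial identity to $\mathbf{1}_{S([y',y])}$ reduces the estimate, up to acceptable errors, to bilinear sums of the shape
\[
\sum_{m \in [y',y]} \Lambda(m) \sum_{N/m < n \leq (N+N_1)/m} \mathbf{1}_{S([y',y])}(n) F(g(mn) \Gamma).
\]
Swapping the order of summation, for each fixed $n$ the inner sum over $m$ is a correlation of the von Mangoldt function with the nilsequence $m \mapsto F(g(mn) \Gamma)$, to which the Green--Tao non-correlation estimate for primes with equidistributed nilsequences can be applied to obtain a power-logarithmic saving. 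Summing over the remaining variable using the asymptotics of Section~\ref{sec:background} yields the claimed bound. The Montgomery--Vaughan decomposition is preferred here over the intrinsic decomposition used by Harper in the exponential sum setting, because for a general equidistributed nilsequence the savings on $\sum_n F(g(n)\Gamma)$ are too weak to compensate the Cauchy--Schwarz loss inherent in the intrinsic approach when $y$ is as small as $(\log N)^K$.

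\textbf{Step 3: Equidistribution of linear subsequences is the main obstacle.} The serious technical input required by the bilinear step is that $m \mapsto g(mn)$ remains equidistributed on the sub-nilmanifold for most $n$ in the relevant range of $S(N,[y',y])$. By the quantitative Leibman criterion, failure of this for a set of $n$ of positive relative density would produce a non-trivial horizontal character $\eta$ on the sub-nilmanifold, together with the leading coefficient $\beta$ of $\eta \circ g$, for which $\|\beta n^d\|$ is small for many $n \in S(N,[y',y])$. This is exactly the hypothesis of the strong recurrence result, Theorem~\ref{thm:strong-recurrence}, which then forces $\beta$ to be very close to a rational with small denominator, contradicting the assumed equidistribution of the original $g$. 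Executing this step uniformly for $y$ as small as $(\log N)^K$ is the principal technical difficulty: the Weyl sum bounds of Section~\ref{sec:weyl-sums} alone are too weak in this range, and one must invoke the full four-stage bootstrapping argument of Section~\ref{sec:bootstrap}, which in turn relies on the sharp distribution of $[y',y]$-smooth numbers in short progressions established in Section~\ref{sec:shortAPs}.
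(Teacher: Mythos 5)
Your proposal follows essentially the same route as the paper: the Green--Tao factorisation theorem together with the short-progression equidistribution of $h_{[y',y]}$ reduces to the totally equidistributed case (Proposition \ref{prop:equid-non-corr}), which is then handled by the Montgomery--Vaughan reduction to a Type-II sum, the explicit von Mangoldt non-correlation estimate, and Proposition \ref{prop:linear-subseqs} on sparse linear subsequences, whose proof rests on Theorem \ref{thm:strong-recurrence} and the bootstrapping of Section \ref{sec:bootstrap} exactly as you describe. The outline is correct and correctly identifies both the role of the $1/\log w(N)$ error term and the reason for preferring the Montgomery--Vaughan decomposition over Harper's intrinsic one.
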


By decomposing the summation range $1 \leq n \leq (N-A)/\W$ into dyadic intervals 
$n \sim (N'-A)/\W$ for $N' \in (N/\log N, N]$ and the initial segment $1 \leq n \ll N/(\W\log N)$, 
and approximating $g_{[y',y]}$ by $h_{[y',y]}$ on each dyadic interval,
it follows from Lemma \ref{lem:g-h-approximation} that the task of proving Theorem \ref{thm:non-corr} may be reduced to proving that the estimate
\begin{align} \label{eq:thm-non-corr-bound}
\bigg| \frac{\W}{N}
\sum_{n \sim (N-A)/\W} 
(h_{[y',y]}^{(\W,A)}(n)- 1)
F(g(n)\Gamma)
\bigg| \ll_{d,C} (1 +\|F\|_{\mathrm{Lip}}) \delta(N) Q_0 + \frac{1}{\log w(N)}
\end{align}
holds under the assumptions of Theorem \ref{thm:non-corr} and for the function $h_{[y',y]}^{(\W,A)}$ that is defined on the interval $n \sim (N-A)/\W$.

\subsection{Reduction to non-correlation with equidistributed nilsequences}

Observe that the bound in the statement of Theorem \ref{thm:non-corr} holds trivially unless 
$$Q_0 \leq \delta(N)^{-1}.$$
This information can be used in combination with Green and Tao's factorisation theorem for polynomial sequences 
(which states that every polynomial sequence is the product of a slowly varying, a highly equidistributed and a periodic polynomial sequences) together with our results on the distribution of $n \mapsto h_{[y',y]}$ in short arithmetic progressions in order to reduce the statement to one in which the sequence $g$ can be assumed to be equidistributed.
Before stating the reduced version and proving the reduction, we recall the relevant definitions around equidistribution as well as the factorisation theorem.

\begin{definition}[$\delta$-equidistributed and totally $\delta$-equidistributed sequence]\cite[Definition 1.2]{GT-polyorbits}\label{almost-equidistribution}
  Let $G/\Gamma$ be a nilmanifold.
	\begin{enumerate}
		\item
		Given a length $N > 0$ and an error tolerance $\delta > 0$, a finite sequence $(g(n)\Gamma)_{n \in [N]}$ is said to be \emph{$\delta$-equidistributed} if we have
		$$ \left|\EE_{n \in [N]} F(g(n) \Gamma) - \int_{G/\Gamma} F\right| \leq \delta \|F\|_{\operatorname{Lip}}$$
		for all Lipschitz functions $F: G/\Gamma \to \CC$.
		\item A finite sequence $(g(n)\Gamma)_{n \in [N]}$ is said to be \emph{totally $\delta$-equidistributed} if we have
		$$ \left|\EE_{n \in P} F(g(n) \Gamma) - \int_{G/\Gamma} F\right| \leq \delta \|F\|_{\operatorname{Lip}}$$
		for all Lipschitz functions $F: G/\Gamma \to \CC$ and all arithmetic progressions $P \subset [N]$ of length at least $\delta N$.
	\end{enumerate}
\end{definition}

\begin{definition}[Rational sequence]\cite[Definition 1.17]{GT-polyorbits}\label{rat-def-quant}
	Let $G/\Gamma$ be a nilmanifold and let $Q > 0$ be a parameter. We say that $\gamma \in G$ is \emph{$Q$-rational} if $\gamma^r \in \Gamma$ for some integer $r$, $0 < r \leq Q$. A \emph{$Q$-rational point} is any point in $G/\Gamma$ of the form $\gamma\Gamma$ for some $Q$-rational group element $\gamma$. A sequence $(\gamma(n))_{n \in \ZZ}$ is \emph{$Q$-rational} if every element $\gamma(n)\Gamma$ in the sequence is a $Q$-rational point.
\end{definition}

\begin{definition}[Smooth sequences]\cite[Definition 1.18]{GT-polyorbits}\label{smooth-seq-def}  Let $G/\Gamma$ be a nilmanifold with a Mal'cev basis $\X$.  Let $(\eps(n))_{n \in \ZZ}$ be a sequence in $G$, and let $M, N \geq 1$.  We say that $(\eps(n))_{n \in \ZZ}$ is \emph{$(M,N)$-smooth} if we have $d(\varepsilon(n),\operatorname{id}_G) \leq M$ and $d(\varepsilon(n),\varepsilon(n-1)) \leq M/N$ for all $n \in [N]$, where $\id_G$ denotes the identity element of $G$.
\end{definition}

The following proposition is Green and Tao's factorisation theorem \cite[Theorem 1.19]{GT-polyorbits}
for polynomial sequences, which asserts that any polynomial sequence can be decomposed into a product of a smooth, a highly equidistributed and a rational polynomial sequence.

\begin{proposition}[Green--Tao factorization theorem \cite{GT-polyorbits}, \cite{TT}]\label{factorization}
	Let $m,d \geq 1$, and let $M_0, N \geq 1$ and $A > 0$ be real numbers.
	Suppose that $G/\Gamma$ is an $m$-dimensional nilmanifold together with a filtration $G_{\bullet}$ of degree $d$. Suppose that $\X$ is an $M_0$-rational Mal'cev basis adapted to $G_{\bullet}$ and that $g \in \mathrm{poly}(\ZZ,G_{\bullet})$. Then there is an integer $M$ with 
	$M_0 \leq M \ll M_0^{O_{A,m,d}(1)}$, 
	a rational subgroup $G' \subseteq G$, a Mal'cev basis $\X'$ for $G'/\Gamma'$ in which each element is an $M$-rational combination of the elements of $\X$, and a decomposition $g = \eps g' \gamma$ into polynomial sequences $\eps, g', \gamma \in \mathrm{poly}(\ZZ,G_{\bullet})$ with the following properties:
	\begin{enumerate}
		\item $\eps : \ZZ \rightarrow G$ is  $(M,N)$-smooth;
		\item $g' : \ZZ \rightarrow G'$ takes values in $G'$, and the finite sequence $(g'(n)\Gamma')_{n \in [N]}$ is totally $1/M^A$-equidistributed in $G'/\Gamma'$, using the metric $d_{\mathcal{X}'}$ on $G'/\Gamma'$;
		\item $\gamma: \ZZ \rightarrow G$ is $M$-rational, and $(\gamma(n)\Gamma)_{n \in \ZZ}$ is periodic with period at most $M$.
	\end{enumerate}
\end{proposition}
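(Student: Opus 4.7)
The plan is to cite the Green--Tao factorization theorem directly from \cite{GT-polyorbits}, together with its quantitative refinement \cite{TT}, since the statement as given is a verbatim transcription of a result from those references. There is nothing substantively new for us to prove here. Nevertheless, it is worth sketching how the proof proceeds in those references so that the role of each of the three outputs $\varepsilon, g', \gamma$ is transparent when we invoke the proposition in Section~\ref{sec:equid-nilsequences}.

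First I would describe the inductive structure. The argument runs by induction on the dimension $m = \dim(G/\Gamma)$. The case $m=0$ is trivial, with $\varepsilon = g' = \gamma = \id_G$. For the inductive step, one asks whether the finite orbit $(g(n)\Gamma)_{n \in [N]}$ is already totally $1/M^A$-equidistributed in $G/\Gamma$. If so, take $\varepsilon = \gamma = \id_G$ and $g' = g$, with $G' = G$. If not, the key input is the quantitative Leibman equidistribution criterion (\cite[Theorem~2.9]{GT-polyorbits}): non-equidistribution forces the existence of a nontrivial horizontal character $\eta : G \to \RR/\ZZ$, of modulus $|\eta| \ll M_0^{O_{A,m,d}(1)}$, such that the scalar polynomial $\eta \circ g : \ZZ \to \RR/\ZZ$ has small $C^\infty[N]$-norm, i.e.\ is uniformly close to a polynomial with rational coefficients of bounded height.

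Second I would use $\eta$ to peel off one layer of the factorization. Splitting $\eta\circ g$ into a slowly varying piece, a rational piece, and a residual piece that vanishes identically modulo $1$ yields a decomposition $g = \varepsilon_1 \, g_1 \, \gamma_1$ in which $g_1$ takes values in the rational subgroup $G' := \ker(\eta)$, which has dimension $m-1$. A Mal'cev basis $\mathcal{X}'$ for $G'/\Gamma'$ can be extracted from $\mathcal{X}$ by elementary linear algebra at a polynomial cost in the complexity parameter, and the filtration $G_\bullet$ restricts to a filtration on $G'$ of the same degree $d$. One then applies the inductive hypothesis to $g_1$ inside $G'/\Gamma'$, with the parameter $A$ replaced by a sufficiently larger value to absorb losses at each level of the induction.

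Third, the genuine technical work, which I would leave to the cited references, lies in the bookkeeping: controlling the growth of the complexity parameter $M$ through the $O(m)$ iterations; verifying that the smooth pieces $\varepsilon_1, \varepsilon_2, \dots$ collected at each stage can be reassembled into a single $(M,N)$-smooth sequence $\varepsilon$; and similarly amalgamating the rational pieces $\gamma_i$ into a single $M$-rational, $M$-periodic sequence $\gamma$. The main obstacle in a self-contained write-up would be the quantitative Leibman step, but since that is precisely the content of \cite[Theorem~2.9]{GT-polyorbits} and the refinements in \cite{TT} give exactly the form stated, no further work is needed on our part.
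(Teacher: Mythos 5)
Your proposal matches the paper exactly: the paper states this proposition as a direct citation of \cite[Theorem 1.19]{GT-polyorbits} (with \cite{TT}) and offers no proof of its own, which is precisely what you do. Your sketch of the underlying induction-on-dimension argument via the quantitative Leibman criterion is an accurate account of the cited proof, but no further verification is required here.
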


With the above notation in place, we may now state the equidistributed version of Theorem \ref{thm:non-corr} and 
deduce this theorem from it.

\begin{proposition}[Orthogonality to equidistributed nilsequences] 
 \label{prop:equid-non-corr}
Let $N$ be a large positive parameter, let $K' \geq 1$, $K>2K'$ and $d \geq 0$ be integers.
Let $\frac{1}{2}\log_3 N \leq y' \leq (\log N)^{K'}$ and suppose that $(\log N)^K < y_0 < y \leq N^{\eta}$ for some $\eta \in (0,1)$ that is sufficiently small depending on $d$.
Let $ w(N) = \frac{1}{2}\log_3 N$, $W=P(w(N))$ and 
$\delta(N) = \exp(-C_0 \sqrt{\log_4 N}))$
with $C_0 \in [1,(\log N)^{1/4}]$.

Let $G/\Gamma$ be a nilmanifold together with a filtration $G_{\bullet}$ of degree $d$, and suppose that $\mathcal X$ is a 
$\delta(N)^{-1}$-rational Mal'cev basis adapted to $G_{\bullet}$.
Let $g \in \mathrm{poly}(\ZZ,G_{\bullet})$ be any polynomial sequence 
such that the finite sequence 
$(g(n)\Gamma)_{n\leq 2N/W}$ is totally $\delta(N)^{E_0}$-equidistributed for some
$E_0 > 1$.
Let $F:G/\Gamma \to \CC$ be any $1$-bounded Lipschitz function such that
$\int_{G/\Gamma}F=0$.

If $1 \leq q \leq (\log y_0)^C$ is $(w(N)-1)$-smooth, where $1\leq C \ll1$, if $0 \leq a < q$ and $0<A<W$ are integers such that $(W,A)=1$ (and thus $(Wa+A,Wq)=1$), and if $0<N_1\leq N$, then we have
\begin{align} \label{eq:prop-bound}
\bigg|
\frac{Wq}{N}
\sum_{\substack{ m \in \NN: \\ N < Wqm < N+N_1}} 
h_{[y',y]}^{(W,A)}\left(qm+a\right)
F(g(m)\Gamma)
\bigg| 
\ll_{d, \dim G, \|F\|_{\mathrm{Lip}}, E_1}  \delta(N)^{E_1} 
\end{align}
for any given $E_1\geq 1$, provided that $E_0$ is sufficiently large with respect to $d$, $\dim G$ and $E_1$, provided that $K$ is sufficiently large depending on the degree $d$ of $G_{\bullet}$, and provided that $N$ is sufficiently large in terms of $\dim G$, $d$ and $E_0$.
\end{proposition}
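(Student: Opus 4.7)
The natural approach is to proceed by induction on the degree $d$ of the filtration $G_\bullet$, the base case being handled trivially. For the inductive step, perform a vertical Fourier decomposition of $F$ with respect to the action of the centre $G_d/(G_d \cap \Gamma)$, writing $F = \sum_\xi F_\xi$ where each $F_\xi$ transforms under a character $\xi$. Since $F$ is Lipschitz, the decomposition can be truncated to finitely many characters of complexity polynomial in $\delta(N)^{-1}$ with an acceptable error. The $\xi = 0$ component $F_0$ is $G_d$-invariant, descends to a Lipschitz function on the quotient $(G/G_d)/(\Gamma/(\Gamma \cap G_d))$ of degree $d - 1$, and the sequence $(g(m)\Gamma)$ projects to a totally equidistributed sequence there with a slightly weaker parameter; the inductive hypothesis then handles this contribution.

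For each non-zero vertical frequency $\xi$, the nilsequence $F_\xi(g(m)\Gamma)$ factors as $e(P_\xi(m))\,\widetilde{F}_\xi(\bar g(m)\bar\Gamma)$, where $P_\xi \in \RR[m]$ is a polynomial of degree exactly $d$ whose leading coefficient is determined by $\xi$ and the degree-$d$ component of $g$, and $\widetilde{F}_\xi$ descends to the lower-step quotient. Partitioning the summation range into short subintervals on which $\widetilde{F}_\xi \circ \bar g$ is approximately constant --- controlled via Lipschitz continuity together with the inductive equidistribution of $\bar g$ --- and applying Abel summation reduces the correlation to bounding sums of the form
\begin{equation*}
\sum_{m} h^{(W,A)}_{[y',y]}(qm+a)\, e(P_\xi(m)),
\end{equation*}
which, after unfolding the normalisation of $h$, become restricted exponential sums over $[y',y]$-smooth integers lying in a fixed arithmetic progression modulo $Wq$. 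These are estimable via the Weyl sum bounds of Theorem~\ref{thm:x-eps-smooth}, yielding a power saving unless the leading coefficient of $P_\xi$ is extremely close to a rational with denominator bounded by a power of $\delta(N)^{-1}$.

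In the latter major-arc case, one invokes Theorem~\ref{thm:strong-recurrence} in a contrapositive manner: a large correlation compels the sequence $\{\|P_\xi(m)\|\}$ to be strongly recurrent on $S(N,[y',y])$, which forces the leading coefficient of $P_\xi$ to be rational with small denominator and small residual error. Translating this via the explicit relation between $\xi$ and the top-degree coefficient of $g$ produces a non-trivial horizontal character of $G$ whose composition with $g$ is close to a rational sequence, directly contradicting the total $\delta(N)^{E_0}$-equidistribution of $g$ through the quantitative Leibman theorem underlying Proposition~\ref{factorization}. Choosing $E_0$ sufficiently large in terms of $d$, $\dim G$, and $E_1$ absorbs all accumulated $\delta(N)^{-O(1)}$ losses from the vertical truncation, partial summation, and the short-interval approximation. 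The principal obstacle is the tight bookkeeping of these losses: each step of vertical Fourier truncation, Abel summation, and Weyl-sum application costs a polynomial factor of $\delta(N)^{-1}$, so one must verify that the savings from either the Weyl estimate or the strong-recurrence step genuinely dominate for the stated choice of parameters. A secondary technical issue is confirming that the moduli $q \leq (\log y_0)^C$ appearing in the progression $qm+a$ remain within the admissible range of Section~\ref{sec:shortAPs} and Section~\ref{sec:weyl-sums}, which is guaranteed by the hypothesis that $q$ is $(w(N)-1)$-smooth with bounded height together with the choice $K$ large in terms of $d$.
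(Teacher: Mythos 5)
There is a genuine gap, and it occurs at the central step of your reduction. For a nilmanifold of step $\geq 2$, a nonzero vertical character component $F_\xi$ does \emph{not} factor as $F_\xi(g(m)\Gamma) = e(P_\xi(m))\,\widetilde F_\xi(\bar g(m)\bar\Gamma)$ with $\widetilde F_\xi$ a lower-degree nilsequence that is ``approximately constant on short subintervals.'' Such a splitting is available only in the abelian case; in general the residual factor (even when one writes everything in Mal'cev coordinates) oscillates at the same scale as the phase you are trying to extract, so Abel summation over short blocks does not isolate a clean exponential sum $\sum_m h^{(W,A)}_{[y',y]}(qm+a)e(P_\xi(m))$. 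Since the entire remainder of your argument — the application of the Weyl bounds of Theorem \ref{thm:x-eps-smooth} and the contrapositive use of Theorem \ref{thm:strong-recurrence} — is predicated on having reduced to polynomial exponential sums over $S([y',y])$, the proof does not go through. (A secondary issue: Theorem \ref{thm:x-eps-smooth} treats monomial phases $e(\theta n^k)$, not general polynomials $P_\xi$, though that alone would be repairable.)

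The missing idea is that the proposition is never proved by confronting the nilsequence with the smooth numbers directly. The paper first strips the weight, then applies a Montgomery--Vaughan-type identity to convert the correlation into a bilinear (Type II) sum $\sum_n \sum_m \1_{S([y',y])}(n)\Lambda(\W m + A_n')F(g(mn+A_n)\Gamma)$ with the prime variable $\W m + A_n'$ confined to $[y',y]$. The Weyl-sum and strong-recurrence machinery (your Theorems \ref{thm:x-eps-smooth} and \ref{thm:strong-recurrence}) enters only through Proposition \ref{prop:linear-subseqs}, to show that for all but a $\delta(N)^{E_2}$-proportion of $n \in S([y',y])$ the dilated sequence $(g(mn+A_n)\Gamma)_{m \leq N/n}$ is still totally equidistributed; the inner sum is then estimated by the quantitative orthogonality of the $W$-tricked von Mangoldt function to equidistributed nilsequences (Theorem \ref{thm:explicit-Lambda}). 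Your proposal has no analogue of this bilinear step or of the von Mangoldt input, and without them there is no known way to extract cancellation from a genuine (non-abelian) equidistributed nilsequence against the sparse set $S([y',y])$, particularly in the regime $y = (\log N)^{K}$ where the paper explicitly notes that Cauchy--Schwarz-based reductions to exponential sums lose too much.
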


\begin{proof}[Proposition \ref{prop:equid-non-corr} implies Theorem \ref{thm:non-corr}]
We shall prove that Proposition \ref{prop:equid-non-corr} implies \eqref{eq:thm-non-corr-bound}, from which Theorem \ref{thm:non-corr} follows.
We may assume that $Q_0 \leq \delta(N)^{-1}$ as \eqref{eq:thm-non-corr-bound} is trivially true otherwise.
Let $B>1$ be a parameter.
Then, by Proposition \ref{factorization}, applied with $N$ replaced by $2N/\W$, there exists 
$Q_0 \leq Q \ll Q_0^{O_{B,\dim G, d}(1)}$ 
and a factorisation of the polynomial sequences $g$ as 
$\eps g' \gamma$ that satisfies properties (1)--(3) of that proposition.
In particular, the polynomial sequence $\gamma: \ZZ \to G$ gives rise to a $\tilde q$-periodic function 
$\gamma(\cdot) \Gamma: \ZZ \to G/\Gamma$ for some period $1 \leq \tilde q \leq Q$,
and the sequence $\eps: \ZZ \to G$ is $(Q,2N/\W)$-smooth. 
The sequence $g': \ZZ \to G'$ takes values in a $Q$-rational subgroup $G'$ of $G$, it is a polynomial sequence with respect to the filtration $G'_{\bullet}:= G_{\bullet} \cap G'$ and the finite sequence 
$(g'(n)\Gamma')_{n\leq 2N/\W}$ is $Q^{-B}$-equidistributed in $G'/\Gamma'$, where $\Gamma' = \Gamma \cap G'$ and where equidistribution is defined with respect to the metric $d_{\mathcal{X'}}$ arising from a Mal'cev basis $\X'$ adapted to $G'_{\bullet}$. 
The existence of the Mal'cev basis $\X'$ is guaranteed by \cite[Lemma A.10]{GT-polyorbits}, which also allows us to assume that each of its basis elements is a $Q$-rational combination of basis elements from $\mathcal{X}$.

In order to reduce the non-correlation estimate to the case where the polynomial sequence is highly equidistributed, we seek to decompose the summation range of $n$ in \eqref{eq:thm-non-corr-bound} into (short) subprogressions on which $\gamma$ and $\eps$ are almost constant.
Splitting the interval $(N/\W,2N/\W]$ into arithmetic progressions with common difference $\tilde q$, we obtain
$$
\sum_{n\sim N/\W}\bigbrac{h_{[y',y]}^{(\W,A)}(n)-1}F(g(n)\Gamma)
=\sum_{0\leq a<\tilde q}\sum_{\substack{n\sim N/\W \\ n\equiv a \Mod{\tilde q}}}\bigbrac{h_{[y',y]}^{(\W,A)}(n)-1}F(\eps(n)g'(n)\gamma_a\Gamma),
$$
where $\gamma_a \in G$ is such that $\gamma(n)\Gamma=\gamma_a\Gamma$ whenever $n\equiv a\Mod {\tilde q}$. 

Since $F$ is a Lipschitz function and since $d_{\mathcal{X}}$ is right-invariant 
(cf.\ \cite[Appendix A]{GT-polyorbits}), we deduce that for any $n_0, n \in \ZZ$,
\begin{align*}
 \bigabs{F(\eps(n_0)g'(n)\gamma_a\Gamma)-F(\eps(n)g'(n)\gamma_a\Gamma)} 
 &\leq \|F\|_{\mathrm{Lip}} ~d_\X\bigbrac{\eps(n_0)g'(n)\gamma_a,\eps(n)g'(n)\gamma_a}\\
 &=\|F\|_{\mathrm{Lip}} ~d_\X\bigbrac{\eps(n_0),\eps(n)}.
\end{align*}
It then follows from the assumption $\eps$ is $(Q,2N/\W)$-smooth that
\[
d_\X\bigbrac{\eps(n_0),\eps(n)}\leq\frac{Q|n_0-n|}{N/\W}.
\]
whenever $n,n_0 \leq 2N/\W$, and therefore
\begin{align}\label{difference-eps}
\bigabs{F(\eps(n_0)g'(n)\gamma_a\Gamma) - F(\eps(n)g'(n)\gamma_a\Gamma)}\ll_{\|F\|_{\mathrm{Lip}}} \log^{-1} Q.
\end{align}
if, in addition, $|n_0-n| \ll N/(\W Q \log Q)$.
With this in mind, we refine the partition of our summation range and consider a sub-partition 
$$(N/\W, 2N/\W] = \bigcup_{j} P_j,$$
where each $P_j$ is a `short' progression of common difference $\tilde q$, as before, but with diameter bounded by 
$O(N/(Q \W \log Q))$. 
The bound on the diameter ensures that $\eps$ is almost constant on each $P_j$.
Note that the total number of short progressions $P_j$ is $O(\tilde q Q \log Q)$. 
By fixing an element $\eps_j\in \eps(P_j)$ in the image of $P_j$ under $\eps$ for each progression $P_j$, we obtain
\begin{multline}\label{remove-eps}
    \sum_{n\sim N/\W}\bigbrac{h_{[y',y]}^{(\W,A)}(n)-1}F(g(n)\Gamma)=\sum_{j}\sum_{n\in P_j}\bigbrac{h_{[y',y]}^{(\W,A)}(n)-1}F(\eps_jg'(n)\gamma_a\Gamma)\\
    +O\bigg\{\sup_{P_j}\sup_{n\in P_j}\bigabs{F(\eps_jg'(n)\gamma_a\Gamma)-F(\eps(n)g'(n)\gamma_a\Gamma)}\Bigbrac{\sum_{n\sim N/\W}h_{[y',y]}^{(\W,A)}(n)+N/\W}\bigg\}.
\end{multline}
By construction or, more precisely, by Lemma \ref{lem:equid-short-progressions}, we have 
\[
\sum_{n\sim N/\W}h_{[y',y]}^{(\W,A)}(n)\ll N/\W.
\]
Bounding the error in \eqref{remove-eps} with the help of this bound and \eqref{difference-eps}, we obtain
\[
\sum_{n\sim N/\W}\bigbrac{h_{[y',y]}^{(\W,A)}(n)-1}F(g(n)\Gamma)
=\sum_{j}\sum_{n\in P_j}\bigbrac{h_{[y',y]}^{(\W,A)}(n)-1}F(\eps_jg'(n)\gamma_a\Gamma)+O\Big(\frac{N}{\W\log Q}\Big).
\]

Observe that in the argument of $F$, apart from two constant factors, only the sequence $g'$ occurs, which has the property that $(g'(n)\Gamma)_{n \leq 2N/\W}$ is $Q^{-B}$-equidistributed.
We aim to use this equidistribution property in combination with Proposition \ref{prop:equid-non-corr} in order to bound the correlations on the right hand side above.
For this purpose, we shall now first 
show that the sequence $n \mapsto \eps_jg'(n)\gamma_a$ can be reinterpret as a polynomial sequence $n \mapsto g^*(n)$ that is equidistributed on some filtered nilmanifold $H/\Lambda$. 
At the same time, we show that $F(\eps_jg'(n)\gamma_a \Gamma) = \tilde F(g^*(n)\Lambda)$ for a Lipschitz function 
$\tilde F: H/\Lambda \to \CC$.
As a second step,
we carry out a reduction that allows us to assume that $\int_{H/\Lambda} \tilde F = 0$.
And, thirdly and finally, we will apply Proposition \ref{prop:equid-non-corr} and complete the proof.
We denote these steps (1)--(3) below.

(1) Define $g^*(n) := \gamma_a^{-1}g'(n)\gamma_a$. 
For an application of Proposition \ref{prop:equid-non-corr}, it is necessary to verify that $g^*$ is a polynomial sequence and that it inherits the equidistribution properties of $g'$.
These questions have been addressed by Green and Tao in \cite[\S2]{GT-nilmobius} and we follow their argument here.  
Let $H=\gamma_{a}^{-1} G' \gamma_{a}$ and define
$H_{\bullet} = \gamma_{a}^{-1} (G')_{\bullet} \gamma_{a}$.
Let $\Lambda = \Gamma \cap H$ and define
$\tilde{F} = \tilde{F}_{a,j}: H/\Lambda \to \RR$ via
$$
\tilde{F}(x \Lambda) 
= F(\eps_j \gamma_{a} x \Gamma).
$$
Then $g^* \in \mathrm{poly}(\ZZ,H_{\bullet})$, we have 
$\tilde{F}(g^*(n) \Lambda) =  F(\eps_j g'(n)\gamma_{a} \Gamma)$,
and the correlation  that we seek to bound takes the form
\begin{align} \label{eq:F-tilde}
\sum_{n \in P_j } 
\Big(h_{[y',y]}^{(\W,A)}(n)-1\Big)\tilde{F}(g^*(n) \Lambda).
\end{align}
The `Claim' from the end of \cite[\S2]{GT-nilmobius} guarantees the existence of  
a Mal'cev basis $\mathcal{Y}$ for $H/\Lambda$ adapted to $H_{\bullet}$ 
such that each basis element $Y_i$ is a $Q^{O(1)}$-rational combination of basis
elements $X_i$ from $\X$. 
Thus, there is $C'=O(1)$ such that $\mathcal{Y}$ is $Q^{C'}$-rational.
Furthermore, the `Claim' implies that there is $c'>0$, depending only on $\dim G$ and on the degree $d$ of $G_{\bullet}$, such that whenever $B$ is sufficiently large 
the sequence 
\begin{equation}\label{eq:h-seq}
 (g^*(n) \Lambda)_{n \leq 2N/\W}
\end{equation}
is totally $Q^{-c'B + O(1)}$-equidistributed in $H/\Lambda$, equipped with 
the metric $d_{\mathcal{Y}}$ induced by $\mathcal{Y}$.
Taking $B$ sufficiently large, we may assume that the sequence \eqref{eq:h-seq} is
totally $M^{-c'B/2}$-equidistributed.
Finally, the `Claim' also provides the bound 
$\|\tilde{F}\|_{\mathrm{Lip}} \leq Q^{C''} \|F\|_{\mathrm{Lip}}$ for some 
$C''=O(1)$.
This shows that all conditions of Proportion \ref{prop:equid-non-corr} are satisfied 
except for $\int_{H/\Lambda}\tilde{F}=0$.

(2) Let $\mu(\tilde{F}) = \int_{H/\Gamma} \tilde{F}$ denote the mean value of $\tilde{F}$ and observe that 
$\mu(\tilde{F}) \ll 1$ since $\tilde{F}$ is $1$-bounded. 
Then $\int_{H/\Gamma} \bar{F} = 0$ if $\bar{F} := \tilde{F}-\mu(\tilde{F}): H/\Gamma \to \CC$  and
\begin{align*}
&\sum_{n \in P_j} \Big(h_{[y',y]}^{(\W,A)}(n)-1\Big) \tilde{F}(g^*(n)\Lambda)\\
&= \sum_{n \in P_j} \Big(h_{[y',y]}^{(\W,A)}(n)-1\Big) \bar{F}(g^*(n)\Lambda)
+ O\bigg(\Big|\sum_{n \in P_j} \Big(h_{[y',y]}^{(\W,A)}(n)-1\Big)\Big|\bigg) \\
&= \sum_{n \in P_j} \Big(h_{[y',y]}^{(\W,A)}(n)-1\Big) \bar{F}(g^*(n)\Lambda)
+ O\bigg(\frac{|P_j|}{\log w(N)}\bigg)
\end{align*}
by Lemma \ref{lem:equid-short-progressions}.
We may thus assume that $\int_{H/\Gamma} \tilde{F} = 0$.

(3) By the previous two steps it remains to bound
$$
\sum_j \sum_{n \in P_j } 
\Big(h_{[y',y]}^{(\W,A)}(n)-1\Big)\tilde{F}(g^*(n) \Lambda),
$$
where we may assume that $\int_{H/\Gamma} \tilde{F} = 0$ and that $(g^*(n) \Lambda)_{n \leq 2N/\W}$ is 
$Q^{-c'B/2}$-equidistributed.
Since $P_j$ has common difference $\tilde q < Q$, the bound on the diameter of $P_j$ implies that 
$|P_j|\gg N/(\tilde q Q \W \log Q)$.
Note further that $Q \ll Q_0^{O_{B,d,\dim G}(1)} \ll \delta(N)^{-C}$ for some 
$C = O_{B,d,\dim G}(1)$ since $Q_0 \leq \delta(N)^{-1}$.
We may suppose that $N$ is sufficiently large for $C \leq (\log N)^{1/4}$ to hold.
Define $\delta'(N) = \delta(N)^{-C}$.

We may thus apply Proposition \ref{prop:equid-non-corr} with 
$W q$ replaced by $\W \tilde q = W(q \tilde q)$, with
$\delta$ replaced by $\delta'$, with 
$E_0 = c'B/2$, 
$g = g^*$, $G/\Gamma = H/\Lambda$, $\X = \mathcal Y$, and with $N_1 \gg N/(Q \log Q) > N \delta'(N)^{E_1/2}$ 
(assuming that $E_1 > 2$, which we may) to deduce that
\begin{align*}
\sum_j \sum_{n \in P_j} 
\Big(h_{[y',y]}^{(\W,A)}(n)-1\Big)\tilde{F}(g^*(n) \Lambda)
&\ll_{d} \sum_j (1 + \|\tilde F\|_{\mathrm{Lip}}) \delta'(N)^{E_1}\frac{N}{\W \tilde q} \\
&\ll_{d} (1 + Q^{C''} \| F\|_{\mathrm{Lip}}) \delta'(N)^{E_1} \frac{\tilde q QN \log Q}{\W \tilde q} \\
&\ll_{d} (1 + \| F\|_{\mathrm{Lip}}) Q^{O(1)} \delta'(N)^{E_1} \frac{N}{\W} \\
&\ll_{d} (1 + \| F\|_{\mathrm{Lip}})  \delta(N) Q_0 \frac{N}{\W},
\end{align*}
provided that $B$, and hence $E_1$, is sufficiently large to imply the final bound.
This completes the proof of the deduction of \eqref{eq:thm-non-corr-bound} and, hence, Theorem \ref{thm:non-corr}.
\end{proof}

\subsection{Sparse families of linear subsequences of equidistributed nilsequences} \label{sec:linear-subsequences}

In Section \ref{subsec:MV} we will relate our task of bounding the one-parameter correlation \eqref{eq:prop-bound} to that of bounding a bilinear sum.
This reduction naturally leads to the problem of understanding equidistribution properties in families of linear subsequences of polynomial sequences that arise as follows.
Let $(g(n)\Gamma)_{n\leq N}$ be a polynomial sequence and consider the family of sequences
$$
\{n \mapsto (g(mn)\Gamma)_{n\leq N/m}\}_{m \in S([y',y]) \cap [M,2M)},
$$
where the parameter $m \in [M,2M)$ is further restricted to the sparse set $S([y',y])$, yielding a sparse family of subsequences.
Our aim in this subsection is to show that if $(g(n)\Gamma)_{n\leq N}$ is $\delta$-equidistributed for a suitable choice of $\delta$, then almost all sequences in this family are $\delta^{1/C}$-equidistributed.
Equidistribution properties in unrestricted families of linear subsequences have been studied by the first author in \cite[\S7]{Mat-multiplicative}.
In this section we show that, thanks to the sparse recurrence result from Section \ref{sec:bootstrap}, 
the unrestricted result \cite[Proposition 7.4]{Mat-multiplicative} can indeed be extended to the sparse situation where $m \in S([y',y])$.
Part of this section follows \cite[\S7]{Mat-multiplicative} very closely.

The proof of Proposition \ref{prop:linear-subseqs} below uses the notion of a {\em horizontal character} (\cite[Definition 1.5]{GT-polyorbits}) on a nilmanifold $G/\Gamma$, which is defined to be a continues additive homomorphism $\eta: G \to \RR/\ZZ$ which annihilates $\Gamma$.
The set of horizontal characters may be equipped with a height function $|\eta|$ as defined in 
\cite[Definition 2.6]{GT-polyorbits}.
This specific height function is called the \emph{modulus} of $\eta$.
All that is relevant to us in the present paper is that there are at most $Q^{O(1)}$ horizontal characters 
$\eta:G\to \RR/\ZZ$ of modulus $|\eta| \leq Q$.

If $\eta:G \to \RR/\ZZ$ is a horizontal character and $g \in \mathrm{poly}(\ZZ,G_{\bullet})$, where $G_{\bullet}$ is a filtration of degree $d$, then $\eta \circ g: \ZZ \to \RR/\ZZ$ is a polynomial of degree at most $d$.
For an arbitrary polynomial $P: \ZZ \to \RR/\ZZ$ of degree at most $d$, we may define two sets of 
coefficients, $\alpha_0, \dots, \alpha_d$ and $\beta_0, \dots, \beta_d$, in $\RR/\ZZ$ via
$$
P(n) = \alpha_0 + \alpha_1 \binom{n}{1} + \dots + \alpha_d \binom{n}{d}
= \beta_d n^d + \dots + \beta_1 n + \beta_0.
$$
The {\em smoothness norm} (\cite[Definition 1.5]{GT-polyorbits}) of $P$ with respect to $N$ is then defined to be
$$
\|P\|_{C^{\infty}[N]}
= \sup_{1 \leq j \leq d} N^j \|\alpha_j\|_{\RR/\ZZ}
$$
and we have
$$
\|P\|_{C^{\infty}[N]} 
\ll_d \sup_{1 \leq j \leq d} N^j \|\beta_j\|_{\RR/\ZZ}
\qquad \text{and} \qquad
\sup_{1 \leq j \leq d} N^j \|q\beta_j\|_{\RR/\ZZ}
\ll  \|P\|_{C^{\infty}[N]}
$$
for some positive integer $q \ll_d 1$ by \cite[Lemma 3.2]{GT-polyorbits}.

Smoothness norms and horizontal characters allow one to characterise $\delta$-equidistributed polynomial sequences in the following sense:
\begin{lemma}[Green--Tao \cite{GT-polyorbits}, Theorem 2.9]
\label{lem:leibman}
Let $m_G$ and $d$ be non-negative integers, let $0 < \delta < 1/2$ and let
$N \geq 1$.
Suppose that $G/\Gamma$ is an $m_G$-dimensional nilmanifold together with a 
filtration $G_{\bullet}$ of degree $d$ and that $\mathcal{X}$ is a 
$\delta^{-1}$-rational Mal'cev basis adapted to $G_{\bullet}$. 
Suppose that $g \in \mathrm{poly}(\ZZ, G_{\bullet})$.
If $(g(n)\Gamma)_{n \leq N}$ is not $\delta$-equidistributed, then there 
exists a non-trivial horizontal character $\eta$ with 
$0 < |\eta| \ll \delta^{-O_{d, m_G}(1)}$ such that
$$\|\eta \circ g \|_{C^{\infty}[N]} \ll \delta^{-O_{d, m_G}(1)},$$
where $C_d$ is a sufficiently large constant depending only on $d$.
\end{lemma}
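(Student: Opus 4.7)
The plan is to prove the contrapositive. Suppose $(g(n)\Gamma)_{n \leq N}$ is not $\delta$-equidistributed, so there is a Lipschitz $F: G/\Gamma \to \CC$ with $\|F\|_{\mathrm{Lip}} \leq 1$ and $\bigabs{\EE_{n \leq N} F(g(n)\Gamma) - \int_{G/\Gamma} F} > \delta$. Subtracting $\int F$, we may assume $\int F = 0$. The goal is to produce a non-trivial horizontal character $\eta$ of modulus $|\eta| \ll \delta^{-O_{d,m_G}(1)}$ such that $\|\eta\circ g\|_{C^{\infty}[N]} \ll \delta^{-O_{d,m_G}(1)}$.

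The first step is a vertical Fourier decomposition. Let $G_s$ denote the deepest non-trivial piece of the filtration; by the filtration axioms $[G,G_s] \subseteq G_{s+1} = \{\mathrm{id}_G\}$, so $G_s$ is central and $G_s/(G_s\cap\Gamma)$ is a torus. Decomposing $F$ as an $L^2$-series over characters $\xi$ of this vertical torus, truncating at height $\ll \delta^{-O_{d,m_G}(1)}$ (which is justified by the rapid decay of Fourier coefficients of Lipschitz functions on a torus, via the $\delta^{-1}$-rationality of $\mathcal{X}$), and pigeonholing, one obtains a vertical character $\xi$ of bounded height such that the Fourier component $F_\xi$ of $F$ satisfies $\bigabs{\EE_{n\leq N} F_\xi(g(n)\Gamma)} \gg \delta^{O_{d,m_G}(1)}$.

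Next, I would induct on the step $s$ of $G$. The base case $s=1$ is abelian: $G/\Gamma$ is a torus, $g$ is valued in it, and one appeals to the quantitative Weyl theorem, which says that if a real polynomial $P$ of degree at most $d$ satisfies $\bigabs{\EE_{n\leq N} e(P(n))} \geq \eta$, then $\|P\|_{C^{\infty}[N]} \ll \eta^{-O_d(1)}$. For the inductive step, apply a van der Corput / Cauchy--Schwarz argument: a large value of $\bigabs{\EE_n F_\xi(g(n)\Gamma)}$ forces, for many shifts $h$, a large correlation $\EE_n F_\xi(g(n+h)\Gamma)\overline{F_\xi(g(n)\Gamma)}$. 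Because $F_\xi$ transforms by $\xi$ under the central action of $G_s$, this product descends to a Lipschitz function on the quotient nilmanifold $(G/G_s)/(\Gamma G_s/G_s)$ of step $s-1$, evaluated at the derivative sequence $n\mapsto (g(n+h)g(n)^{-1})\bmod G_s$. The inductive hypothesis then yields a horizontal character of the quotient, which pulls back to a horizontal character of $G$ of modulus $\ll \delta^{-O_{d,m_G}(1)}$; averaging the resulting linear-in-$h$ constraint over many $h$ via a further pigeonhole produces a single horizontal character $\eta$ with $\bigabs{\EE_{n\leq N} e(\eta(g(n)))} \gg \delta^{O_{d,m_G}(1)}$. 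Applying the quantitative Weyl theorem to the real polynomial $\eta\circ g$ of degree at most $d$ then delivers the smoothness-norm bound $\|\eta\circ g\|_{C^{\infty}[N]} \ll \delta^{-O_{d,m_G}(1)}$.

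The principal obstacle is the quantitative bookkeeping. Each van der Corput step inflates the implicit exponents in the modulus and smoothness bounds by a factor depending on $s$ and $m_G$, and the pull-back from the quotient to $G$ must preserve rationality of Mal'cev coordinates; one has to check that the $\delta^{-1}$-rationality of $\mathcal{X}$ controls the ratios of heights of characters and the constants implicit in descending to $G/G_s$ and its Mal'cev basis at every level of the induction, so that the exponents remain $O_{d,m_G}(1)$ at the end.
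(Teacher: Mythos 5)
This lemma is not proved in the paper at all: it is imported verbatim as Theorem~2.9 of Green--Tao \cite{GT-polyorbits} (the quantitative Leibman theorem), so there is no in-paper argument to compare against. Your outline is, in substance, the proof strategy of the cited source: vertical Fourier decomposition against the central torus coming from the last nontrivial step of the filtration, an induction with van der Corput/Cauchy--Schwarz at each stage, and the quantitative Weyl estimate on the torus at the bottom. Two points where your sketch compresses the real argument are worth flagging. First, in the van der Corput step the product $F_\xi(g(n+h)\Gamma)\overline{F_\xi(g(n)\Gamma)}$ does not literally descend to a function on $(G/G_s)/(\Gamma G_s/G_s)$ evaluated at the derivative sequence; what is invariant is $F_\xi\otimes\overline{F_\xi}$ under the \emph{diagonal} central subgroup of $G\times G$, so the descent is to a quotient of the product nilmanifold (of strictly smaller dimension), and Green--Tao accordingly run the induction on $\dim G$ rather than on the step $s$. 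Second, the passage from the linear to the general polynomial case is a separate induction (on degree, or via lifting the polynomial sequence to a linear one on an enlarged group), which your outline does not address. As a blind reconstruction of the standard proof the sketch is sound in spirit, but it is an outline of a long argument rather than a proof; since the paper treats the result as a black box, that is an acceptable level of detail here.
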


In order to pass between the notions of equidistribution and total equidistribution for polynomial sequences, we shall use the following lemma, which is \cite[Lemma 7.2]{Mat-multiplicative}.
\begin{lemma} 
\label{lem:equi/totally-equi}
 Let $N$ and $A$ be positive integers and let $\delta: \NN \to [0,1]$ be a 
 function that satisfies $\delta(x)^{-t} \ll_t x$ for all $t>0$.
 Suppose that $G$ has a $\delta(N)^{-1}$-rational Mal'cev basis adapted to 
 the filtration $G_{\bullet}$.
 Then there is  $1\leq B \ll_{d, \dim G} 1$ such that the following holds provided $N$ is sufficiently large.
 If $g \in \mathrm{poly}(\ZZ,G_{\bullet})$ is a polynomial sequence 
 such that $(g(n)\Gamma)_{n\leq N}$ is $\delta(N)^A$-equidistributed for some $A>B$, then
 $(g(n)\Gamma)_{n\leq N}$ is totally $\delta(N)^{A/B}$-equidistributed.
\end{lemma}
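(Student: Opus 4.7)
\medskip

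\noindent\emph{Proof plan.} The plan is to argue by contrapositive and follow the standard Green--Tao deduction (compare \cite[Proposition~2.1]{GT-polyorbits}). Suppose $(g(n)\Gamma)_{n\leq N}$ is not totally $\delta(N)^{A/B}$-equidistributed. Then there is a Lipschitz function $F:G/\Gamma\to\CC$ and an arithmetic progression $P=\{a+qn:0\leq n<N'\}\subseteq[N]$ of length $N'\geq\delta(N)^{A/B}N$ (so in particular with common difference $q\leq\delta(N)^{-A/B}$) witnessing a deviation of size at least $\delta(N)^{A/B}\|F\|_{\mathrm{Lip}}$. Reparametrising, the finite sequence $(g_a(n)\Gamma)_{n<N'}$ given by $g_a(n):=g(a+qn)$ fails to be $\delta(N)^{A/B}$-equidistributed on $[N']$. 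Since $g_a\in\mathrm{poly}(\ZZ,G_\bullet)$ and the Mal'cev basis is $\delta(N)^{-1}$-rational, Lemma~\ref{lem:leibman} produces a non-trivial horizontal character $\eta:G\to\RR/\ZZ$ with modulus $|\eta|\ll\delta(N)^{-O_{d,\dim G}(A/B)}$ and smoothness norm
\[
\|\eta\circ g_a\|_{C^\infty[N']}\ll\delta(N)^{-O_{d,\dim G}(A/B)}.
\]

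\medskip

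\noindent The next step is to transfer this local obstruction back to $g$ itself. Expanding $\eta\circ g(n)=\beta_d n^d+\cdots+\beta_1n+\beta_0\Mod 1$, the polynomial $\eta\circ g_a(n)$ has $n^j$-coefficient $q^j\beta_j$ modulo lower-order contributions from the binomial expansion. Using the comparison between the $C^\infty[N']$-norm and the monomial coefficients recalled before Lemma~\ref{lem:leibman}, the bound on $\|\eta\circ g_a\|_{C^\infty[N']}$ yields $N'^j\|q'q^j\beta_j\|_{\RR/\ZZ}\ll\delta(N)^{-O(A/B)}$ for each $1\leq j\leq d$ and some integer $q'\ll_d 1$. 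Consider now the horizontal character $\tilde\eta:=q'q^d\eta$, which remains non-trivial and has modulus bounded by $q'q^d|\eta|\ll\delta(N)^{-O(A/B)}$. Writing $q'q^d\beta_j=q^{d-j}\cdot(q'q^j\beta_j)$ and using $q\leq\delta(N)^{-A/B}$, $N'\geq\delta(N)^{A/B}N$, we obtain
\[
N^j\|q'q^d\beta_j\|_{\RR/\ZZ}
\leq q^{d-j}(N/N')^jN'^j\|q'q^j\beta_j\|_{\RR/\ZZ}
\ll\delta(N)^{-O(A/B)}
\]
for every $1\leq j\leq d$, hence $\|\tilde\eta\circ g\|_{C^\infty[N]}\ll\delta(N)^{-O_{d,\dim G}(A/B)}$.

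\medskip

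\noindent To finish, I would invoke the straightforward converse of Lemma~\ref{lem:leibman} (i.e.\ the ``easy'' direction of \cite[Theorem~2.9]{GT-polyorbits}, proved by decomposing $F$ into vertical Fourier components and using the standard Weyl-type estimate on $\sum_{n\leq N}e(\tilde\eta\circ g(n))$ together with the bound on $\|\tilde\eta\circ g\|_{C^\infty[N]}$): the existence of a non-trivial $\tilde\eta$ of modulus and smoothness norm both $\ll\delta(N)^{-CA/B}$ for some $C=C(d,\dim G)$ forces $(g(n)\Gamma)_{n\leq N}$ to fail to be $\delta(N)^{C'A/B}$-equidistributed for some larger $C'=C'(d,\dim G)$. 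Choosing $B:=C'+1$ (which depends only on $d$ and $\dim G$), any $A>B$ gives $C'A/B<A$, so the original sequence cannot be $\delta(N)^A$-equidistributed, contradicting our hypothesis and proving the lemma.

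\medskip

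\noindent The only mildly delicate point is the bookkeeping in the second paragraph, ensuring that passing from the local character $\eta$ on the subprogression to a global character on $[N]$ can be done by multiplying $\eta$ by an integer $q^d$ (together with a universal constant $q'\ll_d 1$) without inflating the modulus or the smoothness norm beyond $\delta(N)^{-O(A/B)}$; everything else is a direct appeal to Lemma~\ref{lem:leibman} in both directions.
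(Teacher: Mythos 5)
The paper offers no proof of this lemma: it is quoted verbatim from \cite[Lemma 7.2]{Mat-multiplicative} (``This is \cite[Lemma 7.2]{Mat-multiplicative}''), so there is no internal argument to compare against. Your first two paragraphs follow the natural route (contrapositive, Lemma \ref{lem:leibman} applied to the reparametrised sequence $g_a(n)=g(a+qn)$ on the subprogression, then transfer of the character back to $[N]$), and this part is essentially sound modulo the bookkeeping you acknowledge: the passage from $\|q'\alpha_j^{(a)}\|$ to $\|q'q^d\beta_j\|$ requires a downward induction in $j$ with a single multiplier (say $q'q^d$) carried throughout, using $|a|\leq N$ and $N'\geq\delta(N)^{A/B}N$ to absorb the cross terms $\binom{i}{j}a^{i-j}\beta_i$; the losses remain $\delta(N)^{-O_{d,\dim G}(A/B)}$ as claimed. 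This is the same computation the paper itself performs around \eqref{eq:alpha-beta}.

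The genuine gap is the final step. The ``easy converse'' of Lemma \ref{lem:leibman} does not deliver what you need: from a nontrivial $\tilde\eta$ with $|\tilde\eta|\leq Q$ and $\|\tilde\eta\circ g\|_{C^{\infty}[N]}\leq Q$ one only deduces that $\tilde\eta\circ g$ is nearly constant on an initial segment of length $\asymp N/(dQ)$, hence that $(g(n)\Gamma)_{n\leq N}$ fails to be \emph{totally} $cQ^{-O(1)}$-equidistributed. It does \emph{not} follow that $\big|\EE_{n\leq N}e(\tilde\eta\circ g(n))\big|$ is large over the full range, which is what failure of plain $\delta(N)^{A}$-equidistribution requires. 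Concretely, take $G/\Gamma=\RR/\ZZ$ with the standard basis, $g(n)=n/N$ and $\tilde\eta=\mathrm{id}$: then $|\tilde\eta|=1$, $\|\tilde\eta\circ g\|_{C^{\infty}[N]}=1$, yet $\sum_{n\leq N}e(n/N)=0$ and the sequence $(n/N \bmod 1)_{n\leq N}$ is $O(1/N)$-equidistributed (a perfect Riemann sum), so no conclusion of the form ``not $\delta^{C'A/B}$-equidistributed'' can be drawn. The converse direction you invoke is valid only when the smoothness norm is below an absolute constant such as $1/(10d)$, not when it is merely $\leq\delta^{-O(A/B)}$.

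Note also that this is not just a defect of the write-up: the same example (the sequence $(n/N)_{n\leq N}$ is $O(1/N)$-equidistributed on $[N]$, yet the average of $e(x)$ over the subprogression $\{1,\dots,\lfloor N/2\rfloor\}$ has absolute value $\approx 2/\pi$) shows the statement cannot be established by this route at all, so you should consult the cited proof and, more importantly, how the lemma is used. In this paper it only ever appears in composition with Lemma \ref{lem:leibman} (see the proof of Proposition \ref{prop:linear-subseqs}); the implication actually needed there is ``not totally $\delta^{A/B}$-equidistributed $\Rightarrow$ there exists a nontrivial $\tilde\eta$ with $|\tilde\eta|$ and $\|\tilde\eta\circ g\|_{C^{\infty}[N]}$ both $\ll\delta^{-O_{d,\dim G}(A/B)}$'', and that is exactly the (correct) content of your first two paragraphs.
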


With these preparations in place, we now turn towards the main result of this section.

\begin{proposition}[Equidistribution in sparse families of linear subsequences] \label{prop:linear-subseqs}
Let $N$ be a large positive parameter, let $d \geq 0$, and let $K' > 0$ and $K> \max(2K',2)$. 
Suppose that $1 \leq y' \leq (\log N)^{K'}$ and $(\log N)^K < y \leq N^{\mu}$, where $\mu = \mu(d) \in (0,1)$ is sufficiently small depending on $d$.
Let $\delta: \RR_{>0} \to \RR_{>0}$ be a function of $N$ that satisfies 
$\log_5 N \ll \delta(N)^{-1}$ and $\delta(N)^{-B} \ll_B \log_2 N$ for all $B>0$.
Suppose that $(G/\Gamma, G_{\bullet})$ is a nilmanifold  
together with a filtration $G_{\bullet}$ of degree $d$ and a $\delta(N)^{-1}$-rational Mal'cev basis adapted to it.
Let $g \in \mathrm{poly}(\ZZ,G_{\bullet})$ be a polynomial sequence and suppose  
that the finite sequence $(g(n)\Gamma)_{n\leq N}$ is totally $\delta(N)^{E_1}$-equidistributed in $G/\Gamma$ for some $E_1 \geq 1$.
Then there is some $c_1 \in (0,1)$ depending on $d$ and $\dim G$ such that the following assertion holds for all integers
\begin{equation} \label{eq:M-range}
M \in [N^{1/2},N/y^{1/2}]
\end{equation}
provided that $K$ is sufficiently large depending on $d$, and that $c_1E_1 \geq 1$. 

Given any sequence $(A_m)_{m\in \NN}$ of integers satisfying $|A_m| \leq m$, 
write $g_m(n) := g(mn+A_m)$ and let $\mathcal{B}_{M}$ denote the set of integers 
$m \in [M,2M) \cap S([y',y])$ for which 
$$(g_m(n)\Gamma)_{n \leq N/m}$$
fails to be totally $\delta(N)^{c_1 E_1}$-equidistributed. 
Then
$$
\# \mathcal{B}_{M}
\ll \Psi(2M,[y',y]) \delta(N)^{c_1 E_1}.
$$
\end{proposition}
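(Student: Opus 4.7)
The plan is to argue by contradiction, adapting the Green--Tao strategy of \cite[Theorem 2.9]{GT-polyorbits} and its sparse adaptation in \cite[Proposition 7.4]{Mat-multiplicative} to the setting of $[y',y]$-smooth numbers. The crucial new ingredient is that Theorem \ref{thm:strong-recurrence} provides strong recurrence for polynomial sequences evaluated on the sparse set $S([y',y])$, which will replace the unrestricted sparse recurrence used in the cited works.

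Suppose for contradiction that $|\mathcal{B}_M| > \delta(N)^{c_1 E_1}\Psi(2M,[y',y])$ for a small constant $c_1 > 0$ depending on $d$ and $\dim G$ to be chosen. First I would extract, for each $m \in \mathcal{B}_M$, a nontrivial horizontal character $\eta_m : G \to \RR/\ZZ$ witnessing the failure of equidistribution of $(g_m(n)\Gamma)_{n \leq N/m}$: combining Lemma \ref{lem:equi/totally-equi} (to pass from totally to ordinarily equidistributed at a reduced exponent) with Lemma \ref{lem:leibman} applied to $g_m$ on $[N/m]$ produces $\eta_m$ of modulus $\ll \delta(N)^{-O(c_1 E_1)}$ and smoothness norm $\|\eta_m \circ g_m\|_{C^\infty[N/m]} \ll \delta(N)^{-O(c_1 E_1)}$, with all implied exponents depending only on $d$ and $\dim G$. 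Since horizontal characters of modulus at most $T$ number at most $T^{O(\dim G)}$, a pigeonhole argument yields a single $\eta$ valid for a subset $\mathcal{B}_M' \subseteq \mathcal{B}_M$ of size $\gg \delta(N)^{O(c_1 E_1)}\Psi(2M,[y',y])$.

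Writing $\eta \circ g(t) = \sum_{j=0}^d \beta_j t^j$ gives $\eta \circ g_m(n) = \sum_{j=0}^d m^j \gamma_j^{(m)} n^j$ with $\gamma_j^{(m)} = \sum_{i \geq j} \binom{i}{j}\beta_i A_m^{i-j}$, and in particular $\gamma_d^{(m)} = \beta_d$. By the comparison between binomial and monomial coefficients in \cite[Lemma 3.2]{GT-polyorbits}, the smoothness norm bound produces an integer $q_0 \ll_d 1$ with $\|q_0 m^j \gamma_j^{(m)}\|_{\RR/\ZZ} \ll \delta(N)^{-O(c_1 E_1)}(m/N)^j$ for all $1 \leq j \leq d$ and $m \in \mathcal{B}_M'$. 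I would then proceed by descending induction on $j$. For $j = d$, the identity $\gamma_d^{(m)} = \beta_d$ directly converts the smoothness norm bound into strong recurrence of $\{\|m^d(q_0\beta_d)\|\}_{m \in S(2M,[y',y])}$. Applying Theorem \ref{thm:strong-recurrence} (with $x = 2M$, $k = d$) yields $q_d \ll \delta(N)^{-O(c_1 E_1)}$ with $\|q_d q_0 \beta_d\| \ll \delta(N)^{-O(c_1 E_1)}/N^d$. Setting $Q_j := q_d q_{d-1} \cdots q_{j+1} q_0$, the step from $j+1$ to $j$ uses the inductively established bounds $\|Q_j \beta_i\| \ll \delta(N)^{-O(c_1 E_1)}/N^i$ for $i > j$ to control the cross-terms $\sum_{i > j} m^j \binom{i}{j} A_m^{i-j} \|Q_j \beta_i\|$. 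The constraint $M \leq N y^{-1/2} \leq N$ makes $A_m/N \leq 1$, so these cross-terms are absorbed into a constant multiple of the main smoothness-norm budget $\delta(N)^{-O(c_1 E_1)}(m/N)^j$, isolating $\|Q_j m^j \beta_j\| \ll \delta(N)^{-O(c_1 E_1)}(m/N)^j$ for many $m \in S(2M,[y',y])$. A further application of Theorem \ref{thm:strong-recurrence} supplies $q_j$ with the analogous conclusion for $\beta_j$. After $d$ iterations, one obtains a positive integer $Q \ll \delta(N)^{-O_d(c_1 E_1)}$ with $\|Q\beta_j\|_{\RR/\ZZ} \ll \delta(N)^{-O_d(c_1 E_1)}/N^j$ for every $1 \leq j \leq d$, and hence $\|Q\eta \circ g\|_{C^\infty[N]} \ll \delta(N)^{-O_d(c_1 E_1)}$. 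Since $G$ is a connected (simply connected) nilpotent Lie group, any continuous homomorphism $G \to (1/Q)\ZZ/\ZZ$ vanishes, so $Q\eta$ is still a nontrivial horizontal character. The quantitative converse to Lemma \ref{lem:leibman} (obtained by testing against $F(x) = e(Q\eta \cdot \tilde{x})$ and a standard Weyl-type bound for polynomial phases of small smoothness norm, as in \cite[\S4]{GT-polyorbits}) then shows that $(g(n)\Gamma)_{n \leq N}$ fails to be $\delta(N)^{c_1' E_1}$-equidistributed for some $c_1' > c_1$ depending only on $d$ and $\dim G$. Choosing $c_1$ small enough that $c_1' E_1 < E_1$ contradicts total $\delta(N)^{E_1}$-equidistribution of $(g(n)\Gamma)_{n \leq N}$.

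The main obstacle is verifying, at each stage of the descending induction, that the hypotheses of Theorem \ref{thm:strong-recurrence} are met. The effective recurrence parameter $\delta_{\rm rec} := \delta(N)^{O(c_1 E_1)}$ must satisfy the sandwich $\log_5(2M) \ll \delta_{\rm rec}^{-1} < \log_2(2M)$, which is precisely the content of the hypothesis $\log_5 N \ll \delta(N)^{-1}$ together with the apparently severe decay assumption $\delta(N)^{-B} \ll_B \log_2 N$ for every $B > 0$ (recalling $M \geq N^{1/2}$, so that $\log_k(2M) \asymp \log_k N$). Similarly, the tolerance $\eps = \delta(N)^{-O(c_1 E_1)}(M/N)^j$ must fall below $\delta_{\rm rec}^{-O(1)}(\log 2M)^{-C_1}$, which is guaranteed by the size restriction $M \leq N y^{-1/2}$ provided $K$ (equivalently, the lower bound $(\log N)^K$ on $y$) is taken sufficiently large in terms of $d$. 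The bookkeeping of these constants through all $d$ rounds of induction is the technical heart of the argument.
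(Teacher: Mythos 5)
Your proposal is correct and follows essentially the same route as the paper's proof: contradiction, passage from total to ordinary equidistribution (Lemma \ref{lem:equi/totally-equi}), extraction of horizontal characters via Lemma \ref{lem:leibman}, pigeonholing to a single $\eta$, the coefficient identity relating $\eta\circ g_m$ to $\eta\circ g$, repeated application of Theorem \ref{thm:strong-recurrence} over $S(2M,[y',y])$, and a violating test function built from $\kappa\eta$. The only (cosmetic) difference is that you interleave the descending induction with the recurrence applications so that the cross-terms are controlled by the already-established $\|Q_j\beta_i\|$ bounds --- which is, if anything, a cleaner way to justify the step the paper dispatches as ``downwards induction'' --- and your verification of the hypotheses of Theorem \ref{thm:strong-recurrence} using $M\in[N^{1/2},N/y^{1/2}]$ matches the paper's.
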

\begin{proof}
Let $M$ be a fixed integer in the range \eqref{eq:M-range} and let $c_1 > 0$ to be determined in the course of the proof.
Suppose that $E_1>1/c_1$.
It follows from Lemma \ref{lem:equi/totally-equi}  
that for every $m \in \mathcal{B}_{M}$, the sequence $(g_m(n)\Gamma)_{n\leq N/ m}$
fails to be $\delta(N)^{c_1E_1B}$-equidistributed on $G/\Gamma$ for some 
$1\leq B \ll_{d, \dim G} 1$.
By Lemma \ref{lem:leibman},  
there is a non-trivial horizontal character $\eta_m: G \to \RR/\ZZ$ of magnitude 
$|\eta_m| \ll {\delta(N)}^{-O_{d, \dim G}(c_1 E_1)}$ such that
\begin{align} \label{eq:smoothness-D}
\| \eta_m \circ g_m\|_{C^{\infty}[N/M]} \ll {\delta(N)}^{-O_{d, \dim G}(c_1 E_1)}.
\end{align}
For each non-trivial horizontal character $\eta: G \to \RR/\ZZ$ we define the 
set
$$
\mathcal{M}_{\eta} = 
\left\{ m \in \mathcal{B}_M : \eta_m = \eta \right\}.
$$
Note that this set is empty unless $|\eta| \ll {\delta(N)}^{-O_{d, \dim G}(c_1 E_1)}.$
Suppose that
$$
\# \mathcal{B}_M \geq \Psi(2M,[y',y]) \delta(N)^{c_1E_1}.
$$
Since there are only $M^{O(1)}$ horizontal characters of modulus bounded by $M$, it follows from the pigeon hole principle that there is some $\eta$ of modulus
$|\eta| \ll {\delta(N)}^{-O_{d, \dim G}(c_1 E_1)}$ such that
$$
\# \mathcal{M}_{\eta} \geq \Psi(2M,[y',y]) {\delta(N)}^{c_1 E_1 C}
$$
for some $C \asymp_{d, \dim G} 1$.
Suppose
$$
\eta \circ g(n) 
= \beta_d n^d + \dots \beta_1 n + \beta_0. 
$$
Then
$$
\eta \circ g_m(n) 
= \eta \circ g (mn + A_m)
= \alpha_d^{(m)} n^d + \dots + \alpha_1^{(m)} n + \alpha_0^{(m)},
$$
where
\begin{equation} \label{eq:alpha-beta}
\alpha_j^{(m)} = m^j \sum_{i=j}^d \binom{i}{j} A_m^{i-j} \beta_i, \qquad (0 \leq j \leq d). 
\end{equation}
The bound \eqref{eq:smoothness-D} on the smoothness norm asserts that
\begin{align*} 
\sup_{1 \leq j \leq d} 
\frac{N^j}{M^j} 
\| \alpha_j^{(m)} \|
\ll {\delta(N)}^{-O_{d, \dim G}(c_1 E_1)}, 
\end{align*}
which by downwards induction combined with \eqref{eq:alpha-beta} implies
\begin{align*} 
\sup_{1 \leq j \leq d} 
\frac{N^j}{M^j} 
\| \beta_j m^j  \|
\ll {\delta(N)}^{-O_{d, \dim G}(c_1 E_1)}.
\end{align*}
Hence,
\begin{align*} 
\| \beta_j m^j  \|
\ll {\delta(N)}^{-O_{d, \dim G}(c_1 E_1)} (M/N)^j , \qquad(1 \leq j \leq d), 
\end{align*}
for every $m \in \mathcal{M}_\eta$.

In view of the lower bound on $\#\mathcal{M}_\eta$, we seek to apply 
Theorem \ref{thm:strong-recurrence} with $k=j \leq d$ and with the cut-off parameter $N$ in the theorem replaced by $M$.
Observe that all assumptions in Theorem \ref{thm:strong-recurrence} about the relation between the parameters $N$, $y$, $y'$, as well as, those on the function $\delta$ and the size of $\eps$ are sufficiently flexible to allow for $N$ to be replaced by any $M$ with $\log M \asymp \log N$.
Note that $N/M > y^{1/2} \geq (\log N)^{K/2} \gg (\log M)^{K/2}$, which allows us to choose
$$
\eps = {\delta(N)}^{-O_{d, \dim G}(c_1 E_1)} (M/N)^j 
< {\delta(N)}^{-O_{d, \dim G}(c_1 E_1)} {y}^{-j/2} < {\delta(N)}^{-O_{d, \dim G}(c_1 E_1)} (\log M)^{-jK/2}
$$ and $C_1 = jK/2$ in the application of Theorem~\ref{thm:strong-recurrence}.
Our assumptions on $\delta(N)$ imply that $\eps < {\delta^*}/2$ 
for any $\delta^*$ of the form ${\delta(N)}^{c_1 E_1 C}$ with $C \asymp_{d, \dim G} 1$ as soon as $N$ is sufficiently large.
Hence, it follows from Theorem \ref{thm:strong-recurrence} that there exists an integer 
$1 \leq q_j \ll {\delta(N)}^{-O_{d, \dim G}(c_1 E_1)}$ such that
\begin{equation*} 
\| q_j \beta_j \|
\ll {\delta(N)}^{-O_{d, \dim G}(c_1 E_1)} (M/N)^{-j} M^{-j} 
= {\delta(N)}^{-O_{d, \dim G}(c_1 E_1)} N^{-j}.
\end{equation*}
Thus,
\begin{align}\label{eq:alpha_j-kappa_j-D}
\beta_j = \frac{a_j}{\kappa_j} + \tilde\beta_j,
\end{align}
where $\kappa_j \mid q_j $, $\gcd(a_j,\kappa_j)=1$
and 
$$0 \leq \tilde\beta_j \ll{\delta(N)}^{-O_{d, \dim G}(c_1 E_1)} N^{-j}.$$
Hence,
\begin{align}\label{eq:kappa_j-D}
\| \kappa_j \beta_j \|
\ll  {\delta(N)}^{-O_{d, \dim G}(c_1 E_1)} N^{-j}.
\end{align}
Let $\kappa=\lcm(\kappa_1, \dots, \kappa_d)$ and set 
$\tilde \eta = \kappa \eta$.
We proceed in a similar fashion as in \cite[\S3]{GT-nilmobius}: 
The above implies that
$$
\|\tilde \eta \circ g (n)\|_{\RR/\ZZ} \ll
{\delta(N)}^{-O_{d, \dim G}(c_1 E_1)} n/N, \qquad (n \leq N),
$$
which is small provided $n$ is not too large.
Indeed, if $N'= \delta(N)^{c_1 E_1 C''} N$ for some sufficiently large constant 
$C'' \geq 1$ depending only on $d$ and $\dim G$, and if $n \in \{1, \dots , N'\}$, 
then
$$
\|\tilde \eta \circ g (n)\|_{\RR/\ZZ} \leq 1/10.
$$
Let $\chi: \RR/\ZZ \to [-1,1]$ be a function of bounded Lipschitz norm that 
equals $1$ on $[-\frac{1}{10},\frac{1}{10}]$ and satisfies $\int_{\RR/\ZZ} 
\chi(t)\d t = 0$.
Then, by setting $F := \chi \circ \tilde \eta$, we obtain a Lipschitz function 
$F:G/\Gamma \to [-1,1]$ that satisfies $\int_{G/\Gamma} F = 0$ and 
$\|F\|_{\mathrm{Lip}} \ll {\delta(N)}^{-O_{d, \dim G}(c_1 E_1)}$.
By choosing $c_1$ sufficiently small depending on $d$ and $\dim G$ we may ensure that
$$\|F\|_{\mathrm{Lip}} < \delta(N)^{-E_1}$$
and, moreover, that
$$
N'> \delta(N)^{E_1}N.
$$
This choice of $N'$, $F$ and $c_1$ implies that, for all sufficiently large values of $N$, we have
$$
\Big| \frac{1}{N'} \sum_{1 \leq n \leq N'} F(g(n)\Gamma) \Big|
= 1 > \delta(N)^{E_1}  \|F\|_{\mathrm{Lip}},
$$
which contradicts our assumption that
$(g(n)\Gamma)_{n\leq N}$ is totally $\delta(N)^{E_1}$-equidistributed.
This completes the proof of the proposition.
\end{proof}

\section{Non-correlation with equidistributed nilsequences} \label{sec:equid-nilsequences}

In this section we prove Proposition \ref{prop:equid-non-corr}, that is to say we show that 
the function $m \mapsto \1_{S([y',y])}(\W m+A')$ and its weighted version $m \mapsto h_{[y',y]}^{(W,A)}(qm+a)$ are orthogonal to highly equidistributed nilsequences for $w(N)$-smooth values of $q$ and $0\leq a <q$.

Both of these functions are $W$-tricked versions of {\em sparse multiplicative functions}.
Building on a Montgomery--Vaughan-type decomposition, which allows one to replace twisted sums of multiplicative functions by bilinear expressions, it was proved in \cite{Mat-multiplicative} that $W$-tricked and centralised versions of {\em `dense' multiplicative functions} are orthogonal to nilsequences.
We shall show that this approach can be extended to the sparse setting we are looking at here in order to establish
Proposition \ref{prop:equid-non-corr}.

\subsection{Removing the weight}
We will prove Proposition \ref{prop:equid-non-corr} by a sequence of reductions. 
Recall that
$$
h_{[y',y]}^{(W,A)}(m) 
= \frac{\phi(W)}{W}h_{[y',y]}(Wm+A) 
= \frac{\phi(W)}{W} \frac{N^{\alpha} (Wm+A)^{1-\alpha}}{\alpha \Psi(N,[y',y])} \1_{S([y',y])}(Wm+A),
$$
where $\gcd(A,W)=1$.
Our first step is to remove the weight and reduce the correlation estimate for $h_{[y',y]}^{(W,A)}(qm+a)$ to one that only involves the characteristic function $\1_{S([y',y])}(\W m+A')$, where $\W=Wq$ and $A'=Wa+A$.

\begin{lemma}[Removing the weight]
Let $\W= Wq$ and $A'= Wa+A$ and define for any $x \in \NN$ the quantity
$$T(x) := \sum_{N/\W< m \leq x} \1_{S([y',y])}(\W m + A') F(g(m)\Gamma).$$
Then the conclusion \eqref{eq:prop-bound} of Proposition \ref{prop:equid-non-corr} holds provided that
\begin{equation} \label{eq:T-bound}
 T(x) \ll_{d, \dim G, \|F\|_{\mathrm{Lip}}, E_1}  \delta(N)^{E_1} \frac{\Psi(N,[y',y])}{\phi(\W)}
\end{equation}
for all $x \in (N/\W,2N/\W]$. 
\end{lemma}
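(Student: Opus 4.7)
The reduction is a routine partial summation. My plan is as follows.

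First, I would expand the weighted function in the sum:
\[
h_{[y',y]}^{(W,A)}(qm+a) = \frac{\phi(W)}{W}\cdot\frac{N^\alpha}{\alpha\,\Psi(N,[y',y])}\,(\W m+A')^{1-\alpha}\,\1_{S([y',y])}(\W m+A'),
\]
where $\W=Wq$ and $A'=Wa+A$. Writing $u(t):=(\W t+A')^{1-\alpha}$ and $a(m):=\1_{S([y',y])}(\W m+A')F(g(m)\Gamma)$, so that $T(x)=\sum_{N/\W<m\leq x}a(m)$, the sum in \eqref{eq:prop-bound} equals
\[
\frac{\phi(W)}{W}\cdot\frac{N^\alpha}{\alpha\,\Psi(N,[y',y])}\sum_{N/\W<m<(N+N_1)/\W}u(m)\,a(m).
\]

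Next, I would apply Abel summation to transfer the weight $u(m)$ onto the partial sums $T(x)$, which are controlled by \eqref{eq:T-bound}:
\[
\sum_{N/\W<m<(N+N_1)/\W}u(m)a(m) = u\!\left(\tfrac{N+N_1}{\W}\right)T\!\left(\tfrac{N+N_1}{\W}\right)-\int_{N/\W}^{(N+N_1)/\W}u'(t)T(t)\,dt.
\]
On the range $t\in(N/\W,2N/\W]$, one has $\W t+A'\in(N,2N+A']$, so $u(t)\asymp N^{1-\alpha}$, and $u'(t)=(1-\alpha)\W(\W t+A')^{-\alpha}$ satisfies $\int |u'(t)|\,dt\ll N^{1-\alpha}$. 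Inserting the hypothesis \eqref{eq:T-bound}, which gives $|T(x)|\ll \delta(N)^{E_1}\Psi(N,[y',y])/\phi(\W)$ uniformly for $x\in(N/\W,2N/\W]$, yields
\[
\Bigl|\sum_{N/\W<m<(N+N_1)/\W}u(m)a(m)\Bigr|\ll N^{1-\alpha}\cdot\delta(N)^{E_1}\,\frac{\Psi(N,[y',y])}{\phi(\W)}.
\]

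Finally, I would combine these estimates and simplify the arithmetic factors. Since $q$ is $(w(N)-1)$-smooth every prime factor of $q$ divides $W$, so $\phi(\W)=\phi(Wq)=q\,\phi(W)$. Hence $\phi(W)/\phi(\W)=1/q$ and the prefactors collapse to
\[
\frac{\phi(W)}{W}\cdot\frac{N^\alpha}{\alpha\,\Psi(N,[y',y])}\cdot N^{1-\alpha}\cdot\frac{\Psi(N,[y',y])}{\phi(\W)}\ll\frac{N}{Wq}=\frac{N}{\W}.
\]
Multiplying by the outer normalisation $\W/N$ yields the claimed bound $\ll\delta(N)^{E_1}$, up to constants depending on $d$, $\dim G$, $\|F\|_{\mathrm{Lip}}$, $E_1$.

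The argument is essentially mechanical; the only point to watch is the cancellation between $N^\alpha$ in the weight normalisation and $N^{1-\alpha}$ coming from the size of $(\W m+A')^{1-\alpha}$, together with the identity $\phi(Wq)=q\phi(W)$ which eliminates the factor $q$. No step is a substantial obstacle, so the real work lies in proving \eqref{eq:T-bound} itself, which is taken up in the subsequent subsections.
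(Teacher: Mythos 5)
Your proposal is correct and follows essentially the same route as the paper: expand $h_{[y',y]}^{(W,A)}(qm+a)$, observe that the weight $(\W m+A')^{1-\alpha}$ is of constant order $N^{1-\alpha}$ with total variation of the same order on the range in question, perform Abel/partial summation to transfer it onto the partial sums $T(\cdot)$, and use $\phi(Wq)=q\phi(W)$ (the paper phrases this as $\phi(\W)\asymp q\phi(W)$ via \eqref{eq:phi-and-W-trick}) to collapse the prefactors. The only cosmetic difference is that the paper carries out the summation-by-parts discretely, bounding consecutive weight differences by integrals, rather than writing a single Riemann--Stieltjes integral.
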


\begin{proof}
To start with, recall that 
$$
\phi(\W)^{-1} = (\phi(W)q)^{-1} \prod_{p \mid q, p>w(N)} (1-p^{-1})^{-1} \asymp (\phi(W)q)^{-1}
$$
by \eqref{eq:phi-and-W-trick} and note that
$$
\frac{N^{\alpha} (\W m+A')^{1-\alpha}}{\alpha N} \asymp 1
$$
for $N/\W < m < (N+N_1)/\W$.
With this in mind, partial summation shows that
\begin{align*}
 &\frac{\W}{N}\frac{\phi(W)}{W}\sum_{N/\W < m < (N+N_1)/\W} h_{[y',y]}(\W m + A') F(g(m)\Gamma)\\
 &=\frac{1}{\Psi(N,[y',y])}\frac{\W \phi(W)}{W} 
 \sum_{N/\W < m < (N+N_1)/\W} 
 \frac{N^{\alpha} (\W m+A')^{1-\alpha}}{\alpha N}
 \1_{S([y',y])}(\W m + A') F(g(m)\Gamma)\\
 &\ll \frac{|T((N+N_1)/\W)| \phi(\W)}{\Psi(N,[y',y])} \\
 & \qquad
 + \frac{\phi(\W)}{\Psi(N,[y',y])} \sum_{N/ \W < m < (N+N_1)/\W } \frac{|T(m)|}{\alpha} 
 \left|  \left(\frac{\W (m+1)+A'}{N} \right)^{1-\alpha} - 
 \left(\frac{\W m+A'}{N} \right)^{1-\alpha} \right| \\
 &\ll \frac{\phi(\W)}{\Psi(N,[y',y])} \Bigg\{ \left|T\left(\frac{N+N_1}{\W}\right) \right|
 +  
 \sum_{\substack{N/ \W < m \\< (N+N_1)/\W }} \frac{|T(m)|(1-\alpha)}{\alpha N} 
  \left| \int_{\W (m+1)+A'}^{\W m+A'} (N/t)^{\alpha} ~dt \right| \Bigg\}  \\
 &\ll \frac{\phi(\W)}{\Psi(N,[y',y])} \Bigg\{ \left|T\left(\frac{N+N_1}{\W}\right) \right|
 + \frac{\W}{N} \sum_{N/ \W < m < (N+N_1)/\W} |T(m)| \Bigg\} \\
 &\ll_{d, \dim G, \|F\|_{\mathrm{Lip}}, E_1}  \delta(N)^{E_1},
\end{align*}
provided that the stated bounds on $T(x)$ hold.
\end{proof}

\subsection{Montgomery--Vaughan-type reduction to a Type II estimate} \label{subsec:MV}

Our proof strategy for establishing \eqref{eq:T-bound} is to employ a Montgomery--Vaughan-type decomposition which replaces the the given one-parameter correlation by a bilinear sum and eventually allows us to reduce \eqref{eq:T-bound} to a non-correlation estimate of the von Mangoldt function with nilsequences.
Since the parameter $y$ can be very small and since the correlation involving the von Mangoldt function will have length $y^c$ in the end, this approach requires a careful choice of the cut-off parameter $w(N)$ in the $W$-trick and the parameter $\delta(N)$ that controls the level of equidistribution in the nilsequence to ensure that uniform non-correlation estimates for the von Mangoldt function, valid over the whole range produced by the Montgomery--Vaughan decomposition, can be deduced.

\begin{proposition}[Reduction to a bilinear correlation] \label{prop-bilinear}
The bound \eqref{eq:T-bound} holds for all $x \in (N/\W, 2N/\W]$ provided that 
for any sequence $(A_n)_{n \in \NN}$ with $|A_n| \leq n$,
for any sequence $(A'_n)_{n \in \NN}$ of integers $0< A'_n < \widetilde W$ coprime to $\widetilde W$ and for any 
sufficiently large $E_0 >1$, we have
\begin{align*}
&\frac{1}{\log N}  
\Big| 
\sum_{\substack{1 < n \leq N+N_1}} 
\sum_{\substack{\widetilde{W} m+A'_n \in [y',y]\\ N < n(\widetilde{W} m+A'_n) \leq N+N_1}}
\1_{S([y',y])}(n) \Lambda(\widetilde{W} m+A'_n) F(g(mn + A_n)\Gamma) 
\Big| \\
&\ll \delta(N)^{E_1} \frac{\Psi(N,[y',y]))}{\phi(W)q}
\end{align*}
for all $N_1 \in(0,N]$ and all $g \in \mathrm{poly}(\ZZ, G_{\bullet})$ such that $(g(n)\Gamma)_{n\leq N}$
is totally $\delta(N)^{E_0}$-equidistributed. 
The implied constant is allowed to depend on $d$, $\dim G$, $\|F\|_{\mathrm{Lip}}$ and $E_1$.
\end{proposition}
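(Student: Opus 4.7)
The plan is to deduce \eqref{eq:T-bound} from the assumed bilinear estimate via a Montgomery--Vaughan-type decomposition of $T(x)$, based on the identity $\log n = \sum_{d\mid n}\Lambda(d)$. Set $n_{\mathrm{orig}} := \W m_{\mathrm{orig}} + A'$, so that $n_{\mathrm{orig}} \in (N, N+N_1]$ with $N_1 := \W x - N + A'$. Since $n_{\mathrm{orig}} \asymp N$ one has $\log n_{\mathrm{orig}} = \log N + O(1)$, and so
$$T(x) = \frac{1}{\log N} \sum_{N/\W < m_{\mathrm{orig}} \leq x} (\log n_{\mathrm{orig}})\, \1_{S([y',y])}(n_{\mathrm{orig}})\, F(g(m_{\mathrm{orig}})\Gamma) + O\Bigl(\frac{T(x)}{\log N}\Bigr),$$
where the error term is a factor $O(1/\log N)$ smaller than $T(x)$ itself and is therefore harmless after rearrangement.

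I then expand $\log n_{\mathrm{orig}} = \sum_{d\mid n_{\mathrm{orig}}}\Lambda(d)$. Since $n_{\mathrm{orig}} \in S([y',y])$, the nonzero terms come from $d=p^k$ with $p \in [y',y]$, and then the complementary factor $n := n_{\mathrm{orig}}/d$ automatically lies in $S([y',y])$. I separate the contribution from $d \in [y',y]$ (which will yield the main term) from that of $d > y$ (which forces $k \geq 2$ and $p > y^{1/k}$). Using Lemma~\ref{lem:BrTen-2.4.i-ii}(i) to count the smooth factors $n$ in the residue class $n \equiv A' d^{-1}\Mod{\W}$, the tail $d>y$ contributes
$$\ll \frac{\Psi(N,[y',y])}{\phi(\W)}\sum_{k\geq 2}\sum_{y^{1/k}<p\leq y}\frac{\log p}{p^{k\alpha}} \ll \frac{\Psi(N,[y',y])}{\phi(\W)}\cdot y^{1/2-\alpha+o(1)},$$
which is negligible in view of $\alpha = \alpha(N,y) \geq 1 - O(1/K)$ (by \eqref{alpha-x-y}) and the hypothesis that $K$ is large.

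For the main term ($d \in [y',y]$), I swap the order of summation and change variables. For each $n \in S([y',y])$ (for which $\gcd(n,\W)=1$ automatically), let $A'_n \in (0,\W)$ be the unique representative of $A' n^{-1} \Mod{\W}$, and set $A_n := (A'_n n - A')/\W \in \ZZ$. Then $\gcd(A'_n,\W)=1$ and $|A_n| \leq n$; writing $d = \W m + A'_n$, the relation $dn = n_{\mathrm{orig}} = \W m_{\mathrm{orig}} + A'$ rearranges to $m_{\mathrm{orig}} = mn + A_n$. Consequently the main term equals
$$\frac{1}{\log N}\sum_{1<n\leq N+N_1}\sum_{\substack{\W m + A'_n \in [y',y]\\ N < n(\W m+A'_n) \leq N+N_1}} \1_{S([y',y])}(n)\,\Lambda(\W m+A'_n)\, F(g(mn+A_n)\Gamma),$$
which is precisely the bilinear expression of the proposition (the boundary case $n=1$ would correspond to $d = n_{\mathrm{orig}} > y$, and has already been absorbed into the prime-power tail above). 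Applying the assumed bilinear bound then yields \eqref{eq:T-bound}.

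The main subtlety will lie in the bookkeeping of absolute sizes: since $\delta(N) = \exp(-C_0\sqrt{\log_4 N})$ is only mildly subpolynomial while the density $\Psi(N,[y',y])/\phi(\W)$ may be thin (as $y$ can be as small as $(\log N)^K$), each of the various small errors---the $d>y$ tail, the $\log n_{\mathrm{orig}} \to \log N$ approximation, and the trivial bound on $T(x)$ used to absorb the remainder---must be tracked carefully using the refined smoothness estimates of Section~\ref{sec:background} and Theorem~\ref{thm:equid-in-APs}.
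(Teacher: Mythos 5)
Your argument is, in substance, the paper's own proof: the paper packages the identity $\log n_{\mathrm{orig}}=\sum_{d\mid n_{\mathrm{orig}}}\Lambda(d)$ as Lemma \ref{lem:MV-decomposition}, disposes of the $d>y$ prime-power tail in a separate lemma, and performs the identical change of variables $d=\W m+A'_n$, $m_{\mathrm{orig}}=mn+A_n$. Two details need repair. First, the error in replacing $\log N$ by $\log n_{\mathrm{orig}}$ is not $O(T(x)/\log N)$ and cannot be ``absorbed after rearrangement'': cancellation in $T(x)$ does not transfer to this error, which is genuinely $O\bigl((\log N)^{-1}\sum_{m}\1_{S([y',y])}(\W m+A')\bigr)=O\bigl(\Psi(N,[y',y])/(\phi(\W)\log N)\bigr)$ by Theorem \ref{thm:equid-in-APs}; it is acceptable only because $(\log N)^{-1}\ll\delta(N)^{E_1}$ for $\delta(N)=\exp(-C_0\sqrt{\log_4 N})$, so it must be bounded directly rather than rearranged away. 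Second, your main term is not \emph{precisely} the proposition's bilinear sum: the latter also contains prime powers $\W m+A'_n=p^k\in[y',y]$ with $w(N)\leq p<y'$ (necessarily $k\geq2$), which never occur as divisors of $n_{\mathrm{orig}}\in S([y',y])$; these must be added back via the triangle inequality, and they contribute $\ll(\log N)^{-1}\phi(\W)^{-1}\Psi(N,[y',y])\sum_{p<y'}\log p\cdot p^{-2\alpha}\ll\Psi(N,[y',y])/(\phi(\W)\log N)$, again acceptable since $2\alpha>1$. With these two adjustments your reduction is complete and coincides with the paper's.
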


Proposition \ref{prop-bilinear} will be deduced from the following simple lemma, which is inspired by a bound from
Montgomery and Vaughan \cite{mv77}, but does not involve a second moment.
\begin{lemma} \label{lem:MV-decomposition}
Suppose that $f : \NN \to \CC$, let $N$ be a positive parameter and let 
$\mathcal{S}(N) \subset [N/2,N] \cap \NN $ denote a set.
Then
\begin{align*}
\sum_{n \in \mathcal{S}(N)} f(n) 
\leq \frac{1}{\log N} \sum_{n \in \mathcal{S}(N)} |f(n)|  
 + \frac{1}{\log N} \Big|\sum_{nm  \in \mathcal{S}(N)} f(nm)\Lambda(m) \Big|.
\end{align*} 
\end{lemma}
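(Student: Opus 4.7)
The plan is to apply the classical identity $\log n = \sum_{m \mid n} \Lambda(m)$ to convert the one-parameter sum into a bilinear expression in $f$ and $\Lambda$, exploiting the fact that $\log n$ is essentially constant on the dyadic window $[N/2, N]$ in which $\mathcal{S}(N)$ is contained.

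Concretely, I would first multiply both sides by $\log N$ and decompose
\begin{align*}
\log N \sum_{n \in \mathcal{S}(N)} f(n)
= \sum_{n \in \mathcal{S}(N)} f(n) \log n
+ \sum_{n \in \mathcal{S}(N)} f(n)\bigl(\log N - \log n\bigr).
\end{align*}
Substituting $\log n = \sum_{m \mid n} \Lambda(m)$ into the first summand on the right and re-parametrising the factorisation as $n = n' m$ (with the inner sum ranging over $m \mid n$) turns this summand into precisely the bilinear sum $\sum_{n' m \in \mathcal{S}(N)} f(n' m) \Lambda(m)$ that appears on the right-hand side of the lemma.

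For the remaining error term, the assumption $\mathcal{S}(N) \subseteq [N/2, N]$ guarantees $0 \leq \log N - \log n = \log(N/n) \leq \log 2$, so this term is bounded in absolute value by $(\log 2)\sum_{n \in \mathcal{S}(N)} |f(n)|$. Dividing through by $\log N$, taking absolute values on the left, and using $\log 2 < 1$ to absorb the constant into the coefficient $1/\log N$ in front of $\sum |f(n)|$ then produces the claimed inequality.

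There is no real obstacle here: this is a textbook Montgomery--Vaughan style manipulation. The only cosmetic points worth checking are that $\Lambda(1) = 0$ so that the factorisation $n = 1 \cdot n$ contributes nothing spurious, and that the restriction to the dyadic range is used only through the cheap bound $\log(N/n) \leq \log 2$, which is why the constant on the first error term may be taken to be $1$ (indeed, something slightly smaller).
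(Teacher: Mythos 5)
Your proposal is correct and is essentially the paper's own argument: the paper's one-line proof rests on exactly the bound $\sum_{N/2 \leq n \leq N} \log(N/n) f(n) \leq \log 2 \sum_{N/2 \leq n \leq N} |f(n)|$, combined with the identity $\log n = \sum_{m \mid n} \Lambda(m)$ to produce the bilinear term, just as you do. Nothing further is needed.
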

\begin{proof}
This is an immediate consequence of the bound
\begin{align*}
\sum_{N/2 \leq n \leq N} \log(N/n)f(n) \leq \log 2 \sum_{N/2 \leq n \leq N} |f(n)|.
\end{align*}
\end{proof}

Note that we have
$$
T(x) =
\sum_{N/\W < m \leq x} \1_{S([y',y])}(\W m+A') F(g(m)\Gamma)
= \sum_{\substack{N<n \leq x\W \\n\equiv A' \Mod{\W}}} \1_{S([y',y])}(n) F\Big(g\Big(\frac{n-A'}{\widetilde W} \Big)\Gamma\Big).
$$
Writing $N_1:= x\W-N \in (0,N]$ and applying Lemma \ref{lem:MV-decomposition} to this expression yields
\begin{align} \label{eq:T(x)-bd}
&T(x) \\
\nonumber
&\ll \frac{1}{\log N }  
\Big| \sum_{\substack{N < mn \leq N + N_1\\ mn \equiv A' \Mod{\W}}} 
     \1_{S([y',y])}(mn) \Lambda(m) F\Big(g\Big(\frac{nm-A'}{\widetilde W}\Big)\Gamma\Big)  \Big| 
  + O\Big(\frac{\Psi(N,[y',y];\W,A')}{\log N}\Big).
\end{align}  
If $A'_n \in \{0,\widetilde{W}-1\}$ is such that $n A'_n \equiv A' \Mod{\widetilde{W}}$, we may expand the congruence condition on $m$ and replace $m$ by $\W m + A'_n$.
With this choice of $A'_n$ we have 
$$\frac{n (\W m + A'_n) - A'}{\W} = n m +A_n$$
for some integer $A_n$ with $|A_n| \leq n$.
Hence the first term in the bound on $T(x)$ may be rewritten as
\begin{align*}
\mathcal{M}(x):=
\frac{1}{\log N }  
\Big| \sum_{1\leq n\leq N+N_1} \sum_{\substack{N/n < \widetilde{W}m+A'_n \\ \leq (N + N_1)/n }} 
     \1_{S([y',y])}(n (\widetilde{W}m+A'_n)) \Lambda(\widetilde{W}m+A'_n) F(g(nm+A_n)\Gamma)  \Big| 
\end{align*}
The error term in \eqref{eq:T(x)-bd} is negligible in view of Theorem \ref{thm:equid-in-APs}, Lemma \ref{lem:BrTen-2.1} and \eqref{eq:Hil-Ten-Thm1}, provided that $\log^{-1} N \ll_{E_1} \delta(N)^{E_1}$ which holds for the choice of $\delta(N)$ for Proposition \ref{prop:equid-non-corr}.
Thus \eqref{eq:T-bound} holds provided we can show that, under the assumptions of Proposition
\ref{prop:equid-non-corr}, the bound
\begin{equation} \label{eq:aux-condition}
\mathcal{M}(x)
\ll_{d, \|F\|_{\mathrm{Lip}}} \delta(N)^{E_1} \frac{\Psi(N,[y',y]))}{\phi(\W)}
\end{equation}
holds for all $N_1 \in (0,N]$. 
To complete the proof of Proposition \ref{prop-bilinear}, it remains to show that we can replace the function 
$\Lambda(m) \1_{S([y',y])}(m)$ by $\Lambda(m) \1_{[y',y]}(m)$ in the condition \eqref{eq:aux-condition}.
This is the content of the following lemma.

\begin{lemma}[Removing large prime powers]
We have
$$
\frac{1}{\log N }  
\sum_{\substack{N < nm \leq N + N_1\\ nm \equiv A' \Mod{\W} \\ \Omega(m)\geq 2 \\ m > y}} 
     \1_{S([y',y])}(nm) \Lambda(m) 
\ll_B \delta(N)^{B} \frac{\Psi(N,[y',y]))}{\phi(\W)}     
 $$
\end{lemma}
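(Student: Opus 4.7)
The plan is to exploit the restricted support of $\Lambda$. Since $\Lambda(m)\neq 0$ only for prime powers $m=p^k$ and $\Omega(m)\geq 2$ forces $k\geq 2$, the condition $\1_{S([y',y])}(nm)=1$ constrains $p\in[y',y]$ and $n\in S([y',y])$. Because $p\geq y'\geq w(N)$ is coprime to $\W$, the congruence $nm\equiv A'\Mod{\W}$ determines $n$ modulo $\W$. Dropping this progression restriction, the left-hand side (times $\log N$) is bounded by
\[
\sum_{\substack{y'\leq p\leq y,\,k\geq 2\\ y<p^k\leq 2N}}\log p\cdot\Psi(2N/p^k,[y',y]),
\]
and I would split the sum at $p^k=2N/y$ into a small-$m$ and a large-$m$ regime.

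For $y<p^k\leq 2N/y$, Lemma \ref{lem:BrTen-2.4.i-ii}(i) yields $\Psi(2N/p^k,[y',y])\ll p^{-k\alpha}\Psi(N,[y',y])$, and one is left with $\sum\log p\cdot p^{-k\alpha}$ over $k\geq 2$, $p\in[y',y]$, $p^k>y$. Splitting at $p=\sqrt y$: for $p>\sqrt y$ the geometric series in $k$ converges, and partial summation combined with the prime number theorem gives $\sum_{\sqrt y<p\leq y}\log p\cdot p^{-2\alpha}\ll y^{1/2-\alpha}/(2\alpha-1)$, while for $p\leq\sqrt y$ only the smallest $k$ with $p^k>y$ contributes substantially, summing to $O(y^{1/2-\alpha})$. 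Using \eqref{alpha-x-y} one gets $1-\alpha\ll 1/K$ when $y\geq(\log N)^K$, hence $y^{1/2-\alpha}\ll(\log N)^{-(K-2)/2}$. Since $\log_2 N\gg\sqrt{\log_4 N}$, this polylogarithmic saving dominates any $\delta(N)^B$ once $K$ is sufficiently large, even after absorbing a factor $\phi(\W)\leq(\log N)^C$.

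For $p^k>2N/y$ the trivial bound $\Psi(2N/p^k,[y',y])\leq 2N/p^k+1$ suffices: summing the geometric series in $k$ gives $O(y)$ per prime and thus $O(y^2)$ in total from the $2N/p^k$ term, while the $+1$ term contributes $O(\sqrt N\log N)$ via a standard prime-power count. Since $y\leq N^\eta$ and $\Psi(N,[y',y])/\phi(\W)\gg N^{1-1/K+o(1)}$, these contributions are $\ll N^{-c}\Psi(N,[y',y])/\phi(\W)$ for $\eta$ small, far beating $\delta(N)^B$. The main obstacle is conceptual rather than technical: one must verify that the modest polylogarithmic savings in the first regime actually dominate $\delta(N)^B$ despite the extremely slow decay of $\delta(N)$, which reduces to the elementary inequality $\log_2 N\gg\sqrt{\log_4 N}$; no delicate estimates are required.
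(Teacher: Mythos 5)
Your argument is correct and takes essentially the same route as the paper: both isolate the prime powers $m=p^k$ with $k\geq 2$, $p\in[y',y]$, $p^k>y$, apply Lemma \ref{lem:BrTen-2.4.i-ii}(i) in the range where $p^k$ is admissible and a trivial bound beyond it, split the resulting sum $\sum\log p\cdot p^{-k\alpha}$ at $p=\sqrt y$ to extract the saving $y^{1/2-\alpha}\ll(\log N)^{-K/2+O(1)}$, and observe that any fixed negative power of $\log N$ beats $\delta(N)^B$. The only (harmless) difference is that the paper retains the congruence modulo $\W$ and invokes Theorem \ref{thm:equid-in-APs} to keep the factor $1/\phi(\W)$ explicitly, whereas you drop the progression and absorb $\phi(\W)\leq(\log N)^{O(1)}$ into the polylogarithmic saving, which works since $C$ is fixed and $K$ is taken large.
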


\begin{proof}
Theorem \ref{thm:equid-in-APs} and Lemma \ref{lem:BrTen-2.4.i-ii} (i) show that the left hand side above is bounded by
\begin{align*}
&\ll \frac{\log y}{\log N} \sum_{p \leq y} \sum_{k \geq \max(2,(\log y)/\log p)}
 \max_{(A'',\W)=1} \Psi\left(\frac{N}{p^k},[y',y];\W,A''\right) \\
&\ll \frac{\log y}{\log N} \frac{\Psi(N,[y',y])}{\phi(\W)} \sum_{p \leq y} \sum_{k \geq \max(2,(\log y)/\log p)}
 p^{-\alpha k}
 +O\Big(\frac{\log y}{\log N} \frac{y^2}{\log y}\Big),
\end{align*}
where the error term trivially bounds the contribution of those choices of $p^k$ for which $N/y < p^k < (N+N_1)/y'$, i.e.\ to which Lemma \ref{lem:BrTen-2.4.i-ii} (i) does not apply. This error term is acceptable since $y< N^{\eta}$ for some sufficiently small $\eta>0$.
The sum over primes in the main term satisfies
\begin{align*}
\sum_{p \leq y} \sum_{k \geq \max(2,(\log y)/\log p)}
 p^{-\alpha k} 
&\ll \sum_{p \leq y^{1/2}} y^{-\alpha} + \sum_{\sqrt{y}< p \leq y} p^{-2\alpha} 
\ll \frac{y^{1/2-\alpha}}{\log y} + (y^{1/2})^{-2\alpha + 1} \\
&\ll y^{-1/2} u \log u 
\ll_{\eps} \frac{\log N}{\log y} (\log N)^{-K/2 + \eps}
\ll_B \delta(N)^B \frac{\log N}{\log y}
\end{align*}
for all $B>0$, since $\alpha > 1/2$ and $\delta(N) = \exp(-C_0 \sqrt{\log_4 N})$.
\end{proof}

\subsection{Explicit bounds for the correlation between $\Lambda$ and nilsequences}
In view of the inner sum in the bilinear expression in Proposition \ref{prop-bilinear}, we may reduce the problem of establishing that bound to a non-correlation estimate for the von Mangoldt function with equidistributed nilsequences.
For this purpose, we require explicit bounds for correlations of length $\xi$ of the $W$-tricked von Mangoldt function with nilsequences.
A specific requirement on these bounds is that they work with the same $W$-trick (determined by $w(N)$ and independent of $\xi$) and are uniform over a large range of $\xi$.
Taking into account that $y$ can be as small as $(\log N)^{K}$, we shall prove a result that is applicable to 
$\xi \in [\log N, N]$ if $w(N)$ is suitably chosen (cf.\ Remark \ref{rem:choice-of-parameters} (ii) below).

\begin{theorem}
 \label{thm:explicit-Lambda}
Let $N_0>2$ be a large constant and $N>N_0$ a parameter. 
Let $x_0, w, \delta, \kappa: \NN_{>N_0} \to \RR_{>0}$ be functions that are defined for all sufficiently large integers and satisfy the relations $\kappa(N) \geq 1$, $1 \leq w(N) \leq \frac{1}{2} \log_2 x_0(N)$ and 
$\kappa(N)^2 w(N)^{-1/\kappa(N)} \ll_B \delta(N)^B$ for all $N>N_0$ and all $B \geq 1$.
Suppose that $w(N) \to \infty$ as $N \to \infty$.

Suppose that $G/\Gamma$ is a filtered nilmanifold of dimension $\dim G \leq \kappa(N)$ and complexity at most 
$\delta(N)^{-1}$, let $d\geq 0$ denote its degree, and let $g \in \mathrm{poly}(\ZZ,G_{\bullet})$.
Let $1\leq q \leq (\log x_0(N))^E$, $1 \leq E \ll 1$, be an integer, let $W = P(w(N))$ and write $\W = Wq$.
If $0<A'<\W$ is an integer such that $(\W,A')=1$, then
$$
\frac{\W}{\xi}
\sum_{n \leq \xi/\W} 
\bigg(\frac{\phi(\W)}{\W} \Lambda(\W n+A') - 1 \bigg) F(g(n)\Gamma)
\ll_{d,B} (1+ \|F\|_{\mathrm{Lip}})\delta(N)^{B} 
$$
for all $\xi \in [x_0(N),N]$, all $B \geq 1$,  
and for all Lipschitz functions $F: G/\Gamma \to \CC$. 
\end{theorem}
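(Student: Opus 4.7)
The plan is to follow Green and Tao's strategy from \cite{GT-linearprimes}, with explicit tracking of quantitative bounds, reducing the correlation of $\Lambda$ with the nilsequence to Type I and Type II sums that can be estimated via equidistribution properties of $g$. The only unusual feature is that $W = P(w(N))$ is determined by the outer parameter $N$ rather than by the sum length $\xi$, which must be kept in mind throughout.

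First I would apply the Green--Tao factorisation theorem (Proposition~\ref{factorization}) to write $g = \varepsilon g' \gamma$ with $\varepsilon$ smooth, $g'$ totally equidistributed on a subnilmanifold, and $\gamma$ rational of period $\ll \delta(N)^{-O_{d,\dim G}(1)}$. Partitioning the summation range into short progressions modulo the period of $\gamma$ on which $\varepsilon$ is nearly constant and absorbing the smooth and periodic factors (using trivial equidistribution of $\Lambda_{\W,A'}$ in short progressions to control error terms) reduces the problem to the case where $g$ itself is totally $\delta(N)^{E_0}$-equidistributed for $E_0$ sufficiently large in terms of $B$, and $\int_{G/\Gamma} F = 0$.

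Second, I would apply Vaughan's identity to $\Lambda(\W n + A')$, decomposing the resulting sum into Type I pieces $\sum_{m} a_m \sum_k F(g(mk+c)\Gamma)$, handled by equidistribution of the linear subsequences $(g(mk+c)\Gamma)_k$ for fixed small $m$, and Type II bilinear pieces $\sum_{m\sim M}\sum_{k\sim K} a_m b_k F(g(mk)\Gamma)$, handled by a Cauchy--Schwarz / van der Corput manoeuvre that reduces to equidistribution of $(g(mk)g(mk')^{-1}\Gamma)_m$ for most $k,k'$. The final step appeals to a variant of Proposition~\ref{prop:linear-subseqs} (in fact the simpler unrestricted-$m$ version already implicit in \cite{GT-linearprimes}) to conclude that both contributions are $O(\delta(N)^{cB})$ for some $c = c(d,\dim G) > 0$; taking $B$ in the present theorem somewhat larger than the target exponent compensates for this constant.

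The principal obstacle is uniformity across $\xi \in [x_0(N),N]$ while using $W = P(w(N))$ fixed in terms of $N$. The hypothesis $w(N) \leq \tfrac{1}{2}\log_2 x_0(N)$ ensures $W \leq \exp(\sqrt{\log x_0(N)}) \leq \xi^{o(1)}$, so the $W$-tricked summation still has length $\xi/\W \gg \xi^{1-o(1)}$, which is enough room for the Vaughan decomposition and the subsequent Type I/II analyses to produce power-saving bounds. The auxiliary hypothesis $\kappa(N)^2 w(N)^{-1/\kappa(N)} \ll_B \delta(N)^B$ absorbs losses of the form $(q/\phi(q))^{O(\dim G)}$ arising from the small-prime correction factors when processing major-arc contributions of $\Lambda$, and is what permits the final bound to be stated purely in terms of $\delta(N)^B$ with constants depending only on $d$, despite $\dim G$ being allowed to grow with $N$.
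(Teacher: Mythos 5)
Your route is genuinely different from the paper's. The paper does not run a Vaughan-identity/Type I--II analysis at all: it splits $\Lambda=\Lambda^{\flat}+\Lambda^{\sharp}$ into a M\"obius-type piece and a truncated divisor sum, disposes of $\Lambda^{\flat}$ by quoting the (already dimension-explicit) orthogonality of $\mu$ to nilsequences as in \cite[\S 12]{GT-linearprimes}, and controls the centred $\Lambda^{\sharp}$ piece by bounding its $U^{d+1}$-norm via the linear forms condition \cite[Theorem D.3]{GT-linearprimes}; the entire saving there is the singular-series defect $w(N)^{-1/2^{d+1}}$, converted into a correlation bound by the direct (converse-to-inverse) estimate of \cite[Lemmas A.2--A.3]{Mat-multiplicative} at a cost of $m^{2m}\delta(N)^{-O(m)}$ with $m=2^d\dim G$. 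The hypothesis $\kappa(N)^2 w(N)^{-1/\kappa(N)}\ll_B\delta(N)^B$ exists precisely to let that tiny $w(N)^{-1/2^{d+1}}$ saving beat that dimension-dependent loss; it has nothing to do with $(q/\phi(q))^{O(\dim G)}$ factors in major-arc contributions, as you suggest. Your Vaughan/Type I--II strategy is viable in principle --- it is essentially the Tao--Ter\"av\"ainen approach, which the paper invokes verbatim for its \emph{alternative} proof of the $q=1$ case --- and your diagnosis of the $\xi$-uniformity issue (that $W\leq \xi^{o(1)}$ leaves enough room) is correct but is the easier of the two obstacles.

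The genuine gap is the uniformity in $\dim G$. The theorem allows $\dim G\leq\kappa(N)\to\infty$, yet asserts a bound with implied constants depending only on $d$ and $B$. Every quantitative tool in your outline --- the factorisation theorem (whose exponent is $O_{A,\dim G,d}(1)$), Leibman's criterion, the count $Q^{O(\dim G)}$ of horizontal characters, the vertical Fourier decomposition of $F$ used in the Type II Cauchy--Schwarz step --- carries exponents and constants growing with $\dim G$, so the "power saving $\delta(N)^{c}$" you obtain has $c=c(d,\dim G)\to 0$, and your proposed fix of "taking $B$ somewhat larger" cannot compensate for a non-constant $c$. To close the argument you must show that the residual saving (here Siegel--Walfisz quality in $\log\xi\geq\log_2 N$, plus the equidistribution input) dominates $\delta(N)^{-O_d(\kappa(N))}$ and the other dimension-dependent losses, and this is exactly where the hypotheses $w(N)\leq\tfrac12\log_2 x_0(N)$ and $\kappa(N)^2w(N)^{-1/\kappa(N)}\ll_B\delta(N)^B$ (which force $\kappa\log\kappa\ll\log w$ and $\delta(N)^{-1}=w(N)^{o(1)}$) must be brought in. Without this bookkeeping the proposal proves only a statement with implied constants depending on $\dim G$, which is strictly weaker than the theorem and insufficient for its application in Section \ref{sec:equid-nilsequences}.
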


\begin{rems} \label{rem:choice-of-parameters}
\hspace{2em}
\begin{itemize}
 \item[$(i)$] Note that $\xi/\W = \xi^{1+o(1)}$ by the definition of $W$ and the conditions on $w(N)$, $\delta(N)$ and $q$. More precisely, we have $\xi \geq x_0(N)$, $\W=Wq$, $W = \exp(w(N) + o(1)) = (\log x_0(N))^{1/2 + o(1)}$, and 
 $q \leq (\log x_0(N))^{O(1)}$.

\item[$(ii)$] The choices $x_0(N) := \log N$, $w(N) := \frac{1}{2}\log_3 N$, $\kappa(N) = (\log_5 N)^{C}$ 
with $1 \leq C = O(1)$ and $\delta(N) = \exp(-C_0 \sqrt{\log_4 N})$ with $1 \leq C_0 \leq (\log_4 N)^{1/4}$ and
$N>N_0$ for sufficiently large $N_0$ are permissible in the theorem above.

\item[$(iii)$] Observe that the parameter choices in $(ii)$ are consistent with the assumptions of 
Proposition \ref{prop:linear-subseqs} as well as the assumptions of Theorem \ref{thm:strong-recurrence}. 

\end{itemize}

\end{rems}

\begin{proof}
This proof is a small modification of the proof of \cite[Lemma 9.5]{Mat-multiplicative} 
given in the appendix to that paper. 
The starting point of that proof is the following decomposition of the von Mangoldt function as 
$\Lambda = \Lambda^{\flat} + \Lambda^{\sharp}$.
Let $\id_{\RR}(x)=x$ denote the identity function and let $\chi^{\flat} + \chi^{\sharp} = \id_{\RR}$ be a smooth 
decomposition with the property that $\supp (\chi^{\sharp}) \subset (-1,1)$ and 
$\supp (\chi^{\flat}) \subset \RR \setminus [-1/2,1/2]$.
Then, for any $\gamma \in (0,1)$,
$$
\frac{\phi(\W)}{\W}\Lambda(\W n + A') - 1
= \frac{\phi(\W)}{\W}\Lambda^{\flat}(\W n + A') +
  \Big(\frac{\phi(\W)}{\W}\Lambda^{\sharp}(\W n + A') - 1 \Big),
$$
where, cf.\ \cite[(12.2)]{GT-linearprimes}, 
$$
\Lambda^{\sharp}(n) 
= - \log \xi^{\gamma} 
 \sum_{d \mid n} \mu(d) \chi^{\sharp} 
 \Big( \frac{\log d}{\log \xi^{\gamma}} \Big) 
  \qquad (|t| \geq 1 \Rightarrow \chi^{\sharp} (t) = 0)
$$
is a truncated divisor sum, where
$$
\Lambda^{\flat}(n)
= - \log \xi^{\gamma} 
 \sum_{d\mid n} \mu(d) \chi^{\flat} 
 \Big( \frac{\log d}{\log \xi^{\gamma}} \Big) 
  \qquad (|t| \leq 1/2 \Rightarrow \chi^{\flat} (t) = 0)
$$
is an average of $\mu (d)$ running over large divisors of $n$.

It follows as in \cite[\S12]{GT-linearprimes} and \cite[Appendix A]{Mat-multiplicative} from the orthogonality of the M\"obius function with nilsequences that
$$
\frac{\W}{\xi}\sum_{n\leq \xi/\W} 
\frac{\phi(\W)}{\W}\Lambda^{\flat}(\W n+A')
F(g(n)\Gamma) 
\ll_{\|F\|_{\mathrm{Lip}}, G/\Gamma, B}
(\log \xi)^{-B}, \qquad (B>0).
$$
Here, it is important that $\W \leq (\log x_0(N))^{O(1)} \leq (\log \xi)^{O(1)}$.

Concerning the contribution from $\Lambda^{\sharp}$, define $\lambda^{\sharp}:\NN \to \RR$,
$$\lambda^{\sharp}(n) 
:= \frac{\phi(\W)}{\W}\Lambda^{\sharp}(\W n+A') - 1.$$
Since $\X$ is $\delta(N)^{-1}$-rational, \cite[Lemmas A.2 and A.3]{Mat-multiplicative} imply that the following bound holds with $m = 2^d \dim G$ and for every $\eps>0$:
\begin{align} \label{eq:U_k-U_k-star}
\nonumber
\frac{\W}{\xi}
\sum_{n \leq \xi/\W} \lambda^{\sharp}(n)
F(g(n)\Gamma)
&\ll
\eps(1 + \|F\|_{\mathrm{Lip}})
+ 
\left\| \lambda^{\sharp} \right\|_{U^{d+1}[\xi/\W]} 
(m/\eps)^{2m} \delta(N)^{-O(m)}\\
&\ll
\eps(1 + \|F\|_{\mathrm{Lip}})
+ 
\frac{m^{2m} \left\| \lambda^{\sharp} \right\|_{U^{d+1}[\xi/\W]}}
{(\eps^2 \delta(N)^{O(1)})^{m}}.
\end{align}
We shall show below that
$\left\| \lambda^{\sharp} \right\|_{U^{d+1}[\xi/\W]}  \ll_d w(N)^{-1/2^{d+1}}$. 
Choosing $\eps = \delta(N)^{B}$  
and recalling the assumption that 
$\kappa(N)^2 w(N)^{-1/\kappa(N)} \ll_B \delta(N)^B$ and that $m =2^d \dim G \leq 2^d \kappa(N)$, 
it follows that the above is bounded by
$$
\ll_{d,B}
(1 + \|F\|_{\mathrm{Lip}}) \delta(N)^{B}
+ \delta(N)^{O_d(B\kappa(N))}
\ll_{d,B} (1+ \|F\|_{\mathrm{Lip}}) \delta(N)^{B},
$$
as required.

The uniformity norm $\left\| \lambda^{\sharp} \right\|_{U^{d+1}[\xi/\W]}$, can, as in \cite[Appendix A]{Mat-multiplicative}, be estimated using \cite[Theorem D.3]{GT-linearprimes} since
$w(N) \leq \frac{1}{2} \log_2 x_0(N)$ is sufficiently small 
(c.f.\ the `important convention' in \cite[\S 5]{GT-linearprimes}).
In our case, the system of forms takes the shape
$$
\Psi_{\mathcal{B}}(n,\mathbf h) 
= \Big(\widetilde W(n+\boldsymbol\omega \cdot \mathbf h) + A'\Big)_{\omega \in \mathcal{B}}~,
\quad (n,\mathbf h) \in \ZZ \times \ZZ^{d+1},
$$
where $\mathcal{B} \subset \{0,1\}^{d+1}$ is any non-empty subset.
The corresponding set $\mathcal{P_{\Psi_{\mathcal{B}}}}$ of exceptional primes consists of those primes dividing $\W=Wq$,
i.e.\ $|\mathcal{P_{\Psi_{\mathcal{B}}}}| = \pi(w(N)) \ll \log_2 \xi / \log w(N)$.
We further have 
$$
\prod_{p \not\in \mathcal{P}_{\Psi_{\mathcal{B}}}}
\beta_{p}^{(\mathcal{B})} 
= 1 + O_d\left(\frac{1}{w(N)}\right)
\qquad
\text{and}
\qquad
\prod_{p \in \mathcal{P}_{\Psi}}\beta_{p}^{(\mathcal{B})} 
= \left(\frac{\W}{\phi(\W)}\right)^{|\mathcal{B}|}.
$$
Provided the constant $\gamma$ is chosen sufficiently small, it follows from \cite[Theorem D.3]{GT-linearprimes} applied with
$K_z = \{(n,\mathbf{h}): 0 < n + \boldsymbol \omega \cdot \mathbf h \leq z
\text{ for all } \boldsymbol \omega \in \{0,1\}^{d+1} \}$, 
where $z=\xi/\W$,
that
\begin{align*}
\left\|\lambda^{\sharp}\right\|_{U^{d+1}[z]}^{2^{d+1}} 
&= \frac{\vol(K_z)}{z^{d+2}}
\sum_{\mathcal{B} \subseteq \{0,1\}^{d+1} } 
(-1)^{|\mathcal{B}|}
\prod_{p \not\in P_{\Psi_{\mathcal{B}}}} \beta_{p}^{(\mathcal{B})}
+ O_{d}\Big((\log \xi^{\gamma})^{-1/20} \exp(O_d(|\mathcal{P_{\Psi_{\mathcal{B}}}}|)\Big)\\
&\ll_{d} 
\frac{\vol(K_z)}{z^{d+2}}
\frac{1}{w(N)}
+ \exp\left(-\frac{\log_2 \xi}{20}  + O_d((\log_2 \xi)/\log w(N))\right)\\
&\ll_{d} 
\frac{1}{w(N)}.
\end{align*}
\end{proof}

As it provides a different approach which could prove useful for future generalisations, we include a second, alternative, proof for the special case of Theorem \ref{thm:explicit-Lambda} in which $q=1$. 
This proof is based on one of the main results of Tao and Ter\"av\"ainen \cite{TT}. 
For this proof to work, $w(n)$ needs to be redefined as $w(n)=c' \log_3 N$ for some sufficiently small constant $c'>0$.

\begin{proof}[Alternative proof for special case of Theorem \ref{thm:explicit-Lambda}]
Assume for simplicity the choice of parameters from Remark \ref{rem:choice-of-parameters} (ii) except for
$w(N) = \frac{1}{2}\log_3 N$, which we replace by $w(n)=c' \log_3 N$ for some small $c'>0$.
Further, let $q=1$ so that $\W=W$ and $A'=A$.

Define $\lambda_{W,A} = W^{-1}\phi(W) \Lambda(W\cdot + A) - 1$. 
By the proof of \cite[Corollary 1.5]{TT} (see the end of Section 8 in that paper), we have
\begin{equation} \label{eq:TT-Uk-norm}
\|\lambda_{W,A}\|_{U^k[(\xi-A)/W]} \ll (\log_2 \xi)^{-c} 
\end{equation}
provided that the constant $c'>0$ in the definition of $w(N)$ is sufficiently small to ensure that the estimate
\cite[(8.19)]{TT} holds with $W = P(w(N))$ and $N=x_0(N)$.

Proceeding as in the proof of Theorem \ref{thm:explicit-Lambda} above, we use \cite[Lemmas A.2 and A.3]{Mat-multiplicative} which show that the following bound holds with $m = 2^d \dim G$ and for every $\eps>0$:
\begin{align*}
\frac{W}{\xi}
\sum_{n \leq (\xi-A)/W} \lambda_{W,A}(n)
F(g(n)\Gamma)
&\ll
\eps(1 + \|F\|_{\mathrm{Lip}})
+ 
\left\| \lambda_{W,A} \right\|_{U^{d+1}[(\xi-A)/W]} 
(m/\eps)^{2m} \delta(N)^{-O(m)}.
\end{align*}
Choosing $\eps = \delta(N)^B$ and recalling that $\log_2 \xi \gg \log_2 x_0(N) \gg \log_3 N$, the above is seen to be bounded by 
$$
\ll_{d} (1 + \|F\|_{\mathrm{Lip}}) \delta(N)^B 
 + \frac{\kappa(N)^2 (\log_2 \xi)^{-c}}{\delta(N)^{O_d(B\kappa(N))}}
 \ll_{d} (1 + \|F\|_{\mathrm{Lip}}) \delta(N)^B
$$
where the constants $c$ and the final implied constant depend on the exponent in the bound~\eqref{eq:TT-Uk-norm} from \cite[Section 8]{TT} as well as on the degree $d$ of the filtration.
\end{proof}

\subsection{Conclusion of the proof of Proposition \ref{prop:equid-non-corr}}

In view of the reductions carried out in the previous subsections it remains to show that the condition of Proposition~\ref{prop-bilinear} is in fact valid in order to complete the proof of the equidistributed non-correlation estimate stated in Proposition \ref{prop:equid-non-corr}.
Hence, the proof of Proposition \ref{prop:equid-non-corr} is complete once we have established the following lemma.

\begin{lemma}
Let $E_1\geq1$. Under the assumptions of Proposition \ref{prop:equid-non-corr} and with the notation $\W = Wq$, we have 
\begin{align*}
\frac{1}{\log N}  
\Big| 
\sum_{\substack{n \leq N}}
\sum_{\substack{\W m + A'_n \in \\ [y',\min(y, N/n)]}}
 \1_{S([y',y])}(n) \Lambda(\W m + A'_n) 
F(g(mn + A_n)\Gamma) 
\Big|
\ll \delta(N)^{E_1} \frac{\Psi(N,[y',y]))}{\phi(\W)}
\end{align*}
for all sequences $(A_n)_{n \in \NN}$, $(A'_n)_{n \in \NN}$ of integers such that $|A_n|\leq n$ and 
$\gcd(A'_n, \W) = 1$, and 
for all $g \in \mathrm{poly}(\ZZ,G_{\bullet})$ such that the finite sequence $(g(n)\Gamma)_{n \leq N}$ is totally 
$\delta(N)^{E_0}$-equi{-}distributed for some sufficiently large $E_0>1$.
The implied constant may depend on the degree $d$ of $G_{\bullet}$, $\dim G$, $\|F\|_{\mathrm{Lip}}$ and $E_1$.
\end{lemma}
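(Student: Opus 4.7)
The plan is to fix the outer index $n$, treat the inner sum over $m$ as a correlation of the $W$-tricked von Mangoldt function with the shifted polynomial sequence $g_n(m):=g(nm+A_n)\in\mathrm{poly}(\ZZ,G_\bullet)$, apply Theorem \ref{thm:explicit-Lambda} to extract a main term depending only on $F\circ g_n$, and then control that main term on average over $n$ via Proposition \ref{prop:linear-subseqs}. Write $S$ for the left-hand side of the lemma without the $(\log N)^{-1}$ factor, so that $S=\sum_{n\le N}\1_{S([y',y])}(n)\,\mathcal T_n$, where $\mathcal T_n$ is the inner sum over $m$. Since $n\in S([y',y])$ and $y'\ge w(N)$, we automatically have $\gcd(n,\widetilde W)=1$, so $A'_n$ is a reduced residue modulo $\widetilde W$, and $g_n$ inherits the filtered nilpotent structure and the Mal'cev complexity bound from $g$. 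Applying Theorem \ref{thm:explicit-Lambda} to the partial sums with upper endpoints $\min(y,N/n)$ and $y'$ (with the tail below $\log N$ treated trivially via Brun--Titchmarsh when $y'<\log N$) yields
\[
\mathcal T_n \;=\; \frac{\widetilde W}{\phi(\widetilde W)}\,\mathcal M_n \;+\; O\bigl((1+\|F\|_{\mathrm{Lip}})\delta(N)^B\,|I_n|/\phi(\widetilde W)\bigr)
\]
for any $B\ge 1$, where $\mathcal M_n:=\sum_{m\in J_n}F(g_n(m)\Gamma)$, the set $J_n$ collects the admissible $m$, and $|I_n|:=\min(y,N/n)-y'$ is the length of the ambient prime interval.

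Next I would decompose the outer sum dyadically as $n\sim N_0$ and, for $N_0\in[N^{1/2},N/y^{1/2}]$, invoke Proposition \ref{prop:linear-subseqs} with $E_0$ playing the role of $E_1$. This produces a bad set $\mathcal B_{N_0}$ of cardinality at most $\Psi(2N_0,[y',y])\delta(N)^{c_1 E_0}$ outside of which $(g_n(m)\Gamma)_{m\le N/n}$ is totally $\delta(N)^{c_1 E_0}$-equidistributed; combining total equidistribution with the assumption $\int_{G/\Gamma}F=0$ then gives $|\mathcal M_n|\ll\delta(N)^{c_1 E_0}|J_n|\|F\|_{\mathrm{Lip}}$ for every good $n$ whose $J_n$ occupies at least a $\delta(N)^{c_1 E_0}$-fraction of $[1,N/n]$. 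The remaining contributions (the bad set, the ranges $n<N^{1/2}$ and $n>N/y^{1/2}$, and good $n$ with short $J_n$) are all to be handled via the trivial Brun--Titchmarsh bound $|\mathcal T_n|\ll|I_n|/\phi(\widetilde W)$. The central arithmetic input for the bookkeeping is the identity $\sum_{n\in S([y',y]),\,n\le N}\min(y,N/n)\asymp\log N\cdot\Psi(N,[y',y])$, which follows by partial summation from the estimate $\Psi(X,[y',y])\ll(X/N)^\alpha\Psi(N,[y',y])$ supplied by Lemma \ref{lem:BrTen-2.4.i-ii}.

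The hard part will be the mismatch between the slowly decaying threshold $\delta(N)^{c_1 E_0}=\exp(-c_1E_0 C_0\sqrt{\log_4 N})$ and the intrinsic density $|J_n|/(N/n)=1/\widetilde W$ in the regime $n>N/y$: because $\log\widetilde W=O(\log_3 N)$ dominates $\sqrt{\log_4 N}$, total equidistribution on $[1,N/n]$ is formally too weak in this range. I would absorb this subrange by exploiting the additional smooth-number saving $y^{1-\alpha}\asymp u\log u\ll\log N$ that follows from \eqref{alpha-x-y} together with Lemma \ref{lem:MV}; the smooth-counting identity above already contains this factor, so its combination with the uniform smallness $\delta(N)^{c_1 E_0}$ of the bad-set size and with the factor $\phi(\widetilde W)/\widetilde W$ implicit in the Brun--Titchmarsh bound is enough to collapse all error terms into $\delta(N)^{E_1}\log N\cdot\Psi(N,[y',y])/\phi(\widetilde W)$, provided $E_0$ is chosen sufficiently large in terms of $E_1$, $d$, $\dim G$ and the constant $c_1$.
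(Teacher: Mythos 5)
Your architecture matches the paper's: the Montgomery--Vaughan reduction is already done, Theorem \ref{thm:explicit-Lambda} disposes of the von Mangoldt weight on the inner sum, Proposition \ref{prop:linear-subseqs} controls the subsequences $g_n(m)=g(mn+A_n)$ on average over $n$, the extreme ranges of $n$ are handled trivially, and the counting identity $\sum_{n}\1_{S([y',y])}(n)\min(y,N/n)\asymp(\log N)\Psi(N,[y',y])$ closes the argument. Applying Theorem \ref{thm:explicit-Lambda} to every $n$ before invoking equidistribution (rather than only to the non-exceptional $n$, as the paper does) is a harmless reordering, since that theorem requires no equidistribution hypothesis.

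However, there is a genuine gap in the central equidistribution step, and your own diagnosis of the ``hard part'' points at the wrong place. You measure the density of $J_n$ inside $[1,N/n]$ and obtain $|J_n|/(N/n)\leq 1/\W$ for \emph{every} $n$ (not only for $n>N/y$; for $n\leq N/y$ the density $ny/(\W N)$ is even smaller). Since $1/\W=\exp(-\Theta(\log_3N))$ is smaller than any fixed power of $\delta(N)$, your criterion ``$J_n$ occupies at least a $\delta(N)^{c_1E_0}$-fraction of $[1,N/n]$'' is satisfied by \emph{no} $n$, so the entire sum falls into the trivially bounded case, and the trivial bound summed over all $n$ gives exactly $(\log N)\Psi(N,[y',y])/\phi(\W)$ --- the target with \emph{no} $\delta(N)^{E_1}$ saving. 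The smooth-number saving $y^{1-\alpha}\ll\log N$ cannot rescue this: it is precisely what makes the counting identity come out as $(\log N)\Psi(N,[y',y])$ rather than something larger, so it is already spent and provides no additional room. The correct resolution is that the natural range of the subsequence $g_n$ is not $[1,N/n]$ but $[1,\,O(N/(n\W))]$, because the original sequence $g$ in Proposition \ref{prop:equid-non-corr} is only evaluated (and only assumed equidistributed) at arguments up to $O(N/W)$, and $mn+A_n\leq N/\W+O(n)$ on $J_n$. Measured against this range the two factors of $\W$ cancel and the density of $J_n$ is $n\min(y,N/n)/N$, which is $\geq\delta(N)^{2E_1}$ precisely when $n\geq\delta(N)^{2E_1}N/y$. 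Thus the genuinely problematic regime is \emph{small} $n$, namely $n<\delta(N)^{2E_1}N/y$ (where the trivial bound $y\,\Psi(\delta^{2E_1}N/y,[y',y])/\phi(\W)\ll\delta^{2E_1\alpha}y^{1-\alpha}\Psi(N,[y',y])/\phi(\W)\ll\delta^{E_1}(\log N)\Psi(N,[y',y])/\phi(\W)$ supplies the saving via Lemma \ref{lem:BrTen-2.4.i-ii}), while the regime $n>N/y$ that you single out is in fact the easiest one, with density $\asymp 1$. You need to introduce this lower cut-off explicitly and redo the density bookkeeping with the ambient range $N/(n\W)$; as written, your argument does not produce the factor $\delta(N)^{E_1}$.
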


\begin{proof}
Splitting the summation range of the outer sum into three intervals and abbreviating
$$
G(m,n) = 
 \1_{S([y',y])}(n) \Lambda(\W m + A'_n) 
F(g(mn + A_n)\Gamma),
$$
we obtain
\begin{align} \label{eq:aux4}
\sum_{\substack{n \leq N}}
\sum_{\substack{\W m + A'_n \in \\ [y',\min(y, N/n)]}} G(m,n) 
&= 
\sum_{n < \delta^{2E_1} N/y~} 
\sum_{\substack{y' \leq \W m + A'_n \leq y}}
G(m,n)\\
\nonumber
&\qquad+
\sum_{\delta^{2E_1} N/y < n \leq N/y^{2/3}~} 
\sum_{\substack{y' \leq \W m + A'_n \leq \min (N/n,y)}}
G(m,n) \\
\nonumber
&\qquad+\sum_{0 \leq k \leq K~}
\sum_{\substack{N/y^{2/3} < n \\ \leq N/(y' 2^k)}}
\sum_{\substack{\W m + A'_n \in \\ ~~[y'2^k, \min(y'2^{k+1}, N/n)] }}
G(m,n),
\end{align}
where $K = \lceil \log (y^{2/3}/y') / \log 2 \rceil$ and in particular $y'2^K \asymp y^{2/3}$.

The decomposition above has been chosen in such a way that the contributions from the initial and final segment are negligible while in the middle segment the summation range of the inner sum is guaranteed to be sufficiently long and 
$\min(y,N/n)$ is not too small compared to $N/n$.
More precisely, by Lemma \ref{lem:BrTen-2.4.i-ii} (i), the final term is trivially bounded above by
\begin{align*}
\sum_{0\leq k \leq K} \frac{y'2^k}{\phi(\W)} \Psi\Big(\frac{N}{y'2^k},[y',y]\Big)
& \ll \frac{\Psi(N,[y',y])}{\phi(\W)}\sum_{0\leq k \leq K} (y'2^k)^{(1-\alpha)} 
\ll \frac{\Psi(N,[y',y])}{\phi(\W)} (y'2^K)^{(1-\alpha)} \\
&\ll y^{2(1-\alpha)/3} \frac{\Psi(N,[y',y])}{\phi(\W)} 
\ll (\log N)^{2/3} \frac{\Psi(N,[y',y])}{\phi(\W)},
\end{align*}
which is acceptable in view of the $(\log N)^{-1}$ factor in the statement.

Similarly, the first term is trivially bounded above by
\begin{align*}
\frac{y}{\phi(\W)} \Psi\Big(\delta(N)^{2E_1} \frac{N}{y},[y',y]\Big)
\ll \delta(N)^{2 E_1 \alpha} y^{1-\alpha} \frac{\Psi(N,[y',y])}{\phi(\W)} 
\ll (\log N) \delta(N)^{E_1} \frac{\Psi(N,[y',y])}{\phi(\W)},
\end{align*}
since $\alpha > 1/2$, which is also acceptable.

It remains to analyse the middle range and we start by observing that the summation range of the inner sum over 
$\W m + A'_n$ is always sufficiently long, that is of length $\gg \delta(N)^{E_1} m_1$ if $m_1$ denotes the upper endpoint of the range.
In fact, $m_1 = \min(y,N/n) \geq y^{2/3} > y^{1/2} > y'$, which shows that 
$|\min(y,N/n) - y'| \sim \min(y,N/n)$ and moreover $y' = o(\delta(N)^{B} \min(y,N/n)/\phi(\widetilde{W}))$ for any $B \asymp 1$.

In order to apply Proposition \ref{prop:linear-subseqs} on the equidistribution of linear subsequences of an equidistributed nilsequence, we dyadically decompose the sum over $n$ in the middle range of \eqref{eq:aux4}
into $O(\log N)$ intervals $(M_j,2M_j]$, where
$\delta(N)^{2E_1} N/y \leq M_j < N/y^{2/3}$, and one potentially shorter interval.
Note that $N^{1/2} < N y^{-2} < N \delta(N)^{2E_1} y^{-1} \leq M_j$, assuming that $\eta < 1/4$ (so that
$y < N^{1/4}$) and that $N$ is sufficiently large.
For each $M_j$ as above consider the sum
\begin{align} \label{eq:aux3}
 \sum_{\substack{n \sim M_j \\ n \leq N/y^{2/3}}} 
 \1_{S([y',y])}(n)
 \sum_{\substack{ \W m + A'_n \in \\ [y', \min (y,N/n)]}}
 \Lambda(\W m + A'_n) F(g(mn + A_n)\Gamma)
\end{align}
Let $E_2>1$ be a parameter to be chosen later.
Then, by Proposition \ref{prop:linear-subseqs}, 
all but $O(\delta(N)^{E_2} \Psi(M_j,[y',y]))$ of the finite sequences 
$$ (g_n(m)\Gamma)_{m \leq N/n}, \qquad n \in (M_j,2M_j] \cap S([y',y]),$$
where $g_n(m):= g(mn + A_n)$, are totally $\delta(N)^{E_2}$-equidistributed in $G/\Gamma$ provided that the parameter $E_0>1$ from our assumptions is sufficiently large.
We bound the contribution to \eqref{eq:aux3} from all exceptional values of $n \in (M_j,2M_j] \cap S([y',y])$ 
trivially by 
$$
O(\delta(N)^{E_2} \Psi(M_j,[y',y])) \frac{\min(y, N/M_j)}{\phi(\W)}.
$$
For all other values of $n \in (M_j,2M_j] \in S([y',y])$, we seek to apply Theorem~\ref{thm:explicit-Lambda}
with the choice of parameters given in Remark \ref{rem:choice-of-parameters} (ii).
The lower bound $M_j \geq \delta(N)^{2E_1} N/y$ implies that
$y \geq \delta^{2E_1} N/M_j \geq \delta^{E_2/2} N/n$ for all $n \in (M_j,2M_j]$ provided that $E_2> 4 E_1$.
Thus $\min(N/n,y) \geq \delta^{E_2/2} N/n$ and the sequence 
$$(g_n(m)\Gamma)_{m \leq \min(N/n,y)}$$
is totally $\delta^{E_2/2}$-equidistributed in $G/\Gamma$ whenever $n$ is non-exceptional, and 
$\xi = \min (y, N/n)$ satisfies $\xi \in [\log N, N]$.
Finally, the upper bound on $y'$ implies that for any $B \asymp 1$, 
$y' = o(\delta(N)^{B} \min(y,N/M_j)/\phi(q))$. 
For non-exceptional $n$ as above, it thus follows from Theorem~\ref{thm:explicit-Lambda}, applied with $B=E_2/4$, that
\begin{align*}
  &\frac{\phi(\W)}{\min(y, N/M_j)}  \sum_{\substack{ \W m + A'_n \in \\ [y', \min (y,N/n)]}} 
  \Lambda(\W m + A'_n) F(g(mn + A_n)\Gamma)\\
  &\qquad =  \sum_{\substack{ \W m + A'_n \in \\ [y', \min (y,N/n)]}} F(g(mn + A_n)\Gamma) 
  + O_{d, E_1, \|F\|_{\mathrm{Lip}}} \Big(
    \delta(N)^{E_2/4} \Big) 
  \ll_{d, E_1, \|F\|_{\mathrm{Lip}}} \delta(N)^{E_2/4} ~,
\end{align*}
where we used that $p < w(N)$ for every $p \mid q$.

Taking the contributions from both exceptional and non-exceptional $n \in  (M_j,2M_j] \in S([y',y])$ into account, the expression \eqref{eq:aux3} is thus bounded by
\begin{align*} 
\ll_{d, E_1, \|F\|_{\mathrm{Lip}}}
 (\delta(N)^{E_2/4}+\delta(N)^{E_1}) \min(y, N/M_j) \frac{\Psi(M_j,[y',y])}{\phi(\W)}.
\end{align*}
Summing over all $j$, the contribution of the middle segment to \eqref{eq:aux4} is therefore bounded above by
\begin{align} \label{eq:aux5} 
\nonumber
&\ll_{d, E_1, \|F\|_{\mathrm{Lip}}} 
(\delta(N)^{E_2/4}+\delta(N)^{E_1}) \sum_{j} \min(y, N/M_j) \frac{\Psi(M_j,[y',y])}{\phi(\W)} \\
&\ll_{d, E_1, \|F\|_{\mathrm{Lip}}} 
\frac{\delta(N)^{E_1}}{\phi(\W)} \sum_{j} \sum_{n \sim M_j} \1_{S([y',y])}(n) \min(y, N/n). 
\end{align}
We shall now make use of the fact that the inner sum in the middle range is guaranteed to be sufficiently long in order to deduce that we make an $\delta(N)^{E_1}$-saving on average on each inner sum provided $E_0$ and, hence, $E_2$ are sufficiently large.
Reversing the steps in the Montgomery--Vaughan-type decomposition will then complete the proof.

Turning towards the details, recall that for any $n \sim M_j$ the interval $[y',\min(y, N/n)]$ has length 
$|\min(y, N/n) - y'| \sim \min(y, N/n)$, so that 
$$
\min(y, N/n) \ll
|\min(y, N/n) - y'|
\ll  \sum_{y' \leq m \leq \min(y, N/n)} \Lambda(m), \qquad (n \sim M_j),
$$
by the prime number theorem.
By combining this estimate with the bound \eqref{eq:aux5}, the contribution of the middle segment to  \eqref{eq:aux4} is seen to be bounded above by: 
\begin{align*}
&\ll
\frac{\delta(N)^{E_1}}{\phi(\W)} \sum_{j} \sum_{n \sim M_j} \1_{S([y',y])}(n) \min(y, N/n) \\
&\ll
\frac{\delta(N)^{E_1}}{\phi(\W)} \sum_{j} \sum_{n \sim M_j} \1_{S([y',y])}(n) 
\sum_{y' \leq m \leq \min(y, N/n)} \Lambda(m) \1_{S([y',y])}(m)\\
&\ll
\frac{\delta(N)^{E_1}}{\phi(\W)} 
\sum_{\delta(N)^{2E_1} N/y \leq n < 2N/y^{2/3}~~}
\sum_{y' \leq m \leq \min(y, N/n)} \Lambda(m) \1_{S([y',y])}(mn)\\
&\ll
\frac{\delta(N)^{E_1}}{\phi(\W)}
\sum_{n < 2N} \1_{S([y',y])}(n) \log n \\
&\ll (\log N)
\delta(N)^{E_1} \frac{\Psi(N,[y',y])}{\phi(\W)},
\end{align*}
provided that $E_0$ is sufficiently large (to ensure that $E_2>4E_1$), and where the implied constant 
may depend on $d$, $\dim G$, $E_1$ and $\|F\|_{\mathrm{Lip}}$.
The lemma follows when taking into account the $(\log N)^{-1}$-factor in the statement.
\end{proof}

\section{The proof of Theorem \ref{thm:main-theorem}} \label{sec:conclusion}

We will use the transferred generalised von Neumann theorem \cite[Proposition~7.1]{GT-linearprimes} combined with a simple majorising function in order deduce Theorem \ref{thm:main-theorem} from Theorem~\ref{thm:main-uniformity}. 
There are two choices of marjorants for $h_{[y',y]}$ readily available in the case where $y\geq N^{\eps}$ for any fixed $\eps \in (0,1)$.
These can still be used for somewhat smaller values of $y$.
We state both majorant constructions below.

\subsection{GPY-type sieve majorant}
Consider the following GPY-type sieve majorant for numbers free from prime factors $p < y'$:
$$\Lambda_{\chi,y'}(n) = \log y' \Big(\sum_{d\mid n} \mu(d) \chi\Big(\frac{\log d}{\log y'}\Big) \Big)^2,$$
where $\chi:\RR \to \RR_{\geq 0}$ is a smooth function with support in $[-1,1]$ which takes the value $\chi(x)=1$ for all $x \in [-1/2,1/2]$.
In particular,
$$\Lambda_{\chi,y'}(n) 
= \log y' \Big(\sum_{\substack{d\mid n \\ d\leq y'}} \mu(d) \chi\Big(\frac{\log d}{\log y'}\Big) \Big)^2,$$
which implies that
$\Lambda_{\chi,y'}(n) = \log y'$ for all integers $n$ that are free from prime factors $p \leq y'$.
Since $\alpha = 1 - O((\eps \log N)^{-1}\log_2 N)$ for $y=N^{\eps}$,
Lemma \ref{lem:MV} shows that $\zeta(\alpha,y) \asymp \log y \asymp \log N$ and 
$g_{P(y')} \asymp (\log y')^{-1}$. 
Thus, by \eqref{eq:Hil-Ten-Thm1} and Lemma \ref{lem:BrTen-2.1}, we have
$$h_{[y',N^{\eps}]}(n) \leq \frac{(N/n)^{\alpha}n}{\Psi(N,[y',N^{\eps}])} 
\ll \frac{N \log N}{N g_{P(y')}(\alpha) \zeta(\alpha,N^{\eps})} \ll \log y'.$$
This shows that
$$h_{[y',N^{\eps}]}(n) \ll \Lambda_{\chi,y'}(n), \qquad(N< n \leq 2N).$$
Hence, $\Lambda_{\chi,y'}(n)$ satisfies the majorisation property (1) from above.
It follows from \cite[Theorem D.3]{GT-linearprimes} that the $W$-tricked version of $\Lambda_{\chi,y'}(n)$ satisfies the \emph{linear forms condition} as stated in \cite[Definition 6.2]{GT-linearprimes}.
We omit the proof here as it is essentially contained in \cite[Appendix D]{GT-linearprimes} and only mention that it relies on the fact that $\Lambda_{\chi,y'}$ carries the structure of a truncate divisor sum.

\subsection{The normalised characteristic function $\1_{S([y',N])}$ } \label{sec:cramer}
An alternative majorant function in the dense setting is given by the function
$$
\nu(n) = \frac{P(y')}{\phi(P(y'))} \1_{S([y',2N])}(n), \qquad (n \leq 2N).
$$
Since
$$
\1_{S([y',2N])}(n) = \1_{(n,P(y'))=1}(n)
$$
for $n \leq 2N$, this function corresponds to the \emph{Cram\'er model} for the von Mangoldt function that was
studied in recent work of Tao and Ter\"av\"ainen \cite{TT}: we have
$$\nu(n) = \Lambda_{\text{Cram\'er}, y'} (n) := \frac{P(y')}{\phi(P(y'))}\1_{(n,P(y'))=1}(n).$$
for all $n \leq 2N$.

\begin{lemma} \label{lem:majorant-N-eps}
Let $\eps \in (0,1)$. Then we have
$$h_{[y',y]}(n) \ll \Lambda_{\text{Cram\'er}, y'} (n)$$
uniformly for all $y>N^{\eps}$, all $n \in (N,2N]$ and all sufficiently large $N$.
\end{lemma}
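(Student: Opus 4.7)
The plan is to observe that when $h_{[y',y]}(n) \neq 0$ we have $n \in S([y',y])$, which forces $\gcd(n,P(y'))=1$ and hence $\Lambda_{\text{Cram\'er},y'}(n) = P(y')/\phi(P(y'))$. So after cancellation the task reduces to showing that, for $n \in (N,2N] \cap S([y',y])$,
\[
\frac{N^{\alpha} n^{1-\alpha}}{\alpha\, \Psi(N,[y',y])} \ll \frac{P(y')}{\phi(P(y'))},
\]
where $\alpha=\alpha(N,y)$. Since $n \in (N,2N]$ and $\alpha \in (1/2,1]$ in this dense regime, the factor $n^{1-\alpha}/\alpha$ is comparable to $N^{1-\alpha}$, so the left side is of order $N/\Psi(N,[y',y])$. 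The whole problem thus becomes the lower bound $\Psi(N,[y',y]) \gg (\phi(P(y'))/P(y'))\, N$.

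To reach this bound I would first estimate $\alpha(N,y)$ in the dense regime. Because $y>N^{\eps}$ gives $u=\log N/\log y \leq 1/\eps$, the formula \eqref{alpha-x-y} yields $1-\alpha = O(1/\log N)$ (with an implicit constant depending on $\eps$). Feeding this into \eqref{eq:Hil-Ten-Thm1} together with Lemma \ref{lem:MV} shows $\Psi(N,y) \asymp N$. It then remains to control the ``local factor'' $g_{P(y')}(\alpha) = \prod_{p<y'}(1-p^{-\alpha})$ provided by Lemma \ref{lem:BrTen-2.1}.

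The main mildly delicate step is the comparison between $g_{P(y')}(\alpha)$ and $g_{P(y')}(1) = \phi(P(y'))/P(y')$. For $p<y'\leq(\log N)^{K'}$ we have $\log p \ll \log_2 N$, so $p^{1-\alpha} = \exp(O(\log_2 N/\log N)) = 1+o(1)$ uniformly. Consequently $1-p^{-\alpha} = (1-p^{-1})\bigl(1+O(\log_2 N/((p-1)\log N))\bigr)$, and taking logs and summing via Mertens gives
\[
\log\frac{g_{P(y')}(\alpha)}{\phi(P(y'))/P(y')}
= O\!\left(\frac{\log_2 N}{\log N}\sum_{p<y'}\frac{1}{p-1}\right)
= O\!\left(\frac{\log_2 N\cdot \log_3 N}{\log N}\right) = o(1),
\]
so $g_{P(y')}(\alpha) \asymp \phi(P(y'))/P(y')$.

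Combining the three ingredients $n^{1-\alpha}\asymp N^{1-\alpha}$, $\Psi(N,y)\asymp N$ and $g_{P(y')}(\alpha) \asymp \phi(P(y'))/P(y')$, one concludes $\Psi(N,[y',y]) \asymp (\phi(P(y'))/P(y'))N$, which delivers the required inequality $h_{[y',y]}(n)\ll \Lambda_{\text{Cram\'er},y'}(n)$. The only place where any care is needed is the uniform control of $p^{1-\alpha}$ over the primes $p<y'$; once that is in hand, the result is essentially immediate from the ingredients already assembled in Section \ref{sec:background}.
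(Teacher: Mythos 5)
Your proposal is correct and follows essentially the same route as the paper: both reduce to the lower bound $\Psi(N,[y',y]) \gg N\,\phi(P(y'))/P(y')$ via \eqref{eq:Hil-Ten-Thm1}, Lemma \ref{lem:MV} and Lemma \ref{lem:BrTen-2.1}, using $1-\alpha = O_{\eps}(1/\log N)$ in the dense regime. The only cosmetic difference is that you compare $g_{P(y')}(\alpha)$ to $g_{P(y')}(1)$ directly by a Mertens computation, whereas the paper bounds $g_{P(y')}(\alpha)^{-1} \asymp \log y' \asymp P(y')/\phi(P(y'))$.
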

\begin{proof}
Since $\1_{(n,P(y'))=1} = \1_{S([y',2N])}(n) $ for $n \leq 2N$,
it remains to note that
$$
h_{[y',y]}(n) 
\leq \frac{n (N/n)^{\alpha}}{\Psi(N,[y',N^{\eps}])} 
\ll \frac{N \log N}{N g_{P(y')}(\alpha) \zeta(\alpha,N^{\eps})}
\ll \log y' \asymp \frac{P(y')}{\phi(P(y'))} 
$$
when $n \sim N$ and $y> N^{\eps}$.
\end{proof}

Let $2 < w(N) < y' = (\log N)^{K'}$, $W = P(w(N))$, $0<A<W$ and $(A,W)=1$. 
Then, by \cite[Corollary 5.3]{TT}, we have
\begin{equation} \label{eq:U-k-norm-N-eps}
 \left\|\frac{\phi(W)}{W}\nu(W\cdot + A) - 1 \right\|_{U^k[\frac{N-A}{W}]} 
 \ll_k w(N)^{-c}
\end{equation}
for some constant $c>0$.
We observe that, using $\Lambda_{\chi,y'}(n)$ as a majorant for $\nu$, it follows from this uniformity norm estimate and the generalised von Neumann Theorem \cite[Proposition~7.1]{GT-linearprimes} that the $W$-tricked version of 
$\nu$ satisfies the linear forms condition from 
\cite[Definition 6.2]{GT-longprimeaps}.

\subsection{Proof of Theorem \ref{thm:main-theorem}}
Let $y_0 \leq y \leq N^{\eta}$, where $\eta \in (0,1)$ is sufficiently small depending on $d$ and $y_0$ will be determined in the course of the proof, and let $w(N) \leq y' \leq (\log N)^{K'}$ for some fixed $K'\geq 1$.
The transferred version of the quantitative inverse theorem for $U^k$-norms of Manners \cite{Manners} that Tao and Ter\"av\"ainen prove in \cite[Theorem 8.3]{TT} allows us to deduce explicit $U^k$-norm estimates from Theorem \ref{thm:main-theorem}, which we will later combine with the transferred generalised von Neumann theorem \cite[Proposition 7.1]{GT-linearprimes}. More precisely, Theorem \ref{thm:main-uniformity} implies that 
if $\Psi(N,[y',y]) P(y')/(N \phi(P(y'))) \geq (\log_4 N)^{-1/2}$, then
\begin{align*}
\bigg| \frac{W}{N}
\sum_{N/W < n \leq 2N/W} 
(g_{[y',y]}^{(W,A)}(n)-1)
F(g(n)\Gamma)
\bigg| 
&\ll (1 +\|F\|_{\mathrm{Lip}}) (\log_4 N)^{-1} \\
&\ll \frac{1 +\|F\|_{\mathrm{Lip}}}{\sqrt{\log_4 N}} \frac{N \phi(P(y')) }{\Psi(N,[y',y]) P(y')} 
\end{align*}
for all filtered nilmanifolds of dimension $O((\log_5 N)^{O(1)})$ and complexity $Q_0$ that is bounded by
$$Q_0 < \exp (\frac{1}{2} \sqrt{\log_4 N}),$$
and all nilsequences attached to it.
Observe that 
$$
(\log_4 N)^{-1/2} = o( \exp(- \exp (C_1/{\tilde \delta}^{C_2})))
$$
and
$$\exp \exp (C_1/{\tilde \delta}^{C_2}) = o(\exp (\frac{1}{2} \sqrt{\log_4 N}))$$ 
for all constants $C_1, C_2 >1$ if $\tilde \delta = 1/\log_7 N$.
Note further that the function 
from Section~\ref{sec:cramer} may be used as a trivial majorising function for the following rescaled version of 
$g_{[y',y]}^{(W,A)} (n)$, which essentially corresponds to $\1_{S([y',y])}(n)$. We have:
$$
\frac{P(y')}{\phi(P(y'))}
\frac{\Psi(N,[y',y])}{N} g_{[y',y]}^{(W,A)}(n) \ll \nu(n), \qquad(n \leq (N-A)/W~).
$$
By \eqref{eq:U-k-norm-N-eps}, we have  
$$
\| \nu - 1 \|_{U^k[(N-A)/W]} \ll_k (\log_3 N)^{-c} \ll_B {\tilde \delta}^B
$$
for all $B \geq 1$.
Hence the transferred quantitative $U^k$ inverse theorem of Manners \cite{Manners} as stated in \cite[Theorem 8.3]{TT}, implies that uniformly for all $0<A<W$ with $(W,A)=1$ we have
$$
\| g^{(W,A)}_{[y', y]} - 1 \|_{U^k([(N-A)/W])} \ll \frac{\tilde \delta N \phi(P(y')) }{\Psi(N,[y',y]) P(y')},
$$
which is small compared to the mean value $1$ of $g^{(W,A)}_{[y', y]}$ provided that
$$\Psi(N,[y',y]) P(y')/(N\phi(P(y'))) = o(\log_7 N).$$

In order to apply the generalised von Neumann theorem \cite[Proposition 7.1]{GT-linearprimes} in our situation, observe that the $o(1)$ error term in \cite[Proposition 7.1']{GT-linearprimes} corresponds to the error term in the 
$(D,D,D)$-linear forms condition for $\nu$. 
It follows from \cite[Proposition 5.2]{TT} that any function of the form
$$
\tilde \nu(n) = \frac{1}{t} \sum_{i=1}^t \frac{\phi(W)}{W} \nu^{(N)}(W n + A_i)
$$
with $0<A_i<W$, $\gcd(A_i,W)=1$ for all $1 \leq i \leq t$ satisfies the $(D,D,D)$-linear forms condition 
from \cite[Definition 6.2]{GT-linearprimes} for any given $D = O_{r,s,L}(1)$ with an error term of the form
$$
w(N)^{-c} = (\log_3 N)^{-c}.
$$
Applying \cite[Proposition 7.1]{GT-linearprimes} with 
$$f_i(n) =
\frac{P(y')}{\phi(P(y'))}
\frac{\Psi(N,[y',y])}{N} g_{[y',y]}^{(W,A)}(n)
$$ and with $\tilde \nu$ as above, we obtain
$$
\sum_{\n \in \ZZ^s \cap (N\mathfrak{K})/W} 
  \prod_{j=1}^r g_{[y',y]} (W \psi_j(\n) + A_j)
  = \left(\frac{W}{\phi(W)}\right)^r 
  \left\{\frac{N^s \vol \mathfrak{K}}{W^s} + o_{s,r,\|\Psi\|}\Big((N/W)^s\Big)\right\}
$$
provided that 
$$
\| g^{(W,A)}_{[y', y]} - 1 \|_{U^k([(N-A)/W])} = o\left(\Big(\frac{\Psi(N,[y',y]) P(y')}{N \phi(P(y'))}\Big)^{r-1}\right)
$$
for $k=O_{r,s}(1)$, as well as
$$
w(N)^{-c} = (\text{linear forms condition error term}) 
= o\left(\Big(\frac{\Psi(N,[y',y])P(y')}{N \phi(P(y'))}\Big)^{r}\right).
$$
These conditions are certainly satisfied if 
$$
\frac{\Psi(N,[y',y])P(y')}{N \phi(P(y'))} \gg (\log_8 N)^{-1}.
$$
Since 
$$
\frac{\Psi(N,[y',y])P(y')}{N \phi(P(y'))} \asymp 
\frac{\Psi(N,y)}{N} \prod_{p \leq y'} \frac{1-p^{-\alpha(N,y)}}{1-p^{-1}}
\leq \frac{\Psi(N,y)}{N} = u^{-u+o(u)}
$$
by Lemma \ref{lem:BrTen-2.1}, it follows that $N^{-1}\Psi(N,y) \gg (\log_8 N)^{-1}$.
Let $y_0$ be such that 
$$
\frac{\Psi(N,y_0)}{N} = (\log_8 N)^{-1}
$$
and write $u_0= (\log N)/ \log y_0$.
Then 
$$
\frac{\log N}{\log y_0}=u_0 < u_0^{u_0+o(u_0)} = \log_8 N
$$
and, if $y>y_0$, then
$$1 - \alpha(N,y) \ll \frac{\log (u_0 \log (u_0+2))}{\log y_0} \ll \frac{(\log_8 N)^2}{\log N}.$$
Hence, the first part of Lemma \ref{lem:MV} applies to $g_{P(y')}(\alpha(N,y))$ and yields
$$g_{P(y')}(\alpha(N,y)) \asymp \frac{1}{\log y'} \asymp \frac{\phi(P(y'))}{P(y')}$$
for $y\geq y_0$ and $y' = (\log N)^{K'}$. Thus
$$
\frac{\Psi(N,[y',y])P(y')}{N \phi(P(y'))} \asymp 
\frac{\Psi(N,y)}{N} \gg (\log_8 N)^{-1}
$$
for $y\geq y_0$. 
From 
$$
\exp(u_0^2) > u_0^{u_0+o(u_0)} = \log_8 N,
$$
which holds for sufficiently large $N$, we deduce that $y_0 < N^{1/\sqrt{\log_9 N}}$.

Suppose now that $y_0 \leq y \leq N^{\eta}$. Then we obtain:
\begin{align*}
  &\sum_{\n \in \ZZ^s \cap N\mathfrak{K}} \prod_{j=1}^r g_{[y',y]} (\psi_j(\n)+a_j) \\
  &= \sum_{\substack{\mathbf{A} \in  \{0, \dots W-1\}^s}}
  \sum_{\substack{W\n+\mathbf{A} \\ \in \ZZ^s \cap N\mathfrak{K}}} 
  \prod_{j=1}^r g_{[y',y]} (\psi_j(W \n + \mathbf{A})+a_j)\\
  &= \sum_{\substack{\mathbf{A} \in  \{0, \dots W-1\}^s}}
  \sum_{\n \in \ZZ^s \cap (N\mathfrak{K}-\mathbf{A})/W} 
  \prod_{j=1}^r g_{[y',y]} (W \psi_j(\n) + \psi_j(\mathbf{A})+a_j) \\
  &= \left(\frac{W}{\phi(W)}\right)^r 
  \left\{\frac{N^s \vol \mathfrak{K}}{W^s} + o_{s,r,\|\Psi\|}\Big((N/W)^s\Big)\right\}
  \sum_{\substack{\mathbf{A} \in  \{0, \dots W-1\}^s}}
  \prod_{j=1}^r \1_{\gcd (W,\psi_j(\mathbf{A})+a_j)=1}\\
  &= \left\{N^s \vol \mathfrak{K} + o_{s,r,\|\Psi\|}(N^s)\right\}
  \prod_{p < w(N)} \beta_p,
\end{align*}
where
$$
\beta_p = \frac{1}{p^s} 
  \sum_{\mathbf{u} \in (\ZZ/p\ZZ)^s} \prod_{j=1}^r \frac{p}{p-1}
  \1_{\psi_j(\mathbf{u})+a_j \not\equiv 0 \Mod p}.
$$
Theorem \ref{thm:main-theorem} now follows on recalling\footnote{Observe that our local factors are identical to those defined in \cite[(1.6)]{GT-linearprimes}} from \cite[Lemma 1.3]{GT-linearprimes} that the condition that the forms $\psi_i$ be pairwise linearly independent over $\QQ$ implies that $\beta_p = 1 + O_{s,r,L}(p^{-2})$ for all primes $p > w(N)$.

\end{document}